\newtheorem{theorem}{Theorem}
\newtheorem{lemma}[theorem]{Lemma}
\newtheorem{proposition}[theorem]{Proposition}
\newtheorem{definition}[theorem]{Definition}
\newtheorem{bigthm}{Theorem}
\newtheorem{defprop}[theorem]{Definition/Proposition}
\theoremstyle{definition}
\newtheorem{remark}[theorem]{Remark}
\numberwithin{theorem}{section}
\numberwithin{equation}{section}
\DeclareMathOperator \PSH {{\rm PSH}}
\DeclareMathOperator \loc {{\rm loc}}
\DeclareMathOperator \MA {{\rm MA}}
\DeclareMathOperator \Vol {{\rm Vol}}
\def\C{\mathbb{C}}
\def\R{\mathbb{R}}
\def\e{\varepsilon}
\def\f{\varphi}
\def\dc{dd^c}
\begin{document}
	
	\title[Pluripotential Monge-Amp\`ere flows]{Pluripotential Monge-Amp\`ere Flows\\ in Big Cohomology Classes}
	\author{Quang-Tuan Dang}
	\address{Laboratoire de Math\'ematiques D'Orsay, Universit\'e Paris-Saclay, CNRS, 91405 Orsay, France}
	\email{quang-tuan.dang@universite-paris-saclay.fr}
	\curraddr{Institut de Mathematiques de Toulouse,  Universit\'e de Toulouse; CNRS,
		118 route de Narbonne, 31400 Toulouse, France}
	\email{quang-tuan.dang@math.univ-toulouse.fr}
	\date{\today}
	\subjclass[2020]{53C44, 32W20, 58J35}
	\keywords{Parabolic Monge-Amp\`ere equation, big cohomology class, K\"ahler-Ricci flow}
	\thanks{This work is partially supported by the ANR projects GRACK and PARAPLUI}

	\begin{abstract}
		We study pluripotential complex Monge-Amp\`ere flows in big cohomology classes on compact K\"ahler manifolds. We use the Perron method, considering pluripotential subsolutions to the Cauchy problem.  We prove that, under natural assumptions on the data, the upper envelope of all subsolutions  is continuous in space and semi-concave in time, and provides a unique pluripotential solution with such regularity. We apply this theory to study pluripotential K\"ahler-Ricci flows on compact K\"ahler manifolds of general type as well as on stable varieties 
	\end{abstract}
	
	\maketitle

	\tableofcontents

	\section*{Introduction} 
	
	The primary goal of this paper is to study pluripotential complex Monge-Amp\`ere flows
	motivated by the Minimal Model Program (MMP for brevity) in algebraic geometry, whose aim is the (birational) classification of projective
	manifolds. In a recent celebrated work, Birkar-Cascini-Hacon-Mckernan~\cite{birkar2010existence} showed the existence of
	minimal models for a large class of varieties which are called varieties of general type. 
	J. Song and G. Tian \cite{song2012canonical,song2017kahler} have recently proposed an analytic analogue making use of (twisted) K\"ahler-Ricci flows on compact K\"ahler manifolds. 
	
	Let $X$ be a compact K\"ahler manifold of dimension $n$ equipped with a K\"ahler form $\hat{\omega}$. The (normalized) K\"ahler-Ricci flow on $X$ starting at $\hat{\omega}$  is the solution to the following evolution equation
	\begin{align}\label{nkrf0}
	\dfrac{\partial\theta_t}{\partial t}=-{\rm Ric}(\theta_t) -\lambda \theta_t ,\quad\theta|_{t=0}=\hat{\omega},
	\end{align}
	where the sign of $\lambda\in \R$ depends on that of the first Chern class $c_1(K_X)$. Solving the normalized K\"ahler-Ricci flow~\eqref{nkrf0} turns out to be equivalent to solving the scalar complex Monge-Amp\`ere flow
	\begin{align*}
	\begin{cases}
	(\omega_t+\dc\f_t)^n=e^{\partial_t\f_t+\lambda\f_t+h(t,x)}dV\\
	\omega_{t}+\dc\f_t>0,
	\end{cases}
	\end{align*}
	where $h$ is a smooth density, and $\omega_t\in\{\theta_t\}\in H^{1,1}(X,\R)$ is fixed.

	Since the MMP requires one to work on singular varieties, it is necessary to develop a fine theory dealing with weak solutions. One has indeed to deal with similar complex Monge-Amp\`ere flows with various degeneracies: the reference forms $\omega_t$ are no longer K\"ahler and the densities $h$ is no longer smooth, with integrability properties that depend on the type of singularities.	
	A parabolic viscosity approach has been developed recently in~\cite{eyssidieux2016weak}, which requires the densities to be continuous hence has a limited scope of applications. The first elements of a parabolic pluripotential theory  has been laid down in~\cite{guedj2018pluripotential,guedj2020pluripotential} which are the parabolic analogues of the pioneering work of Bedford and Taylor in the local case \cite{bedford1976dirichlet,bedford1982new}. We extend here this theory so as to be able to deal  with big cohomology classes.  
	
	\subsection*{Assumptions and Notations.} Before going further and stating the
	main results of the paper, let us fix some notations. Let $X$ be a compact K\"ahler manifold of dimension $n$.
	We let $X_T:=(0,T)\times X$ denote the real $(2n+1)$-dimensional manifold with $T\in(0,+\infty]$. We focus mostly on finite time intervals i.e. $T<+\infty$. The parabolic boundary of $X_T$ is denoted by
	$$\partial X_T:=\{0\}\times X.$$
	We fix $\theta$ a smooth closed $(1,1)$-form representing a big cohomology class. We let $\Omega$ denote the ample locus of $\theta$,
	$$\Omega:={\rm Amp}(\theta)$$
	which is a non empty Zariski open subset of $X$. We also set $\Omega_T:=(0,T)\times \Omega$.
	
	We assume that $(\omega_t)_{t\in[0,T)}$ is a smooth family of closed $(1,1)$-forms on $X$ such that
	$$g(t)\theta\leq\omega_t,\quad
	\forall\, t\in [0,T),$$ 
	where $g(t)$ is an increasing smooth positive function on $[0,T]$.
	
	Throughout the article we assume  that there exists a K\"ahler form $\Theta$ such that 
	\begin{align}\label{estimate_ome}
	-\Theta\leq \omega_t,\dot{\omega_t},\ddot{\omega_t}\leq \Theta.\end{align}
	We let $dV$ denote a smooth volume form on $X$. We shall always assume that
	\begin{itemize}
		\item $0\leq f\in L^p(X,dV)$ for some $p>1$, and $f$ is strictly positive almost everywhere;
		\item $F:[0,T]\times X\times \R\rightarrow\R$ is  continuous on $[0,T]\times X\times \mathbb{R}$;   
		\item the function $r\mapsto F(\cdot,\cdot,r)$ is  increasing in $r$;
		\item the function $F$ is  uniformly Lipschitz in $(t,x)\in[0,T]\times \mathbb{R}$, i.e. there exists a constant $\kappa_F>0$  such that for all $t,t'\in [0,T]$, $x\in X$, $r,r'\in\mathbb{R}$,
		\begin{align*}
		|F(t,x,r)-F(t',x,r')|\leq \kappa_F(|t-t'|+|r-r'|);
		\end{align*}
		\item the function $(t,r)\mapsto F(t,\cdot,r)$ is convex.
	\end{itemize}

With the assumptions above, we consider the complex Monge-Ampère flow:
	\begin{align*}\label{cmaf} \tag{CMAF}
	dt\wedge(\omega_t+\dc\f_t)^n=e^{\dot{\f}_t+F(t,\cdot,\f_t)}fdV\wedge dt
	\end{align*}
	on $X_T$. 
	Note that the equation~\eqref{cmaf} should be understood in the weak sense of measures in $(0,T)\times \Omega$ (see Section~\ref{subsol}).
	
	%
	%
	
	The existence of the weak K\"ahler-Ricci flow is often proved by using approximation arguments and a priori estimates (cf.~\cite{song2017kahler,guedj2020pluripotential}). Big cohomology classes can not be approximated by K\"ahler ones so this approach  breaks down in our case. We shall instead use the Perron method, inspired by~\cite{guedj2018pluripotential}, considering the upper envelope $U$ of all pluripotential subsolutions to the Cauchy problem.  We prove  that this upper envelope is locally uniformly semi-concave in time:

	\begin{bigthm}\label{A}
		Let $\f_0$ be a $\omega_{0}$-psh function with minimal singularities.
		Then the upper envelope $U$ of all subsolutions to~\eqref{cmaf} with initial data $\f_0$ is a pluripotential solution to~\eqref{cmaf} which is locally uniformly Lipschitz and locally uniformly semi-concave in $t\in(0,T)$.     
	\end{bigthm}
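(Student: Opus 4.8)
\textit{Step 1: barriers and non-emptiness of the Perron family.} I would first produce a sub- and a supersolution of~\eqref{cmaf} with initial value $\f_0$. Fix, via the Boucksom--Eyssidieux--Guedj--Zeriahi theory, a $\theta$-psh potential $\rho$ with minimal singularities solving an elliptic complex Monge--Amp\`ere equation $(\theta+dd^c\rho)^n=c\,f\,dV$ in the big class $\{\theta\}$. Since $\om_t\geq g(t)\theta\geq g(0)\theta$ with $g$ increasing and positive, a suitable rescaling of $\rho$ plus an affine-in-$t$ term gives a genuine pluripotential subsolution $\underline{\f}$ of~\eqref{cmaf}; replacing it by $\max(\underline{\f}_t,\f_0)-At$ (using that $\f_0$ has minimal singularities and the Lipschitz control on $F$) yields a subsolution with initial value $\f_0$, so the Perron family $\mathcal{S}$ is non-empty. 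A supersolution $\overline{\f}$ with the same initial value is constructed symmetrically, and the pluripotential comparison principle available in big classes shows $\f\leq\overline{\f}$ for every $\f\in\mathcal{S}$. Consequently $U:=\sup\mathcal{S}$ is finite, each slice $U_t$ is $\om_t$-psh with minimal singularities on $X$, and $U\to\f_0$ as $t\to0^+$.

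\textit{Step 2: $U$ is a pluripotential solution.} That $U$ is again a subsolution follows from stability of $\mathcal{S}$ under the relevant envelope operations: a countable supremum of subsolutions is, after upper semicontinuous regularization in the space variable (with $t$ fixed), again $\om_t$-psh, and the Monge--Amp\`ere inequality $(\om_t+dd^cU_t)^n\geq e^{\dot U+F(t,\cdot,U)}f\,dV$ passes to the envelope by the Bedford--Taylor ``choose the maximum'' argument combined with the monotonicity of $r\mapsto F(t,x,r)$ and with monotone limits. That $U$ is also a supersolution on $\Omega_T$ is the usual Perron bump argument: if the reverse inequality were violated strictly at a point of $\Omega_T$, one could glue in a small local perturbation that is still $\om_t$-psh and still a subsolution but lies strictly above $U$ nearby, contradicting maximality. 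Hence $U$ solves~\eqref{cmaf}.

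\textit{Step 3: Lipschitz and semi-concavity in time.} For the Lipschitz bound, fix $s>0$ small and compare $U$ on $(s,T)$ with the time-translate $\f_{t-s}$ of a subsolution $\f$: using $-\Theta\leq\dot\om_t\leq\Theta$, the Lipschitz bound on $F$ in $(t,r)$, and an $O(s)$ correction by a fixed potential with minimal singularities to restore positivity of the form, one sees that this translate (suitably corrected) is a subsolution of the flow on $(s,T)$; comparing with $U$ and with the barriers at the shifted initial time gives $|U_{t+s}(x)-U_t(x)|\leq C_Ks$ on $K\Subset\Omega$ and on time intervals away from $0$. For semi-concavity, fix $\f\in\mathcal{S}$ and form, on a subinterval, $\psi_t:=\tfrac12(\f_{t+s}+\f_{t-s})$. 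Using $\ddot\om_t\leq\Theta$, the concavity of $A\mapsto(\det A)^{1/n}$ on positive Hermitian matrices, and the convexity of $(t,r)\mapsto F(t,\cdot,r)$, one checks that $\psi_t$ minus an $O(s^2)$ multiple of a fixed minimal-singularities potential is again a subsolution; maximality of $U$ then forces $\tfrac12\bigl(U_{t+s}+U_{t-s}\bigr)\leq U_t+Cs^2$ on compact subsets of $\Omega_T$, which together with the Lipschitz control near $\partial(0,T)$ is exactly local uniform semi-concavity of $t\mapsto U_t(x)$.

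\textit{Main obstacle.} The crux is the big-class degeneracy. The Kähler form $\Theta$ dominating $\om_t,\dot\om_t,\ddot\om_t$ is not comparable to $\theta$ globally, only on compact subsets of the ample locus $\Omega$, so the error terms $Cs\,\Theta$ and $Cs^2\,\Theta$ arising in the translate and second-difference arguments cannot be absorbed into $\theta$. The remedy is to carry the minimal-singularities potentials throughout and to perform the comparisons either globally in the weak pluripotential sense --- where the mass carried on $X\setminus\Omega$ does not obstruct the comparison principle --- or on an exhaustion of $\Omega$ by compacts, which is precisely why the estimates are only \emph{locally} uniform. Ensuring that the barrier construction in Step 1 is compatible with ``minimal singularities'', so that $U$ attains the initial data and the comparison principle genuinely applies, is the other point requiring care.
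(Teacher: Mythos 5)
Your overall strategy (Perron envelope, explicit barriers built from a minimal-singularity solution of an elliptic Monge--Amp\`ere equation, dilation/translation tricks for time-regularity, with errors absorbed by a K\"ahler-current potential) is the one the paper follows, and your diagnosis of the main obstacle --- that $\Theta$ is not dominated by $\theta$ globally, so corrections must be carried by $\rho+\chi$ --- is exactly right. However, there are three genuine gaps.

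First, in Step 1 the function $\max(\underline{\f}_t,\f_0)-At$ is \emph{not} a subsolution for any constant $A$: the slice $(t,x)\mapsto\f_0(x)-At$ has $(\omega_t+dd^c\f_0)^n$ on the left, and this measure can vanish on open sets (nothing forces a generic $\omega_0$-psh $\f_0$ with minimal singularities to have a Monge--Amp\`ere measure dominating $e^{-A+F}f\,dV$, since $f>0$ a.e.). The paper instead interpolates, $\underline{\f}_t=(1-\alpha_t)\f_0+\alpha_t g(0)\frac{\rho+\chi}{2}+nt(\log(\delta_0^{-1}t)-1)-Ct$ (Lemma~\ref{lemexist}); the $nt\log t$ term makes the required right-hand side $(\delta_0^{-1}t)^n e^{c_1}f\,dV$ tend to zero as $t\to0$, which is what lets the interpolant be a subsolution while still attaining $\f_0$ at $t=0$.

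Second, the ``Perron bump argument'' in Step 2 does not transfer to the pluripotential setting: the supersolution inequality is an inequality of measures and cannot be violated ``at a point,'' so one cannot bump locally. The paper replaces this by a balayage: solve the local Cauchy--Dirichlet problem for the parabolic Monge--Amp\`ere equation on a small ball with boundary data $U$ (quoting the local existence theorem of Guedj--Lu--Zeriahi) and use maximality of $U$ to conclude equality. Crucially this requires knowing beforehand that the boundary data --- i.e.\ $U$ restricted to the parabolic boundary of the ball --- is locally uniformly Lipschitz and semi-concave in $t$, so the logical order is regularity first, solution property last; your Steps 2 and 3 are in the wrong order for any balayage-type argument to close.

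Third, in showing that the envelope is itself a subsolution you cannot pass to the limit in $e^{\partial_t\f^j}$ by ``monotone limits'': the exponential of a weak-$*$ convergent sequence of derivatives is not weakly continuous. The paper handles this by introducing the sub-envelopes $U^\kappa$ over subsolutions satisfying a uniform bound $t\partial_t\f\leq\kappa-\kappa(\rho+\chi)$, applying a Fatou-type statement (Proposition~\ref{p26}) that needs exactly this uniform control, and then proving $U=U^{\kappa_0}$ a posteriori. Some such uniform one-sided Lipschitz control on the approximating family is indispensable and is missing from your outline.
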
 
	
	We prove Theorem~\ref{A} by following the arguments of~\cite{guedj2018pluripotential} in the local context:
	\begin{itemize}
		\item we first show that the upper envelope of all subsolutions  is locally uniformly Lipschitz in $t$  (Theorem~\ref{thmlips})
		and that it is itself a pluripotential subsolution; 
		\item  we then show that the envelope is locally uniformly semi-concave (Theorem \ref{thmscc}); 
		\item we finally apply a balayage process and use the analogue result in the local context~\cite{guedj2018pluripotential} to conclude the proof.
	\end{itemize}

	We prove in Theorem~\ref{thm: min sing} that the envelope $U$ in Theorem~\ref{A} has minimal singularities and is continuous in $(0,T)\times \Omega$ under an extra assumption:
	\begin{equation}
	\label{eq: min sing cond}
	\dot{\omega}_t \leq A \omega_t, \; t\in [0,T),
	\end{equation}
	for some positive constant $A$. 
	We also show that  $U$ is the unique pluripotential solution with such regularity by establishing the following comparison principle:

	\begin{bigthm}\label{B} 
		Let $\f$ (resp. $\psi$) be a pluripotential subsolution (resp. supersolution) to \eqref{cmaf} with initial data $\f_0$ (resp. $\psi_0$).  We assume that $\psi$ is locally uniformly semi-concave in $t\in (0,T)$ and $\psi$ is continuous in $(0,T)\times \Omega$. We assume moreover that for each $t$, $\psi_t$ has  minimal singularities.
		Then $\f\leq \psi$ on $[0,T)\times X$ if $\f_0\leq\psi_0$.	
	\end{bigthm}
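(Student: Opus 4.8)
The plan is to adapt to the big setting the parabolic comparison principle of \cite{guedj2018pluripotential,guedj2020pluripotential}; the hypotheses on $\p$ are tailored precisely so as to reduce everything to the ample locus $\Omega$, where $\om_t+\dc\p_t$ is a genuine positive current and Bedford--Taylor theory is available. Fix $T'\in(0,T)$; it suffices to prove $\f\leq\p$ on $[0,T']\times X$ and then let $T'\uparrow T$. Since $X\setminus\Omega$ is pluripolar and $\f_t,\p_t$ are $\om_t$-psh, it even suffices to prove $\f_t\leq\p_t$ on $\Omega$ for every $t\in(0,T']$. Moreover $w:=\f-\p$ is bounded above on $[0,T']\times X$, because any $\om_t$-psh function is $\leq V_{\om_t}+O(1)$ while $\p_t\geq V_{\om_t}-O(1)$ by minimality of its singularities.

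I would first reduce to a strict supersolution. For $\e>0$, put $\p^{\e}_t:=\p_t+\e(1+t)$. Monotonicity of $F$ in its last variable and $\dot\p^\e_t=\dot\p_t+\e$ give
\[
dt\wedge(\om_t+\dc\p^\e_t)^n=dt\wedge(\om_t+\dc\p_t)^n\leq e^{\dot\p_t+F(t,\cdot,\p_t)}f\,dV\wedge dt\leq e^{-\e}\,e^{\dot\p^\e_t+F(t,\cdot,\p^\e_t)}f\,dV\wedge dt,
\]
so $\p^\e$ is a supersolution with a quantitative gap and $\p^\e_0=\p_0+\e\geq\f_0+\e$; it suffices to show $\f\leq\p^\e$ on $(0,T']\times\Omega$ for every $\e>0$. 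Next I would penalize $\f$ near $X\setminus\Omega$: fix a $\theta$-psh function $\rho\leq 0$ with $\{\rho=-\infty\}=X\setminus\Omega$, locally bounded on $\Omega$; since $\om_t-g(t)\theta\geq 0$ and $g(t)>0$, $\rho$ is $\om_t$-psh for every $t$, hence so is $u^{\tau}_t:=(1-\tau)\f_t+\tau\rho$ for $\tau\in(0,1)$, and $u^\tau_t\to-\infty$ near $X\setminus\Omega$. For $\tau$ small enough the function $h(t):=\sup_{x\in\Omega}\bigl(u^\tau_t(x)-\p^\e_t(x)\bigr)$ is upper semi-continuous on $(0,T']$, with $\limsup_{t\to 0}h(t)<0$ (by the boundary conditions of sub/supersolutions at $t=0$ and $\f_0\leq\p_0$), and if $\f\not\leq\p^\e$ somewhere then $\sup_{(0,T']}h>0$.

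The crux is to show that $D^+h(t)\leq\kappa_F\,h(t)$ at every $t$ where $h(t)>0$, $D^+$ being the upper right Dini derivative; Gr\"onwall's lemma then forces $h\leq 0$, a contradiction, and letting $\tau\downarrow 0$, $\e\downarrow 0$, $T'\uparrow T$ finishes the proof. To establish this estimate at such a $t$, fix $c<h(t)$ close to $h(t)$ and apply the comparison principle for complex Monge-Amp\`ere measures (Bedford--Taylor, \cite{bedford1982new}) on the open set $\{u^\tau_t>\p^\e_t+c\}$, which is relatively compact in $\Omega$ since $u^\tau_t\to-\infty$ near $X\setminus\Omega$:
\[
\int_{\{u^\tau_t>\p^\e_t+c\}}(\om_t+\dc u^\tau_t)^n\ \leq\ \int_{\{u^\tau_t>\p^\e_t+c\}}(\om_t+\dc\p^\e_t)^n .
\]
Using $(\om_t+\dc u^\tau_t)^n\geq(1-\tau)^n(\om_t+\dc\f_t)^n$, the sub/supersolution bounds on the two densities, the fact that $\p^\e_t$ is bounded on the contact set (which is compact in $\Omega$) so that $\f_t\geq\p^\e_t+c-O(\tau)$ there, and the Lipschitz bound on $F$ to compare $F(t,\cdot,\f_t)$ with $F(t,\cdot,\p^\e_t)$, one is left with the time part $\dot\f_t$ versus $\dot\p^\e_t$. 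One then compares with the neighbouring slices $s\in(t,t+\eta)$, turning the time derivatives into increments whose $\p^\e$-part is controlled from below on the contact set by the local uniform semi-concavity of $\p$ in $t$; dividing by the mass of the contact set (positive, as $f>0$ a.e.) and letting $\eta\downarrow 0$ then yields $D^+h(t)\leq\kappa_F h(t)-\e<\kappa_F h(t)$.

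The main obstacle is this crux step: making rigorous, in the pluripotential category, the heuristic ``at a maximum of $\f-\p$ one has $(\om_t+\dc\f_t)^n\leq(\om_t+\dc\p_t)^n$ and $\dot\f_t\leq\dot\p_t$''. Two points demand the technology of \cite{guedj2018pluripotential,guedj2020pluripotential}: the time derivatives $\dot\f_t,\dot\p_t$ exist only almost everywhere, so they have to be treated by mollification in time and passage to increments inside a comparison of \emph{spatial} Monge-Amp\`ere masses --- which is exactly where the one-sided control of $\dot\p$ afforded by semi-concavity in $t$ is used; and every contact set must be kept compactly inside $\Omega$, which is precisely what the minimal-singularities assumption on $\p_t$, combined with the penalization by $\rho$, ensures. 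The remaining ingredients --- the $t=0$ boundary analysis, the strictness perturbation, the Gr\"onwall step and the limits --- are routine.
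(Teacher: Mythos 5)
Your overall architecture (penalize near $\partial\Omega$, perturb to a strict supersolution, then run a Gr\"onwall argument on $h(t)=\sup_\Omega(u^\tau_t-\p^\e_t)$) is genuinely different from the paper's, and the step you yourself single out as ``the crux'' is exactly where the proof is missing, not merely unpolished. The Bedford--Taylor comparison on $\{u^\tau_t>\p^\e_t+c\}$ produces an \emph{integrated} inequality: $\int e^{\dot\f_t+F(t,\cdot,\f_t)}f\,dV\leq C\int e^{\dot\p^\e_t+F(t,\cdot,\p^\e_t)}f\,dV$ over that set. From this, Jensen controls an \emph{average} of $\dot\f_t-\dot\p^\e_t$ over the super-level set, whereas $D^+h(t)$ is governed by what happens at the maximizers of $u^\tau_t-\p^\e_t$. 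Passing from the average to the sup as $c\uparrow h(t)$ would require $\dot\f_t$ to be (upper semi-)continuous in $x$, but for a general pluripotential subsolution $\partial_t\f$ exists only almost everywhere and is merely locally bounded above; no such continuity is available. A second obstruction: the slice inequalities of Lemma~\ref{lem} hold only for a.e.\ $t$, while your Dini--Gr\"onwall scheme needs $D^+h(t)\leq\kappa_Fh(t)$ at \emph{every} $t$ (or $h$ absolutely continuous, which is not known: $h$ is a sup of functions whose time-derivative is only bounded from above). There are also smaller slips: $\rho$ being $\theta$-psh does not make it $\om_t$-psh when $g(t)<1$ (one must use $g(t)\rho$, as in the paper's $\lambda g(t)(\rho+\chi)/2$), and $\limsup_{t\to0}h(t)<0$ does not follow from the pointwise initial conditions without a barrier argument.

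For comparison, the paper resolves precisely these two difficulties differently. First, it removes the time-irregularity of the \emph{subsolution} by convolving in $t$ (the functions $u^\e$ of Theorem~\ref{thm_unique}, using concavity of $A\mapsto(\det A)^{1/n}$ and Jensen), so that Proposition~\ref{comparison1} may assume $\f$ is $\mathcal C^1$ in $t$. Second, instead of differentiating the sup, it works at a space--time maximum point $(t_0,x_0)$ of $(1-\lambda)\f+\lambda g(t)(\rho+\chi)/2+\lambda g(0)\delta_0\chi-\p-3\e t$: the classical one-variable maximum principle in $t$, combined with the upper semi-continuity of $\partial_t^-\p$ coming from semi-concavity (Proposition~\ref{t_der}), yields a \emph{pointwise} strict inequality $(1-\lambda)\partial_t\f(t_0,\cdot)>\partial_t^-\p(t_0,\cdot)+2\e$ on a whole neighbourhood $D$ of the contact set; this is then fed into the elliptic mixed Monge--Amp\`ere inequality and the domination principle (Proposition~\ref{domination}) on $D$ to reach a contradiction. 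The initial-time behaviour of $\p$ is controlled by the explicit barrier of Lemma~\ref{l1}. If you want to salvage your Gr\"onwall route, you would still need both of these devices (time-regularization of $\f$ and a pointwise, not averaged, control of the time derivatives near the contact set), at which point you have essentially reconstructed the paper's argument.
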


	The assumption that $\psi_t$ has minimal singularities means that for each $t\in (0,T)$, there exists a constant $C_t$ such that $|\psi_t-V_{\omega_t}|$ is bounded by $C_t$, where $V_{\omega_t}$ is the largest negative $\omega_t$-psh function. 
	The proof of Theorem~\ref{B} is provided in Section~\ref{sect: compa_princ}, generalizing some ideas from~\cite{guedj2020pluripotential}.

	\medskip
	
	Starting from a K\"ahler form $\omega_0$, it follows from \cite{cao1985deformation,tsuji1988existence,tian2006kahler} that the (smooth) normalized K\"ahler-Ricci flow exists in $[0,T)$ where 
	\begin{align*}
	T:=\sup\{t>0: e^{-t}\{\omega_{0}\}+(1-e^{-t})c_1(K_X) \,\, \textrm{ is K\"ahler}\}.
	\end{align*}
	The maximal existence time $T$ is finite unless $K_X$ is nef (numerically effective).
	
	It is an interesting question to know how to define the flow for $t>T$.  
	This  was formulated in~\cite[Section 10, Question 8]{feldman2003rotationally} and a precise conjecture was made in~\cite{boucksom2012semipositivity}. Note that, if $X$ is of general type, i.e.  $K_X$ is big, then for any $t>T$ the cohomology class $e^{-t}\{\omega_{0}\}+(1-e^{-t})c_1(K_X)$ remains big but are no longer nef, thus one can not hope to make sense of the flow in the classical one. It was proved in~\cite{to2021convergence} that the flow can be continued through  $T$ in the viscosity sense and it eventually converges to the unique singular K\"ahler-Einstein metric on $\textrm{Amp}(K_X)$.  
	Using the tools developed above, we establish the pluripotential analogue of the main result of~\cite{to2021convergence}:

	\begin{bigthm}\label{C}
		Let $X$ be a compact n-dimensional K\"ahler manifold of general type. Then the normalized pluripotential K\"ahler-Ricci flow emanating from a K\"ahler metric $\omega_0$,
		\begin{align*}
		\dfrac{\partial \theta_t}{\partial t}=-\textrm{Ric}(\theta_t)-\theta_t,  
		\end{align*}  
		exists for all time. It coincides with the smooth flow on $[0,T)$   and deforms $\omega_0$ towards the unique singular K\"ahler-Einstein metric $\omega_{KE}$ on $\textrm{Amp}(K_X)$, as $t\rightarrow+\infty$.
	\end{bigthm}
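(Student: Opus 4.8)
The plan is to pass from the normalized K\"ahler--Ricci flow to~\eqref{cmaf}, build the global pluripotential solution from Theorem~\ref{A}, match it with the smooth flow on $[0,T)$ via the comparison principle (Theorem~\ref{B}), and finally prove convergence as $t\to+\infty$ by a priori estimates in the spirit of~\cite{to2021convergence}. First I would fix a smooth closed $(1,1)$-form $\theta$ representing $c_1(K_X)$, with ample locus $\Omega={\rm Amp}(K_X)$, and a smooth volume form $dV$ with ${\rm Ric}(dV)=-\theta+\dc h$ for a smooth $h$; setting $f:=e^{h}$ (smooth, strictly positive, so in every $L^{p}$) and writing the evolving metric as $\theta_t=\omega_t+\dc\f_t$ with $\omega_t:=e^{-t}\omega_0+(1-e^{-t})\theta$, the flow $\partial_t\theta_t=-{\rm Ric}(\theta_t)-\theta_t$, $\theta_0=\omega_0$, becomes precisely~\eqref{cmaf} with $F(t,x,r)=r$ and $\f_0\equiv0$. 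The standing assumptions are then easy to verify: $g(t)\theta\leq\omega_t$ holds with $g(t)=1-(1-\e_0)e^{-t}$ for small $\e_0>0$, since $\omega_t-g(t)\theta=e^{-t}(\omega_0-\e_0\theta)\geq0$ because $\omega_0$ is K\"ahler; the bound~\eqref{estimate_ome} holds for a large K\"ahler form $\Theta$ as $\dot\omega_t=-\ddot\omega_t=e^{-t}(\theta-\omega_0)$; and $r\mapsto r$ is continuous, increasing, Lipschitz and affine. Theorem~\ref{A} (whose Perron construction applies verbatim with $T=+\infty$) then produces a pluripotential solution $U$ of~\eqref{cmaf} on $[0,+\infty)\times X$ with $U_0\equiv0$, locally uniformly Lipschitz and semi-concave in $t$; this is the normalized pluripotential K\"ahler--Ricci flow $\theta_t:=\omega_t+\dc U_t$, defined for all time.

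Next I would match $U$ with the smooth flow on $[0,T)$. Since $\{\omega_0\}$ is K\"ahler, the classical theory~\cite{cao1985deformation,tsuji1988existence,tian2006kahler} gives a smooth solution $\hat\theta_t=\omega_t+\dc\hat\f_t$ on $[0,T)$ with $\hat\f_0\equiv0$; as $\hat\f$ is smooth on the compact manifold $X$ and, for $t<T$, $\{\omega_t\}$ is K\"ahler (so $V_{\omega_t}$ is bounded), $\hat\f$ is simultaneously a pluripotential subsolution and supersolution of~\eqref{cmaf}, continuous on $\Omega_T$, with minimal singularities and locally semi-concave in $t$. Thus $\hat\f\leq U$, because $U$ is the upper envelope of subsolutions with initial datum $\leq\f_0$; and $U\leq\hat\f$ on $[0,T)\times X$ by Theorem~\ref{B} with $(\f,\psi)=(U,\hat\f)$. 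Hence $U=\hat\f$ on $[0,T)$, so the pluripotential flow coincides with the smooth flow there.

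The substantive part is convergence as $t\to+\infty$. Let $\f_{KE}$ be the unique $\theta$-psh function with minimal singularities solving $(\theta+\dc\f_{KE})^n=e^{\f_{KE}}f\,dV$, so that $\omega_{KE}:=\theta+\dc\f_{KE}$ is the singular K\"ahler--Einstein metric, a genuine K\"ahler metric on $\Omega={\rm Amp}(K_X)$. One first establishes a minimal-singularities bound uniform in $t$, $|U_t-V_{\omega_t}|\leq C$ with $C$ independent of $t$. The lower bound follows by comparing $U$ with $u_t-C$, where $u_t$ is the minimal-singularity solution of $(\omega_t+\dc u_t)^n=c_t\,f\,dV$ provided by the Monge--Amp\`ere theory in big classes, with $c_t=\Vol\{\omega_t\}/\int_X f\,dV$ bounded away from $0$ (because $\Vol\{\omega_t\}\to\Vol\,c_1(K_X)>0$): stability estimates bound $\dot u_t$ from above, so $u_t-C$ is a subsolution with initial datum $\leq\f_0$ for $C$ large, whence $U_t\geq u_t-C\geq V_{\omega_t}-C'$. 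The upper bound --- the genuinely hard one, since it concerns the behaviour of $U_t$ near the non-ample locus $X\setminus\Omega$ --- rests on the uniform $L^{p}$ a priori estimates in big cohomology classes as in~\cite{to2021convergence}; note that the extra hypothesis~\eqref{eq: min sing cond} of Theorem~\ref{thm: min sing} is \emph{not} available here, as $\dot\omega_t\to0$ while $\omega_t\to\theta$, which is an indefinite form. With these bounds in hand, parabolic regularity on $\Omega$ shows $U$ is smooth on $(0,+\infty)\times\Omega$ and that $\{U_t\}_{t\geq1}$ is relatively compact in $C^{\infty}_{\loc}(\Omega)$. Next one proves $\dot U_t\to0$: differentiating~\eqref{cmaf} shows that, on $\Omega$, $v:=\dot U_t$ solves a linear parabolic equation $\partial_t v=\Delta_{\theta_t}v+\operatorname{tr}_{\theta_t}\dot\omega_t-v$ whose source term is $O(e^{-t})$, and combining this with a monotone (Ding-type) functional, the uniform estimates and the local uniform semi-concavity in $t$ from Theorem~\ref{A} forces $\dot U_t\to0$. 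Any $C^{\infty}_{\loc}(\Omega)$-limit $U_\infty$ of the $U_t$ then satisfies $(\theta+\dc U_\infty)^n=e^{U_\infty}f\,dV$ and has minimal singularities, so $U_\infty=\f_{KE}$ by uniqueness; by relative compactness, $U_t\to\f_{KE}$ and hence $\theta_t\to\omega_{KE}$ on ${\rm Amp}(K_X)$ as $t\to+\infty$.

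The main obstacle is this last step, and within it the uniform-in-$t$ minimal-singularity \emph{upper} bound near $X\setminus\Omega$: this is exactly where big cohomology classes differ from K\"ahler ones and where a plain maximum principle is unavailable, so one must rely on the pluripotential a priori estimates in big classes rather than on Theorem~\ref{thm: min sing}. A second delicate point is upgrading $\dot U_t\to0$ (and hence the convergence of $U_t$) from subsequences to the full limit, for which the time-semiconcavity supplied by Theorem~\ref{A} is indispensable.
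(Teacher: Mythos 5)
Your reduction to \eqref{cmaf} (with $F(t,x,r)=r$, $f=e^{h}$ smooth and positive, $\omega_t=e^{-t}\omega_0+(1-e^{-t})\theta\geq g(t)\theta$), the existence via Theorem~\ref{A} applied on finite intervals and glued by uniqueness, and the identification with the smooth flow on $[0,T)$ via Theorem~\ref{B} all match the paper. The convergence step, however, is where your proposal has a genuine gap. You invoke ``parabolic regularity on $\Omega$'' to get smoothness of $U$ on $(0,+\infty)\times\Omega$ and $\mathcal{C}^{\infty}_{\loc}$ compactness, then linearize the equation for $v=\dot U_t$ and appeal to a Ding-type monotone functional to force $\dot U_t\to 0$. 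None of this is available in the pluripotential framework: the envelope $U$ is only known to be locally Lipschitz and semi-concave in $t$ and (by Theorem~\ref{thm: min sing}) continuous in $x$; $\dot U_t$ exists only almost everywhere; and upgrading to interior smoothness on the ample locus would require Song--Tian-type higher-order a priori estimates that the paper is precisely designed to avoid. As written, the assertions ``$U$ is smooth on $(0,+\infty)\times\Omega$'', ``$v$ solves a linear parabolic equation'', and ``a monotone functional forces $\dot U_t\to 0$'' are the hard part of the theorem and are not proved. Incidentally, your claim that hypothesis~\eqref{eq: min sing cond} is unavailable is incorrect: since $\omega_0\geq 0$ and $\omega_t\geq g(t)\theta$, one has $\dot\omega_t=e^{-t}(\theta-\omega_0)\leq e^{-t}\theta\leq g(0)^{-1}\omega_t$, so Theorem~\ref{thm: min sing} does apply (it just gives bounds that depend on $t$).

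The paper's proof (Theorem~\ref{thm_conv} together with Lemma~\ref{lemme}) bypasses all regularity theory by exhibiting explicit barriers and invoking the comparison principle twice. For the lower bound one checks directly that $u(t,x)=e^{-t}\varphi_0+(1-e^{-t})\varphi_{KE}+h(t)$, with $h$ solving $h+h'=n\log(1-e^{-t})$, $h(0)=0$ (so $h(t)=O(te^{-t})$), is a subsolution, whence $\varphi_t\geq\varphi_{KE}-O((t+1)e^{-t})$. For the upper bound one chooses a K\"ahler current $\theta+\dc\chi_0\geq\varepsilon\omega_0$ and observes that $\varphi_t+e^{-t}(\varepsilon^{-1}\chi_0-C)$ is a subsolution of an auxiliary flow governed by $g(t)\theta$ with $g(t)=1+(\varepsilon^{-1}-1)e^{-t}$; after a time reparametrization this auxiliary flow is squeezed between barriers of the same type, giving $\varphi_t\leq\varphi_{KE}+O(e^{-t})$ on ${\rm Amp}(K_X)$. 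If you wish to pursue your route you must first prove uniform-in-$t$ local $C^2$ estimates on $\Omega$, a substantial independent task; the barrier argument is shorter and yields the exponential rate for free.
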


	We actually establish a more general result allowing to run the flow from an arbitrary closed positive current with bounded potential;
	(see Theorem~\ref{thm_conv}). We can also continue the pluripotential K\"ahler-Ricci flow for all time when $K_X$ is pseudoeffective (see Section~\ref{sect: singularity}).
	
	\smallskip
	
	In the last part of the paper we study pluripotential K\"ahler-Ricci flows on  K\"ahler varieties $X$ with semi-log canonical singularities (the most general class of singularities appearing in the log MMP) and ample canonical line bundle.
	
	It has been shown by R. Berman and H. Guenancia \cite{berman2014kahler} that $X$ admits  a unique K\"ahler-Einstein current $\omega_{KE}$ in the class $c_1(K_X)$ which is smooth in the regular locus $X_{\rm reg}$. We apply our theory to run the pluripotential (normalized) K\"ahler-Ricci flow on $X$ and recover the canonical metric $\omega_{KE}$ as the long time limit of the flow. 
	More precisely, we have the following:

	\begin{bigthm}\label{D}
		Let  $X$ be a projective complex algebraic variety with semi-log canonical singularities such that $K_X$ is ample. Then the normalized pluripotential K\"ahler-Ricci flow emanating from a K\"ahler metric $\omega_0$,
		\begin{align*}
		\dfrac{\partial \theta_t}{\partial t}=-\textrm{Ric}(\theta_t)-\theta_t,  
		\end{align*}  
		exists for all time. It  deforms $\omega_0$ towards the unique singular K\"ahler-Einstein metric $\omega_{KE}$ on $X_{\rm reg}$,
		as $t\rightarrow+\infty$.
	\end{bigthm}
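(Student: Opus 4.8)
The plan is to reduce Theorem~\ref{D} to the smooth-model situation governed by Theorems~\ref{A}, \ref{B} and~\ref{thm: min sing}, and then to repeat on $X_{\mathrm{reg}}$ the long-time analysis used for Theorem~\ref{C}.

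\emph{Step 1: reduction to a resolution.} Let $\nu\colon X^\nu\to X$ be the normalization, decomposing $X^\nu$ into normal varieties, and let $D\subset X^\nu$ be the reduced preimage of the conductor, so that $\nu^{*}K_X=K_{X^\nu}+D$ and $(X^\nu,D)$ is log canonical. Choose a log resolution $p\colon Y\to X^\nu$ of $(X^\nu,D)$, so $K_Y+D_Y=p^{*}(K_{X^\nu}+D)+\sum_i a_i E_i$ with $a_i\ge-1$, equality $a_i=-1$ only along the pullback of the non-klt locus $N$. Put $\pi=\nu\circ p$, and fix a Kähler representative of $c_1(K_X)$, pulled back to a smooth big nef form $\theta:=\pi^{*}c_1(K_X)$ with $\operatorname{Amp}(\theta)\supseteq\pi^{-1}(X_{\mathrm{reg}})\setminus\operatorname{supp}(D_Y)$, chosen moreover so that $\theta\le C\,\pi^{*}\omega_0$ for some $C>0$. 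Take $\omega_t$ to be smooth representatives of $e^{-t}\pi^{*}\{\omega_0\}+(1-e^{-t})\theta$; then $\omega_t\ge(1-e^{-t})\theta$, the standing bounds~\eqref{estimate_ome} hold, and since $\dot\omega_t=e^{-t}(\theta-\pi^{*}\omega_0)\le(C-1)e^{-t}\pi^{*}\omega_0\le(C-1)\,\omega_t$, condition~\eqref{eq: min sing cond} holds on $[0,\infty)$. Unwinding $\operatorname{Ric}$ through the Monge--Ampère substitution, the normalized K\"ahler--Ricci flow on $X$ becomes~\eqref{cmaf} on $Y$ with $F(t,x,r)=r+h(x)$, $h$ smooth (linear in $(t,r)$, hence convex; increasing in $r$; trivially $t$-Lipschitz), a smooth $dV$, and density $f=\prod_i|s_{E_i}|^{2a_i}\cdot(\text{smooth}>0)$. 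Away from $N$ all $a_i>-1$, so $f\in L^p$, $p>1$, locally there; to treat the factors with $a_i=-1$ (the conductor and the rest of $N$) I would either use a Poincaré-type reference near $N$, so that $f$ is $L^p$, $p>1$, against the associated volume form, or regularize $D\rightsquigarrow(1-\varepsilon)D$ (turning the pair klt, hence $f$ honestly $L^{p_\varepsilon}$, $p_\varepsilon>1$) and let $\varepsilon\to0$ at the end.

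\emph{Step 2: existence for all time.} Take the initial datum $\varphi_0=0$ (the potential of the Kähler form $\omega_0$, which has minimal singularities in $\{\pi^{*}\omega_0\}$), or more generally a bounded potential as in Theorem~\ref{thm_conv}. By Theorem~\ref{A} the upper envelope $U$ of all subsolutions to~\eqref{cmaf} with initial value $\varphi_0$ is a pluripotential solution, locally uniformly Lipschitz and locally uniformly semi-concave in $t\in(0,\infty)$; by the verification of~\eqref{eq: min sing cond} and Theorem~\ref{thm: min sing}, each $U_t$ has minimal singularities and $U$ is continuous on $(0,\infty)\times\Omega$. Pushing forward, $\theta_t:=\omega_t+\dc U_t$ is the pluripotential normalized K\"ahler--Ricci flow on $X$, existing for all $t>0$, and by Theorem~\ref{B} it is the unique such solution with initial value $\omega_0$ which has minimal singularities, is continuous in space, and is semi-concave in time.

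\emph{Step 3: the static solution and convergence.} Let $\omega_{KE}=\theta_{KE}+\dc\varphi_{KE}$, $\theta_{KE}$ a representative of $\theta$ and $\varphi_{KE}$ the unique solution of $(\theta_{KE}+\dc\varphi)^n=e^{\varphi+h}f\,dV$ provided by~\cite{berman2014kahler}; in the reference fixed above $\varphi_{KE}$ is bounded and $\omega_{KE}$ is smooth on $X_{\mathrm{reg}}$, and $\varphi_{KE}$ is a static solution of~\eqref{cmaf}. To show $U_t\to\varphi_{KE}$ I would: (i) compare $U$ with barriers of the form $\max(\varphi_0,\varphi_{KE})-C t$ and with $\varphi_{KE}$, using $\{\omega_t\}\to\theta$, to get $|U_t-\varphi_{KE}|\le C$ uniformly in $t$; (ii) note that $\dot U_t$ is bounded above and below (from~\eqref{cmaf}, the bound in (i), and control of the Monge--Ampère mass) and that local uniform semi-concavity gives $\partial_t\dot U_t\le C_0$, so $\dot U_t-C_0 t$ is non-increasing, whence $\dot U_t\to0$; (iii) conclude that any $L^1_{\mathrm{loc}}$-limit $U_\infty$ of $U_t$ is $\theta$-psh with minimal singularities solving $(\theta_{KE}+\dc U_\infty)^n=e^{U_\infty+h}f\,dV$, so $U_\infty=\varphi_{KE}$ by uniqueness and the whole family converges. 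Finally, on each $K\Subset X_{\mathrm{reg}}$ the equation is uniformly non-degenerate, and parabolic a priori estimates (a Laplacian estimate à la Aubin--Yau, then Evans--Krylov and Schauder) upgrade the $C^0$/$L^1$ convergence to $C^\infty_{\mathrm{loc}}(X_{\mathrm{reg}})$ convergence $\theta_t\to\omega_{KE}$. If the $\varepsilon$-regularization of Step~1 is used, one carries this out for each $\varepsilon$ with estimates uniform on compact subsets of $X_{\mathrm{reg}}$ and passes to the limit via $\omega_{KE,\varepsilon}\to\omega_{KE}$ from~\cite{berman2014kahler}.

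The hard part will be Step~1: the conductor and the non-klt locus $N$ force a genuinely non-Kähler (Poincaré/cusp) reference, so the structural hypotheses behind~\eqref{cmaf} --- namely $-\Theta\le\omega_t,\dot\omega_t,\ddot\omega_t\le\Theta$ and $f\in L^p$ against a \emph{smooth} $dV$ --- are not literally satisfied near $N$. Making Theorems~\ref{A} and~\ref{B} usable there requires either a completeness-type extension of the parabolic pluripotential machinery or an $\varepsilon\to0$ argument with estimates uniform (in $t$ and in $\varepsilon$) on compacta of $X_{\mathrm{reg}}$ together with uniform control of $U_t$ and $\dot U_t$ near $N$; once that is in place, the long-time part of the argument runs parallel to Theorem~\ref{C} and Theorem~\ref{thm_conv}.
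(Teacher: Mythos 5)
Your Step 1 reduction (normalization plus log resolution, turning the problem into a Monge--Amp\`ere flow for a log canonical pair with $K_X+D$ semi-ample and big) and the overall architecture are the same as the paper's, which proves Theorem~\ref{D} via Theorem~\ref{thm_conv2}. But the proposal has a genuine gap exactly where you flag ``the hard part'': you never actually deal with the density $f=\prod_i|s_i|^{-2a_i}$ failing to be in $L^p$ (indeed failing to be in $L^1$) along the non-klt locus, and without that, Theorems~\ref{A}, \ref{B} and~\ref{thm: min sing} simply cannot be invoked as you do in Step 2 --- their standing hypothesis $f\in L^p$, $p>1$, against a smooth volume form is violated. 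The paper's resolution is neither of your two suggestions. It solves the approximating flows \eqref{eq: approx. NKRF} with truncated densities $\min(f,j)$ (to which the general theory does apply), produces a $j$-independent lower barrier $u(t,x)=\psi_{KE}(x)-C_0e^{-t}$ built from the Berman--Guenancia potential together with the explicit subsolution $v=-2\sum\log(\lambda-\log|s_i|^2)+\chi_0-A$ of the static equation, proves uniform Lipschitz and semi-concavity estimates in $t$ with the explicit singular weight $\sum_{a_k=1}\log(-\log|s_k|^2)-\chi_0$ (Proposition~\ref{thm_lip2}), and then passes to the limit $j\to\infty$ by showing via the generalized capacities of~\cite{di2015generalized} that no Monge--Amp\`ere mass concentrates on the small-capacity sets $\{f>M\}$. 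Your Poincar\'e-reference alternative would destroy the structural hypotheses \eqref{estimate_ome} and force a rebuild of the whole parabolic framework, and your $(1-\varepsilon)D$ regularization requires precisely the $\varepsilon$-uniform estimates and limit identification that are missing; neither is carried out.

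Two further points. Uniqueness cannot be obtained by quoting Theorem~\ref{B}, again because $f\notin L^p$; the paper has to rerun the comparison argument from scratch (Step 4 of the proof of Theorem~\ref{thm_conv2}) with auxiliary potentials $\rho,\chi$ adapted to the singular measure $\mu$. For the long-time behavior, the paper does not go through $\dot U_t\to 0$ and identification of cluster values; it sandwiches the solution between the explicit barriers $e^{-t}\f_0+(1-e^{-t})\psi_{KE}+h(t)$ and $(1+Ae^{-t})\psi_{KE}+C'e^{-t}$ (the latter after enlarging the reference form to $(1+Ae^{-t})\tilde\theta$), which yields exponential convergence of the potentials on the ample locus directly; note also that the paper does not claim the $\mathcal{C}^\infty_{\rm loc}(X_{\rm reg})$ upgrade you sketch --- that is attributed to the different approach of~\cite{chau2019kahler}.
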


	Again we actually show that the flow can be run from an arbitrary positive closed current with bounded potentials (see Theorem~\ref{thm_conv2}).
	
	For varieties of  general type with log terminal singularities 
	the pluripotential K\"ahler-Ricci flow (with non continuous data) was constructed in \cite[Section 5.1]{guedj2020pluripotential}.
	A similar result has been obtained 
	in the recent work~\cite{chau2019kahler}, where the authors have extended the approach of Song-Tian~\cite{song2017kahler} to the case 
	of $\mathbb{Q}$-factorial projective varieties with log canonical singularities: establishing higher order a priori estimates,
	they obtain a good notion of weak K\"ahler-Ricci flow which is smooth in the regular locus of variety.
	\subsection*{Organization of the paper}
	In Section~\ref{prel} we provide some backgrounds on  pluripotential theory in big cohomology classes.
	In Section~\ref{sec:envelope} we study the regularity properties of the envelope of pluripotential subsolutions.
	In Section~\ref{exist} we shall prove Theorem~\hyperref[A]{A}	and Theorem~\ref{B}.
	We study in Section~\ref{applies} the pluripotential normalized K\"ahler-Ricci flow on compact K\"ahler manifolds of general type (resp. stable varieties)
	and prove Theorem~\ref{C}  (resp. Theorem~\ref{D}). 
	\subsection*{Acknowledgements.} The author would like to thank his advisors Vincent Guedj and Hoang-Chinh Lu for constant help and encouragement. We are truly grateful to Henri Guenancia and Ahmed Zeriahi for several interesting discussions. We thank T\^at-Dat T\^o for useful conversations on his results in~\cite{to2021convergence}.
	We also thank
	the referee for giving numerous valuable comments which really improved the presentation of the paper.	
	
	\section{Preliminaries}\label{prel}
	In this section we recall  necessary definitions and backgrounds.	
	Let $X$ be a compact K\"ahler manifold of complex dimension $n$, and $\Theta$ be a K\"ahler metric on $X$. We let $H^{1,1}(X,\R)$ denote the Bott-Chern cohomology of $d$-closed real $(1,1)$-forms (or currents) modulo $\partial\bar{\partial}$-exact ones. 
	
	\subsection{Monge-Amp\`ere operators in big cohomology classes}

	\subsubsection{Big cohomology classes} 	
	
	Let $\theta$ be a smooth real closed $(1,1)-$form on $X$. An upper semi-continuous function $\f:X\rightarrow[-\infty,+\infty)$ is called $\theta$-\textit{plurisubharmonic} ($\theta$-psh for short) if in any local holomorphic coordinates $\f$ can be written as the sum of a psh and a smooth function, and 
	$$\theta+\dc \f\geq 0,$$
	in the weak sense of currents, where $d=\partial+\bar{\partial}$ and $d^c=\frac{i}{2\pi}(\bar{\partial}-\partial)$. 
	
	We let $\PSH(X,\theta)$ denote the set of all $\theta$-psh functions on $X$ which are not identically $-\infty$. This set is endowed with the weak topology which coincides with the $L^1$-topology. By Hartogs' lemma $\f\mapsto\sup_X\f$ is continuous in the $L^1$-topology.
	
	By the $\dc$-lemma any closed positive $(1,1)$-current $T$ cohomologous to $\theta$ can be written as $T=\theta+\dc\f$ for some $\theta$-psh function $\varphi$ which is moreover unique up to an additive constant.

	If $T$ and $T'$ are two closed positive $(1,1)$-currents on $X$ which are cohomologous, then $T$ is said to be \textit{less singular} than $T'$ if their global potentials satisfy $\f'\leq \f+O(1)$ (then we also say that $\varphi$ is less singular than $\varphi'$).  A positive current $T$ is now said to have \emph{minimal singularities} if it is less singular than any other positive current in its cohomology class.	
	\begin{definition}
		A $\theta$-psh function $\f$ is said to have \emph{minimal singularities} if it is less singular than any other $\theta$-psh function on $X$. 
	\end{definition}	
	Such $\theta$-psh functions always exists, one can consider, following Demailly, the upper envelope
	$$V_{\theta}:=\sup\{\f:\f\in\PSH(X,\theta), \text{and}\,\, \f\leq 0 \}.$$
	Observe that $V_{\theta}^*$ is a $\theta$-psh function satisfying $V_{\theta}^*\leq V_{\theta}$, hence $V_{\theta}=V_{\theta}^*$ is a $\theta$-psh function with minimal singularities.
	
	The cohomology class $\alpha=\{\theta\}\in H^{1,1}(X,\R)$ is said to be \textit{big} if there exists a closed $(1,1)$-current 
	$$T_+=\theta+\dc\f_+,$$
	cohomologous to $\theta$ such that $T_+$ is \textit{strictly positive} i.e $T_+\geq \e_0\Theta$ for some constant $\e_0>0$. 
	
	A function $u$ has {\it analytic singularities}  if it can locally be written as 
	$$u=\dfrac{c}{2}\log \sum_{j=1}^{N}|f_j^2|+h,$$
	where the functions $f_j$ are holomorphic, $h$ is smooth  and $c$ is a positive constant.

	In the sequel we always assume that the class $\alpha=\{\theta\}$ is big. 
	By Demailly's regularization theorem \cite{demailly1992regularization}, any $\theta$-psh function $u$ can be approximated from above by a sequence of $(\theta+\e_j\omega)-$psh functions $(u_j)$ with analytic singularities. Applying this to the potential $\f_+$ of a K\"ahler current $T_+=\theta+\dc\f_+$, one can moreover assume that the function $\f_+$ has analytic singularities. Such a current $T_+$ is then smooth on a Zariski open subset, 
	this motivates the following:
	
	\begin{definition}
		The \emph{ample locus} ${\rm Amp}(\alpha)$ of $\alpha$ is  the set of $x \in X$ such that there exists
		a K\"ahler current with analytic singularities which is smooth around $x$.
		
	\end{definition}
	
	It follows from  the Noetherian property of closed analytic subsets that  ${\rm Amp}(\alpha)$ is a Zariski open set.	
	Note that any $\theta$-psh function $\f$ with minimal singularities is locally bounded on the ample locus ${\rm Amp}(\alpha)$ since it has to satisfy $\f_+\leq \f+O(1)$. Moreover, $\f_+$ does not have minimal singularities unless $\alpha$ is a K\"ahler class (cf. \cite[Proposition 2.5]{boucksom2004divisorial}).
	
	By the above analysis, there exists a $\theta$-psh function $\chi$ on $X$ with analytic singularities such that, for some $\delta_0>0$,
	\begin{align}\label{chi}
	\theta+\dc\chi\geq 2\delta_0\Theta.
	\end{align}
	Subtracting a large constant, we can always assume that $\chi\leq 0$, thus $\chi\leq V_{\theta}$. Moreover, $\chi$ is smooth in the ample locus $\text{Amp}(\alpha)$, and $\chi(x)\rightarrow -\infty$ as $x\rightarrow\partial\Omega$ (cf.~\cite[Theorem 3.17]{boucksom2004divisorial}).

	\subsubsection{Full Monge-Amp\`ere mass}
	In~\cite{boucksom2010monge}, the authors defined the non-pluripolar product $T\mapsto\langle T^n \rangle$ of any closed positive $(1,1)$-current $T\in\alpha$, which
	is shown to be well-defined  as a positive measure on $X$ putting no mass on  pluripolar sets. In particular given a $\theta$-psh function $\f$, one can define its non-pluripolar Monge-Amp\`ere product by 
	$$
	\MA_\theta(\f):=\langle (\theta+\dc\f)^n\rangle.
	$$
	From now we denote the non-pluripolar Monge-Ampère product $(\theta+\dc\f)^n$ instead of $\langle (\theta+\dc\f)^n\rangle$.
	By definition the total mass of $\MA(\f)$ is less than or equal to the volume $\Vol(\alpha)$ of the class $\alpha$:
	$$\int_X\MA(\f)\leq \Vol(\alpha):= \int_X \MA(V_{\theta}).$$
	A particular class of $\theta$-psh functions that appears naturally is the one for which
	the last inequality is an equality. We will say that such functions (or the associated
	currents) have \emph{full Monge–Amp\`ere mass}. For example, $\theta$-psh functions with minimal
	singularities have full Monge–Amp\`ere mass (cf.~\cite[Theorem 1.16]{boucksom2010monge}), but the converse is not true.
	
	We let $c_1$ be the normalizing constant such that $2^ne^{c_1}fdV$ has total mass equal to $\Vol(\alpha)$.   We have the following:
	
	\begin{theorem}[{\cite[Theorem 4.1]{boucksom2010monge}}]
		There exists a unique $\theta$-psh function $\rho$ with full Monge-Amp\`ere mass such that
		\begin{align}\label{rho}
		(\theta+\dc\rho)^n=2^ne^{c_1}fdV,
		\end{align} and normalized by $\sup_X\rho=0$. Moreover, there exists a constant $M>0$ only depending on $\theta$, $dV$, and $p>1$ such that 
		\begin{align*}
		\rho\geq V_\theta-M\|f\|_p^{1/n}.
		\end{align*}
	\end{theorem}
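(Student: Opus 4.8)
The plan is to follow Ko\l odziej's method adapted to big cohomology classes: first establish an \emph{a priori} lower bound for any normalized solution, then use it to run an approximation scheme yielding existence, and finally invoke the known uniqueness of full-mass solutions. Throughout I normalize candidate solutions by $\sup_X\rho=0$, so that automatically $\rho\leq V_\theta$. Uniqueness is the easy step: if $\rho_1,\rho_2$ are two such normalized $\theta$-psh functions of full Monge-Amp\`ere mass both satisfying~\eqref{rho}, then uniqueness of full-mass solutions of complex Monge-Amp\`ere equations in big classes (Dinew's theorem, or the domination principle) forces $\rho_1-\rho_2$ to be constant, hence $\rho_1=\rho_2$. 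The real content is the existence together with the quantitative bound.

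\emph{The a priori estimate.} Let $\rho$ be a normalized solution, set $\mu:=2^ne^{c_1}fdV$, and let $\Capa$ be the Monge-Amp\`ere capacity attached to $\alpha=\{\theta\}$. I would derive $\rho\geq V_\theta-M\|f\|_p^{1/n}$ from two ingredients. First, H\"older's inequality combined with the standard comparison of $dV$ with $\Capa$ yields a domination $\mu(E)\leq C\|f\|_p\,\Capa(E)^2$ for every Borel set $E\subset X$, with $C$ independent of $\rho$ and $f$. Second, the generalized comparison principle in big classes, applied to $\{\rho<V_\theta-s-t\}\subset\{\rho<V_\theta-s\}$, gives $t^n\Capa(\{\rho<V_\theta-s-t\})\leq\int_{\{\rho<V_\theta-s\}}(\theta+\dc\rho)^n=\mu(\{\rho<V_\theta-s\})$. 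Writing $h(s):=\Capa(\{\rho<V_\theta-s\})^{1/n}$ and combining the two yields a functional inequality of the shape $t\,h(s+t)\leq C'\|f\|_p^{1/n}\,h(s)^2$, to which the standard iteration lemma applies and forces $h\equiv0$ once $s\geq M\|f\|_p^{1/n}$; that is, $\{\rho<V_\theta-M\|f\|_p^{1/n}\}=\emptyset$. I expect this to be the delicate part of the proof: it is where the hypothesis $f\in L^p$ with $p>1$ (rather than merely $L^1$) is genuinely used, through the domination above, and where the dependence of $M$ on $\theta$, $dV$ and $p$ alone must be checked.

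\emph{Existence.} I would argue by approximation. First reduce to the case $0<f$ smooth, by approximating $f$ in $L^p(X,dV)$ by smooth positive densities $f_j$ with uniformly bounded norms; once~\eqref{rho} is solved for each $f_j$, the \emph{a priori} estimate confines the normalized solutions $\rho_j$ to a compact family of $\theta$-psh functions with a uniform lower bound, so a limit along a subsequence solves~\eqref{rho} for $f$ in the non-pluripolar sense and has full mass. To solve the equation in the big class $\alpha$ for a fixed smooth positive density, I would resolve the analytic singularities of the potential $\chi$ of~\eqref{chi} by a modification $\pi\colon\tilde X\to X$, writing $\pi^*\alpha$ as a nef and big class plus the class of an effective $\R$-divisor; perturbing the first piece by $\e$ times a K\"ahler class makes it K\"ahler, where Yau's theorem solves the corresponding Monge-Amp\`ere equation, and one lets $\e\to0$ with uniform control from the \emph{a priori} estimate. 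Finally the birational invariance of the volume and of non-pluripolar Monge-Amp\`ere products lets one descend the solution to $X$ and check that the resulting potential is $\theta$-psh of full Monge-Amp\`ere mass and solves~\eqref{rho}. (Alternatively, the solution can be constructed directly in the big class by the variational method for Monge-Amp\`ere equations whose non-pluripolar right-hand side has total mass $\Vol(\alpha)$.)
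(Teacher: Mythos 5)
The paper offers no proof of this statement: it is imported verbatim from \cite[Theorem 4.1]{boucksom2010monge}, so there is nothing internal to compare against. Your sketch is essentially the argument of that reference --- uniqueness from Dinew-type uniqueness of full-mass solutions, the lower bound $\rho\geq V_\theta-M\|f\|_p^{1/n}$ from the volume--capacity domination for $L^p$ densities combined with the comparison principle in big classes and De Giorgi's iteration lemma, and existence by approximation/perturbation to the K\"ahler case with the a priori estimate supplying the needed compactness --- so it follows the same route as the cited source.
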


	\subsection{Parabolic potentials}
	In this section we define the parabolic pluripotential objects in big cohomology classes necessary for our study.  These are mainly taken from \cite{guedj2018pluripotential,guedj2020pluripotential} but  we need to be more precise when dealing with unbounded functions. Let $\omega = (\omega_t)_{t\in [0,T)}$ be a smooth family of closed real $(1,1)$-forms satisfying the assumptions in Introduction.

	\begin{definition}\label{def_pp}
		We let $\mathcal{P}(X_T,\omega)$ denote the set of functions $\f: X_T\rightarrow[-\infty,+\infty)$ such that
		\begin{itemize}
			\item $\f$ is upper semi-continuous on $X_T$ and $\f\in L^1_{\rm loc}(X_T)$;
			\item for each $t\in (0,T)$ fixed, the slice $\f_t:x\mapsto\f(t,x)$ is $\omega_t$-psh on $X$;
			\item  
			for any compact subinterval $J\subset(0,T)$, there exists a positive constant $\kappa=\kappa_{J}(\f)$ such that 
			\begin{align}\label{famlip}
			\partial_t \f \leq \kappa-\kappa(\rho+\chi),
			\end{align}
			in the sense of distributions on $J\times \Omega$, where $\rho,\chi$ are defined in \eqref{rho}, \eqref{chi}.
		\end{itemize}
	\end{definition}

	
	We would like to have an interpretation of the last condition. For any compact subset $K\Subset\Omega$, there exists a constant $C=C(K)>0$ such that 
	$$\partial_{\tau}(t,x)\leq C,\; \forall\, (t,x)\in J\times K.$$ 
	Hence for every $x\in K$, the function $t\mapsto\f(t,x)-Ct$ is decreasing in $J$, so the partial derivative $\partial_{t}\f$ exists for almost everywhere $t\in J$ (see e.g.~\cite[Theorem 2.1.8]{kannan1996advanced}).
	
	\begin{lemma}\label{12}	
		Let $\f_0$ be an $\omega_{0}$-psh function and $\f\in \mathcal{P}(X_T,\omega)$. 
		If  $\f_t\rightarrow \f_0$ in $L^1(X)$ as $t\rightarrow 0$, then the extension $\f:[0,T)\times X\rightarrow[-\infty,+\infty)$ is upper semi-continuous in $[0,T)\times X$.
	\end{lemma}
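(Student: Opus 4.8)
The plan is to reduce the statement to the classical Hartogs lemma for plurisubharmonic functions, applied in local holomorphic charts. Since any $\f\in\mathcal{P}(X_T,\omega)$ is by definition upper semi-continuous on $X_T=(0,T)\times X$, and since the restriction of the extended function to $\{0\}\times X$ is the $\omega_{0}$-psh — hence upper semi-continuous — function $\f_0$, the only thing left to check is upper semi-continuity of the extension at the points $(0,x_0)$ with $x_0\in X$ (at a point $(t_0,x_0)$ with $t_0>0$ it is immediate from upper semi-continuity on $X_T$, since all nearby points have positive time). A sequence converging to $(0,x_0)$ splits into a subsequence contained in $\{0\}\times X$, for which the desired inequality is just upper semi-continuity of $\f_0$, and a subsequence of the form $(t_j,x_j)\to(0,x_0)$ with $t_j>0$. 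So I would fix such a sequence, pass to a further subsequence realizing $\ell:=\limsup_j\f(t_j,x_j)$, and prove $\ell\le\f_0(x_0)$.

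First I would establish a uniform upper bound for the slices near $t=0$: each $\f_t$ is $\omega_t$-psh, hence $\Theta$-psh by~\eqref{estimate_ome}, so by the standard compactness of $\PSH(X,\Theta)$ there is a constant $C_\Theta$ with $\sup_X\f_t\le V^{-1}\int_X\f_t\,dV+C_\Theta$, where $V=\int_X dV$; since $\f_t\to\f_0$ in $L^1(X)$ and $\int_X\f_0\,dV$ is finite, this gives $\sup_{0\le t\le\tau_0}\sup_X\f_t<+\infty$ for some $\tau_0\in(0,T)$, and we may assume all $t_j\le\tau_0$. Next I would pass to a holomorphic chart on a ball $B\ni x_0$ and choose a function $h$ on $[0,\tau_0]\times B$, smooth in $(t,x)$, with $\dc h_t=\omega_t$ on $B$ in the space variables — possible since $(\omega_t)$ is a smooth family. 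Then $u_t:=\f_t+h_t$ is psh on $B$, the functions $(u_t)_{t\le\tau_0}$ are uniformly bounded above on compact subsets of $B$, and $u_{t_j}\to u_0:=\f_0+h_0$ in $L^1(B)$ because $\f_{t_j}\to\f_0$ in $L^1$ while $h_{t_j}\to h_0$ uniformly on compacts. Using the sub-mean value inequality for $u_{t_j}$ at $x_j$ over a ball of small radius $r$, together with $x_j\to x_0$, the uniform upper bound and the $L^1$-convergence, I would check that these averages converge to the mean value of $u_0$ over $B(x_0,r)$, so that $\limsup_j u_{t_j}(x_j)$ is bounded by that mean value; letting $r\downarrow0$ the latter decreases to $u_0(x_0)$ since $u_0$ is psh. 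Finally, $h_{t_j}(x_j)\to h_0(x_0)$ yields
\[
\ell=\limsup_j\bigl(u_{t_j}(x_j)-h_{t_j}(x_j)\bigr)\le u_0(x_0)-h_0(x_0)=\f_0(x_0).
\]

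The argument is largely routine; the two points that need care — the ones I would treat as the crux — are, first, the uniform upper bound for the slices as $t\to0$, which uses the hypothesis $\f_t\to\f_0$ in $L^1$ and the bound $\omega_t\le\Theta$ (and, notably, not the differential inequality in Definition~\ref{def_pp}); and second, the interchange of limits in the convergence of the averages, which I would justify by writing $\int_{B(x_j,r)}u_{t_j}=\int_{B(x_j,r)}(u_{t_j}-u_0)+\int_{B(x_j,r)}u_0$, bounding the first term by $\|u_{t_j}-u_0\|_{L^1(B)}$, and handling the second by dominated convergence, noting that the balls $B(x_j,r)$ all have the same Euclidean volume and that $\mathbf{1}_{B(x_j,r)}\to\mathbf{1}_{B(x_0,r)}$ almost everywhere.
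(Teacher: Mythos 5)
Your proof is correct and follows essentially the same route as the paper's: reduce to points $(0,x_0)$, pass to local potentials so that the slices become psh on a chart, apply the sub-mean value inequality at $x_j$, and let the radius shrink to recover $\f_0(x_0)$. The only real difference is in how the convergence of the averages is justified — you use the $L^1$ hypothesis directly via the splitting $\int_{B(x_j,r)}u_{t_j}=\int_{B(x_j,r)}(u_{t_j}-u_0)+\int_{B(x_j,r)}u_0$ and dominated convergence, whereas the paper normalizes the slices to be negative and invokes Fatou's lemma — and your variant is, if anything, slightly more self-contained since it avoids the pointwise $\limsup$ claim.
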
 
	
	\begin{proof} 
		It suffices to prove that the extension $\f$ is upper semi-continuous at $(0,x_0)$ for any $x_0\in X$. Let $(t_j,x_j)\in X_T$ be a sequence which converges to $(0,x_0)$. We will show that
		\begin{align*}
		\limsup_{j\rightarrow+\infty}\f(t_j,x_j)\leq \f_0(x_0).
		\end{align*}
		Since $\f$ is bounded from above we can assume the functions $\f_t$ are negative.  Let $h_t$ be a smooth local potential for $\omega_t$ in an open neighborhood $B$ of $x_0$  i.e. $\dc h_t=\omega_t$. Up to replacing $\f_t$ by $\f_t+h_t$, we may assume that the functions  $\f_t$  are psh and negative on $B$. 
		Fix $r$ so small that $B(x,2r)\Subset B$. For any $\delta\in[0,r)$, there exists $j_0$ such that $x_j\in B(x_0,\delta)$ for all $j\geq j_0$ hence $B(x_0,r)\subset B(x_j,r+\delta)$. We have 		
		\begin{align*}
		\f(t_j,x_j)&\leq \dfrac{1}{\Vol(B(x_j,r+\delta))}\int_{B(x_j,r+\delta)}\f(t_j,x)dV\\
		&\leq\dfrac{1}{\Vol(B(x_j,r+\delta))}\int_{B(x_0,r)}\f(t_j,x)dV.
		\end{align*}
		Since $\limsup_j\f_{t_j}(x)\leq \f_0(x)$ for all $x\in X$, Fatou's lemma implies that
		\begin{align*}
		\limsup_{j\rightarrow+\infty}\f(t_j,x_j)\leq \dfrac{\Vol(B(x_0,r))}{\Vol(B(x_0,r+\delta))}\dfrac{1}{\Vol(B(x_0,r))}\int_{B(x_0,r)}\f_0(x)dV(x).
		\end{align*}
		Now we first let $\delta\rightarrow 0$ and then $r\rightarrow 0$ to conclude the proof. 
	\end{proof}
	
	\begin{definition}
		We say that $\varphi \in \mathcal{P}(X_T,\omega)$ has \emph{minimal singularities} if $\varphi_t-V_{\omega_t}$ is bounded for each $t\in (0,T)$ fixed.
	\end{definition}
	
	If $\f\in\mathcal{P}(X_T,\omega)\cap L^{\infty}_{\loc}(\Omega_T)$ the product
	$$(\omega_t+\dc\f_t)^n$$
	is well defined as a positive measure in $\Omega$ as follows from the works of Bedford-Taylor~\cite{bedford1976dirichlet,bedford1982new}.
	This Monge-Amp\`ere measure extends trivially over $X$ since $\Omega$ is a Zariski open subset in $X$. Since $\omega_t\leq \Theta$ for  $0\leq t\leq T$, the positive Borel measures $(\omega_t+\dc\f_t)^n$ have uniformly bounded masses on X:
	\begin{align}\label{bdmass}
	\int_X(\omega_t+\dc\f_t)^n\leq\int_X (\Theta+\dc\f_t)^n\leq \int_X\Theta^n.
	\end{align}
	These can be considered as a family of currents of degree $2n$ in the real $(2n+1)$-dimensional manifold $X_T=(0,T)\times X$. We have the following: 
	\begin{lemma}\label{lem: 17}
		Let $\f\in\mathcal{P}(X_T,\omega)\cap L^{\infty}_{\rm loc}(\Omega_T)$ and $\gamma$ be a continuous test function in $\Omega_T$. Then  $t\mapsto \int_\Omega\gamma(t,\cdot)(\omega_t+dd^c \varphi_t)^n$ is a Borel bounded measurable function in $(0,T)$, and
		\begin{align*}
		\sup_{0<t<T}\left|\int_\Omega\gamma(t,\cdot)(\omega_t+\dc\f_t)^n \right|\leq (\max_{\Omega_T}\gamma)\int_X\Theta^n.
		\end{align*}
	\end{lemma}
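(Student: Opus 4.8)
The plan is to establish the two assertions separately, the quantitative bound being immediate and the Borel measurability in $t$ being where the actual work lies. For the bound, fix $t\in(0,T)$: since $\gamma$ is continuous on the relatively compact set $\Omega_T$ we have $|\int_\Omega\gamma(t,\cdot)(\om_t+\dc\f_t)^n|\le(\sup_{\Omega_T}|\gamma|)\int_\Omega(\om_t+\dc\f_t)^n\le(\sup_{\Omega_T}|\gamma|)\int_X\Theta^n$ by \eqref{bdmass}, and taking the supremum over $t$ gives the displayed inequality (which for $\gamma\ge 0$ is exactly the stated one); in particular the function is bounded.

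For the measurability I first reduce to a local statement. As $\gamma$ has compact support in $\Omega_T$ we may write $\operatorname{supp}\gamma\subset[a,b]\times K$ with $[a,b]\Subset(0,T)$ and $K\Subset\Omega$. Choose finitely many coordinate balls $U_i\Subset U_i'\Subset\Omega$, $1\le i\le N$, with $K\subset\bigcup_iU_i$ and such that $\f$ is bounded on $[a,b]\times U_i'$ (possible since $\f\in L^\infty_{\loc}(\Omega_T)$), together with a partition of unity $(\rho_i)\subset C_c(U_i)$ with $\sum_i\rho_i\equiv 1$ on $K$. Since the Monge--Amp\`ere measure $(\om_t+\dc\f_t)^n$ on $\Omega$ restricts to the Bedford--Taylor product on each $U_i$, it suffices to show that for each $i$ the map $t\mapsto\int_{U_i}\rho_i\,\gamma(t,\cdot)(\om_t+\dc\f_t)^n$ is Borel. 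Fixing such a chart $U=U_i$, I write $\om_t=\dc h_t$ on $U_i'$ with $h_t$ a local potential depending smoothly on $(t,x)$ (possible because $(\om_t)$ is a smooth family), and set $u_t:=\f_t+h_t$, a bounded psh function on $U_i'$ with $(\om_t+\dc\f_t)^n=(\dc u_t)^n$ on $U$.

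Next I mollify in the space variables only: let $\chi_\e$ be a standard mollifier on $\C^n$ and set $u_t^{(\e)}:=u_t*\chi_\e$ on $U$ for $\e$ small, so that $u_t^{(\e)}$ is smooth, psh, uniformly bounded on $U$, and decreases to $u_t$ as $\e\downarrow 0$. By the Bedford--Taylor continuity theorem along decreasing sequences of locally bounded psh functions, $(\dc u_t^{(\e)})^n\to(\dc u_t)^n$ weakly on $U$ for each fixed $t$, hence $\int_U\rho_i\,\gamma(t,\cdot)(\dc u_t^{(\e)})^n\to\int_U\rho_i\,\gamma(t,\cdot)(\om_t+\dc\f_t)^n$. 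On the other hand, the density of $(\dc u_t^{(\e)})^n$ with respect to $dV$ is a universal polynomial in the second-order $x$-derivatives of $u_t^{(\e)}$, and each such derivative has the form $(t,x)\mapsto\int u_t(x-\e w)\,\partial^\alpha\chi(w)\,dV(w)$; because $(t,x,w)\mapsto u_t(x-\e w)$ is Borel (it is the composition of the Borel function $u$ with a continuous map, $u$ being a sum of the upper semi-continuous $\f$ and the smooth $h$) and the integrand is bounded with support in a fixed compact set of $w$, Fubini--Tonelli shows that this density is Borel and locally bounded in $(t,x)$, and then that $t\mapsto\int_U\rho_i\,\gamma(t,\cdot)(\dc u_t^{(\e)})^n$ is Borel. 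Taking $\e=1/j\downarrow 0$ exhibits $t\mapsto\int_U\rho_i\,\gamma(t,\cdot)(\om_t+\dc\f_t)^n$ as a pointwise limit of Borel functions, hence Borel; summing over $i$ finishes the proof.

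The only genuinely delicate point is the joint measurability of the mollified potentials and their derivatives in the last paragraph: since $\f$ is merely upper semi-continuous --- and a priori not continuous in the time variable --- one cannot invoke continuity of the approximants in $(t,x)$ and must argue Borel measurability directly through Fubini--Tonelli. Everything else is a routine combination of the local Bedford--Taylor theory with the Fubini--Tonelli theorem.
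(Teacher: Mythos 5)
Your argument is correct: the paper itself only cites the local analogue (\cite[Lemma 2.2]{guedj2021viscosity}) for the measurability and the mass bound \eqref{bdmass} for the estimate, and your localization--spatial-mollification--Bedford--Taylor argument, with the joint Borel measurability of the mollified densities handled via Fubini--Tonelli, is exactly the standard proof of that cited lemma. Your remark that the stated bound should read $\max_{\Omega_T}|\gamma|$ (or be restricted to $\gamma\geq 0$) is also accurate.
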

	\begin{proof}
		For the first statement, the proof is identical to the corresponding one in the local context; see~\cite[Lemma 2.2]{guedj2021viscosity}. 
		The second one follows from the inequality \eqref{bdmass} above. 
	\end{proof}
	This shows that $dt\wedge (\omega_t+\dc\f_t)^n$ is well-defined as a positive Borel measure in $X_T$.
	\begin{definition}\label{def_lhs}
		Fix $\f\in\mathcal{P}(X_T,\omega)\cap L^\infty_{\rm loc}(\Omega_T)$. The map
		\begin{align*}
		\gamma\mapsto\int_{X_T}\gamma dt\wedge(\omega_t+\dc\f_t)^n:=\int_0^Tdt\left(\int_\Omega\gamma(t,\cdot)(\omega_t+\dc\f_t)^n \right) 
		\end{align*}
		defines a positive $(2n+1)$-current on $\Omega_T$, hence on $X_T$, denoted by $dt\wedge(\omega_t+\dc\f_t)^n$, which can be identified with a  positive Radon measure on $X_T$.
	\end{definition}
	The following is a parabolic analogue of the convergence result of Bedford-Taylor \cite{bedford1976dirichlet,bedford1982new}.
	\begin{lemma}\label{lem: convergence}
		Assume that $(\f^j)$ is a monotone sequence of functions in $\mathcal{P}(X_T,\omega)$   which converges almost everywhere to a function $\f\in \mathcal{P}(X_T,\omega) \cap L^{\infty}_{\loc}(\Omega_T)$ on $X_T$. Then   
		$$dt\wedge(\omega_{t}+\dc\f^j_t)^n\rightarrow dt\wedge(\omega_{t}+\dc\f_t)^n$$
		in the sense of measures on $\Omega_T$.
	\end{lemma}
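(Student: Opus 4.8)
The plan is to test the measures against an arbitrary $\gamma\in C_c(\Omega_T)$, say with $\operatorname{supp}\gamma\subset[a,b]\times K$ for some $0<a<b<T$ and $K\Subset\Omega$, and to reduce the statement to a one-variable dominated convergence argument. By Definition~\ref{def_lhs} one has $\int_{X_T}\gamma\, dt\wedge(\omega_t+\dc\f^j_t)^n=\int_0^T a_j(t)\,dt$, where $a_j(t):=\int_\Omega\gamma(t,\cdot)(\omega_t+\dc\f^j_t)^n$, and similarly $\int_{X_T}\gamma\, dt\wedge(\omega_t+\dc\f_t)^n=\int_0^T a(t)\,dt$ with $a(t):=\int_\Omega\gamma(t,\cdot)(\omega_t+\dc\f_t)^n$. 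First I would record that each $\f^j$ may be assumed to lie in $L^\infty_{\loc}(\Omega_T)$: in the decreasing case this is automatic, since the pointwise decreasing limit of the $\omega_t$-psh slices $\f^j_t$ is $\omega_t$-psh and agrees a.e.\ with $\f_t$, hence equals $\f_t$, so $\f^j\geq\f$ on all of $X_T$, and $\f$ is locally bounded below on $\Omega_T$ while $\f^j$ is u.s.c.\ on $X_T$ hence locally bounded above; in the increasing case this is needed merely for the left-hand side to be defined. In particular Lemma~\ref{lem: 17} applies to every $\f^j$ and to $\f$, so the $a_j$ and $a$ are Borel measurable, vanish outside $[a,b]$, and obey the uniform bound $|a_j|\le C:=(\max_{\Omega_T}|\gamma|)\int_X\Theta^n$ on $[a,b]$.

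Next I would establish the slicewise convergence $a_j(t)\to a(t)$ for a.e.\ $t$. Since $(\f^j)$ is monotone and converges to $\f$ almost everywhere on $X_T$ with respect to $dt\otimes dV$, Fubini's theorem produces a full-measure set of $t\in(0,T)$ for which $\f^j_t\to\f_t$ holds $dV$-a.e.\ on $X$; for each such $t$, $(\f^j_t)$ is a monotone sequence of $\omega_t$-psh functions, locally bounded on $\Omega$, converging a.e.\ to the locally bounded $\omega_t$-psh function $\f_t$. The classical Bedford–Taylor convergence theorem \cite{bedford1982new}, applied in local holomorphic charts covering $K$, then gives $(\omega_t+\dc\f^j_t)^n\to(\omega_t+\dc\f_t)^n$ weakly near $K$, and testing against $\gamma(t,\cdot)$ yields $a_j(t)\to a(t)$ for a.e.\ $t$. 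Finally, since $|a_j(t)|\le C\,\mathbf 1_{[a,b]}(t)$ is an integrable majorant independent of $j$, dominated convergence gives $\int_0^T a_j\,dt\to\int_0^T a\,dt$, i.e.\ $\int_{X_T}\gamma\, dt\wedge(\omega_t+\dc\f^j_t)^n\to\int_{X_T}\gamma\, dt\wedge(\omega_t+\dc\f_t)^n$; as $\gamma\in C_c(\Omega_T)$ was arbitrary, this is exactly the asserted weak convergence on $\Omega_T$.

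I do not expect a serious obstacle here: the analytic content is entirely standard, being the Bedford–Taylor weak-continuity of the Monge–Ampère operator along monotone sequences of locally bounded plurisubharmonic functions together with dominated convergence. The only point genuinely requiring care is the measure-theoretic bookkeeping — verifying via Fubini that the a.e.\ convergence on $X_T$ descends to slicewise a.e.\ convergence for a.e.\ $t$, and invoking Lemma~\ref{lem: 17} to guarantee measurability of $t\mapsto a_j(t)$ so that dominated convergence is legitimate — together with the minor subtlety, worth flagging explicitly, that in the increasing case the local boundedness of the $\f^j$ on $\Omega_T$ should be part of the hypotheses in order for the left-hand side to make sense at all.
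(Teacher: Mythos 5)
Your proof is correct and follows essentially the same route as the paper's: test against $\gamma\in C_c(\Omega_T)$, apply the slicewise Bedford--Taylor convergence theorem, and conclude by dominated convergence using the uniform mass bound from Lemma~\ref{lem: 17}. Your extra care with the Fubini reduction to slicewise a.e.\ convergence and with the local boundedness of the $\f^j$ in the increasing case are sensible refinements of details the paper leaves implicit.
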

	\begin{proof}
		The proof is similar to that of \cite[Proposition~1.12]{guedj2020pluripotential} but, because of its crucial role in the sequel, we give the details here. 
		
		Let $\gamma(t,x)$ be a continuous test function in $\Omega_T$. By definition we have, for any $j$,
		\begin{align*}
		\int_{\Omega_T}\gamma(t,\cdot)dt\wedge(\omega_t+\dc\f^j_t)^n=\int_0^Tdt\left(\int_\Omega\gamma(t,\cdot)(\omega_t+\dc\f^j_t)^n\right).
		\end{align*}
		We now apply Bedford-Taylor's convergence theorem (see e.g.~\cite[Theorem 3.23]{guedj2017degenerate}) to infer that, for any $t\in(0,T)$,
		\begin{align*}
		\int_\Omega\gamma(t,\cdot)(\omega_t+\dc\f^j_t)^n\rightarrow\int_\Omega\gamma(t,\cdot)(\omega_t+\dc\f_t)^n.
		\end{align*}
		On the other hand, Lemma~\ref{lem: 17} yields, for all $t\in(0,T)$,
		\begin{align*}
		\left|\int_{\Omega}\gamma(t,\cdot)(\omega_t+\dc\f^j_t)^n\right|\leq\max_\Omega|\gamma(t,\cdot)| \Vol(\omega_t)\leq C(\gamma) \int_X \Theta^n.
		\end{align*}
		The result follows from Lebesgue dominated convergence theorem. 
	\end{proof}

	We say that $\f:X_T\rightarrow\R$ is \textit{locally uniformly semi-concave} (resp. \textit{semi-convex}) in $\Omega_T=(0,T)\times \Omega$ if for any compact subset $J\times K\subset (0,T)\times \Omega$ there exists $\kappa=\kappa(\f,J,K)>0$ (resp. $\kappa<0$) such that for all $x\in K$, the function $t\rightarrow\f(t,x)-\kappa t^2$ is concave (resp. convex) in $t\in J$. For any $x\in \Omega$ fixed, the left and right derivatives, 
	\begin{align*}
	\partial_t^+\f(t,x)=\lim_{s\rightarrow 0^+}\dfrac{\f(t+s,x)-\f(t,x)}{s},
	\end{align*}
	and
	\begin{align*}
	\partial_t^-\f(t,x)=\lim_{s\rightarrow 0^-}\dfrac{\f(t+s,x)-\f(t,x)}{s}
	\end{align*}
	exist for all $t\in(0,T)$, and they coincide when $\partial_t\f(t,x)$ exists.
	
	Let $\ell$ denote the Lebesgue measure on $\R$ and $\mu$ denote a positive Borel measure on $X$. We have the following result whose proof is identical to that of \cite[Lemma 1.12]{guedj2018pluripotential}.
	\begin{proposition}\label{t_der}
		Let $\f:\Omega_T\rightarrow\R$ be a 
		continuous function which is locally uniformly semi-concave in $(0,T)$. Then $(t,x)\rightarrow\partial^-_t\f(t,x)$ is upper semi-continuous while $(t,x)\rightarrow\partial_{t}^+\f(t,x)$ is lower semi-continuous in $\Omega_T$. In particular, $\partial_{t}^+\f$ and $\partial_{t}^-\f$  coincide and are continuous in $\Omega_T\backslash E$, where $E$ is a Borel set with $\ell\otimes\mu$ measure zero. 
	\end{proposition}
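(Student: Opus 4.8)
The plan is to reduce the statement to the elementary theory of concave functions of one real variable. Given a point $(t_0,x_0)\in\Omega_T$, I would first choose compact boxes $J'\times K\subset J\times K\subset\Omega_T$ with $(t_0,x_0)$ in the interior of $J'\times K$ and a number $\delta>0$ such that $[t-\delta,t+\delta]\subset J$ for every $t\in J'$. By the local uniform semi-concavity there is $\kappa=\kappa(\f,J,K)>0$ for which $g(t,x):=\f(t,x)-\kappa t^{2}$ is concave in $t\in J$ for each fixed $x\in K$; since $\f$ is continuous, so is $g$, and as $\partial_t^{\pm}\f=\partial_t^{\pm}g+2\kappa t$ with $2\kappa t$ continuous, it is enough to establish the semicontinuity assertions for $g$ on $J'\times K$.

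Next I would invoke the monotonicity of chord slopes of a concave function: for $x\in K$ fixed and $t\in J'$, the difference quotient $s\mapsto \big(g(t+s,x)-g(t,x)\big)/s$ is non-increasing on $\{\,0<|s|\le\delta\,\}$. This yields the representations
\[
\partial_t^{+}g(t,x)=\sup_{0<s\le\delta}\frac{g(t+s,x)-g(t,x)}{s},
\qquad
\partial_t^{-}g(t,x)=\inf_{0<u\le\delta}\frac{g(t,x)-g(t-u,x)}{u},
\]
and, in particular, $\partial_t^{+}g\le\partial_t^{-}g$ on $J'\times K$. For each fixed $s>0$ the function $(t,x)\mapsto \big(g(t+s,x)-g(t,x)\big)/s$ is continuous on $J'\times K$; hence $\partial_t^{+}g$ is a supremum of continuous functions, so lower semi-continuous, while $\partial_t^{-}g$ is an infimum of continuous functions, so upper semi-continuous. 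Adding back $2\kappa t$ and letting $J'\times K$ exhaust $\Omega_T$ gives the first assertion.

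For the last part, I would set $h_{-}:=\partial_t^{-}\f$ (u.s.c.) and $h_{+}:=\partial_t^{+}\f$ (l.s.c.), which satisfy $h_{+}\le h_{-}$ on $\Omega_T$ and are Borel, so that $E:=\{(t,x)\in\Omega_T:\ h_{+}(t,x)<h_{-}(t,x)\}$ is a Borel set. For each fixed $x$ the slice $t\mapsto\f(t,x)$ is locally semi-concave on $(0,T)$, hence differentiable off an at most countable set, so the slice $E_x$ is countable and $\ell(E_x)=0$; by Tonelli's theorem $(\ell\otimes\mu)(E)=\int_X\ell(E_x)\,d\mu(x)=0$. On $\Omega_T\setminus E$ the derivative $\partial_t\f$ exists and equals the common value $h:=h_{+}=h_{-}$, and the restriction of $h$ to the subspace $\Omega_T\setminus E$ is both l.s.c. (as a restriction of $h_{+}$) and u.s.c. (as a restriction of $h_{-}$), hence continuous there.

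This is the argument of \cite[Lemma 1.12]{guedj2018pluripotential} transposed to the present setting, and I do not expect a serious obstacle; the only points deserving care are keeping $t\pm s$ inside $J$ (which is exactly why one works on the smaller box $J'$ and only takes small increments) and the Borel measurability of $E$ needed for Tonelli, which is automatic once the semicontinuity of $h_{\pm}$ has been established.
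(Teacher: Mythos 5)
Your argument is correct and is essentially the standard proof the paper invokes by reference to \cite[Lemma 1.12]{guedj2018pluripotential}: reduce to a concave slice by subtracting $\kappa t^2$, use monotonicity of chord slopes to write $\partial_t^+g$ as a supremum and $\partial_t^-g$ as an infimum of continuous difference quotients (giving lower, resp. upper, semicontinuity), and then combine countability of the non-differentiability set of each concave slice with Tonelli to see that $E=\{\partial_t^+\f<\partial_t^-\f\}$ is Borel of $\ell\otimes\mu$-measure zero. All the delicate points (keeping increments inside $J$, the inequality $\partial_t^+\le\partial_t^-$, Borel measurability of $E$) are handled correctly.
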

	
	The following convergence results  play a key role in the sequel. We omit their proofs and refer the reader to~\cite[Section 2]{guedj2018pluripotential}.
	
	\begin{proposition}\label{p26}
		Let $D$ be a bounded open subset in $\mathbb{R}^m$,  
		$J\subset\R$ be a bounded open interval, and $0\leq f\in L^p(D)$ with $p>1$. Let $(v_j)$ be a sequence of Borel functions in $J\times D$ such that $(e^{v_j}f)$ is uniformly bounded in $L^1(J\times D,dt\wedge dV)$. Assume that for any $x\in D$, $v_j(.,x)$ converges to a bounded Borel function $v(.,x)$ in the sense of distributions on $J$ and for all $\eta\in \mathcal{C}^{\infty}_0(J\times D)$
		\begin{align}\label{p16}
		\sup_{j\in \mathbb{N},x\in D}\left|\int_J\eta(t,x)v_j(t,x)dt\right|<+\infty.
		\end{align}
		Then for any positive smooth test function $\eta\in\mathcal{C}_0^{\infty}(J\times D)$,
		\begin{align*}
		\liminf_{j\rightarrow+\infty}\int_{J\times D}\eta(t,x)e^{v_j(t,x)}f(x)dt\wedge dV \geq \int_{J\times D}\eta(t,x)e^{v(t,x)}f(x)dt\wedge dV.
		\end{align*}
	\end{proposition}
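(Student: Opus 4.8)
The plan is to interpolate between $v_j$ and $v$ through a mollification in the time variable, and to combine Jensen's inequality (convexity of $\exp$) with two successive applications of Fatou's lemma. Fix a standard family $(\zeta_\e)_{\e>0}$ of nonnegative even mollifiers on $\R$, with $\int_\R\zeta_\e\,dt=1$ and $\mathrm{supp}\,\zeta_\e\subset(-\e,\e)$, and for each fixed $x\in D$ set $v_j^\e(\cdot,x):=v_j(\cdot,x)\ast\zeta_\e$ and $v^\e(\cdot,x):=v(\cdot,x)\ast\zeta_\e$, the convolution being taken in the variable $t$; this is legitimate since the hypotheses force each slice $v_j(\cdot,x)$ and $v(\cdot,x)$ to be locally integrable on $J$. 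Since $D$ is bounded we have $f\in L^1(D)$; put $C_0:=\sup_j\|e^{v_j}f\|_{L^1(J\times D)}<+\infty$, and fix a nonnegative $\eta\in C_0^\infty(J\times D)$ with support in $J'\times D'$ where $J'\Subset J$, $D'\Subset D$, and work with $\e<\mathrm{dist}(J',\partial J)$.

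First, I would use Jensen's inequality: since $\zeta_\e(s)\,ds$ is a probability measure, $e^{v_j^\e(t,x)}\le(e^{v_j(\cdot,x)}\ast\zeta_\e)(t)$ for every $(t,x)$. Multiplying by $\eta f\ge0$, integrating over $J\times D$, and moving the time-convolution from $e^{v_j}$ onto $\eta$ by Tonelli's theorem (the resulting support still lies in $J\times D$), together with the elementary estimate $\eta\ast\zeta_\e\le\eta+\omega_\eta(\e)$, where $\omega_\eta$ is the uniform modulus of continuity of $\eta$, and the uniform bound $C_0$, one obtains
\[ \int_{J\times D}\eta\, e^{v_j^\e}f\ \le\ \int_{J\times D}(\eta\ast\zeta_\e)\, e^{v_j}f\ \le\ \int_{J\times D}\eta\, e^{v_j}f\ +\ \omega_\eta(\e)\,C_0. \]

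The final step is a double application of Fatou's lemma. For fixed $x$ and $t$ with $[t-\e,t+\e]\subset J$ — in particular on the whole support of $\eta$ — testing the distributional convergence $v_j(\cdot,x)\to v(\cdot,x)$ against the test function $s\mapsto\zeta_\e(t-s)\in C_0^\infty(J)$ shows $v_j^\e(t,x)\to v^\e(t,x)$. Since $\eta\, e^{v_j^\e}f\ge0$, Fatou's lemma gives $\liminf_j\int\eta\, e^{v_j^\e}f\ge\int\eta\, e^{v^\e}f$, which combined with the previous display yields
\[ \int_{J\times D}\eta\, e^{v^\e}f\ \le\ \liminf_{j\to+\infty}\int_{J\times D}\eta\, e^{v_j}f\ +\ \omega_\eta(\e)\,C_0. \]
Because $v(\cdot,x)$ is a bounded Borel function, $v^\e(\cdot,x)\to v(\cdot,x)$ at every Lebesgue point, hence for a.e. $t$, so $e^{v^\e}f\to e^v f$ a.e. on $\mathrm{supp}\,\eta$; a second application of Fatou gives $\liminf_{\e\to0}\int\eta\, e^{v^\e}f\ge\int\eta\, e^v f$. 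Letting $\e\to0$ in the last display, so that $\omega_\eta(\e)\to0$, then produces $\int_{J\times D}\eta\, e^v f\le\liminf_j\int_{J\times D}\eta\, e^{v_j}f$, which is the claim.

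The main obstacle is the mismatch of directions: Jensen's inequality controls the mollified quantity only \emph{from above}, whereas Fatou's lemma is a lower-semicontinuity statement, so the two can be reconciled only after the mollifier is transferred onto $\eta$ and the resulting error is absorbed into $\omega_\eta(\e)\,C_0$ — which crucially uses the \emph{uniform} $L^1$-bound on $(e^{v_j}f)$ — and only if the limits are taken in the order $j\to\infty$, then $\e\to0$. One must also check carefully that the pointwise convergences $v_j^\e\to v^\e$ (in $j$) and $v^\e\to v$ (in $\e$) hold on a set of full $dt\wedge dV$-measure and that the mollifications and all limiting procedures are legitimate; this is where the hypothesis \eqref{p16} is used, guaranteeing that the slices $v_j(\cdot,x)$ are honest distributions with uniformly controlled seminorms.
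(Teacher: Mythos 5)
Your proof is correct. The paper itself does not reprove this statement --- it defers to \cite[Proposition 2.6]{guedj2018pluripotential} --- and the argument there runs differently: it uses the tangent-line form of convexity, $e^{v_j}\geq e^{w}(1+v_j-w)$ with $w=v\ast\zeta_\e$ a time-mollification of the \emph{limit}, integrates against $\eta f$, passes to the limit in $j$ in the linear term $\int\eta e^{w}(v_j-w)f$ using the slice-wise distributional convergence together with the uniform bound \eqref{p16} (which supplies the domination needed to integrate the slice limits over $x$), and then lets $\e\to 0$. You instead mollify the $v_j$ themselves, bound $e^{v_j^\e}$ from above by $(e^{v_j})\ast\zeta_\e$ via Jensen, transfer the mollifier onto $\eta$ at the cost of $\omega_\eta(\e)C_0$, and apply Fatou twice; the directions of all the inequalities and the order of limits ($j\to\infty$ first, then $\e\to0$) are handled correctly. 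A consequence worth noting is that your route uses \eqref{p16} only to guarantee that the slices $v_j(\cdot,x)$ are locally integrable so that the convolutions and the distributional pairing make sense, whereas the cited proof needs its uniformity genuinely (for dominated convergence in $x$); so your argument is, if anything, slightly more economical in its hypotheses. The only points to make explicit in a polished write-up are the joint measurability of $v$ (needed to upgrade the slice-wise a.e.\ convergence $v^\e(\cdot,x)\to v(\cdot,x)$ to $dt\otimes dV$-a.e.\ convergence via Fubini), which is anyway implicit in the statement, and the restriction to $t$ with $[t-\e,t+\e]\Subset J$ when defining the convolutions, which you already impose.
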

	\begin{proof}
		See~\cite[Proposition 2.6]{guedj2018pluripotential}.
	\end{proof}

	\begin{proposition}\label{115}
		Let $(f_j)$ be a sequence of positive functions converging to $f$ in $L^1(X_T,\ell\otimes\mu)$. Let $(\f_j)$ be a sequence of functions in $\mathcal{P}(X_T,\omega)\cap L^{\infty}_{\loc} (\Omega_T)$ which
		\begin{itemize}
			\item converges $\ell\otimes\mu$-almost everywhere in $X_T$ to a function $\f\in\mathcal{P}(X_T,\omega)\cap L^{\infty}_{\loc} (\Omega_T)$;
			\item is locally uniformly semi-concave in $(0,T)\times \Omega$. 
		\end{itemize} 
		Then the limit $\lim\limits_{j\rightarrow+\infty}\dot{\f}^j(t,x)$ exists and is equal to $\dot{\f}(t,x)$ for $\ell\otimes\mu$-almost every $(t,x)\in \Omega_T$, and
		\begin{align*}
		h(\dot{\f}^j)f_j\ell\otimes\mu\rightarrow h(\dot{\f})f\ell\otimes\mu,
		\end{align*}
		in the weak sense of measures on $\Omega_T$, for all $h\in \mathcal{C}^0(\R,\R)$. 
	\end{proposition}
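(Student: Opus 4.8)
The plan is to establish the two assertions separately. First, that $\dot{\f}^j\to\dot{\f}$ $\ell\otimes\mu$-almost everywhere on $\Omega_T$: this is, slice by slice, a statement about sequences of concave functions of one real variable. Second, that $h(\dot{\f}^j)f_j\,\ell\otimes\mu\to h(\dot{\f})f\,\ell\otimes\mu$ weakly: granting the first point, this follows from a routine splitting, \emph{provided} one controls $\dot{\f}^j$ uniformly on compact subsets of $\Omega_T$ --- and producing that uniform bound is where the real difficulty lies.

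For the almost everywhere convergence of the time derivatives, note first, by Fubini, that there is a $\mu$-full set $\Omega'\subset\Omega$ such that $\f_j(t,x)\to\f(t,x)$ for $\ell$-a.e.\ $t\in(0,T)$, for every $x\in\Omega'$. Fix such an $x$, a compact subinterval $J$, and an open interval $I$ with $J\Subset I\Subset(0,T)$. By the local uniform semi-concavity of the family there is $\kappa>0$ independent of $j$ (and of $x$ in a fixed compact set) such that the functions $g_j(t):=\f_j(t,x)-\kappa t^2$ are concave on $I$. Since the $g_j$ converge $\ell$-a.e., hence on a dense subset of $I$, the classical theory of concave functions gives that they converge locally uniformly on $I$ to a finite concave function $\bar g$; moreover, from $\partial_t^-g_j(t)=\inf_{s<t}\frac{g_j(t)-g_j(s)}{t-s}$ and the analogous formula for $\partial_t^+g_j$, one reads off $\limsup_j\partial_t^-g_j(t)\le\partial_t^-\bar g(t)$ and $\liminf_j\partial_t^+g_j(t)\ge\partial_t^+\bar g(t)$ for every $t$, whence $\partial_t^\pm g_j(t)\to\bar g'(t)$ at each of the (all but countably many) points where $\bar g$ is differentiable. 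Since the slice $\f(\cdot,x)$ is upper semi-continuous with distributional $t$-derivative bounded above on compacts, while equalling the continuous function $\bar g(\cdot)+\kappa(\cdot)^2$ for $\ell$-a.e.\ $t$, it coincides with the latter throughout $I$; therefore $\dot{\f}^j(t,x)\to\dot{\f}(t,x)$ for $\ell$-a.e.\ $t$. Letting $x$ range over $\Omega'$ yields $\dot{\f}^j\to\dot{\f}$ $\ell\otimes\mu$-a.e.\ on $\Omega_T$, hence also $h(\dot{\f}^j)\to h(\dot{\f})$ $\ell\otimes\mu$-a.e.\ for every $h\in\mathcal{C}^0(\R,\R)$.

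For the convergence of measures, fix $\eta\in\mathcal{C}_c(\Omega_T)$; decomposing $\eta=\eta^+-\eta^-$ we may assume $\eta\ge0$, with support $K\Subset\Omega_T$. Suppose that
\begin{align*}
C:=\sup_j\ \sup_K\ |\dot{\f}^j|<+\infty ,
\end{align*}
which is the point discussed below. Then $|h(\dot{\f}^j)|\le C':=\max_{[-C,C]}|h|$ on $K$, and we split
\begin{align*}
\int_{\Omega_T}\eta\,h(\dot{\f}^j)f_j\,\ell\otimes\mu-\int_{\Omega_T}\eta\,h(\dot{\f})f\,\ell\otimes\mu
&=\int_{\Omega_T}\eta\,h(\dot{\f}^j)(f_j-f)\,\ell\otimes\mu\\
&\quad+\int_{\Omega_T}\eta\,\bigl(h(\dot{\f}^j)-h(\dot{\f})\bigr)f\,\ell\otimes\mu .
\end{align*}
The first term is $\le C'\|\eta\|_\infty\|f_j-f\|_{L^1(X_T)}\to0$; the second tends to $0$ by dominated convergence, its integrand converging to $0$ $\ell\otimes\mu$-a.e.\ by the previous step and being dominated by $2C'\|\eta\|_\infty f\in L^1$. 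This establishes the weak convergence.

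The main obstacle is precisely the uniform bound $C=\sup_j\sup_K|\dot{\f}^j|<\infty$. For a concave function on an interval, the one-sided derivatives over a compact subinterval are controlled by the sup-norm of the function over a slightly larger one; hence, by local uniform semi-concavity, it suffices to know that $(\f_j)$ is \emph{locally uniformly bounded} on $\Omega_T$. The upper bound is immediate: each slice $\f_j(t,\cdot)$ is $\Theta$-psh, so the sub-mean value inequality bounds it above by a multiple of its $L^1$-norm, which is uniformly bounded because $\f_j\to\f$ in $L^1_{\loc}$. The lower bound is the genuinely delicate point; it is obtained by combining the sub-mean value inequality for the psh slices with the semi-concavity in $t$, and is in any case immediate when $(\f_j)$ is a monotone sequence, since then each $\f_j$ is trapped between a fixed function and the locally bounded limit $\f$. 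Once this bound is in hand, the remaining arguments are soft and parallel the local case treated in~\cite{guedj2018pluripotential}.
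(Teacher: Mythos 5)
Your first step is correct: the $\ell\otimes\mu$-a.e.\ convergence of $\dot{\f}^j$ to $\dot{\f}$ is obtained, exactly as it should be, by fixing $x$ in a $\mu$-full set, passing to the concave slices $t\mapsto\f_j(t,x)-\kappa t^2$ (with $\kappa$ uniform in $j$ and locally in $x$ by hypothesis), and using the classical facts that concave functions converging on a dense set converge locally uniformly and that their one-sided derivatives converge at every differentiability point of the limit; the identification of the limit with the slice of $\f$ via monotonicity of $t\mapsto\f(t,x)-Ct$ is also fine. Note that the paper offers no proof of this proposition at all — it refers to the local results of Guedj--Lu--Zeriahi — so your argument is a genuine reconstruction, and it follows the same mechanism. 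Your reduction of the weak convergence of $h(\dot{\f}^j)f_j\,\ell\otimes\mu$ to the uniform bound $\sup_j\sup_K|\dot{\f}^j|<+\infty$ is likewise the right move, since $h$ is only assumed continuous on all of $\R$.

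The gap is that this uniform bound is never established, and it does not follow from the hypotheses by the route you sketch. Your upper bound for $(\f_j)$ invokes convergence in $L^1_{\loc}$, but the proposition only assumes $\ell\otimes\mu$-a.e.\ convergence for a \emph{general} positive Borel measure $\mu$: if $\mu$ charges a Lebesgue-null set, this gives no control on any set of positive Lebesgue measure, and the sub-mean value inequality for the psh slices has nothing to feed on (even for $\mu=$ Lebesgue, a.e.\ convergence is weaker than the $L^1_{\loc}$ convergence you use). The lower bound you explicitly leave open, and it is exactly what is needed, since semi-concavity converts a two-sided bound on $\f_j$ over $J\times K$ into the two-sided bound on $\dot{\f}^j$ over a slightly smaller compact (membership of each $\f_j$ in $\mathcal{P}(X_T,\omega)$ only gives a one-sided bound with a constant depending on $j$). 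So as written you prove the first assertion but not the second. The honest resolution is that some locally uniform control — local uniform boundedness of $(\f_j)$ on $\Omega_T$, or equivalently a locally uniform Lipschitz bound in $t$ — must either be added to the hypotheses (as it is, in effect, in the cited results of Guedj--Lu--Zeriahi) or supplied by the context of application: in every use of this proposition in the paper the sequence $(\f_j)$ is monotone, hence locally trapped between its first term and its locally bounded limit, which closes the argument exactly as you observe. State that hypothesis explicitly; with it, your two steps constitute a complete proof.
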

	
	\begin{proof}
		We refer the reader to~\cite[Proposition 2.9]{guedj2018pluripotential} (see also~\cite[Theorem 1.14]{guedj2020pluripotential}).
	\end{proof}

	\subsection{Pluripotential subsolutions/supersolutions}\label{subsol}
	We assume that $T<+\infty$. As explained in the introduction, we assume here that $g(t)\theta\leq \omega_t\leq \Theta$, where $\theta$ is a big $(1,1)$-form, $g$ is a smooth increasing positive function in $t\in[0,T]$, and $\Theta$ is a K\"ahler form.

	Let us emphasize here that by comparison with~\cite{guedj2018pluripotential,guedj2020pluripotential}, for an element $\f\in\mathcal{P}(X_T,\omega)$ the weak derivative $\partial_\tau\f(t,\cdot)$ is merely locally bounded from above in $\Omega$ but $\f$ is not locally uniformly Lipschitz in $(0,T)$. This is natural as we are dealing with quasi-psh functions which are bounded from above but not from below. 
	
	Before defining pluripotential subsolutions (supersolutions), we need to make sense of the quantity $\dot{\f}_t=\partial_\tau\f(t,\cdot)$, in order to define the right-hand side of \eqref{cmaf}. 
	By the definition of $\mathcal{P}(X_T,\omega)$, for any compact subset $K\subset \Omega$, $J\Subset (0,T)$, there exists a constant $C=C_{K,J}>0$ such that $Ct-\f(t,x)$ is increasing  in $t\in J$ for every $x\in K$. Thus, for every $x\in K$, $\partial_\tau\f_t(x)$ is well defined for almost every $t\in J$ (see e.g. \cite[Lemma 1.2.8]{kannan1996advanced}). This implies that the right-hand side of \eqref{cmaf} is well-defined almost everywhere in $\Omega_T$ (using Fubini's theorem). This analysis motivates the following:

	\begin{definition}
		We say that a parabolic potential $\f\in \mathcal{P}(X_T,\omega)$ is a {\em pluripotential subsolution} to~\eqref{cmaf} on $X_T$ if
		\begin{itemize}
			\item for each $t\in (0,T)$ fixed, the $\omega_t$-psh function $\f(t,\cdot)$ is locally bounded in $\Omega$
			\item the inequality 
			\begin{align*}
			(\omega_t+\dc\f_t)^n\wedge dt\geq e^{\dot{\f}_t+F(t,\cdot,\f_t)}fdV\wedge dt
			\end{align*}
			holds in the sense of measures in $(0,T)\times \Omega$.
		\end{itemize}
	\end{definition}
	\begin{definition}
		We say that a parabolic potential $\f\in \mathcal{P}(X_T,\omega)$ is a {\em pluripotential supersolution} to~\eqref{cmaf} on $X_T$ if
		\begin{itemize}
			\item for each $t\in (0,T)$ fixed, the $\omega_t$-psh function $\f(t,\cdot)$ is locally bounded in $\Omega$,
			\item the inequality 
			\begin{align*}
			(\omega_t+\dc\f_t)^n\wedge dt\leq e^{\dot{\f}_t+F(t,\cdot,\f_t)}fdV\wedge dt
			\end{align*}
			holds in the sense of measures in $(0,T)\times \Omega$.
		\end{itemize}
	\end{definition}
	\begin{remark}
		In these definitions the left-hand side is well-defined by using Bedford-Taylor's theory (see Definition~\ref{def_lhs}).
	\end{remark}
	\begin{lemma}\label{lem}
		Let $\f\in\mathcal{P}(X_T,\omega)$ be a parabolic potential  such that the restriction of $\f$ to $\{t\}\times \Omega$ is an $\omega_t$-psh function which is locally bounded on $\Omega$. 
		Then 
		
		1) $\f$ is a pluripotential subsolution to \eqref{cmaf} if and only if for a.e. $t\in(0,T)$, 
		\begin{align}\label{lemsub}
		(\omega_t+\dc\f_t)^n\geq e^{\partial_{\tau}\f(t,\cdot)+F(t,\cdot,\f_t)}fdV,
		\end{align}
		in the sense of measures in $\Omega$.
		
		2) $\f$ is a pluripotential supersolution to \eqref{cmaf} if and only if for a.e. $t\in(0,T)$, 
		\begin{align}\label{lemsuper}
		(\omega_t+\dc\f_t)^n\leq e^{\partial_{\tau}\f(t,\cdot)+F(t,\cdot,.\f_t)}fdV,
		\end{align}
		in the sense of measures in $\Omega$.
	\end{lemma}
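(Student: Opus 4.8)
The plan is to deduce both equivalences from a disintegration argument; I spell out part~1), part~2) being identical with all inequalities reversed. By Definition~\ref{def_lhs} the left-hand side of the subsolution inequality is the positive Radon measure $dt\wedge\nu_t$ on $\Omega_T$, where $\nu_t:=(\omega_t+\dc\f_t)^n$ is, for each $t\in(0,T)$, a well-defined positive measure on $\Omega$ by Bedford--Taylor theory (since $\f_t$ is $\omega_t$-psh and locally bounded on $\Omega$), of mass $\le\int_X\Theta^n$ by~\eqref{bdmass}, with $t\mapsto\int_\Omega\gamma(t,\cdot)\,d\nu_t$ Borel and bounded for every $\gamma\in\mathcal{C}_0(\Omega_T)$ by Lemma~\ref{lem: 17}. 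The right-hand side of~\eqref{cmaf} is, as explained just before the definition of subsolutions, the positive Radon measure $dt\wedge\sigma_t$ with $\sigma_t:=e^{\partial_\tau\f(t,\cdot)+F(t,\cdot,\f_t)}f\,dV$, a positive measure on $\Omega$ for a.e.\ $t$: for $K\Subset\Omega$ and $J\Subset(0,T)$, \eqref{famlip} gives $\partial_\tau\f(t,\cdot)\le C_{K,J}$ on $K$ for a.e.\ $t\in J$, while $F(t,x,\f_t(x))$ is bounded above on $\overline{J}\times K$ because $F$ is increasing in its last variable and $\f$ is bounded above on the compact set $\overline{J}\times K\subset X_T$; hence $\sigma_t(K)\le C\int_K f\,dV<\infty$, and $t\mapsto\int_\Omega\gamma(t,\cdot)\,d\sigma_t$ is Borel (by Fubini) and locally bounded. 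It therefore suffices to prove: for two such measurable families of positive measures on $\Omega$, one has $dt\wedge\nu_t\ge dt\wedge\sigma_t$ on $\Omega_T$ if and only if $\nu_t\ge\sigma_t$ on $\Omega$ for Lebesgue-a.e.\ $t\in(0,T)$; this last statement is precisely condition~\eqref{lemsub}.

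The implication ``$\Leftarrow$'' is immediate: pair the slicewise inequality with any $0\le\gamma\in\mathcal{C}_0(\Omega_T)$ and integrate in $t$. For ``$\Rightarrow$'', fix a countable family $(\chi_k)_{k\in\mathbb{N}}$ of non-negative functions in $\mathcal{C}_0(\Omega)$ that is dense, for uniform convergence on compact subsets, in the cone of all non-negative functions in $\mathcal{C}_0(\Omega)$ (possible since $\Omega$ is a second-countable, locally compact, Hausdorff space). For each $k$ and every $0\le\alpha\in\mathcal{C}^\infty_0((0,T))$, pairing the global inequality with $\gamma(t,x)=\alpha(t)\chi_k(x)$ and invoking Definition~\ref{def_lhs} yields $\int_0^T\alpha(t)\,g_k(t)\,dt\ge0$, where $g_k(t):=\int_\Omega\chi_k\,d\nu_t-\int_\Omega\chi_k\,d\sigma_t$ is Borel and bounded on every compact subinterval of $(0,T)$ by the estimates above, hence $g_k\in L^1_{\loc}((0,T))$. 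Thus $g_k\ge0$ outside a Lebesgue-null set $N_k$, and with $N:=\bigcup_k N_k$ (still null) we get, for every $t\in(0,T)\setminus N$, $\int_\Omega\chi_k\,d\nu_t\ge\int_\Omega\chi_k\,d\sigma_t$ for all $k$, hence $\int_\Omega\chi\,d\nu_t\ge\int_\Omega\chi\,d\sigma_t$ for every non-negative $\chi\in\mathcal{C}_0(\Omega)$ by density, i.e.\ $\nu_t\ge\sigma_t$ on $\Omega$ by the Riesz representation theorem. This establishes~\eqref{lemsub} for a.e.\ $t$ and proves part~1); part~2) is obtained verbatim with $\ge$ replaced by $\le$ throughout.

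The only genuinely delicate point is the passage from an inequality of $(2n+1)$-dimensional currents on $\Omega_T$ to a statement valid for a.e.\ fixed time $t$: it relies on the time-slices of both members being honest positive measures whose mass against a test function is a Borel, locally bounded function of $t$ --- which is exactly what Lemma~\ref{lem: 17} and the one-sided bound~\eqref{famlip} furnish --- so that Fubini's theorem applies and the product test functions $\alpha(t)\chi_k(x)$ detect the inequality slicewise; the countable dense family $(\chi_k)$ is then what lets the exceptional null sets be gathered into a single one. Everything else is routine bookkeeping.
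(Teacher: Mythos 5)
Your proof is correct and follows essentially the same route as the paper's: the forward implication by pairing and integrating, and the converse by testing against a countable family of product test functions $\alpha(t)\chi_k(x)$, applying Fubini to get a full-measure set of times for each $k$, intersecting the exceptional null sets, and concluding by density in $\mathcal{C}_0(\Omega)$. The extra care you take in verifying that both time-slices are genuine locally finite positive measures with Borel, locally bounded pairings is implicit in the paper (via Lemma~\ref{lem: 17} and the discussion preceding the definition of subsolutions) but is a welcome clarification.
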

	\begin{proof}
		We shall prove the result for subsolutions. The proof for supersolutions is similar.
		
		We first assume that \eqref{lemsub} holds for almost every $t$. Let $\eta$ be a positive continuous test function in $(0,T)\times \Omega$. We thus obtain
		\begin{align*}
		\int_\Omega
		\eta(t,\cdot)(\omega_t+\dc\f_t)^n\geq \int_\Omega\eta(t,\cdot)e^{\partial_{t}\f_t+F(t,\cdot,\f_t)}fdV.
		\end{align*}
		Integrating with respect to $t$, we get
		\begin{align*}
		\int_0^T\int_\Omega\eta (\omega_t+\dc\f_t)^n\wedge dt\geq \int_0^T\int_\Omega\eta e^{\partial_{t}\f_t+F(t,\cdot,\f_t)}fdVdt,
		\end{align*}
		hence $\f$ is a pluripotential subsolution to \eqref{cmaf}.
		
		Conversely, assume that $\f$ is a pluripotential subsolution to \eqref{cmaf}. We consider positive test functions that can be decomposed as
		\begin{align*}
		\eta(t,x)=\lambda(t)\xi_j(x),
		\end{align*} 
		where $(\xi_j)$ is a sequence of positive test functions on $\Omega$ which generates a dense subspace of the space $\mathcal{C}^0_c(\Omega)$ in $\mathcal{C}^0$-topology. 
		It follows from Fubini's theorem that 
		\begin{align*}
		\int_0^T\left(\int_\Omega\xi_j(x)(\omega_t+\dc\f_t)^n\right)\lambda(t)dt\geq\int_0^T\left(\int_\Omega\xi_j(x)e^{\partial_t\f_t+F(t,.,\f_t)}fdV\right)\lambda(t)dt.
		\end{align*}
		Hence for any $j$ there exists a subset $E_j$ which has full measure in $(0,T)$ so that for all $t\in(0,T)$
		\begin{align}\label{preine}
		\int_\Omega\xi_j(x)(\omega_t+\dc\f_t)^n\geq \int_\Omega\xi_j(x)e^{\partial_t\f_t+F(t,\cdot,\f_t)}f(x)dV(x).
		\end{align}
		If we set $E:=\cap_jE_j$, then $E$ has full measure in $(0,T)$. Moreover, the inequality \eqref{preine} holds for all $t\in E$ and for all $j$. Let $\xi$ be an arbitrary positive continuous function in $\Omega$. We can approximate this function by convex combinations of the $\xi_j$, we infer that for all $t\in E$,
		\begin{align*}
		\int_\Omega\xi(x)(\omega_t+\dc\f_t)^n\geq \int_\Omega\xi(x)e^{\partial_t\f_t+F(t,\cdot,\f_t)}f(x)dV(x), 
		\end{align*}
		from which \eqref{lemsub} follows.
	\end{proof}
	
	\begin{lemma}\label{maxi}
		Let $\f,\psi\in \mathcal{P}(X_T,\omega)$ be two pluripotential subsolutions to \eqref{cmaf}. Then
		\begin{align*}
		1_{\{\f\geq\psi\}}\partial_t\max(\f,\psi)=1_{\{\f\geq \psi\} }\partial_t\f,
		\end{align*}
		almost everywhere in $\Omega_T$, and 
		\begin{align*}
		(\omega+\dc\max(\f,\psi))^n\wedge dt\geq 1_{\{\f\geq \psi\}} (\omega+\dc\f)^n\wedge dt
		\end{align*} 
		in the sense of measures in $\Omega_T$.
	\end{lemma}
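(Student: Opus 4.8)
The statement is the parabolic counterpart, in big cohomology classes, of the classical fact that the maximum of two subsolutions is again a subsolution; accordingly the plan is to treat the two parts separately, reducing each to the corresponding fact in the local (elliptic) Bedford--Taylor theory, in the spirit of \cite{guedj2018pluripotential,guedj2020pluripotential}. The only genuinely new point is that $\f$ and $\psi$ are $\omega_t$-psh functions which are merely locally bounded on the Zariski open set $\Omega$, but this is all that is needed both for the Monge--Amp\`ere operator and for the pointwise-in-$t$ calculus.

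For the $t$-derivative identity I would argue pointwise. Fix a compact $J\times K\subset(0,T)\times\Omega$; by the interpretation of \eqref{famlip} recalled after Definition~\ref{def_pp} there is $C=C(K)$ so that for every $x\in K$ the maps $t\mapsto\f(t,x)-Ct$ and $t\mapsto\psi(t,x)-Ct$ are nonincreasing on $J$, hence so is their maximum; thus $\max(\f,\psi)\in\mathcal{P}(X_T,\omega)$ and, for a.e.\ $x$, each of $\f(\cdot,x)$, $\psi(\cdot,x)$, $\max(\f,\psi)(\cdot,x)$ is BV in $t$, differentiable for a.e.\ $t$, and continuous off a countable set of $t$'s. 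Split $\{\f\geq\psi\}=\{\f>\psi\}\sqcup\{\f=\psi\}$. On $\{\f>\psi\}$, at a point $(t,x)$ where $\f(\cdot,x)$ and $\psi(\cdot,x)$ are both continuous one has $\max(\f,\psi)=\f$ on a $t$-neighbourhood, so $\partial_t\max(\f,\psi)=\partial_t\f$ wherever the derivative exists; by Fubini this holds $\ell\otimes\mu$-a.e.\ on $\{\f>\psi\}$. On $\{\f=\psi\}$, at a point where $\max(\f,\psi)(\cdot,x)$ and $\f(\cdot,x)$ are differentiable in $t$ (a.e.\ such points), the nonnegative function $\max(\f,\psi)(\cdot,x)-\f(\cdot,x)$ vanishes there and therefore has vanishing $t$-derivative, giving $\partial_t\max(\f,\psi)=\partial_t\f$ again. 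Together these yield $1_{\{\f\geq\psi\}}\partial_t\max(\f,\psi)=1_{\{\f\geq\psi\}}\partial_t\f$ a.e.\ in $\Omega_T$.

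For the Monge--Amp\`ere inequality I would work slicewise and then integrate. For fixed $t$, $\f_t,\psi_t$ are $\omega_t$-psh and locally bounded on $\Omega$, and I claim
\begin{align}\label{eq: maxi plan}
(\omega_t+\dc\max(\f_t,\psi_t))^n\ \geq\ 1_{\{\f_t\geq\psi_t\}}\,(\omega_t+\dc\f_t)^n
\end{align}
as Borel measures on $\Omega$. On the open set $\{\f_t>\psi_t\}$ the two sides coincide by locality of the Bedford--Taylor operator; to include the closed coincidence set I would use the perturbation $w_\e:=\max(\f_t,\psi_t-\e)$, which equals $\f_t$ on the open set $\{\f_t>\psi_t-\e\}$, satisfies $w_\e\nearrow\max(\f_t,\psi_t)$ as $\e\searrow0$ so that $(\omega_t+\dc w_\e)^n\to(\omega_t+\dc\max(\f_t,\psi_t))^n$, and then let $\e\searrow0$ using that $1_{\{\f_t>\psi_t-\e\}}\searrow1_{\{\f_t\geq\psi_t\}}$ pointwise together with dominated convergence against the finite measure $(\omega_t+\dc\f_t)^n$. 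Granting \eqref{eq: maxi plan} for each $t$, I would test against an arbitrary positive continuous $\gamma$ on $\Omega_T$ and integrate in $t$: by Definition~\ref{def_lhs} and Lemma~\ref{lem: 17} the currents $(\omega+\dc\f)^n\wedge dt$ and $(\omega+\dc\max(\f,\psi))^n\wedge dt$ disintegrate along $X_T\to(0,T)$ with fibres $(\omega_t+\dc\f_t)^n$ and $(\omega_t+\dc\max(\f_t,\psi_t))^n$ respectively, and since the $t$-slice of the Borel set $\{\f\geq\psi\}$ is $\{\f_t\geq\psi_t\}$, the measure $1_{\{\f\geq\psi\}}\bigl[(\omega+\dc\f)^n\wedge dt\bigr]$ is the fibrewise integral of $1_{\{\f_t\geq\psi_t\}}(\omega_t+\dc\f_t)^n$. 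Integrating \eqref{eq: maxi plan} over $t\in(0,T)$ then produces the claimed inequality of $(2n+1)$-currents on $\Omega_T$, hence on $X_T$.

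I expect the main obstacle to be bookkeeping rather than substance: in the second step one must ensure that the slicewise Monge--Amp\`ere measures depend measurably on $t$ (which is where Lemma~\ref{lem: 17} and $\max(\f,\psi)\in\mathcal{P}(X_T,\omega)\cap L^\infty_{\loc}(\Omega_T)$ enter) and that restricting the disintegrated measure to the $t$-varying set $\{\f_t\geq\psi_t\}$ is legitimate; in the first step the only delicate point is the ``$\max(\f,\psi)=\f$ near $(t,x)$'' claim on $\{\f>\psi\}$, which is precisely why the one-sided-limit/BV structure furnished by \eqref{famlip} is used.
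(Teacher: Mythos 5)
Your argument is correct and follows essentially the same route as the paper: the paper disposes of the time-derivative identity by invoking the monotonicity of $t\mapsto Ct-\f(t,x)$, $t\mapsto Ct-\psi(t,x)$ from \eqref{famlip} together with \cite[Lemma 7.6]{gilbarg1983elliptic}, and of the Monge--Amp\`ere inequality by citing the elliptic maximum principle \cite[Theorem 3.27]{guedj2017degenerate} slicewise; you simply reprove both cited facts by hand (the first-derivative-at-a-minimum argument for $\max(\f,\psi)-\f$ on $\{\f\geq\psi\}$, and the $\max(\f_t,\psi_t-\e)$ perturbation plus locality of Bedford--Taylor), and spell out the integration in $t$ that the paper leaves implicit.
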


	\begin{proof} 
		Fix $K\Subset\Omega$  and $J\Subset (0,T)$. Then there exists a constant $C=C_{K,J}>0$ such that, for any $x\in K$ fixed, $Ct-\f(t,x)$ and $Ct-\psi(t,x)$ are increasing in $t\in J$.  These functions thus have derivatives in $t$ almost everywhere on $J$ (see e.g.~\cite[Theorem 1.2.8]{kannan1996advanced}). Hence the first equality follows from~\cite[Lemma 7.6]{gilbarg1983elliptic}.
		
		The second inequality is a simple consequence of  the elliptic  maximum principle in the local context (see e.g. \cite[Theorem 3.27]{guedj2017degenerate}).
	\end{proof}
	
	\begin{lemma}\label{lem_initial}
		For every $\lambda\in [(1+\delta_0g(0))^{-1},1]$,
		the function $\lambda g(0)(\rho+\chi)/2$ is $\omega_0$-psh. In particular, there exists a uniform constant $C_0>0$ such that
		\begin{align*}
		\lambda g(0)\frac{\rho+\chi}{2}-C_0\leq \f_0.
		\end{align*}
	\end{lemma}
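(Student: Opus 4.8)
\emph{Setup of the plan.} The statement asks us to show that $u_\lambda:=\tfrac{\lambda g(0)}{2}(\rho+\chi)$ is $\omega_0$-psh for every $\lambda\in[(1+\delta_0 g(0))^{-1},1]$, and then to deduce a uniform lower barrier for $\f_0$. The plan is to check the inequality $\omega_0+\dc u_\lambda\geq 0$ as currents on $X$ directly from the defining properties of $\rho$ and $\chi$, together with the two lower bounds $\omega_0\geq g(0)\theta$ and $\omega_0\geq -\Theta$ (the latter being part of \eqref{estimate_ome}).

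\emph{Step 1: the endpoint $\lambda=1$, with slack.} Since $\rho\in\PSH(X,\theta)$ we have $\dc\rho\geq-\theta$, while \eqref{chi} gives $\dc\chi\geq-\theta+2\delta_0\Theta$; averaging these and multiplying by $g(0)$ yields $\dc\bigl[\tfrac{g(0)}{2}(\rho+\chi)\bigr]\geq -g(0)\theta+\delta_0 g(0)\Theta$. Adding $\omega_0$ and using $\omega_0\geq g(0)\theta$ gives
\[
\omega_0+\dc\Bigl[\tfrac{g(0)}{2}(\rho+\chi)\Bigr]\geq \delta_0\,g(0)\,\Theta\geq 0,
\]
so, writing $w:=\tfrac{g(0)}{2}(\rho+\chi)$ (which is upper semi-continuous and, as $\rho,\chi\in L^1(X)$, not identically $-\infty$), the function $w$ is $\omega_0$-psh and moreover $\omega_0+\dc w\geq\delta_0 g(0)\Theta$.

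\emph{Step 2: general $\lambda$ by interpolation.} For $\lambda$ in the stated interval I would write $u_\lambda=\lambda w$ and interpolate between $\omega_0$ and $\omega_0+\dc w$:
\[
\omega_0+\dc(\lambda w)=(1-\lambda)\,\omega_0+\lambda\bigl(\omega_0+\dc w\bigr)\geq -(1-\lambda)\,\Theta+\lambda\,\delta_0 g(0)\,\Theta=\bigl(\lambda(1+\delta_0 g(0))-1\bigr)\Theta,
\]
where the middle inequality uses $\omega_0\geq-\Theta$ and Step 1 (recall $0<\lambda\leq 1$). The right-hand side is $\geq 0$ precisely when $\lambda\geq(1+\delta_0 g(0))^{-1}$, which is exactly the asserted range; hence $\lambda w=u_\lambda$ is $\omega_0$-psh. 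I expect this convex-combination step — recycling the slack $\delta_0 g(0)\Theta$ produced at $\lambda=1$ to absorb the negative part $-(1-\lambda)\Theta$ of $\omega_0$ — to be the only point that is not routine bookkeeping, so I would present it as the crux.

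\emph{Step 3: the uniform barrier.} Since $\f_0$ has minimal singularities, every $\omega_0$-psh function is bounded above by $\f_0$ up to an additive constant; applying this to the ($\omega_0$-psh) function $w_0:=(1+\delta_0 g(0))^{-1}w$ produces a constant $C_0>0$ (enlarged if needed to be positive) with $w_0\leq\f_0+C_0$. As $\rho\leq 0$ and $\chi\leq 0$ (recall $\sup_X\rho=0$), the map $\lambda\mapsto\lambda w=\tfrac{\lambda g(0)}{2}(\rho+\chi)$ is pointwise non-increasing on $[(1+\delta_0 g(0))^{-1},1]$, whence $\tfrac{\lambda g(0)}{2}(\rho+\chi)\leq w_0\leq\f_0+C_0$ for every admissible $\lambda$; thus the same $C_0$ works uniformly in $\lambda$, as claimed.
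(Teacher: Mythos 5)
Your proof is correct and follows essentially the same route as the paper's: the interpolation $(1-\lambda)\omega_0+\lambda(\omega_0+\dc w)$ with the slack $\lambda\delta_0 g(0)\Theta$ absorbing $-(1-\lambda)\Theta$ is exactly the paper's three-term decomposition, just regrouped through the $\lambda=1$ case. Your Step 3 additionally makes the uniformity of $C_0$ in $\lambda$ explicit (via monotonicity of $\lambda\mapsto\lambda w$ using $\rho,\chi\leq 0$), which the paper leaves implicit.
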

	Recall here that $\chi$ is a fixed $\theta$-psh function with analytic singularities such that $\theta+\dc\chi\geq 2\delta_0\Theta$, for some $\delta_0>0$.
	\begin{proof}
		By hypothesis \eqref{estimate_ome}, we first observe that
		\begin{align*}
		\omega_0+\dc\lambda g(0)\frac{\rho+\chi}{2}&= \frac{\lambda}{2}(\omega_0+g(0)\dc\rho)+\frac{\lambda}{2}(\omega_0+g(0)\dc\chi)+(1-\lambda)\omega_0\\
		&\geq \frac{\lambda}{2}g(0)(\theta+\dc\rho)+\frac{\lambda}{2}g(0)(\theta+\dc\chi)+(1-\lambda)\omega_0\\
		&\geq \lambda g(0)\delta_0\Theta-(1-\lambda)\Theta\geq 0
		\end{align*}
		where the last inequality follows from the choice of $\lambda$. Thus 
		the function $\lambda g(0)(\rho+\chi)/2$ is $\omega_0$-psh. Since $\f_0$ is $\omega_0$-psh with minimal singularities,  there exists a constant $C_0>0$ such that $\lambda g(0)(\rho(x)+\chi(x))/2-C_0\leq \f_0(x)$ for all $x\in X$. 
	\end{proof}

	\section{The envelope of subsolutions} \label{sec:envelope}
	\subsection{Definition}
	\begin{definition}\label{defcp}
		A Cauchy datum for \eqref{cmaf} is a $\omega_{0}$-psh function $\f_0:X\rightarrow\R$ with minimal singularities.
		We say $\f \in \mathcal{P}(X_T,\omega)$ is a subsolution to the Cauchy problem:
		\begin{align*}
		(\omega_t+\dc u_t)^n=e^{\partial_{t}u_t+F(t,\cdot,u_t)}fdV,\quad  u\big|_{\{0\}\times X}=\f_0
		\end{align*}
		if $\f$ is a pluripotential subsolution  to \eqref{cmaf} such that
		$\limsup_{t\rightarrow 0}\f(t,x)\leq\f_0(x)$ for all $x\in X$.
		
		We let $\mathcal{S}_{\f_0,f,F}(X_T)$ denote the set of pluripotential subsolutions to the Cauchy problem above.		
	\end{definition}
	
	\begin{lemma}\label{exist_sub}
		The set $\mathcal{S}_{\f_0,f,F}(X_T)$ is non-empty, uniformly bounded from above on $X_T$, and stable under finite maxima.
	\end{lemma}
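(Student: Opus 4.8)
\emph{(Sketch of the three assertions.)} For non-emptiness the plan is to exhibit an explicit subsolution. Let $\rho,\chi$ be as in \eqref{rho} and \eqref{chi}, normalized so that $\rho,\chi\le0$, and set
\[
\underline{\f}(t,x):=g(t)\,\tfrac{\rho(x)+\chi(x)}{2}-At-C_0 ,
\]
where $C_0$ is the constant of Lemma~\ref{lem_initial} (with $\lambda=1$, which lies in $[(1+\delta_0 g(0))^{-1},1]$) and $A\ge0$ is large, to be fixed. First I would check $\underline{\f}\in\mathcal{P}(X_T,\omega)$: each slice is $\omega_t$-psh because, using $\omega_t-g(t)\theta\ge0$, $\theta+\dc\chi\ge2\delta_0\Theta$ and $\theta+\dc\rho\ge0$,
\[
\omega_t+\dc\underline{\f}_t=\tfrac{g(t)}{2}(\theta+\dc\rho)+\tfrac{g(t)}{2}(\theta+\dc\chi)+(\omega_t-g(t)\theta)\ \ge\ \tfrac{g(t)}{2}(\theta+\dc\rho)\ \ge\ 0 ;
\]
it is u.s.c.\ and in $L^1_{\loc}$, it is locally bounded in $\Omega$ ($\rho$ has minimal singularities, $\chi$ is smooth there), and $\partial_t\underline{\f}=g'(t)\tfrac{\rho+\chi}{2}-A\le0$ gives \eqref{famlip}. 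The displayed inequality also yields $(\omega_t+\dc\underline{\f}_t)^n\ge g(t)^n e^{c_1}f\,dV$ by \eqref{rho} and the monotonicity of Monge-Amp\`ere masses of ordered positive currents with bounded local potentials; so the subsolution inequality reduces to $n\log g(t)+c_1\ge g'(t)\tfrac{\rho+\chi}{2}-A+F(t,\cdot,\underline{\f}_t)$, which holds once $A\ge C_F-c_1-n\log g(0)$, using $g'\ge0$, $\rho+\chi\le0$, $\underline{\f}_t\le0$, the Lipschitz bound on $F$ (hence $F(t,\cdot,\underline{\f}_t)\le F(t,\cdot,0)\le C_F:=\max_{[0,T]\times X}F(\cdot,\cdot,0)$) and $g(t)\ge g(0)>0$. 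Finally $\limsup_{t\to0}\underline{\f}(t,x)=g(0)\tfrac{\rho(x)+\chi(x)}{2}-C_0\le\f_0(x)$ by Lemma~\ref{lem_initial}, so $\underline{\f}\in\mathcal{S}_{\f_0,f,F}(X_T)$.

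For stability under finite maxima it is enough to treat $\f,\psi\in\mathcal{S}_{\f_0,f,F}(X_T)$. The slices $\max(\f_t,\psi_t)$ are $\omega_t$-psh and locally bounded in $\Omega$, the extension is u.s.c.\ and $L^1_{\loc}$, and by Lemma~\ref{maxi} the a.e.\ time-derivative of $\max(\f,\psi)$ equals $\partial_t\f$ on $\{\f\ge\psi\}$ and $\partial_t\psi$ on $\{\psi>\f\}$, which gives \eqref{famlip} with $\kappa=\max(\kappa_J(\f),\kappa_J(\psi))$; hence $\max(\f,\psi)\in\mathcal{P}(X_T,\omega)$. Applying the second inequality of Lemma~\ref{maxi} on $\{\f\ge\psi\}$ and on $\{\psi\ge\f\}$, summing, and using that $F$ is nondecreasing in its last variable (so $F(t,\cdot,\max(\f,\psi))$ agrees with $F(t,\cdot,\f)$ where $\f\ge\psi$) yields $(\omega+\dc\max(\f,\psi))^n\wedge dt\ge e^{\partial_t\max(\f,\psi)+F(t,\cdot,\max(\f,\psi))}f\,dV\wedge dt$; the initial inequality passes obviously to the maximum. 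An induction then handles arbitrary finite maxima.

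The uniform upper bound is the main point. Fix $\f\in\mathcal{S}_{\f_0,f,F}(X_T)$, put $a(t):=\sup_X\f_t$ and $A(t):=\int_\Omega\f_t\,f\,dV$, and set $c_f:=\int_X f\,dV$. Since the $\Theta$-psh functions normalized by $\sup_X=0$ form a set bounded in every $L^q(dV)$, H\"older's inequality with $f\in L^p$ gives $|a(t)c_f-A(t)|\le C$ with $C$ a data constant, so it suffices to bound $A(t)$ from above. I would first show $\limsup_{t\to0}a(t)\le\sup_X\f_0$: if $t_j\to0$ realizes $\limsup a$, the normalized slices $\f_{t_j}-a(t_j)$ subconverge in $L^1$ to a $\Theta$-psh function $w$ with $\sup_X w=0$, and Hartogs' lemma combined with the pointwise bound $\limsup_{t\to0}\f(t,x)\le\f_0(x)$ forces $\limsup_t a(t)$ to be finite and $\le\sup_X\f_0$; in particular $\f_t$ is uniformly bounded above for small $t$, so the reverse Fatou lemma gives $\limsup_{t\to0}A(t)\le\int_\Omega\f_0\,f\,dV=:A_0$, a data constant.

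Next, integrating the subsolution inequality over $\Omega$ and using $\omega_t\le\Theta$ yields $\int_\Omega e^{\partial_t\f_t+F(t,\cdot,\f_t)}f\,dV\le\int_X\Theta^n$ for a.e.\ $t$; Jensen's inequality for the probability measure $c_f^{-1}f\,dV$, followed by the Lipschitz bound of $F$ in its last variable and the normalized-$\Theta$-psh estimate, gives $\int_\Omega\partial_t\f_t\,f\,dV\le C'+\kappa_F|A(t)|$ for a.e.\ $t$. Since for a.e.\ $x\in\Omega$ the function $t\mapsto\f(t,x)$ is locally a difference of a linear and an increasing function, $\f(t,x)-\f(s,x)\le\int_s^t\partial_\tau\f(\tau,x)\,d\tau$; integrating against $f\,dV$ over an exhaustion of $\Omega$ produces the differential inequality $A(t)-A(s)\le\int_s^t\bigl(C'+\kappa_F|A(\tau)|\bigr)\,d\tau$ for $0<s<t<T$. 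A Gr\"onwall argument then finishes: $A$ is left-continuous with only downward jumps, so any excursion of $A$ below $0$ re-enters $\{A\ge0\}$ at a value $\le C'/\kappa_F$, while on $\{A\ge0\}$ one has $A'\le C'+\kappa_F A$; together with $\limsup_{t\to0}A(t)\le A_0$ this bounds $A(t)\le(A_0^{+}+C'/\kappa_F)e^{\kappa_F T}$, hence $\sup_{X_T}\f\le M$ with $M$ independent of $\f$. The chief difficulty is precisely this step: $\partial_t\f$ is only controlled from above and only in the weak sense, and the slices $\f_t$ are merely quasi-plurisubharmonic (unbounded below near $\partial\Omega$), so the argument must exploit essentially the $L^p$-integrability of $f$, the uniform Lipschitz dependence of $F$ on the last variable, and one-sided semicontinuity control as $t\to0$.
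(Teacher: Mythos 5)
Your construction of the explicit subsolution and your treatment of stability under maxima follow the paper's proof essentially verbatim: the paper takes $\underline{u}=\lambda g(t)\tfrac{\rho+\chi}{2}-C_1(t+1)$ with $\lambda\in[(1+\delta_0g(0))^{-1},1]$, and your choice $\lambda=1$ is the admissible endpoint, with the same decomposition, the same use of Lemma~\ref{logconcave}/mixed Monge--Amp\`ere positivity on $\Omega$, and the same appeal to Lemma~\ref{maxi}. Where you genuinely diverge is the uniform upper bound. The paper only bounds subsolutions $\f\geq\underline{u}$ (which suffices, by stability under maxima, and is all that is used later for the envelope): it fixes $M$ so large that $G=\{\underline{u}(T,\cdot)>-M\}$ has $\mu(G)>\mu(X)/2$ for $\mu=fdV$, so that $\f_t>-M$ on $G$ and hence $F(t,\cdot,\f_t)\geq -m_F$ is bounded below by a constant there; Jensen then gives $\int_G\dot{\f}_t\,d\mu\leq C$ with $C$ independent of $\f$ and $t$, so $\int_G\f_t\,d\mu\leq\int_G\f_0\,d\mu+CT$ grows linearly, and the conclusion follows from the uniform $L^1(\mu)$ bound for normalized $\Theta$-psh functions. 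You instead work on all of $\Omega$ with no lower barrier, absorb $F(t,\cdot,\f_t)$ via the Lipschitz bound in $r$ at the price of a term $\kappa_F|A(t)|$, and close the estimate with a Gr\"onwall argument resting on the structural fact that $A(t)-Bt$ is decreasing (hence only downward jumps). Both routes are valid; the paper's is shorter precisely because the explicit subsolution supplies a uniform lower bound on a set of definite $\mu$-mass, which makes the time-derivative estimate linear and removes any need for Gr\"onwall, whereas yours handles arbitrary elements of $\mathcal{S}_{\f_0,f,F}(X_T)$ directly (note that such elements can have arbitrarily negative sup, since $\f-C$ remains a subsolution, so the $|A(t)|$ term is genuinely unavoidable in your setup). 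One caveat: your constant $B$ in ``$A-Bt$ decreasing'' comes from \eqref{famlip} and therefore depends on $\f$; this is harmless for the qualitative no-upward-jump claim that your Gr\"onwall step uses, but it does mean you cannot shortcut to the naive bound $A(t)\leq A(s)+B(t-s)$, so the ODE comparison really is needed and should be written out with the care you indicate.
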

	\begin{proof} Fix $\lambda\in [(1+\delta_0g(0))^{-1},1]$.
		Consider, for any $(t,x)\in X_T$,
		\begin{equation}\label{eq: subsol underline u}
		\underline{u}(t,x):=\lambda g(t)\frac{\rho(x)+\chi(x)}{2}-C_1(t+1),
		\end{equation}
		where $\rho$ and $\chi$ are defined in \eqref{rho} and \eqref{chi}, the uniform constant $C_1>0$ will be chosen later.
		By hypothesis \eqref{estimate_ome} on $\omega_t$ we have
		\begin{align*}
		\frac{\lambda}{2}(\omega_t+g(t)\dc\chi)+(1-\lambda)\omega_t &\geq \frac{\lambda}{2}g(t)(\theta+\dc\chi)+(1-\lambda)\omega_t\\
		&\geq \lambda g(t)\delta_0\Theta-(1-\lambda)\Theta\\
		&=[\lambda(1+\delta_0g(0))-1]\Theta\geq 0.
		\end{align*}
		since $g(t)$ is increasing. Therefore, we obtain
		\begin{equation}\label{eq: lem22}	\begin{aligned}[t]
		(\omega_t+\dc\underline{u}_t)^n&=\left(\frac{\lambda}{2}(\omega_t+g(t)\dc\rho)+\frac{\lambda}{2}(\omega_t+g(t)\dc\chi)+(1-\lambda)\omega_t\right)^n\\
		&\geq \left(\frac{\lambda}{2}g(t)(\theta+\dc\rho)\right)^n=(\lambda g(t))^ne^{c_1}fdV.
		\end{aligned}
		\end{equation} 
		We set
		\begin{align}\label{const_c1}
		C_1=C_0+M_F+|n\log (g(T))| +|c_1|,
		\end{align}
		it thus follows from~\eqref{eq: lem22} that 
		\begin{align*}
		\exp(\partial_t\underline{u}_t+F(t,\cdot,\underline{u}_t(\cdot)))fdV&=\exp(\lambda g'(t)(\rho+\chi)/2-C_1+F(t,\cdot,\underline{u}_t(\cdot)))fdV\\
		&\leq \exp( n\log (\lambda g(t))+c_1)fdV\\
		&\leq (\omega_t+\dc\underline{u}_t)^n
		\end{align*}
		using in the first inequality that $g$ is increasing in $t\in[0,T]$, $\sup_X\rho=\sup_X\chi=0$.
		It follows moreover from the choice of $C_1$ and Lemma~\ref{lem_initial} that $\underline{u}(0,\cdot)\leq \f_0$ on $X$, hence $\underline{u}\in\mathcal{S}_{\f_0,f,F}(X_T)$. 
		
		Let now $\f\in\mathcal{S}_{\f_0,f,F}(X_T)$ such that $\f\geq \underline{u}$. Set with $\mu:=fdV$.
		Consider the set
		$$G:=\{x\in X: \underline{u}(T,x)>-M \},$$
		for $M>0$ so large that $\mu(G)>\frac{\mu(X)}{2}$. We observe that for every $t\in (0,T)$, $$\f_t(x)\geq \underline{u}(t,x)\geq \underline{u}(T,x)>-M,\,\forall\, x\in G.$$
		Set $-m_F=\inf_{[0,T)\times X}F(t,x,-M)>-\infty$.
		Since $F(.,.,r)$ is non-decreasing in $r$ we obtain 
		\begin{align*}
		\int_Ge^{\dot{\f_t}-m_F}d\mu\leq \int_Ge^{\dot{\f}_t+F(t,\cdot,\f_t)}d\mu\leq \int_G(\omega_t+\dc\f_t)^n\leq\int_G(\Theta+\dc\f_t)^n\leq \int_X\Theta^n.
		\end{align*}
		On the other hand, it follows from Jensen's inequality that
		\begin{align*}
		\exp\left(\int_G\dot{\f_t}\dfrac{d\mu}{\mu(G)}\right)\leq \int_G e^{\dot{\f_t}}\dfrac{d\mu}{\mu(G)}.
		\end{align*} 
		Combining these two estimates we get
		\begin{align*}
		\int_G\dot{\f_t}d\mu&\leq \mu(G)\log\left(\frac{e^{m_F}\int_X\Theta^n}{\mu(G)}\right)\leq \mu(X)\log\left(\frac{2e^{m_F}\int_X\Theta^n}{\mu(X)}\right)=:C.
		\end{align*}
		We then infer that the function 
		$
		t\mapsto\int_G\f_td\mu-Ct
		$
		is non-increasing in $(0,T)$, hence 
		\begin{align}\label{eq: bound integral}
		\int_G\f_t d\mu \leq\int_G\f_0d\mu + Ct \leq \int_G\f_0d\mu +CT. 
		\end{align}
		On the other hand, it follows from~\cite[Proposition 8.5]{guedj2017degenerate} that there exists a uniform constant $C'$ (only depending on $\mu$) such that 
		\[
		\int_X (\psi -\sup_X \psi) d\mu \geq -C', \; \text{for all}\; \psi \in \PSH(X,\Theta). 
		\]
		Thus for each $t\in (0,T)$,
		\begin{align*}
		-C' \leq \int_X (\varphi_t -\sup_X \varphi_t) d\mu \leq \int_G  (\varphi_t -\sup_X \varphi_t) d\mu  \leq C'' -\mu(G) \sup_X \f_t.
		\end{align*}	
		We deduce that $\sup_X \varphi_t$ is uniformly bounded from above. 		
		
		\smallskip
		The stability under finite maxima follows immediately from Lemma~\ref{maxi}.
	\end{proof}
	From now on, we let $M_0>0$ denote  a uniform upper bound of  all pluripotential subsolutions $\f$ to \eqref{cmaf} such that $\f\geq \underline{u}$ on $X_T$,
	and set
	\begin{align}
	M_F:=\sup_{X_T}F(\cdot,\cdot,M_0).
	\end{align}		
	Lemma~\ref{exist_sub} allows us to define the upper envelope of  subsolutions:
	
	\begin{definition}
		We let 
		\begin{align*}
		U=U_{\f_0,f,F,X_T}:=\sup\{ \f\in\mathcal{S}_{\f_0,f,F}(X_T): \underline{u}\leq \f\leq M_0 \}
		\end{align*}
		denote the upper envelope of all subsolutions. 
	\end{definition}
	
	\begin{lemma}\label{lemexist} 
		There exists $\underline{\f}\in\mathcal{S}_{\f_0,f,F}(X_T)$ such that for any $x\in X$,
		\begin{align*}
		\lim_{t\rightarrow 0}\underline{\f}(t,x)=\f_0(x).
		\end{align*}
	\end{lemma}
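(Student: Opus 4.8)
The plan is to construct a subsolution $\underline{\f}$ whose slices converge to $\f_0$ as $t\to 0$ by a local-in-time solvability argument followed by gluing. First I would solve, on a small time interval $[0,\tau]$, an auxiliary Cauchy problem whose initial datum is $\f_0$ and which is a genuine subsolution of \eqref{cmaf}; the natural candidate is to take the linearization of the exponential term and bound $F$ from above by its Lipschitz constant, i.e.\ look for $v$ of the form $v(t,x) = \f_0(x) + t\,w(x) - C t$ (or more generally $v(t,x)=\max(\f_0(x), \underline u(t,x)) + \text{small correction}$), where $w$ is chosen so that $(\omega_0+\dc\f_0)^n$ dominates $e^{w+F(0,\cdot,\f_0)}fdV$; since $\f_0$ has minimal singularities it has full Monge--Amp\`ere mass, but $(\omega_0+\dc\f_0)^n$ need not dominate $fdV$ pointwise, so instead one exploits that $\underline u$ from \eqref{eq: subsol underline u} already has strictly positive density and glues.

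Concretely, I would set $\underline{\f} := \max(\f_0 \star, \underline{u})$ in a way that makes the initial trace exact, using the following idea borrowed from \cite{guedj2018pluripotential}: let $\f_0^\e$ be a decreasing sequence of $\omega_0$-psh functions with minimal singularities decreasing to $\f_0$ (these exist by Demailly regularization applied on $\Omega$, combined with $V_{\omega_0}$), solve the elliptic degenerate complex Monge--Amp\`ere equation $(\omega_0+\dc\p^\e)^n = c_\e e^{F(0,\cdot,\f_0^\e)} fdV$ with $\p^\e$ normalized, and then build the time-dependent competitor $\underline{\f}(t,x)$ by taking, on dyadic intervals shrinking to $0$, the appropriate maximum of translates $\f_0^{\e_k}(x) - C_k(t - t_k)$ chosen so that on $[t_{k+1}, t_k]$ this function is a subsolution (this uses that $F$ is Lipschitz in $t$ and increasing and convex in $r$, so a function affine-decreasing in $t$ with the right slope stays a subsolution as in Lemma~\ref{lem}) and so that the slices at $t_k$ match the $\p^{\e_k}$. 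The key structural inputs are: the stability of subsolutions under finite maxima (Lemma~\ref{maxi} and Lemma~\ref{exist_sub}), the characterization of subsolutions slice-by-slice (Lemma~\ref{lem}), and the fact that $\f_0^{\e_k}\to\f_0$ in $L^1$ which by Lemma~\ref{12} upgrades to upper semi-continuity of the extension.

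The main obstacle I anticipate is ensuring that the glued function $\underline{\f}$ genuinely lies in $\mathcal{P}(X_T,\omega)$ near $t=0$, in particular that the distributional bound $\partial_t\underline\f \le \kappa - \kappa(\rho+\chi)$ on compact subintervals of $(0,T)$ is preserved under the dyadic gluing (the slopes $C_k$ a priori blow up as $k\to\infty$, i.e.\ as $t\to 0$, which is compatible with Definition~\ref{def_pp} only if the blow-up is absorbed by the $-\kappa(\rho+\chi)$ term — so one must keep $C_k$ controlled, e.g.\ by choosing $\e_k$ so that $\|f\|_p$-type corrections entering $\p^{\e_k}$ stay bounded, exploiting the quantitative estimate $\rho \ge V_\theta - M\|f\|_p^{1/n}$ and the analogous bound for $\p^\e$), together with checking that $\liminf_{t\to 0}\underline\f(t,x) \ge \f_0(x)$ pointwise and not merely in $L^1$. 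I would handle the pointwise lower bound by noting that $\underline\f(t,\cdot) \ge \underline u(t,\cdot)$ and $\underline\f(t,\cdot)\ge \f_0^{\e_k}(\cdot) - C_k(t-t_k)$ for $t\in[t_{k+1},t_k]$, and letting $t\to 0$ along the $t_k$ forces $\liminf \ge \f_0^{\e_k} \to \f_0$; combined with the already-known $\limsup_{t\to0}\underline\f \le \f_0$ (from membership in $\mathcal S_{\f_0,f,F}(X_T)$ and Lemma~\ref{12}) this yields the claimed limit.
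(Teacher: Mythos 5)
There is a genuine gap: your construction never produces an actual subsolution near $t=0$. The building blocks you propose on each dyadic interval are translates of the form $\f_0^{\e_k}(x)-C_k(t-t_k)$, i.e.\ functions affine and decreasing in $t$ with fixed spatial slice. By Lemma~\ref{lem}, such a function is a subsolution only if $(\omega_t+\dc\f_0^{\e_k})^n\geq e^{-C_k+F(\cdot,\cdot,\cdot)}fdV$; since $f>0$ almost everywhere, the right-hand side is a strictly positive multiple of $dV$ no matter how large $C_k$ is, whereas the Monge--Amp\`ere measure of a Demailly regularization of $\f_0$ (or of $\f_0$ itself, or of $\max(\f_0,\underline u_t)$ on the set where $\f_0$ dominates) can vanish on sets of positive Lebesgue measure. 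So no choice of slopes $C_k$, bounded or not, makes these pieces subsolutions — this is exactly the obstruction you flag in your first paragraph, but the dyadic scheme does not remove it. If instead you use the elliptic solutions $\p^{\e_k}$ of $(\omega_0+\dc\p^{\e})^n=c_\e e^{F(0,\cdot,\f_0^\e)}fdV$ as slices, the Monge--Amp\`ere lower bound holds, but these functions solve a different equation and are in no way comparable to $\f_0$, so the required initial trace $\lim_{t\to0}\underline\f(t,x)=\f_0(x)$ is lost; your sentence ``so that the slices at $t_k$ match the $\p^{\e_k}$'' asks for two incompatible things simultaneously.

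The missing idea is the one the paper uses: take the \emph{convex combination} $\underline\f_t=(1-\alpha_t)\f_0+\alpha_t\, g(0)\frac{\rho+\chi}{2}$ with $\alpha_t=\delta^{-1}t$, where $\rho$ solves $(\theta+\dc\rho)^n=2^ne^{c_1}fdV$ and $\chi$ supplies positivity. The mixed Monge--Amp\`ere inequality (Lemma~\ref{logconcave}) then gives $(\omega_t+\dc\underline\f_t)^n\geq (\delta_0^{-1}t)^n e^{c_1}fdV$, and the degenerating factor $t^n$ is absorbed by adding $nt(\log(\delta_0^{-1}t)-1)$, whose time derivative $n\log(\delta_0^{-1}t)$ reproduces exactly $(\delta_0^{-1}t)^n$ inside the exponential on the right-hand side; a single explicit constant $C$ (finite by Lemma~\ref{lem_initial}) handles the remaining $F$- and $\partial_t$-terms. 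Because $\alpha_t\to0$ and $t\log t\to0$, the initial trace is exactly $\f_0$; the construction is then iterated on finitely many intervals of length $\leq\delta_0 g(0)$ covering $[0,T)$, restarting from the value at each endpoint, rather than on intervals shrinking to $0$. Note also that Definition~\ref{def_pp} only demands an \emph{upper} bound on $\partial_t\f$ on compact subintervals of $(0,T)$, so the logarithmic blow-up of the time derivative to $-\infty$ as $t\to0^+$ is harmless — the control of the slopes $C_k$ you worry about is not the real difficulty.
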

	\begin{proof}

		We set $\delta=\delta_0g(0)$. For any $(t,x)\in [0,\delta)\times X$, consider
		$$\underline{\f}(t,x)=(1-\alpha_t )\f_0(x)+\alpha_tg(0)\frac{\rho(x)+\chi(x)}{2}
		+nt(\log(\delta_0^{-1} t)-1)-Ct,$$
		where $\alpha_t=\delta^{-1}t$, the functions $\rho,\chi$ are defined in \eqref{rho}, \eqref{chi}, and $$C:=M_F+\delta^{-1}
		\sup_X\left(g(0)\frac{\rho+\chi}{2}-\f_{0}\right)-\min(c_1,0).$$ 
		Lemma \ref{lem_initial} with $\lambda=1$ ensures that $C<+\infty$. We compute 
		\begin{align*}
		\omega_t+\dc \underline{\f}_t&=(1-\alpha_t)(\omega_{0}+\dc\f_{0})+\frac{\alpha_t}{2}(\omega_{0}+g(0)\dc\rho)\\
		&+ \frac{\alpha_t}{2}(\omega_{0}+g(0)\dc\chi)+\omega_t-\omega_{0}.
		\end{align*}
		Since $\omega_t+t\Theta$ is increasing, we have  $\omega_t-\omega_{0}\geq-t\Theta$, hence
		\begin{align*}
		\frac{\alpha_t}{2}(\omega_{0}+g(0)\dc\chi)+\omega_t-\omega_{0}&\geq \frac{\alpha_t g(0)}{2}(\theta+\dc\chi)+\omega_t-\omega_{0}\\
		&\geq \alpha_t g(0)\delta_0\Theta-t\Theta=0.
		\end{align*}
		Computing the time derivative we obtain
		\begin{align*}
		\partial_t\underline{\f}(t,\cdot)=(\delta_0g(0))^{-1}\left(g(0)\frac{\rho+\chi}{2}-\f_0\right)+n\log(\delta_0^{-1}t)-C.
		\end{align*}
		Hence, for all $t\in (0,\delta)$ we have, by the choice of the constant $C$,
		\begin{align*}
		(\omega_t+\dc \underline{\varphi}_t)^n&\geq \left ( \frac{\alpha_t g(0)}{2}(\theta+\dc\rho)\right )^n=(\delta_0^{-1}t)^ne^{c_1}fdV\\
		&\geq e^{\partial_t\underline{\varphi}_t+F(t,\cdot,\underline{\varphi}_t)}fdV.
		\end{align*}
		We divide $[0,T]$ into $N$ small intervals of the same length $[T_k,T_{k+1}]$,  $k=0,...,N-1$ so that $|T_{k+1}-T_k| \leq \delta:= \delta_0 g(0)$, $T_0=0$ and $T_{N}=T$. For $t\in [T_k,T_{k+1}]$ we define 
		\begin{align*}
		\underline{\varphi}^{(k)}(t,\cdot):&= (1-\alpha^{(k)}_t) \varphi_{T_k} + \alpha^{(k)}_t g(T_k) \frac{\rho +\chi}{2}-C^{(k)}(t-T_k)\\
		&\quad + n(t-T_k) (\log (\delta_0^{-1}(t-T_k)) -1),
		\end{align*}
		where $\alpha_t^{(k)}=\delta ^{-1}(t-T_k)$, and 		
		\[
		C^{(k)}=M_F+\delta^{-1}
		\sup_X\left(g(T_k)\frac{\rho+\chi}{2}-\f_{T_k}\right)-\min(c_1,0).
		\]
		The subsolution constructed in the proof of Lemma~\ref{exist_sub} (see \eqref{eq: subsol underline u}) ensures that  $C^{(k)}<+\infty$ is  a uniform positive constant. 
		The same arguments as above ensure that $\underline{\f}^{(k)}$ is a pluripotential subsolution to~\eqref{cmaf} in $[T_k,T_{k+1}]\times X$. 
		Gluing these functions, we get our desired pluripotential subsolution defined on $[0,T)\times X$.
		It is also clear from the definition that $\underline{\f}(t,.)$ converges to $\f_0$ in $L^1(X,dV)$ as $t\rightarrow 0^+$.		
	\end{proof}

	\subsection{Lipschitz regularity in time} In this section, we study the regularity in time $t$ of the Perron upper envelope by adapting some arguments in~\cite[Section 4]{guedj2018pluripotential}. 
	\begin{proposition}\label{pro1}
		For all $0<S<T$ we have $U_{\f_0,f,F,X_S}=U_{\f_0,f,F,X_T}$ in $X_S$.
	\end{proposition}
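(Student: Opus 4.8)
The plan is to prove the two inclusions $U_{\f_0,f,F,X_S}\leq U_{\f_0,f,F,X_T}$ and $U_{\f_0,f,F,X_S}\geq U_{\f_0,f,F,X_T}$ on $X_S$ separately. The first is the easy direction: if $\f\in\mathcal{S}_{\f_0,f,F}(X_T)$, then its restriction $\f|_{X_S}$ lies in $\mathcal{S}_{\f_0,f,F}(X_S)$ (being a pluripotential subsolution on $(0,T)\times\Omega$ it is one on $(0,S)\times\Omega$, and the initial condition $\limsup_{t\to 0}\f(t,x)\leq\f_0(x)$ is unchanged), so $\f|_{X_S}\leq U_{\f_0,f,F,X_S}$; taking the supremum over all such $\f$ (with $\underline{u}\leq\f\leq M_0$) gives $U_{\f_0,f,F,X_T}\leq U_{\f_0,f,F,X_S}$ on $X_S$.

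The content is in the reverse inclusion: a subsolution defined only on $X_S$ must be extended to a subsolution on all of $X_T$ without decreasing it on $X_S$. First I would reduce to extending a \emph{single} subsolution $\f\in\mathcal{S}_{\f_0,f,F}(X_S)$ with $\underline{u}\leq\f\leq M_0$ to some $\tilde\f\in\mathcal{S}_{\f_0,f,F}(X_T)$ with $\tilde\f\geq\f$ on $X_S$; taking suprema then yields $U_{\f_0,f,F,X_S}\leq U_{\f_0,f,F,X_T}$ on $X_S$. Fix a parameter $S'\in(0,S)$ and consider the slice $\f_{S'}$, which is an $\omega_{S'}$-psh function, locally bounded on $\Omega$, satisfying $\underline{u}(S',\cdot)\leq\f_{S'}$. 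The idea is to continue from time $S'$ onward by the same barrier construction as in Lemma~\ref{lemexist}: on $[S',T]$ set
\begin{align*}
\hat\f(t,x)&:=(1-\alpha_t)\f_{S'}(x)+\alpha_t g(S')\frac{\rho(x)+\chi(x)}{2}\\
&\quad + n(t-S')\bigl(\log(\delta_0^{-1}(t-S'))-1\bigr)-C(t-S'),
\end{align*}
with $\alpha_t=\delta^{-1}(t-S')$, $\delta=\delta_0 g(S')$, and $C$ chosen large (in terms of $M_F$, $c_1$, and $\sup_X(g(S')(\rho+\chi)/2-\f_{S'})$, which is finite since $\f_{S'}$ has locally bounded, in fact minimal-type lower bound coming from $\underline{u}$) exactly as in the proof of Lemma~\ref{lemexist}. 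Splitting $[S',T]$ into intervals of length $\leq\delta_0 g(S')$ and gluing as there, one obtains a pluripotential subsolution $\hat\f$ on $[S',T)\times X$ with $\hat\f(S',\cdot)=\f_{S'}$. Then define $\tilde\f:=\f$ on $(0,S']\times X$ and $\tilde\f:=\max(\f,\hat\f)$ on $[S',T)\times X$; because the two pieces agree at $t=S'$, Lemma~\ref{maxi} shows $\tilde\f$ is a pluripotential subsolution across $t=S'$ and on all of $(0,T)\times\Omega$, it satisfies $\tilde\f\geq\f$ everywhere it is defined, and $\limsup_{t\to 0}\tilde\f=\limsup_{t\to 0}\f\leq\f_0$, so $\tilde\f\in\mathcal{S}_{\f_0,f,F}(X_T)$. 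Truncating by $M_0$ from above (using Lemma~\ref{maxi} again, since $M_0$ is a supersolution-type upper barrier — or simply noting $\tilde\f\leq M_0$ already holds on $X_S$ and replacing $\tilde\f$ by $\min(\tilde\f, M_0)$ where needed via the same stability) keeps it in the admissible family. Restricting to $X_S$ gives $\f=\tilde\f\leq U_{\f_0,f,F,X_T}$, and the sup over $\f$ completes the argument.

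The main obstacle I expect is the bookkeeping at the gluing time $t=S'$: one must check that $\partial_t\tilde\f$ still satisfies the defining upper bound \eqref{famlip} of $\mathcal{P}(X_T,\omega)$ across $t=S'$ (the two one-sided derivatives need to be compatible with $\kappa-\kappa(\rho+\chi)$, which is fine since $\hat\f$ was built precisely to have such a bound and $\f$ already has one), and that the Monge–Ampère inequality in the sense of measures on $(0,T)\times\Omega$ is not spoiled by the jump in the time-derivative at the single slice $\{S'\}\times\Omega$, which has $\ell\otimes\mu$-measure zero and so contributes nothing — this is exactly the content of Lemma~\ref{maxi} applied on a neighborhood of $S'$. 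A minor point is ensuring $\sup_X(g(S')(\rho+\chi)/2-\f_{S'})<+\infty$: this follows because $\f_{S'}\geq\underline{u}(S',\cdot)=\lambda g(S')(\rho+\chi)/2-C_1(S'+1)$ for the relevant $\lambda<1$, so $g(S')(\rho+\chi)/2-\f_{S'}\leq (1-\lambda)g(S')(\rho+\chi)/2+C_1(S'+1)\leq C_1(S'+1)$ since $\rho+\chi\leq 0$.
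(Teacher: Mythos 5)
Your overall strategy is the same as the paper's: the restriction direction is immediate, and the substantive direction is handled by prolonging a subsolution on $X_S$ past an interior time $S'<S$ (the paper's $t_0$) via the barrier $(1-\alpha_t)\f_{S'}+\alpha_t g(S')\tfrac{\rho+\chi}{2}+n(t-S')(\log(\delta_0^{-1}(t-S'))-1)-C(t-S')$, with the finiteness of $C$ coming from the lower bound $\f\geq\underline{u}$ exactly as you note. The one genuine defect is the gluing step: you set $\tilde\f:=\max(\f,\hat\f)$ on $[S',T)\times X$, but $\f$ is not defined for $t\geq S$, and even reading this as $\max(\f,\hat\f)$ on $[S',S)$ followed by $\hat\f$ on $[S,T)$, the resulting function need not be upper semi-continuous at $\{S\}\times X$: $\f$ is only u.s.c.\ on the open set $X_S$, so $\limsup_{t\to S^-}\f(t,x)$ is not controlled by $\hat\f(S,x)$, and a downward jump at $t=S$ would take $\tilde\f$ out of $\mathcal{P}(X_T,\omega)$. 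The paper avoids this by \emph{not} taking the maximum --- the glued function equals $\f$ only on $[0,t_0]$ and equals the barrier afterwards, which agree at $t_0$ by construction; one then only concludes $\f\leq U_T$ on $[0,t_0]\times X$ and lets $t_0\to S$ at the end. You should adopt that limit rather than the max. Two smaller points: $\min(\tilde\f,M_0)$ is not a legitimate operation (the minimum of two $\omega_t$-psh functions is generally not $\omega_t$-psh), but no truncation is needed since Lemma~\ref{exist_sub} already shows every subsolution dominating $\underline{u}$ is bounded above by $M_0$; and to ensure the extension dominates $\underline{u}$ on all of $X_T$ you may replace it by $\max(\cdot,\underline{u})$, which \emph{is} covered by Lemma~\ref{maxi}.
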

	\begin{proof}
		Set $U_T:=U_{\f_0,f,F,X_T}$ and $U_S:=U_{\f_0,f,F,X_S}$. We can assume that $|T-S|\leq \dfrac{\delta_0 g(0)}{2}$, since if we can show that $U_T=U_S$ for such $S$ we can restart the process to prove that $U_S=U_{S'}$ for $S-\dfrac{\delta_0 g(0)}{2}<S'<S$. 
		
		It suffices to show that $U_S\leq U_T$ because the reverse inequality is clear. Fix $\f\in\mathcal{S}_{\f_0,f,F}(X_S)$. Fix $0<t_0<S$ such that $T-t_0< \delta_0 g(0)$. 
		Set, for $(t,x)\in (t_0,T)\times X$, 
		\begin{align*}
		\psi(t,x)=(1-\alpha_t)\f(t_0,x)+\alpha_t g(t_0)\dfrac{\rho(x)+\chi(x)}{2}-C(t-t_0)+n(t-t_0)(\log[\delta_0^{-1}(t-t_0)]-1),
		\end{align*}
		where $\alpha_t=(\delta_0g(t_0))^{-1}(t-t_0)<1$, the functions $\rho,\chi$ are defined in~\eqref{rho}, \eqref{chi}, and $$C:=M_F+(\delta_0g(t_0))^{-1}
		\sup_X\left(g(t_0)\frac{\rho+\chi}{2}-\f_{t_0}\right)+|c_1|.$$ 
		From~\eqref{eq: subsol underline u}, with  $\lambda=1$ and $t=t_0$, we see that $C<+\infty$ is a uniform constant. From \eqref{eq: subsol underline u} again we see that $\partial_t \psi(t,x)$ satisfies  \eqref{12}. 
		We compute
		\begin{align*}
		\omega_t+\dc\psi_t&=(1-\alpha_t)(\omega_{t_0}+\dc\f_{t_0})+\dfrac{\alpha_t}{2}(\omega_{t_0}+g(t_0)\dc\rho)\\
		&+\dfrac{\alpha_t}{2}(\omega_{t_0}+g(t_0)\dc\chi)+\omega_t-\omega_{t_0}.
		\end{align*}
		Since $\omega_t+t\Theta$ is increasing, we have $\omega_t-\omega_{t_0}\geq-(t-t_0)\Theta$ for all $t\geq t_0$. It thus follows that 
		\begin{align*}
		\dfrac{\alpha_t}{2}(\omega_{t_0}+g(t_0)\dc\chi)+\omega_t-\omega_{t_0}&\geq \dfrac{\alpha_tg(t_0)}{2}(\theta+\dc\chi)+\omega_t-\omega_{t_0}\\
		&\geq \alpha_t g(t_0)\delta_0\Theta-(t-t_0)\Theta=0.
		\end{align*}
		Hence, for all $t\in[t_0,T)$,
		\begin{align*}
		(\omega_t+\dc\psi_t)^n&\geq \left ( \frac{\alpha_t g(t_0)}{2}(\theta+\dc\rho)\right )^n=(\delta_0^{-1}(t-t_0))^ne^{c_1}fdV\\
		&\geq e^{\partial_t\psi_t+F(t,\cdot,\psi_t)}fdV,
		\end{align*}
		by the choice of the constant $C$. Therefore the function \begin{align*}
		(t,x)\mapsto u(t,x)	:=\begin{cases}
		\f(t,x), \quad \text{if}\, t\in[0,t_0]\\
		\psi(t,x),\quad\text{if}\, t\in [t_0,T)
		\end{cases}
		\end{align*} 
		is a pluripotential subsolution to~\eqref{cmaf} in $[0,T)\times X$ by using Lemma~\ref{lem}. We thus have $u\in\mathcal{S}_{\f_0,f,F}(X_T)$ since $u(0,\cdot)=\f_0$. This yields $u\leq U_T$ in $[0,T)\times X$. In particular $\f\leq U_T$ in $[0,t_0]\times X$, and it follows that $U_S\leq U_T$ on $[0,t_0]\times X$ by taking supremum over all subsolutions. We now let $t_0\rightarrow S$ to obtain $U_s\leq U_T$ in $X_S$.
	\end{proof}
	Next we introduce the mixed type inequality:
	
	\begin{lemma}\label{logconcave}
		Let $\theta_1,\theta_2$ be two closed smooth $(1,1)$-forms on $X$ with big cohomology classes. Let $\f_1$ ($\f_2$ resp.) be a bounded $\theta_1$-psh ($\theta_2$-psh resp.) function such that
		\begin{align*}
		(\theta_1+\dc\f_1)^n\geq e^{f_1}\mu\quad \text{and}\quad (\theta_2+\dc\f_2)^n\geq e^{f_2}\mu
		\end{align*}
		where $f_1,f_2$ are bounded measurable functions and $\mu$ is a positive Radon measure with $L^1$ density with respect to Lebesgue measure. Then, for any $\lambda\in(0,1)$,
		\begin{align*}
		(\lambda(\theta_1+\dc\f_1)+(1-\lambda)(\theta_2+\dc\f_2))^n\geq e^{\lambda f_1+(1-\lambda)f_2}\mu.
		\end{align*}
	\end{lemma}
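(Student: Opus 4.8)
The plan is to reduce everything to a local statement and then feed it into the classical mixed Monge-Amp\`ere inequality. Since both sides are positive measures on $X$, it is enough to prove the inequality on an arbitrary small ball $B\subset X$. On $B$ we may write $\theta_j=\dc h_j$ with $h_j$ smooth, and set $u_j:=h_j+\f_j$; then $u_j$ is a bounded psh function on $B$ with $\dc u_j=\theta_j+\dc\f_j$, and $w:=\lambda u_1+(1-\lambda)u_2$ is bounded psh as well. Writing $\mu=\phi\,dV$ on $B$ with $0\le\phi\in L^1_{\loc}$, the hypotheses become $(\dc u_j)^n\ge e^{f_j}\phi\,dV$, and the assertion to prove is $(\dc w)^n\ge e^{\lambda f_1+(1-\lambda)f_2}\phi\,dV$ on $B$.

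Let $g_j$ denote the density of the absolutely continuous part of $(\dc u_j)^n$ with respect to $dV$, so $g_j\ge e^{f_j}\phi$ since $e^{f_j}\phi\,dV$ is absolutely continuous. The core claim is $(\dc w)^n\ge g_1^{\lambda}g_2^{1-\lambda}\,dV$. When $u_1,u_2$ are smooth this is pointwise: it follows from the concavity of $A\mapsto(\det A)^{1/n}$ on non-negative Hermitian matrices (Minkowski's inequality) applied to the complex Hessians, followed by the weighted AM--GM inequality. In general I would expand by multilinearity of the Bedford--Taylor product,
\[
(\dc w)^n=\sum_{k=0}^n\binom nk\lambda^k(1-\lambda)^{n-k}\,(\dc u_1)^k\wedge(\dc u_2)^{n-k},
\]
and apply to each summand the mixed Monge-Amp\`ere inequality for bounded potentials (see, e.g.,~\cite{guedj2017degenerate} and the references therein): for bounded psh $v_1,\dots,v_n$ on $B$ with $(\dc v_i)^n\ge g_{v_i}\,dV$ one has $\dc v_1\wedge\cdots\wedge\dc v_n\ge(g_{v_1}\cdots g_{v_n})^{1/n}\,dV$. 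Taking $k$ factors equal to $u_1$ and $n-k$ equal to $u_2$ gives $(\dc u_1)^k\wedge(\dc u_2)^{n-k}\ge g_1^{k/n}g_2^{(n-k)/n}\,dV$, and summing, the binomial theorem turns the right-hand side into $\bigl(\lambda g_1^{1/n}+(1-\lambda)g_2^{1/n}\bigr)^n\,dV$. Hence, using $\lambda a+(1-\lambda)b\ge a^{\lambda}b^{1-\lambda}$ with $a=g_1^{1/n}$, $b=g_2^{1/n}$, and then $g_j\ge e^{f_j}\phi$,
\[
(\dc w)^n\ \ge\ g_1^{\lambda}g_2^{1-\lambda}\,dV\ \ge\ e^{\lambda f_1+(1-\lambda)f_2}\,\phi\,dV\ =\ e^{\lambda f_1+(1-\lambda)f_2}\,\mu .
\]
Since $B$ was an arbitrary ball, the inequality holds on $X$.

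The one genuinely nontrivial ingredient is the mixed Monge-Amp\`ere inequality quoted above, and this is the main obstacle. A naive proof by convolution and weak convergence fails, because the functional $(\nu_1,\dots,\nu_n)\mapsto\int\prod_{i=1}^n(d\nu_i/dV)^{1/n}\,dV$ is not weakly lower semicontinuous: oscillating approximating densities can make the geometric-mean bound collapse in the limit. The correct argument first reduces to densities $g_{v_i}\in L^\infty$ by monotone approximation, then replaces the $v_i$ one at a time by continuous solutions of the Monge-Amp\`ere--Dirichlet problem with prescribed density, controlling the discrepancy through the domination (comparison) principle, and only at the end invokes the pointwise matrix inequality on the smooth model. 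Once this is granted, everything in the paragraph above is elementary.
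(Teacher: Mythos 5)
Your proof is correct and follows essentially the same route as the paper's: the paper's proof is a one-line citation of \cite[Lemma 2.10]{guedj2018pluripotential} together with the mixed Monge-Amp\`ere inequalities of \cite{dinew2009inequality}, and the argument there is precisely your localization, multilinear expansion, term-by-term mixed inequality, and binomial/weighted AM--GM (equivalently, convexity of the exponential) step. Your closing remarks on why the mixed inequality itself cannot be obtained by naive regularization and requires the Dirichlet-problem/comparison-principle argument are accurate, but the paper treats that inequality as a black box.
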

	\begin{proof}
		The proof is the same as that of \cite[Lemma 2.10]{guedj2018pluripotential} using the convexity of the exponential together with the mixed Monge-Amp\`ere inequalities;
		see e.g.~\cite{dinew2009inequality}.
	\end{proof}	
	\begin{theorem}\label{thmlips}
		There exists a uniform constant $L_U>0$ such that for all $(t,x)\in X_T$,
		\begin{align}\label{lips}
		t|\partial_tU(t,x)|\leq L_U-L_U(\rho(x)+\chi(x)).
		\end{align}
	\end{theorem}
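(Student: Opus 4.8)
The strategy is to show that if $\varphi$ is any subsolution with $\underline{u} \le \varphi \le M_0$, then the time-rescaled slices of $\varphi$ can be recombined (via a convex-combination-in-time trick) to produce another admissible subsolution that dominates $\varphi$ and yields the two-sided Lipschitz bound at the level of the envelope $U$. Concretely, fix $t_0 \in (0,T)$ and a small parameter $s$, and compare $\varphi_{t_0}$ with a suitably rescaled-in-time competitor. The key structural input is Lemma \ref{logconcave} (the log-concavity/mixed Monge-Amp\`ere inequality), which tells us that a convex combination in time of two subsolution slices remains a subsolution slice, with the right-hand side density transforming log-affinely. The function $\rho + \chi$ plays the role of the "reservoir of positivity": by Lemma \ref{lem_initial} and the construction of $\underline{u}$ in \eqref{eq: subsol underline u}, a small multiple of $g(t)(\rho+\chi)/2$ is $\omega_t$-psh and its Monge-Amp\`ere mass dominates $e^{c_1} f\, dV$, so one can always "borrow" from it to absorb the errors created by time-reparametrisation and by the Lipschitz constant $\kappa_F$ of $F$.

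**Upper bound on $\partial_t U$.** For the bound $t\,\partial_t U(t,x) \le L_U - L_U(\rho+\chi)$, I would argue as follows. Take $\varphi \in \mathcal{S}_{\f_0,f,F}(X_T)$ with $\underline{u}\le\varphi\le M_0$ and fix $t_0$. For $t > t_0$ close to $t_0$, set $\beta_t = t/t_0 > 1$ and consider the competitor obtained by dilating the time variable of $\varphi$ near $t_0$, i.e. roughly $\psi(t,x) = (1-\alpha_t)\varphi(t_0 \beta_t^{-1}\cdot \text{something}, x) + \alpha_t g(t_0)(\rho+\chi)/2 + (\text{explicit log term in } t) - C(t-t_0)$, patched to $\varphi$ on $[0,t_0]$ — exactly the type of construction already used in the proof of Proposition \ref{pro1}. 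Computing $\omega_t + \dc\psi_t$ and using $\omega_t - \omega_{t_0} \ge -(t-t_0)\Theta$ together with \eqref{chi}, the positivity is maintained; then Lemma \ref{logconcave} applied to the two pieces gives $(\omega_t+\dc\psi_t)^n \ge e^{\dot\psi_t + F(t,\cdot,\psi_t)} f\,dV$ provided $C$ is chosen of the form $M_F + (\delta_0 g(t_0))^{-1}\sup_X(g(t_0)(\rho+\chi)/2 - \varphi_{t_0}) + |c_1| + \kappa_F(\ldots)$; by Lemma \ref{lem_initial} this $C$ is finite and, crucially, controlled by $L_U - L_U(\rho(x)+\chi(x))$ after dividing by $t$ once one tracks where $\rho+\chi$ enters the choice of $C$. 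Since $\psi \in \mathcal{S}_{\f_0,f,F}(X_T)$, we get $\psi \le U$, and differentiating the resulting inequality $\varphi(t_0,x) \le \psi(t,x) \le U(t,x)$ in the appropriate direction as $t \to t_0$ yields the one-sided bound for $\varphi$, hence for $U$ by taking the supremum and using that the right-hand side of \eqref{lips} is independent of the subsolution.

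**Lower bound on $\partial_t U$.** For the lower bound $t\,\partial_t U(t,x) \ge -L_U + L_U(\rho+\chi)$ — equivalently $\partial_t U \ge \kappa - \kappa(\rho+\chi)$, which is already built into $\mathcal{P}(X_T,\omega)$ but now with a uniform constant — the natural approach is the symmetric one: now contract time near $t_0$, i.e. take $\beta_t < 1$ for $t > t_0$, forming $\psi(t,x) = (1-\alpha_t)\varphi(t_1(t),x) + \alpha_t g(t_0)(\rho+\chi)/2 - C(t - t_0) + (\text{log correction})$ where $t_1(t) \ge t_0$ is chosen so that the reparametrised slice of $\varphi$ is pushed backward in time. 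One checks $\psi$ is still an admissible subsolution (here one uses monotonicity of $g$ and the two-sided bound \eqref{estimate_ome} on $\dot\omega_t$), concludes $\psi \le U$, and reads off the lower bound at $(t_0,x)$. Combining the two gives \eqref{lips} with $L_U$ depending only on the structural data ($n$, $c_1$, $M_F$, $\delta_0$, $g$, $\Theta$, $C_0$ from Lemma \ref{lem_initial}, and $\kappa_F$).

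**Main obstacle.** The delicate point is not the existence of the competitor $\psi$ — that is a routine adaptation of the gluing argument in Proposition \ref{pro1} — but rather tracking precisely how the constant $C$ in that construction depends on $\rho + \chi$, so that after dividing by $t$ one obtains the affine-in-$(\rho+\chi)$ bound \eqref{lips} with a \emph{uniform} $L_U$ rather than an $x$-dependent one. The term $(\delta_0 g(t_0))^{-1}\sup_X(g(t_0)(\rho+\chi)/2 - \varphi_{t_0})$ must be split: the part controlled by $\sup_X(\rho+\chi) = 0$ and $\sup_X \varphi_{t_0} \le M_0$ contributes to the uniform constant, while the pointwise deficit of $\varphi_{t_0}(x)$ below its supremum — which is exactly where $\varphi_{t_0} \ge \underline{u}_{t_0} \ge \lambda g(0)(\rho+\chi)/2 - C_1$ gets used — contributes the $-L_U(\rho(x)+\chi(x))$ term. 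Making this bookkeeping precise, uniformly over all subsolutions and over $t_0 \in (0,T)$ (with the $1/t$ blow-up as $t_0 \to 0$ matched by the $\log(\delta_0^{-1}(t-t_0))$ correction), is the technical heart of the argument, and is where one most closely follows \cite[Section 4]{guedj2018pluripotential}, modified to accommodate the unboundedness of the potentials via the lower barrier $\underline{u}$.
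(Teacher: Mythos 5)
Your plan is essentially the paper's proof: a time-reparametrised competitor $u^s(t,x)=\frac{\alpha_s}{s}\f(st,x)+(1-\alpha_s)g(t)\frac{\rho+\chi}{2}-C|s-1|(t+1)$ with $\alpha_s=1-A|s-1|$, made into a subsolution via Lemma~\ref{logconcave}, the positivity reserve of $\chi$ from \eqref{chi}, and the lower barrier $\underline{u}$ to control the monotonicity of $F$, after which one takes the supremum over $\f$ and lets $s\to 1^{\pm}$ (the paper treats both signs at once through $|s-1|$, rather than running separate dilation/contraction arguments near each $t_0$). The only point where your bookkeeping departs from the paper's is the source of the $-L_U(\rho+\chi)$ term: in the paper the constant $C$ stays genuinely uniform (it is not split), and the $x$-dependence of \eqref{lips} comes from differentiating the explicit additive term $(1-\alpha_s)g(t)\frac{\rho+\chi}{2}=A|s-1|g(t)\frac{\rho+\chi}{2}$ in $u^s$ at $s=1$, while $\f\ge\underline{u}$ is used only to verify $\f_{st}\ge u^s(t,\cdot)$ and the subsolution inequality.
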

	\begin{proof} 

		Let $\f\in\mathcal{S}_{\f_0,f,F}(X_T)$ such that $\f\geq \underline{u}$ on $X_T$, where $\underline{u}$ is defined in \eqref{eq: subsol underline u}. Fix $0<T'<T$ and $\e_0>0$ so small that $(1+\e_0)T'<T$. Set, for all $(t,x)\in X_{T'}$, $s\in (1-\e_0,1+\e_0)$,
		\begin{align*}
		u^s(t,x):=\dfrac{\alpha_s}{s}\f(st,x)+(1-\alpha_s)g(t)\dfrac{\rho(x)+\chi(x)}{2}-C|s-1|(t+1),
		\end{align*}
		where $\rho,\chi$ are defined in \eqref{rho}, \eqref{chi}, $\alpha_s=1-A|s-1|,$ and
		\begin{align*}
		C=C_0(A+2)+\kappa_FT+AM_F+(A+2)C_1(T+1)+(A+2)M_0. 
		\end{align*}
		The constant $C_1$ is defined in \eqref{const_c1}, and the constant $A$ will be chosen later that depends only on $T$.
		We will show that $u^s\in\mathcal{S}_{\f_0,f,F}(X_T)$. We compute
		\begin{align*}
		\omega_t+\dc u^s(t,\cdot)&=\dfrac{\alpha_s}{s}(\omega_{st}
		+\dc\f_{st})\\
		&\quad+\alpha_s\omega_t-\dfrac{\alpha_s}{s}\omega_{st}
		+(1-\alpha_s)\left(\omega_t+g(t)\dc \dfrac{\rho+\chi}{2}\right).	
		\end{align*}
		Since $\f$ is a subsolution to \eqref{cmaf}, we have for almost every $t\in(0,T')$,
		\begin{align*}
		(s^{-1}(\omega_{st}+\dc\f_{st}))^n\geq e^{-n\log s+\partial_{t}\f(st,\cdot)+F(t,\cdot,\f(st,\cdot))}fdV.
		\end{align*} 
		Recalling the definition of $\rho$ and $\omega_t \geq g(t) \theta$ we also have 
		\begin{align}\label{eq: weaker assumption on rho}
		\left(\dfrac{1}{2}(\omega_t+g(t)\dc\rho)\right)^n\geq\left(\dfrac{g(t)}{2}(\theta+\dc\rho)\right)^n=e^{n\log g(t)+c_1}fdV.
		\end{align}
		On the other hand, since $\dot{\omega}_t\geq -\Theta$, we have
		\begin{align*}
		\alpha_s\omega_t-\dfrac{\alpha_s}{s}\omega_{st}&=\dfrac{\alpha_s}{s}(\omega_t-\omega_{st})+\alpha_s(1-s^{-1})\omega_t\\
		&\geq -\alpha_st \left|s^{-1}-1\right|\Theta-\alpha_s\left |s^{-1}-1\right|\Theta\\
		&\geq -(t+1)s^{-1}|s-1|\Theta\\
		&\geq -(2T+2)|s-1|\Theta,
		\end{align*}     
		where the last line follows from $s\geq 1/2$.
		Recall that $\theta+\dc\chi\geq 2\delta_0\Theta$ for some $\delta_0>0$. If we choose  $A\geq 2(T+1)(\delta_0g(0))^{-1}$, then  
		\begin{align*}
		\alpha_s\omega_t-\dfrac{\alpha_s}{s}\omega_{st}+(1-\alpha_s)\frac{\omega_t+g(t)\dc \chi}{2}\geq -(2T+2)|s-1|\Theta+A|s-1|g(t)\delta_0\Theta\geq 0,
		\end{align*}
		since $g$ is increasing in $t$.
		Combining these estimates with the mixed Monge-Amp\`ere inequality (Lemma \ref{logconcave}) we obtain
		\begin{equation}\label{eq: 28}
		\begin{split}
		(\omega_t+\dc u^s(t,\cdot))^n&\geq (\alpha_s(s^{-1}\omega_{st}+\dc\f_{st})+(1-\alpha_s)(2^{-1}(\omega_t+\dc\rho)))^n\\
		&\geq \exp(\alpha_s\partial_{t}\f(st,\cdot)+\alpha_sF(st,\cdot,\f(st,\cdot))\\
		&\quad -\alpha_s n\log s
		+(1-\alpha_s)(n\log g(t)+c_1))f dV \\
		&\geq \exp(\partial_{t}u^s(t,\cdot)+F(t,\cdot,u^s(t,\cdot))fdV. 
		\end{split}   
		\end{equation}
		where the last line follows from the choice of $C$ as we now explain. Indeed, observe that \begin{equation}
		\begin{split}
		\alpha_s\partial_{t}\f(st,\cdot)&=\partial_{t}u^s(t,\cdot)+C|s-1|-(1-\alpha_s)g'(t) \frac{\rho+\chi}{2} \\
		&\geq \partial_{t}u^s(t,\cdot)+C|s-1|. \label{eq: Lipschitz 1}
		\end{split}   
		\end{equation}
		Since $g$ is non-decreasing,  we also have 
		\begin{align*}
		g(ts)-g(t)\leq \kappa_gt|s-1|\leq \kappa_gT\e_0g(0)^{-1} g(t). 
		\end{align*}
		It thus follows that  $g(ts)\leq \gamma g(t)$ where $\gamma=1+\e_0\kappa_gTg(0)^{-1}$. Choosing $\varepsilon_0$ small enough at the beginning we can ensure that $\gamma(1+\delta_0g(0))^{-1}<1$ . Up to increasing $A$ so large that $\frac{A}{A+2}\geq \gamma(1+\delta_0g(0))^{-1}$, hence
		\begin{align*}
		\left( 1-\frac{\alpha_s}{s}\right)\f_{st}
		&=\left( 1-\frac{\alpha_s}{s}\right)(\f_{st}-M_0)+\left( 1-\frac{\alpha_s}{s}\right)M_0\\
		&\geq (A+2)|s-1|(\f_{ts}-M_0)\\
		&\geq (A+2)|s-1|\left(\frac{A}{(A+2)\gamma}g(ts)\frac{\rho+\chi}{2}-C_1(T+1)-M_0\right)\\
		&\geq A|s-1|g(t)\frac{\rho+\chi}{2}-((A+2)C_1(T+1)+(A+2)M_0)|s-1|,
		\end{align*} where the first inequality follows from the elementary one $1-\frac{\alpha_s}{s}\leq (A+2)|s-1|$, while the second inequality follows from  $\f\geq \underline{u}$.
		We infer 
		\begin{align*}
		\f_{st}\geq \frac{\alpha_s}{s}\f_{st}+(1-\alpha_s)g(t)\frac{\rho+\chi}{2}- C|s-1| 
		\geq u^s(t,\cdot). \label{eq: Lipschitz 2}
		\end{align*}
		Using this and the assumption that $F$ is non-decreasing in $r$  
		and uniformly Lipschitz in $t$, we get 
		\begin{equation}
		\begin{split}\label{eq: Lipschitz 3}
		\alpha_sF(st,\cdot,\f(st,\cdot))&= F(st,\cdot,\f(st,\cdot))-(1-\alpha_s) F(st,\cdot,\f(st,\cdot)) \\
		&\geq F(t,\cdot,\f_{ts}(\cdot))-\kappa_Ft|s-1|-|s-1|AM_F  \\
		&\geq F(t,\cdot,u^s(t,\cdot))-(\kappa_FT+AM_F)|s-1|. 
		\end{split}
		\end{equation}
		Combining \eqref{eq: Lipschitz 1}, \eqref{eq: Lipschitz 3}, and the definition of $C$, we obtain the last inequality in~\eqref{eq: 28}.
		Hence $u^s$ is a pluripotential subsolution to~\eqref{cmaf} by Lemma \ref{lem}.
		We now take care of the initial values. For any $x\in X$ we have
		\begin{align*}
		u^s(0,x)&=\f_0(x) -C|s-1|+(1-\alpha_s)g(0)\frac{\rho(x)+\chi(x)}{2}-\left( 1-\frac{\alpha_s}{s}\right)\f_0(x)\\
		&\leq \f_0(x)-C|s-1|+A|s-1|g(0)\frac{\rho(x)+\chi(x)}{2}-(A+2)|s-1|\f_0(x)\\
		&\leq \f_0(x)-C|s-1|+(A+2)|s-1|\left( \frac{A}{A+2}g(0)\frac{\rho(x)+\chi(x)}{2}-\f_0(x)\right)\\
		&\leq \f_0(x)
		\end{align*}
		where the last line follows again from the choice of $C$. Thus, for any $x\in X$ we also get $\limsup_{t\rightarrow 0}u^s(t,x)\leq \f_0(x)$.
		Therefore $u^s\in \mathcal{S}_{\f_0,f,F}(X_T)$, so $u^s\leq U$ in $X_{T'}$. We thus obtain
		\begin{align*}
		\dfrac{\alpha_s}{s}\f(st,x)+(1-\alpha_s)g(t)\frac{\rho(x)+\chi(x)}{2}-C|s-1|(t+1)\leq U(t,x).
		\end{align*}
		We now take the supremum over all subsolutions $\varphi \in \mathcal{S}_{\f_0,f,F}(X_T)$ to get 
		\begin{align*}
		\dfrac{\alpha_s}{s}U(st,x)+ A|s-1|g(t)(\rho(x)+\chi(x))-C|s-1|(t+1)\leq U(t,x), \quad\forall (t,x)\in X_{T'}.
		\end{align*}
		Letting $s\rightarrow 1$, we infer, for all $(t,x)\in X_{T'}$ that
		\begin{align*}
		t|\partial_{t}U(t,x)|\leq C(T+1)+AM_0-Ag(T)(\rho(x)+\chi(x)).
		\end{align*}
		We can now define $L_U:=Ag(T)+C(T+1)+AM_0$. Finally, letting $T'\to T$ and applying Proposition~\ref{pro1} we finish the proof.
	\end{proof}
	%

	\subsection{Convergence at initial time}
	We define the {\em upper semi-continuous (u.s.c) regularization} $U^*$ of $U$ by the formula
	\[U^*(t,x)=\limsup\limits_{X_T\ni(s,y)\to(t,x)}U(s,y), \quad (t,x)\in X_T. \]	
	We then prove that the upper envelope has the right initial values:

	\begin{theorem}\label{thmlim}
		The upper semi-continuous regularisation of the upper envelope $U:=U_{\f_0,f,F,X_T}$ satisfies, for all $x\in \Omega$,
		\begin{align*}
		\lim_{\Omega_T\ni(t,y)\rightarrow(0,x)}U^*(t,y)=\f_0(x).
		\end{align*}
	\end{theorem}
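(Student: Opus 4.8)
The plan is to prove the two one-sided estimates
\begin{align*}
\limsup_{\Omega_T\ni(t,y)\to(0,x)}U^*(t,y)\le\f_0(x)\le\liminf_{\Omega_T\ni(t,y)\to(0,x)}U(t,y),\qquad x\in\Omega,
\end{align*}
which, since $U\le U^*$, together give the claimed equality. The lower bound is read off directly from the explicit sub-barrier of Lemma~\ref{lemexist}: there is $\underline\f\in\mathcal S_{\f_0,f,F}(X_T)$ equal, for $t$ small, to $(1-\alpha_t)\f_0(y)+\alpha_t g(0)\frac{\rho(y)+\chi(y)}{2}+nt(\log(\delta_0^{-1}t)-1)-Ct$ with $\alpha_t=(\delta_0g(0))^{-1}t$. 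Since $\rho$ is continuous on $\Omega$ (the density $f$ lying in $L^p$, $p>1$), $\chi$ is smooth there, $\alpha_t\to0$, and the parabolic corrections vanish as $t\to0$, one gets $\underline\f(t,y)\to\f_0(x)$ as $(t,y)\to(0,x)$ inside $\Omega_T$; as $\underline\f\le U\le U^*$ this forces $\liminf_{(t,y)\to(0,x)}U^*(t,y)\ge\f_0(x)$.

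For the upper bound, fix $x\in\Omega$ and a ball $B'\Subset\Omega$ containing $x$ on which $\f_0$ and the sub-barrier $\underline u$ of Lemma~\ref{exist_sub} are bounded (the latter on $[0,T]\times B'$); since every admissible subsolution satisfies $\underline u\le\f\le M_0$, all of them are then uniformly bounded on $[0,T]\times B'$. Let $h_t$ be a smooth local potential of $\omega_t$, uniformly $C^2$-bounded by \eqref{estimate_ome}. Choose $(t_j,y_j)\to(0,x)$ in $\Omega_T$ realising $\ell:=\limsup_{(t,y)\to(0,x)}U(t,y)$ and subsolutions $\f^j$ with $\f^j(t_j,y_j)>U(t_j,y_j)-1/j$; replacing $\f^j$ by $\max(\f^j,\underline\f)$, which is still admissible by stability under finite maxima, we may assume $\underline\f\le\f^j\le M_0$, so that $\f^j(t_j,y_j)\to\ell$ and the $v^j_t:=\f^j_t+h_t$ are psh and uniformly bounded on $B'$. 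Running the Jensen/mass-bound computation from the proof of Lemma~\ref{exist_sub} over $B'$ — its only input being the lower bound on $F(t,\cdot,\f^j_t)$ coming from $\f^j\ge\underline u$ — produces a constant $C$ \emph{independent of $j$} with $\int_{B'}\partial_t\f^j_t\,fdV\le C$ for a.e.\ $t$; integrating in $t$ and using $\limsup_{s\to0}\int_{B'}\f^j_s\,fdV\le\int_{B'}\f_0\,fdV$ (reverse Fatou, since $\f^j\le M_0$ and $\limsup_{s\to0}\f^j(s,\cdot)\le\f_0$) yields $\int_{B'}\f^j_{t_j}\,fdV\le\int_{B'}\f_0\,fdV+Ct_j$. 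By Bedford--Taylor compactness and uniform boundedness a subsequence of $(v^j_{t_j})$ converges in $L^1(B')$ and a.e., say $\f^j_{t_j}\to g$; then $g\ge\f_0$ a.e.\ (from $\f^j_{t_j}\ge\underline\f_{t_j}\to\f_0$ in $L^1$, recalling Lemma~\ref{lemexist}) while $\int_{B'}g\,fdV\le\int_{B'}\f_0\,fdV$, so $g=\f_0$ a.e.\ ($f>0$ a.e.), whence the u.s.c.\ regularisation $g^*$ equals $\f_0$. Hartogs' lemma now gives $\ell=\lim_j\f^j_{t_j}(y_j)\le g^*(x)=\f_0(x)$, hence $\limsup_{(t,y)\to(0,x)}U(t,y)\le\f_0(x)$; the same bound passes to $U^*$, being its u.s.c.\ regularisation.

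The delicate point lies entirely in the upper bound. One must make the time-derivative estimate $\int_{B'}\partial_t\f_t\,fdV\le C$ \emph{uniform over all admissible subsolutions}, so that it survives the passage to the envelope, and then convert this $L^1$-in-space information into pointwise control of the limit through the Bedford--Taylor compactness/Hartogs mechanism — the squeeze $\f_0\le g$ together with $\int_{B'}g\,fdV\le\int_{B'}\f_0\,fdV$, forcing $g=\f_0$, being precisely what pins down the initial trace. The lower bound, by contrast, is immediate from the explicit barrier.
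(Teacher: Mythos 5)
Your argument is correct in substance and hinges on the same essential ingredient as the paper's proof — the estimate $\int \f^j_t\, f\,dV\le\int \f_0\, f\,dV+Ct$, uniform over all admissible subsolutions, extracted from the Jensen/mass-bound computation of Lemma~\ref{exist_sub} — but you package the upper bound differently. The paper fixes a time $t_0$, uses Choquet's lemma to write $(U_{t_0})^*$ as the regularized limit of an increasing sequence of subsolutions, passes the integral bound to the limit by Lelong's monotone convergence theorem, and then takes $L^q$ cluster points of $t\mapsto U_t^*$ as $t\to 0$; that route also needs the slice identity $U^*(t,\cdot)=(U_t)^*$, which rests on the time-Lipschitz estimate of Theorem~\ref{thmlips}. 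You instead run a diagonal argument: near-optimal subsolutions $\f^j$ at moving points $(t_j,y_j)$, the integral bound on a fixed ball $B'\Subset\Omega$ (legitimate, since the Jensen argument only needs a Borel set of positive $\mu$-measure on which the subsolutions are uniformly bounded below — your $B'$ plays the role of the paper's $G=\{\underline{u}_T>-M\}$), compactness of the psh slices $\f^j_{t_j}+h_{t_j}$, the squeeze between $g\ge\f_0$ a.e.\ and $\int_{B'}g\,f\,dV\le\int_{B'}\f_0\,f\,dV$, and Hartogs' lemma at $x$. This avoids both Choquet's lemma and the identity $U^*(t,\cdot)=(U_t)^*$, and it controls the two-variable $\limsup$ of $U$ directly (hence that of $U^*$, which agrees with it at boundary points of $X_T$), whereas the paper's proof literally establishes the vertical limit $\limsup_{t\to0}U_t^*(x)\le\f_0(x)$.

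Two small caveats. In the lower bound you assert $\underline{\f}(t,y)\to\f_0(x)$ as $(t,y)\to(0,x)$; since $\f_0$ is merely upper semi-continuous on $\Omega$, this two-variable limit can fail, and the barrier really only yields $\lim_{t\to0}\underline{\f}(t,x)=\f_0(x)$ along vertical lines — the reading the paper itself implicitly adopts, so this is not a gap relative to the paper, but the claim as written is too strong. Also, when you replace $\f^j$ by $\max(\f^j,\underline{\f})$ you should keep the normalization $\underline{u}\le\f^j\le M_0$ (take the maximum with $\underline{u}$ as well) so that the uniform upper bound $M_0$ and the constant controlling $F(t,\cdot,\f^j_t)$ from below in the Jensen step remain available; this is cosmetic.
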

	\begin{proof}
		Thanks to Lemma~\ref{lemexist}, it suffices to show that for all $x\in \Omega$,
		\begin{align*}
		\limsup_{\Omega_T\ni(t,y)\rightarrow(0,x)}U^*(t,y)\leq \f_0(x).
		\end{align*}
		Theorem~\ref{thmlips} ensures that for $y\in\Omega$ fixed, the upper envelope $U(\cdot,y)$ is locally Lipschitz in $(0,T)$. Arguing exactly as in the proof of~\cite[Lemma 1.7]{guedj2018pluripotential} we can show that $U^*(t,\cdot)=(U_t)^*$ for all $t\in (0,T)$, where $(U_t)^*$ denotes the u.s.c regularization of $u_t$ ($t$ fixed) in the $x$-variable only. It thus remains  to prove that, for all $x\in \Omega$, 
		\begin{align*}
		\limsup_{t\rightarrow 0}U_t^*(x)\leq \f_0(x).
		\end{align*}
		Fixing $M>0$, we set $G:= \{\underline{u}_T >-M\}$, where $\underline{u}$ is defined as in~\eqref{eq: subsol underline u}. 				 		
		We  claim that there exists a constant $C>0$ (also depending on $M$) such that,  for all $t\in (0,T)$,
		\begin{align}\label{claim28}
		\int_G U_t^*fdV\leq\int_G \f_0fdV +Ct.
		\end{align}
		Fix $t_0\in (0,T)$. By Choquet's lemma, there exists a sequence $\{\f^j\}$ in $\mathcal{S}_{\f_0,f,F}(X_T)$ such that
		\begin{align*}
		U_{t_0}^*=\left(\lim\limits_{j\rightarrow+\infty}\f_{t_0}^j\right)^* \quad\text{in}\; X.
		\end{align*} 
		Since the set $\mathcal{S}_{\f_0,f,F}(X_T)$ is stable under finite maximum, we can moreover assume that the sequence $\{\f^j\}$ is increasing with $\f^j\leq M_0$ on $X$. 
		It follows from \eqref{eq: bound integral} that
		\begin{align*}
		\int_G \f^j_tfdV\leq \int_G\f_0 fdV+Ct, \,\forall \, t\in(0,T), 
		\end{align*}
		for a constant $C=C(M)>0$ independent of the sequence $\{\varphi^j\}$. For $t=t_0$, letting $j\rightarrow+\infty$, we obtain
		\begin{align*}
		\int_G U_{t_0}^*fdV\leq \int_G \f_0fdV+Ct_0, 
		\end{align*}
		thanks to a classical theorem of Lelong (see e.g.~\cite[Proposition 1.40]{guedj2017degenerate}). Note that the sequence $\{\f_t^j\}$ depends on $t_0$, but the constant $C$ does not. Therefore the claim \eqref{claim28} follows.
		
		Let now $u_0\in \PSH(X,\omega_{0})$ be any cluster point of $U_t^*$ as $t\rightarrow 0$. We can assume that $U_t^*$ converges to $u_0$ in $L^q(X,dV)$ for any $q>1$. Then $U_t^* f$ converges to  $u_0f$ in $L^1(X)$. Thus, the claim above ensures that
		\begin{align*}
		\int_G u_0fdV\leq \int_G \f_0fdV.
		\end{align*} 
		We infer that $u_0\leq \f_0$ almost everywhere on $G$ with respect to $fdV$, hence everywhere on $G$ by the assumption on $f$. Letting $M\to +\infty$, we thus conclude that $\limsup_{t\rightarrow 0}U_t^*= \f_0$ on $\Omega$.
	\end{proof}
	
	\subsection{The envelope is a subsolution}
	We now consider the set of subsolutions which are locally uniformly Lipschitz. 
	\begin{definition}
		Let $\kappa$ be a fixed positive constant. We let  $\mathcal{S}^{\kappa}_{\f_0,f,F}(X_T)$  denote the set of all functions $\f\in\mathcal{S}_{\f_0,f,F}(X_T)$ such that, for all $t\in (0,T)$, $x\in \Omega$,
		\begin{align*}
		t\partial_t\f(t,x)\leq \kappa-\kappa(\rho(x)+\chi(x)).
		\end{align*}
		Set
		\begin{align*}
		U^{\kappa}:=U^{\kappa}_{\f_0,f,F, X_T}:=\sup\{\f: \f \in \mathcal{S}^{\kappa}_{\f_0,f,F}(X_T)\}.
		\end{align*}
	\end{definition}
	\begin{proposition}\label{pro2}
		For all $0<S<T$ we have $U^\kappa_{\f_0,f,F,X_S}=U^\kappa_{\f_0,f,F,X_T}$ in $X_S$.
	\end{proposition}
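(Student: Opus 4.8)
The plan is to repeat the proof of Proposition~\ref{pro1} almost verbatim; the only point requiring attention is that the subsolution produced by the gluing argument must be checked to lie in $\mathcal{S}^{\kappa}_{\f_0,f,F}(X_T)$, and not merely in $\mathcal{S}_{\f_0,f,F}(X_T)$. One inequality is free: if $\f\in\mathcal{S}^{\kappa}_{\f_0,f,F}(X_T)$ then its restriction to $X_S$ belongs to $\mathcal{S}^{\kappa}_{\f_0,f,F}(X_S)$, because being a pluripotential subsolution, the bound $t\partial_t\f\leq\kappa-\kappa(\rho+\chi)$, and the initial condition $\limsup_{t\to 0}\f\leq\f_0$ are all local in $t$; taking the supremum gives $U^{\kappa}_{\f_0,f,F,X_T}\leq U^{\kappa}_{\f_0,f,F,X_S}$ on $X_S$.

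For the reverse inequality, exactly as in Proposition~\ref{pro1} it suffices to treat $|T-S|\leq\delta_0 g(0)/2$ and then iterate. Fix $\f\in\mathcal{S}^{\kappa}_{\f_0,f,F}(X_S)$. Since $\underline{u}\in\mathcal{S}^{\kappa}_{\f_0,f,F}(X_S)$ ($\underline u$ is a subsolution by Lemma~\ref{exist_sub}, and $\partial_t\underline{u}\leq 0$ because $g$ is increasing and $\rho+\chi\leq 0$), and $\max(\f,\underline{u})$ is again in $\mathcal{S}^{\kappa}_{\f_0,f,F}(X_S)$ by Lemma~\ref{maxi} (the one-sided $t$-derivative of a maximum agrees a.e.\ with one of the two derivatives, so the Lipschitz bound persists), we may assume $\f\geq\underline{u}$, with $\underline{u}$ as in \eqref{eq: subsol underline u}. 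Fix $t_0<S$ with $T-t_0<\delta_0 g(0)$ and let $\psi$ on $[t_0,T)\times X$ be the conical extension
\[
\psi(t,x)=(1-\alpha_t)\f(t_0,x)+\alpha_t g(t_0)\tfrac{\rho(x)+\chi(x)}{2}-C(t-t_0)+n(t-t_0)\bigl(\log[\delta_0^{-1}(t-t_0)]-1\bigr),\quad \alpha_t=(\delta_0 g(t_0))^{-1}(t-t_0),
\]
used in the proof of Proposition~\ref{pro1}, but now with the constant $C>0$ taken larger, so that in addition to the requirements there one has
\[
C\geq(\delta_0 g(t_0))^{-1}C_1(t_0+1)+n\max\bigl(\log(\delta_0^{-1}(T-t_0)),0\bigr),
\]
$C_1$ being the constant of \eqref{const_c1}. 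Enlarging $C$ does not affect the verification in Proposition~\ref{pro1} that $\psi$ is a pluripotential subsolution: the added term $-C(t-t_0)$ depends on $t$ only, so $\dc\psi_t$, hence $(\omega_t+\dc\psi_t)^n$, is unchanged, while the right-hand side $e^{\partial_t\psi_t+F(t,\cdot,\psi_t)}fdV$ is only decreased (as $\partial_t\psi$ decreases and $F$ is non-decreasing in its last variable). Since $\psi(t_0,\cdot)=\f(t_0,\cdot)$, Lemma~\ref{lem} shows that $u$ defined by $u=\f$ on $[0,t_0]\times X$ and $u=\psi$ on $[t_0,T)\times X$ is a pluripotential subsolution to \eqref{cmaf} on $X_T$ with $u(0,\cdot)=\f_0$.

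It remains to check $t\partial_t u\leq\kappa-\kappa(\rho+\chi)$ on $X_T$. On $(0,t_0)$ this is inherited from $\f\in\mathcal{S}^{\kappa}_{\f_0,f,F}(X_S)$. On $(t_0,T)$ one computes, as in Proposition~\ref{pro1},
\[
\partial_t\psi(t,\cdot)=(\delta_0 g(t_0))^{-1}\Bigl(g(t_0)\tfrac{\rho+\chi}{2}-\f_{t_0}\Bigr)+n\log[\delta_0^{-1}(t-t_0)]-C,
\]
and from $\f\geq\underline{u}$, i.e.\ $\f_{t_0}\geq\lambda g(t_0)\tfrac{\rho+\chi}{2}-C_1(t_0+1)$ with $\lambda\leq 1$ and $\rho+\chi\leq 0$, one gets $g(t_0)\tfrac{\rho+\chi}{2}-\f_{t_0}\leq C_1(t_0+1)$; combining this with $n\log[\delta_0^{-1}(t-t_0)]\leq n\max(\log(\delta_0^{-1}(T-t_0)),0)$ and the choice of $C$ gives $\partial_t\psi\leq 0$ on $(t_0,T)$, hence $t\partial_t\psi\leq 0\leq\kappa-\kappa(\rho+\chi)$ for free, since $\rho+\chi\leq 0$. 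At $t=t_0$ the curve $t\mapsto u(t,x)$ has a downward corner (the right $t$-derivative of $\psi$ at $t_0$ equals $-\infty$), so $\partial_t u$ acquires no positive Dirac mass across $t_0$ and the bound holds on all of $X_T$. Thus $u\in\mathcal{S}^{\kappa}_{\f_0,f,F}(X_T)$, so $\f\leq u\leq U^{\kappa}_{\f_0,f,F,X_T}$ on $[0,t_0]\times X$; letting $t_0\to S$ and taking the supremum over $\f$ yields $U^{\kappa}_{\f_0,f,F,X_S}\leq U^{\kappa}_{\f_0,f,F,X_T}$ on $X_S$.

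The single new point beyond Proposition~\ref{pro1}, and the step I expect to be the crux, is the observation that the drift constant $C$ in the conical extension may be enlarged at will without destroying the subsolution property — because $\dc\psi_t$ is insensitive to adding a function of $t$ alone and the right-hand side only decreases — which forces $\partial_t\psi\leq 0$ and thereby places the glued function inside $\mathcal{S}^{\kappa}$ for the prescribed $\kappa$; the only minor technicality is that the downward corner at $t=t_0$ contributes no positive distributional $t$-derivative, so gluing preserves the Lipschitz bound. Everything else is a transcription of the proof of Proposition~\ref{pro1}.
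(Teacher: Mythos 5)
Your proof is correct and follows the same route as the paper, which simply asserts that the argument of Proposition~\ref{pro1} carries over; the one genuinely new point you isolate --- enlarging the drift constant $C$ in the conical extension so that $\partial_t\psi\leq 0$, which costs nothing since $\dc\psi_t$ is unchanged and the right-hand side only decreases, and noting that the downward corner at $t=t_0$ contributes no positive distributional $t$-derivative --- is exactly the detail needed to keep the glued subsolution inside $\mathcal{S}^{\kappa}_{\f_0,f,F}(X_T)$ rather than merely $\mathcal{S}_{\f_0,f,F}(X_T)$. This is precisely what the paper's one-line proof leaves implicit, and your verification of it is sound.
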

	\begin{proof}
		The proof is the same as that of Proposition~\ref{pro1}.
	\end{proof}
	\begin{theorem}\label{thmlipk}
		We have, for all $\kappa>0$ and $(t,x)\in X_T$,
		\begin{align*}
		t \partial_t U^{\kappa} (t,x)\leq L_U-L_U(\rho(x)+\chi(x)),
		\end{align*}
		where $L_U$ is the constant defined in Theorem~\ref{thmlips}. 
	\end{theorem}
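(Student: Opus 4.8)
The plan is to transcribe the proof of Theorem~\ref{thmlips} verbatim, replacing the class $\mathcal{S}_{\f_0,f,F}(X_T)$ by the smaller class $\mathcal{S}^{\kappa}_{\f_0,f,F}(X_T)$, replacing $U$ by $U^{\kappa}$, and using Proposition~\ref{pro2} in place of Proposition~\ref{pro1}. The only genuinely new point is that the reparametrized competitor $u^s$ constructed there still lies in $\mathcal{S}^{\kappa}$ whenever the subsolution $\f$ it is built from does; once this is granted, the conclusion comes out with \emph{the same} constant $L_U$, since $L_U$ in Theorem~\ref{thmlips} depends only on $T$, $M_0$, $M_F$, $\kappa_F$ and the geometric data, not on $\kappa$. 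As preliminaries I would note that the barrier $\underline{u}$ of \eqref{eq: subsol underline u} lies in $\mathcal{S}^{\kappa}$ for every $\kappa>0$ (indeed $\partial_t\underline{u}_t=\lambda g'(t)\tfrac{\rho+\chi}{2}-C_1\le 0$, so $t\,\partial_t\underline{u}_t\le 0\le\kappa-\kappa(\rho+\chi)$ because $\rho+\chi\le 0$), whence $\mathcal{S}^{\kappa}\ne\emptyset$ and $\underline{u}\le U^{\kappa}\le U\le M_0$; and that $\mathcal{S}^{\kappa}$ is stable under finite maxima, since by Lemma~\ref{maxi} one has $\mathbf{1}_{\{\f\ge\psi\}}\partial_t\max(\f,\psi)=\mathbf{1}_{\{\f\ge\psi\}}\partial_t\f$ a.e., which preserves the bound defining $\mathcal{S}^{\kappa}$. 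Thus, exactly as in Theorem~\ref{thmlips}, it suffices to work with $\f\in\mathcal{S}^{\kappa}$ satisfying $\underline{u}\le\f\le M_0$.

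The heart of the matter is the following computation. Keeping the notation of the proof of Theorem~\ref{thmlips}, with $u^s(t,x)=\tfrac{\alpha_s}{s}\f(st,x)+(1-\alpha_s)g(t)\tfrac{\rho+\chi}{2}-C|s-1|(t+1)$, $\alpha_s=1-A|s-1|$, and assuming in addition that $A\ge 1$ (harmless, $A$ being already taken large there) and that $\e_0$ is small enough that $\alpha_s>0$ on $(1-\e_0,1+\e_0)$, differentiating in $t$ and multiplying by $t$ gives
\[
t\,\partial_t u^s(t,x)=\frac{\alpha_s}{s}\,(st)\,\partial_t\f(st,x)+(1-\alpha_s)\,t\,g'(t)\,\frac{\rho(x)+\chi(x)}{2}-C|s-1|\,t .
\]
The last two summands are $\le 0$ because $\rho+\chi\le 0$, $g'\ge 0$, $1-\alpha_s\ge 0$ and $C>0$. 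For the first summand, $\f\in\mathcal{S}^{\kappa}$ applied at the point $(st,x)$ gives $(st)\,\partial_t\f(st,x)\le\kappa-\kappa(\rho(x)+\chi(x))$, which is $\ge 0$, while the elementary inequality $1-A|s-1|\le s$ (valid because $A\ge 1$) yields $0<\tfrac{\alpha_s}{s}\le 1$; hence $\tfrac{\alpha_s}{s}(st)\,\partial_t\f(st,x)\le\kappa-\kappa(\rho(x)+\chi(x))$. Altogether $t\,\partial_t u^s(t,x)\le\kappa-\kappa(\rho(x)+\chi(x))$. Since the proof of Theorem~\ref{thmlips} already shows that $u^s$ is a pluripotential subsolution to~\eqref{cmaf} with $\limsup_{t\to 0}u^s(t,\cdot)\le\f_0$, this yields $u^s\in\mathcal{S}^{\kappa}_{\f_0,f,F}(X_T)$, hence $u^s\le U^{\kappa}$ on $X_{T'}$.

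From here the argument of Theorem~\ref{thmlips} goes through unchanged: taking the supremum over $\f\in\mathcal{S}^{\kappa}$ and then letting $s\to 1^+$ at a point where $\partial_t U^{\kappa}(t,x)$ exists — such points being of full measure, since for fixed $x$ the function $t\mapsto U^{\kappa}(t,x)-(\kappa-\kappa(\rho(x)+\chi(x)))\log t$ is a supremum of nonincreasing functions on each compact subinterval of $(0,T)$, hence nonincreasing — one arrives, by exactly the computation of Theorem~\ref{thmlips}, at $t\,\partial_t U^{\kappa}(t,x)\le L_U-L_U(\rho(x)+\chi(x))$ with the same constant $L_U$ as in that theorem; finally, letting $T'\to T$ and invoking Proposition~\ref{pro2} extends the estimate to all of $X_T$. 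I expect no real obstacle here: the only point demanding care is the bookkeeping guaranteeing $\alpha_s/s\le 1$ and the choice of $\e_0$, after which the whole proof is a transcription of the earlier one.
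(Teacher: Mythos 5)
Your proposal is correct and follows exactly the paper's argument: the paper's proof likewise consists of observing that the competitor $u^s$ from Theorem~\ref{thmlips} satisfies $t\partial_t u^s\leq \alpha_s(\kappa-\kappa(\rho+\chi))\leq \kappa-\kappa(\rho+\chi)$ and hence stays in $\mathcal{S}^{\kappa}_{\f_0,f,F}(X_T)$, after which the conclusion follows verbatim with the same constant $L_U$. Your extra care with the factor $\alpha_s/s\leq 1$ (via $A\geq 1$) and the sign of the remaining terms is a slightly more precise version of the paper's one-line justification, not a different route.
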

	\begin{proof}
		The proof is the same as that of Theorem~\ref{thmlips}. In fact,  if $\varphi \in \mathcal{S}_{\varphi_0,f,F}^{\kappa}(X_T)$ then the function $u^s$ in the proof of Theorem~\ref{thmlips} satisfies 
		\[
		t\partial_t u^s \leq \alpha_s (\kappa-\kappa (\rho+\chi)) \leq \kappa-\kappa (\rho+\chi)
		\]
		because $0<\alpha_s\leq 1$. It follows that $u^s\in \mathcal{S}_{\varphi_0,f,F}^{\kappa}(X_T)$ and we argue as in the proof of Theorem~\ref{thmlips} to conclude.
	\end{proof}
	\begin{theorem}\label{thmsub}
		The upper envelope $U$ is a pluripotential subsolution to \eqref{cmaf}  in $X_T$. 
	\end{theorem}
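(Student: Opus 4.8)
The strategy is to realise $U$ as a monotone limit of subsolutions for which we already control the Monge-Ampère measure and the time-derivative, and then pass to the limit using the convergence results recorded in Section~\ref{prel}. More precisely, the plan is as follows. First, observe that $U = \sup_\kappa U^\kappa$, where the supremum is over $\kappa > 0$ and the $U^\kappa$ form an increasing family (as $\kappa$ increases the constraint is weakened). By Theorem~\ref{thmlipk} each $U^\kappa$ in fact already satisfies the \emph{uniform} Lipschitz bound $t\partial_t U^\kappa \le L_U - L_U(\rho+\chi)$ with the single constant $L_U$; hence $U = \lim_{\kappa\to\infty} U^\kappa = U^{L_U}$ up to relabelling, so it suffices to prove that a locally uniformly Lipschitz envelope like $U^{L_U}$ is a subsolution. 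Equivalently, I will work directly with $U$, which by Theorem~\ref{thmlips} obeys \eqref{lips}, so that $U \in \mathcal{P}(X_T,\omega)$ and, by Theorem~\ref{thmlim}, has the correct initial data; in particular $U \in \mathcal{S}_{\f_0,f,F}(X_T)$ once we verify the differential inequality.

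Next I would fix a compact box $J \times K \Subset \Omega_T$ and, by Choquet's lemma together with stability under finite maxima (Lemma~\ref{exist_sub}), choose an increasing sequence $\f^j \in \mathcal{S}_{\f_0,f,F}(X_T)$ with $\underline{u} \le \f^j \le M_0$ and $(\sup_j \f^j)^* = U$ on $X_T$ (that $U$ coincides with its slicewise u.s.c.\ regularisation, hence equals $(\sup_j\f^j)^*$, follows from the argument already invoked in the proof of Theorem~\ref{thmlim}, citing \cite[Lemma~1.7]{guedj2018pluripotential}). Each $\f^j$ is locally bounded on $\Omega$ for each $t$ and satisfies, for a.e.\ $t$,
\begin{equation*}
(\omega_t+\dc\f^j_t)^n \ge e^{\dot{\f}^j_t + F(t,\cdot,\f^j_t)} f\,dV.
\end{equation*}
The function $U$ is locally bounded on $\Omega_T$ by \eqref{lips} and the lower bound $U \ge \underline{u}$, so $U \in \mathcal{P}(X_T,\omega) \cap L^\infty_{\loc}(\Omega_T)$ and Lemma~\ref{lem: convergence} gives $dt\wedge(\omega_t+\dc\f^j_t)^n \to dt\wedge(\omega_t+\dc U_t)^n$ as measures on $\Omega_T$. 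For the right-hand side I first need that $U$ is locally uniformly semi-concave in time so that Proposition~\ref{115} applies; since that is exactly Theorem~\ref{thmscc}, which is proved later, I should instead at this stage only use what is available—so the cleaner route is to pass to the limit directly: the $\f^j$ are uniformly Lipschitz in $t$ with the common bound from \eqref{lips}, so $\partial_t\f^j \to \partial_t U$ a.e.\ (using that the $\f^j$ increase to $U$ and the $t\mapsto \f^j(t,x) - Ct$ are uniformly monotone, one gets convergence of the derivatives a.e.\ by a standard one-dimensional argument), and then Proposition~\ref{p26} applied with $v_j = \dot\f^j_t + F(t,\cdot,\f^j_t)$ — whose exponentials are uniformly $L^1$ by the uniform mass bound \eqref{bdmass} — yields
\begin{equation*}
\liminf_{j\to\infty}\int_{J\times K}\eta\, e^{\dot\f^j_t + F(t,\cdot,\f^j_t)} f\,dV\,dt \;\ge\; \int_{J\times K}\eta\, e^{\dot U_t + F(t,\cdot,U_t)} f\,dV\,dt
\end{equation*}
for every nonnegative $\eta \in \mathcal{C}^\infty_c(J\times K)$, where I use the continuity of $F$ and $\f^j_t \to U_t$ in $L^1$ to identify the limit of $F(t,\cdot,\f^j_t)$.

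Putting the two halves together: for nonnegative $\eta \in \mathcal{C}^\infty_c(\Omega_T)$,
\begin{equation*}
\int_{\Omega_T}\eta\,dt\wedge(\omega_t+\dc U_t)^n = \lim_j \int_{\Omega_T}\eta\,dt\wedge(\omega_t+\dc\f^j_t)^n \ge \liminf_j \int_{\Omega_T}\eta\, e^{\dot\f^j_t + F(t,\cdot,\f^j_t)} f\,dV\,dt \ge \int_{\Omega_T}\eta\, e^{\dot U_t + F(t,\cdot,U_t)} f\,dV\,dt,
\end{equation*}
which is precisely the subsolution inequality in the sense of measures on $\Omega_T$; by Lemma~\ref{lem} this is equivalent to the slicewise inequality for a.e.\ $t$, so $U$ is a pluripotential subsolution. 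Finally $U \in \mathcal{S}_{\f_0,f,F}(X_T)$ because Theorem~\ref{thmlim} gives $U^*(t,x)\to\f_0(x)$, hence $\limsup_{t\to 0}U(t,x)\le\f_0(x)$. The main obstacle I anticipate is the a.e.\ convergence $\partial_t\f^j \to \partial_t U$: one must argue carefully that passing from an increasing sequence of functions, each monotone in $t$ after subtracting the same linear term, to the limit really does give convergence of the time-derivatives off a null set — this is where the uniform Lipschitz bound \eqref{lips} from Theorem~\ref{thmlips} is essential, and it is the reason the envelope's Lipschitz regularity had to be established first. If the semi-concavity Theorem~\ref{thmscc} is placed before this theorem in the final text, one can shortcut this step by invoking Proposition~\ref{115} directly, which already packages the needed derivative convergence.
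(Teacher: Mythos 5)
Your first half (approximating the envelope by an increasing Choquet sequence, using Lemma~\ref{lem: convergence} for the left-hand side and Proposition~\ref{p26} for the right-hand side) is the same mechanism as the paper's, and with the bookkeeping on exceptional sets filled in it would prove that $(U^{\kappa})^*=U^{\kappa}$ is a subsolution. But the proposal has a genuine gap at its very first step: the identity $U=\sup_{\kappa}U^{\kappa}$ is asserted, not proved, and it does not follow from Theorems~\ref{thmlips} or~\ref{thmlipk}. A general $\f\in\mathcal{S}_{\f_0,f,F}(X_T)$ only satisfies $\partial_t\f\leq\kappa_J-\kappa_J(\rho+\chi)$ on compact subintervals $J\Subset(0,T)$ with a constant $\kappa_J(\f)$ that may blow up as $J$ approaches $0$; such a $\f$ need not belong to any $\mathcal{S}^{\kappa}$, so a priori $\sup_\kappa U^\kappa$ could be strictly smaller than $U$. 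Theorem~\ref{thmlipk} bounds $\partial_t U^{\kappa}$, and Theorem~\ref{thmlips} bounds $\partial_t U$; neither bounds $\partial_t\f$ for an arbitrary subsolution $\f$. The paper spends the entire second half of its proof closing exactly this gap: one shifts time, setting $u^s(t,x)=\alpha_s\f(t+s,x)+\cdots$, so that the shifted subsolution does lie in some $\mathcal{S}^{\kappa}$ and hence below $U^{\kappa_0}$; this forces one to first treat $\f_0=P_{\omega_0}h$ with $h$ continuous (to control $u^s(0,\cdot)$ via $\eta(s)=\sup_X(\alpha_s\f_s-h)$ and Hartogs' lemma), and then to remove that restriction by a decreasing approximation $\f_0^j=P_{\omega_0}(h_j)$. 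None of this appears in your plan.

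The same confusion resurfaces in your fallback route of ``working directly with $U$'': you claim the Choquet sequence $\f^j$ is ``uniformly Lipschitz in $t$ with the common bound from~\eqref{lips}.'' That bound is a property of the envelope $U$, not of the individual $\f^j$ produced by Choquet's lemma, so the uniform monotonicity of $t\mapsto\f^j(t,x)-Ct$ (with $C$ independent of $j$) that you need both for the distributional convergence of $\dot\f^j$ and for the hypotheses of Proposition~\ref{p26} is not available in that setting. This is precisely why the paper runs the limiting argument inside a fixed $\mathcal{S}^{\kappa}$, where the constant $\kappa$ is uniform over the sequence, and only afterwards identifies $U$ with $U^{\kappa_0}$ by the time-shift argument.
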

	\begin{proof}
		
		We will first show that $U^{\kappa}=(U^{\kappa})^*$ is a subsolution to \eqref{cmaf}. Indeed, Choquet's lemma implies that there exists a sequence $\{\f^j\}$ in $\mathcal{S}^{\kappa}_{\f_0,f,F}(X_T)$ such that
		\begin{align*}
		(U^{\kappa})^*=\left(\sup_{j\in \mathbb{N}}\f^j\right)^* \quad\text{in} \;  X_T.
		\end{align*}
		Since $\mathcal{S}^{\kappa}$ is stable under finite maximum, we can assume that the sequence $\{\f^j\}$ is non-decreasing. We now claim that
		\begin{align*}
		dt\wedge(\omega_{t}+\dc\f^j_t)^n\rightarrow dt\wedge(\omega_{t}+\dc(U^{\kappa})^*_t)^n
		\end{align*}
		in the sense of measures in $(0,T)\times\Omega$.
		
		Let $K$ be a relatively compact open subset of $\Omega$ and $J$ be a compact interval of $(0,T)$. Then there exists a constant $C=C(J,K)>0$ such that for all $j\in\mathbb{N}$, $\f^j(t,x)-Ct$ is decreasing in $t\in J$, for any $x\in K$. Moreover, the sequence of functions $\f^j$  increases towards $u$, so for any $x\in K$, $u(t,x)-Ct$ is decreasing in $t$. Thus for each $x\in K$, there exists a countable subset $E_x\subset J$   such that  $u(\cdot,x)$ is continuous on $J\backslash E_x$. Now set
		\begin{align*}
		E:=\{(t,x)\in J\times K: t\in E_x \}.
		\end{align*}
		Note that $E$ has zero $(2n+1)$-dimensional Lebesgue measure by using Fubini's theorem. Let $N$ be the set of $t\in J$ such that $E_t=\{x\in K: (t,x)\in E \}$ has positive Lebesgue measure. We must have that $N$ has zero Lebesgue measure. 
		Thus for any $t\in J':=J\backslash N$, the set $E_t$ has zero Lebesgue measure, and 
		$\lim_{s\to t}u(s,x)=u(t,x)$ for all $x\in K\backslash E_t$. Fixing $(t,x)\in J'\times K$, we want to show that
		\begin{align}\label{ine_218}
		\limsup_{(s,y)\to (t,x)}u(s,y)\leq (U^\kappa_t)^*(x),
		\end{align}
		where the upper semicontinuous regularization on the RHS is in the $x$-variable only. Since the problem is local we may assume that the functions $u_s$ are psh and negative in a neighborhood $B(x,2r)\subset K$. Fix $\delta\in (0,r)$. For $y$ so close to $x$  that $B(x,r)\subset B(y,r+\delta)$ we have
		\begin{align*}
		\f^j(s,y)&\leq\frac{1}{\Vol(B(y,r+\delta))}\int_{B(y,r+\delta)}\f^j(s,z)dV(z) \\
		& \leq\frac{1}{\Vol(B(y,r+\delta))}\int_{B(y,r+\delta)}u(s,z)dV(z).
		\end{align*}
		Letting $j\to+\infty$ we get
		\begin{align*}
		u(s,y)&\leq\frac{1}{\Vol(B(y,r+\delta))}\int_{B(y,r+\delta)}u(s,z)dV(z)\\
		&\leq \frac{\Vol(B(x,r))}{\Vol(B(y,r+\delta))}\frac{1}{\Vol(B(x,r))}\int_{B(x,r)}u(s,z)dV(z).
		\end{align*}
		Since $\lim_{s\to t} u(s,z)=u(t,z)$ for almost every $z\in B(x,r)\subset K$, Fatou's lemma yields
		\begin{align*}
		\limsup_{(s,y)\to (t,x)}u(s,y)\leq \frac{\Vol(B(x,r))}{\Vol(B(x,r+\delta))} \frac{1}{\Vol(B(x,r))}\int_{B(x,r)}u(t,z)dV(z). 
		\end{align*}
		Now, we first let $\delta\to 0$ and then $r\to 0$ to obtain the desired inequality~\eqref{ine_218} by definition of $U^\kappa$. The reverse inequality is clear, hence we get the equality. Therefore, for each $t\in J'$ we have that  $\f_t^j$ increase almost everywhere towards $(U^\kappa_t)^*=(U^\kappa)^*_t$ on $K$, so Bedford-Taylor's convergence theorem yields
		\begin{align*}
		(\omega_t+\dc \f_t^j)^n\rightarrow(\omega_t+\dc (U^\kappa)^*_t)^n 
		\end{align*}
		in the weak sense of measures in $K$. Thus the claim follows directly from  Fubini's theorem.
		
		On the other hand, for each $x\in K$ fixed, the sequence $\{ \partial_{t}\f^j(t,x)+F(t,x,\f^j(t,x))\}$ converges to $\partial_{t}(U^{\kappa})^*(t,x)+F(t,x,(U^{\kappa})^*(t,x))$ in the sense of distributions in $J$, with the later being bounded in $J\times K$. Applying Proposition \ref{p26} we obtain
		\begin{align*}
		\lim_{j\rightarrow +\infty}e^{\partial_{t}\f^j+F(t,\cdot,\f^j)}fdt\wedge dV\geq e^{\partial_{t}(U^{\kappa})^*+F(t,\cdot,(U^{\kappa})^*)}fdt\wedge dV
		\end{align*}
		in the weak sense of measures in $J\times K$. It thus follows that $(U^{\kappa})^*$ is a pluripotential subsolution to~\eqref{cmaf} in $\Omega_T$, and hence $(U^{\kappa})^{*}\in \mathcal{S}^\kappa_{\f_0,f,F}(X_T)$. We thus deduce that $U^\kappa=(U^\kappa)^*$.  
		
		We have shown that, for some $\kappa_0>0$,  $U^{\kappa}=U^{\kappa_0}$ for all $\kappa>\kappa_0$ (by Theorem~\ref{thmlipk}).  It thus remains to prove that $U=U^{\kappa_0}$ in $X_T$. We first assume that 
		\[
		\f_0=P_{\omega_0}h:=\sup\{\psi \in \PSH(X,\omega_0): \psi\leq h \}
		\]
		for some continuous function $h$. Fix $0<S<T$, $s>0$ sufficiently small, and $\f\in \mathcal{S}_{\f_0,f,F}(X_T)$. For $(t,x)\in [0,S]\times X$, we define
		\begin{align*}
		u^s(t,x):=\alpha_s\f(t+s,x)+(1-\alpha_s)g(t+s)\dfrac{\rho(x)+\chi(x)}{2}-Cs(t+1)-\eta(s),
		\end{align*}  
		where  $\alpha_s=1-(\delta_0g(0))^{-1}s$, $\eta(s):=\sup_{X}(\alpha_s\f_s-h)$ and 
		$$C=(\delta_0g(0))^{-1}C_1(T+1)+2\delta_0^{-1}M_F+n|\log (g(T))| + |c_1|,$$ 
		with $C_1>0$ defined in \eqref{const_c1}.
		We compute
		\begin{align*}
		\omega_{t}+\dc u^s(t,\cdot)=&\alpha_s(\omega_{t+s}+\dc\f_{t+s})+\dfrac{1-\alpha_s}{2}(\omega_{t+s}+g(t+s)\dc\rho)\\
		&+\dfrac{1-\alpha_s}{2}(\omega_{t+s}+g(t+s)\dc\chi)+\omega_{t}-\omega_{t+s}.
		\end{align*}
		It follows from the assumption \eqref{estimate_ome} that
		$$\omega_{t}-\omega_{t+s}\geq -s\Theta.$$
		Since $\theta+\dc\chi\geq 2\delta_0\Theta$ we  thus obtain \begin{align*}
		\dfrac{1-\alpha_s}{2}(\omega_{t+s}+\dc\chi)+\omega_{t}-\omega_{t+s}&\geq \frac{(\delta_0g(0))^{-1}}{2}sg(t+s)(\theta+\dc\chi)-s\Theta
		\geq 0.
		\end{align*}
		We thus get
		\begin{align*}
		(\omega_{t}+\dc u^s(t,\cdot))^n&\geq(\alpha_s(\omega_{t+s}+\dc \f_{t+s})+(1-\alpha_s)g(t+s)(\theta+\dc\rho)/2)^n\\
		&\geq e^{\alpha_s(\partial_{t}\f_{t+s}+F(t+s,\cdot,\f_{t+s}))+(1-\alpha_s)(n\log g(t+s)+c_1)}fdV
		\end{align*}
		where we apply Lemma~\ref{logconcave} in the last line.
		Since the function $(t,r)\mapsto F(t,\cdot,r)$ is uniformly Lipschitz, it follows that
		\begin{align*}
		\alpha_sF(t+s,\cdot,\f_{t+s})&=F(t+s,\cdot,\f_{t+s})-(1-\alpha_s)F(t+s,\cdot,\f_{t+s})\\
		&\geq  F(t,\cdot,\f_{t+s})-\kappa_Fs-(\delta_0g(0))^{-1}sM_F.
		\end{align*}
		Since $\f\geq \underline{u}$ on $X_T$ we have
		\begin{align*}
		(1-\alpha_s)\f_{t+s}\geq (1-\alpha_s)g(t+s)\frac{\rho+\chi}{2}-(\delta_0g(0))^{-1}sC_1(T+1).
		\end{align*}
		Consequently, it follows from the choice of $C$ that $\f_{t+s}\geq u^s(t,\cdot)$ for all $t\in [0,S]$. Therefore,  
		\begin{align*}
		\alpha_sF(t+s,\cdot,\f_{t+s})
		&\geq F(t,\cdot,u^s(t,\cdot))-s(\kappa_F+(\delta_0g(0))^{-1}M_F),
		\end{align*}
		since the function $r\mapsto F(\cdot,\cdot,r)$ is increasing.	
		Observe now that $$\alpha_s\partial_t\f_{t+s}= \alpha_s\partial_t u^s+Cs-\alpha_sg'(t+s)\frac{\rho+\chi}{2}.$$
		It thus follows  from the choice of $C$ and the estimates above that
		\begin{align*}
		(\omega_{t}+\dc u_t^s)^n\geq e^{\partial_{t} u_t^s+F(t,\cdot,u^s(t,\cdot))}fdV,
		\end{align*}
		which means that $u^s$ is a pluripotential subsolution to~\eqref{cmaf}. By definition of $u^s$ we have  $u^s(0,\cdot)\leq h$ on $X$ since $\sup_X\rho=\sup_X\chi=0$. Since $u^s(0,\cdot)$ is $\omega_0$-psh, we infer  $u^s(0,\cdot)\leq \f_0=P_{\omega_0}h$ on $X$. 
		It follows that $u^s\in\mathcal{S}_{\f_0,f,F}(X_S)$, and hence $u^s\in\mathcal{S}^{\kappa}_{\f_0,f,F}(X_S)$ for some $\kappa>0$ large enough. Therefore, $u^s\leq U^{\kappa}=U^{\kappa_0}$ in $X_S$ by Proposition~\ref{pro2}. On the other hand it follows from Hartogs' Lemma that $\lim_{s\to 0}\eta(s)\leq 0$. Letting $s\rightarrow 0$ we get $\f\leq U^{\kappa_0}$  in $X_S$. Finally, letting $S\rightarrow T$ to obtain $\f\leq U^{\kappa_0}$, so $U\leq U^{\kappa_0}$ on $ X_T$ (see Proposition \ref{pro1}). Therefore $U=U^{\kappa_0}$ is the maximal subsolution to~\eqref{cmaf} with initial data $\f_0$. 
		
		We now remove the extra assumption on $\f_0$. Let $\{h_j\}$ be a sequence of continuous functions decreasing to $\f_0$. Then
		$\f_0^j=P_{\omega_0}(h_j)$ is a decreasing sequence of $\omega_0$-psh functions converging to $\f_0$. We thus obtain that the upper envelope $U^j:=U_{\f_0^j,f,F,X_T}$ is also a subsolution to \eqref{cmaf} by the previous arguments. We also provide a uniform Lipschitz constant for $U^j$. Since $\f_0^j$ decreases to  $\f_0$, we have that $ U^j$ decreases to some $V\in\mathcal{P}(X_T)$ which is a subsolution to \eqref{cmaf} and $U\leq V$. On the other hand we see that $V\big|_{\{0\}\times X}\leq \f_0$. Hence $V=U$. 
	\end{proof}

	\subsection{The envelope is locally uniformly semi-concave in time}
	
	\begin{theorem}\label{thmscc}
		There exists a uniform constant $C_U>0$ such that 
		\begin{align}\label{loc}
		t^2\partial_t^2 U(t,x)\leq C_U-C_U(\rho(x)+\chi(x)),
		\end{align}
		in the sense of distributions in $X_T$.
	\end{theorem}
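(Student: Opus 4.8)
We adapt the proof of Theorem~\ref{thmlips}, but the rescaling of the time variable must now be \emph{symmetric} and \emph{of second order}. Since $U=U^{\kappa_0}$ is itself a pluripotential subsolution (Theorem~\ref{thmsub}) satisfying the two-sided bound $t|\partial_tU|\le L_U-L_U(\rho+\chi)$ (Theorem~\ref{thmlips}), we may feed $\f=U$ into the construction. Fix $0<T'<T$ and $\e_0>0$ with $(1+\e_0)T'<T$; for $s\in(1-\e_0,1+\e_0)$ and $(t,x)\in X_{T'}$ we set
\begin{align*}
u^s(t,x):={}&\frac{\lambda_s}{s}\,U(st,x)+\lambda_s s\,U(s^{-1}t,x)\\
&+A(s-1)^2g(t)\bigl(\rho(x)+\chi(x)\bigr)-C(s-1)^2(t+1)\bigl(2-\rho(x)-\chi(x)\bigr),
\end{align*}
where $\lambda_s:=\tfrac12(1-A(s-1)^2)$ and $A,C>0$ are large constants depending only on $T$ and the structural data. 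Three features are deliberate: (i) the factors $s^{\mp1}$ force $\partial_t$ of the first two terms to be the \emph{symmetric} average $\lambda_s(\dot U(st,\cdot)+\dot U(s^{-1}t,\cdot))$; (ii) writing the first two terms cohomologically as $\lambda_s\cdot\tfrac1s(\omega_{st}+\dc U_{st})+\lambda_s s\,(\omega_{s^{-1}t}+\dc U_{s^{-1}t})$ and the $\rho$-part as $A(s-1)^2(\omega_t+g(t)\dc\rho)$, the weights $\lambda_s+\lambda_s+A(s-1)^2=1$ sum to one, so one gets a genuine \emph{convex combination} of positive currents; (iii) the last correction is of size $O((s-1)^2)$ and its $t$-derivative, $\le -C(s-1)^2(1-\rho-\chi)$, is designed to absorb the $(1-\rho-\chi)$-weighted errors that necessarily appear below.

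The first step is the cohomological decomposition $\omega_t+\dc u^s_t=S_s+R_s$, where $S_s:=\lambda_s\tfrac1s(\omega_{st}+\dc U_{st})+\lambda_s s(\omega_{s^{-1}t}+\dc U_{s^{-1}t})+A(s-1)^2(\omega_t+g(t)\dc\rho)$ and $R_s$ collects a pure-form term $\lambda_s\bigl(\tfrac1s(\omega_t-\omega_{st})+s(\omega_t-\omega_{s^{-1}t})\bigr)$, the positive current $A(s-1)^2(\omega_t+g(t)\dc\chi)$, and the current produced by $\dc$ of the correction. Because the two slices carry the \emph{same} weight $\lambda_s$, a second order Taylor expansion of $r\mapsto\omega_r$ at $r=t$ shows the $O(|s-1|)$ parts of $\omega_t-\omega_{st}$ and $\omega_t-\omega_{s^{-1}t}$ cancel and the pure-form term is in fact $O((s-1)^2)\,\Theta$ — this is exactly where the full hypothesis \eqref{estimate_ome}, in particular $-\Theta\le\ddot\omega_t\le\Theta$, is used, and is the ingredient absent from Theorem~\ref{thmlips}. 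Since $\theta+\dc\chi\ge 2\delta_0\Theta$ gives $A(s-1)^2(\omega_t+g(t)\dc\chi)\ge 2A\delta_0g(0)(s-1)^2\Theta$, one chooses $A$ large so that $R_s\ge 0$. Then the mixed Monge-Amp\`ere inequality (Lemma~\ref{logconcave}, iterated to three factors), combined with the subsolution property $(\omega_\tau+\dc U_\tau)^n\ge e^{\dot U(\tau,\cdot)+F(\tau,\cdot,U_\tau)}fdV$ applied at $\tau=st$ and $\tau=s^{-1}t$, and with $(\omega_t+g(t)\dc\rho)^n\ge e^{n\log(2g(t))+c_1}fdV$, yields $(\omega_t+\dc u^s_t)^n\ge S_s^n\ge e^{E_s}fdV$ with
$$
E_s=\lambda_s\bigl(\dot U(st,\cdot)+\dot U(s^{-1}t,\cdot)\bigr)+\lambda_s\bigl(F(st,\cdot,U_{st})+F(s^{-1}t,\cdot,U_{s^{-1}t})\bigr)+A(s-1)^2\bigl(n\log(2g(t))+c_1\bigr),
$$
the $\pm\lambda_s n\log s$ terms having cancelled thanks to the symmetry. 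It remains to check $E_s\ge\partial_tu^s+F(t,\cdot,u^s)$: the $\dot U$-parts match exactly by (i); using the \emph{convexity} of $(t,r)\mapsto F(t,\cdot,r)$, together with its Lipschitz continuity and monotonicity, the bound $\underline u\le U\le M_0$, and $|U(st,\cdot)-U(s^{-1}t,\cdot)|\le C|s-1|(1-\rho-\chi)$ from the time-Lipschitz estimate, one gets $\lambda_s(F(st,\cdot,U_{st})+F(s^{-1}t,\cdot,U_{s^{-1}t}))\ge F(t,\cdot,u^s)-O((s-1)^2)(1-\rho-\chi)$, and the $t$-derivative of the correction in $u^s$ absorbs this error. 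Hence $u^s$ is a pluripotential subsolution, $\limsup_{t\to 0^+}u^s(t,x)\le\f_0(x)$ as in Theorem~\ref{thmlips}, and by Proposition~\ref{pro1} we conclude $u^s\le U$ on $X_{T'}$.

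Feeding $\f=U$ into $u^s\le U$, and using $\tfrac{\lambda_s}{s},\lambda_ss=\tfrac12+O(s-1)$ together with $|U(st,\cdot)-U(s^{-1}t,\cdot)|\le C|s-1|(1-\rho-\chi)$ and $|U|\le C(1-\rho-\chi)$ on $X_{T'}$, all deviations collapse to second order and there is a uniform $C_U'>0$ with
$$
\tfrac12\bigl(U(st,x)+U(s^{-1}t,x)\bigr)-U(t,x)\le C_U'(s-1)^2\bigl(1-\rho(x)-\chi(x)\bigr),\qquad s\text{ near }1.
$$
Reading this in the distributional sense in the variable $\sigma=\log t$ (and using $(s-1)^2\sim(\log s)^2$), and noting $\partial_\sigma^2U=t\partial_tU+t^2\partial_t^2U$, we obtain $t^2\partial_t^2U+t\partial_tU\le 4C_U'(1-\rho-\chi)$ on $X_{T'}$; combining with $-t\partial_tU\le L_U-L_U(\rho+\chi)$ gives \eqref{loc} with $C_U:=4C_U'+L_U$, and letting $T'\to T$ via Proposition~\ref{pro1} finishes the proof.

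The main obstacle — and the point where this argument genuinely exceeds Theorem~\ref{thmlips} — is the passage from a first-order to a second-order perturbation. One must at the same time (a) make $R_s$ nonnegative, which is possible only because the rescaling is \emph{balanced} ($\lambda_s$ on both slices), so that the first-order terms cancel and the residue, now genuinely $O((s-1)^2)$, is controlled by the second time-derivative bound on $\omega_t$; and (b) absorb the $(1-\rho-\chi)$-weighted errors, inherent to working with quasi-psh functions that are unbounded below on $\Omega$, which forces $u^s$ to carry a correction with that same weight while the whole perturbation stays of size $O((s-1)^2)$. Throughout, the two-sided Lipschitz estimate of Theorem~\ref{thmlips} is what keeps every error term uniform despite the $1/t$-type growth of $\dot U$ near the initial time.
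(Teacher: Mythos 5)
Your argument is essentially the paper's proof: the same symmetric second-order rescaling $\tfrac{1}{2}\bigl(s^{-1}U(st,\cdot)+sU(s^{-1}t,\cdot)\bigr)$ with weight $1-A(s-1)^2$, the same use of $|\ddot\omega_t|\leq\Theta$ to show that the pure-form residue cancels to first order in $s-1$ and is then absorbed via $\theta+dd^c\chi\geq2\delta_0\Theta$, the same mixed Monge--Amp\`ere inequality and convexity of $F$, and the same passage to the limit $s\to1$ combined with the Lipschitz bound of Theorem~\ref{thmlips}. The one point where you deviate is the correction term $-C(s-1)^2(t+1)(2-\rho-\chi)$: its $dd^c$ contributes $C(s-1)^2(t+1)\,dd^c(\rho+\chi)\geq-2C(s-1)^2(T+1)\theta$ to $R_s$, so $A$ must dominate a multiple of $C$, while $C$ (as in Theorem~\ref{thmlips}) grows linearly in $A$ --- a circularity in the choice of constants that you do not address. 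The paper sidesteps this by keeping the correction a pure function of $t$ and absorbing all $(\rho+\chi)$-weighted errors into the term $(1-\alpha_s)g(t)\tfrac{\rho+\chi}{2}$ already present in $u^s$ (using $\tfrac{1}{2}(U_{st}+U_{s^{-1}t})\geq u^s$ together with the monotonicity of $F$ in $r$, which makes the $F$-error a constant multiple of $(s-1)^2$ rather than a $(\rho+\chi)$-weighted one); restructuring your correction accordingly removes the issue.
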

	
	\begin{proof} 
		Fix $0<T'<T$ and $\e_0>0$ small enough such that $(1+\e_0)T'<T$, $s\in[1-\e_0,1+\e_0]$.
		Set, for any $(t,x)\in X_{T'}$,
		\begin{align*}
		u^s(t,x):=\alpha_s\dfrac{s^{-1}U(st,x)+sU(s^{-1}t,x)}{2}+(1-\alpha_s)g(t)\dfrac{\rho(x)+\chi(x)}{2}-C|s-1|^2(t+1),
		\end{align*}
		where $\alpha_s=1-A(s-1)^2$ for $A>0$ a uniform constant to be chosen later, and
		\begin{align*}
		C:=(A+1)C_0+\kappa_FT+An|\log (g(T)) + c_1|.
		\end{align*}
		We are going to prove that $u^s$ is a subsolution to \eqref{cmaf}. We compute
		\begin{align*}
		\omega_t+\dc u^s(t,\cdot)=&\dfrac{\alpha_s}{2}\left(\dfrac{1}{s}(\omega_{st}+\dc U_{st})+s(\omega_{s^{-1}t}+\dc U_{s^{-1}t})\right)+(1-\alpha_s)\dfrac{\omega_t+g(t)\dc\rho}{2}\\
		&+\alpha_s\omega_t-\dfrac{\alpha_s}{2}(s^{-1}\omega_{st}+s\omega_{s^{-1}t})+(1-\alpha_s)\dfrac{\omega_t+g(t)\dc\chi}{2}.
		\end{align*}	
		Consider, for $s\in[1-\varepsilon_0,1+\varepsilon_0]$,
		\begin{align*}
		h(s):=\omega_t-\frac{1}{2}(\omega_{st}+\omega_{s^{-1}t}).
		\end{align*} 
		We have $h(1)=h'(1)=0$, and $|h''(s)|\leq (T+2)^2\Theta$ on $[1-\varepsilon_0,1+\varepsilon_0]$. Hence
		\begin{align*}
		\alpha_s\omega_t-\dfrac{\alpha_s}{2}(s^{-1}\omega_{st}+s\omega_{s^{-1}t})\geq -(T+2)^2\Theta.
		\end{align*}
		Recall that $\chi$ is a $\theta$-psh function on $X$ such that $\theta+\dc\chi\geq2\delta_0\Theta$. If we take $A>0$ such that $A\geq (T+2)^2(\delta_0g(0))^{-1}$ then
		\begin{align*}
		\alpha_s\omega_t-\dfrac{\alpha_s}{2}(s^{-1}\omega_{st}+s\omega_{s^{-1}t})+(1-\alpha_s)\dfrac{\omega_t+\dc\chi}{2}\geq 0,
		\end{align*}
		hence,
		\begin{align*}
		\omega_t+\dc u^s(t,\cdot)\geq& \dfrac{\alpha_s}{2}\left(\dfrac{1}{s}(\omega_{st}+\dc U_{st})+s(\omega_{s^{-1}t}+\dc U_{s^{-1}t})\right)
		\\&+(1-\alpha_s)\dfrac{\omega_t+g(t)\dc\rho}{2}.
		\end{align*}
		It follows from Theorem~\ref{thmsub} that $U$ is a pluripotential subsolution to~\eqref{cmaf}. We then have for almost every $t\in(0,T')$
		\begin{align*}
		(s^{-1}(\omega_{st}+\dc U_{st}))^n\geq e^{\partial_{\tau}U_{st}+F(st,\cdot,U_{st})-n\log s}fdV,
		\end{align*}
		and 
		\begin{align*}
		(s(\omega_{s^{-1}t}+\dc U_{st}))^n\geq e^{\partial_{\tau}U_{s^{-1}t}+F(s^{-1}t,\cdot,U_{s^{-1}t})+n\log s}fdV.
		\end{align*}
		Combining these together with Lemma~\ref{logconcave}, we obtain
		\begin{align*}
		(\omega_t+\dc u^s(t,\cdot))^n\geq e^{\alpha_s(a(s)+a(s^{-1}))+(1-\alpha_s)(n\log g(t)+c_1)} fdV,
		\end{align*}
		where 
		\begin{align*}
		a(s)=\dfrac{\alpha_s}{2}(\partial_{\tau}U_{st}+F(st,\cdot,U_{st})).
		\end{align*}
		Since $F$ is a convex function in $r$ we get
		\begin{align}\label{est_Fcc}
		\dfrac{1}{2}F(st,\cdot,U_{st})+\dfrac{1}{2}F(s^{-1}t,\cdot,U_{s^{-1}t})&\geq F\left(\dfrac{(s+s^{-1})t}{2},\cdot,\dfrac{U({st},\cdot)+U({s^{-1}t},\cdot)}{2}\right).
		\end{align}
		Now we  use the same arguments as in Theorem~\ref{thmlips} to show that for each $t\in[0,T)$,
		\begin{align}\label{est_U}
		\dfrac{U({st},\cdot)+U({s^{-1}t},\cdot)}{2}\geq u^s(t,\cdot).
		\end{align}
		It is equivalent to show that
		\begin{align}\label{est_cc1}
		\frac{1}{2}\left[ (1-s^{-1}\alpha_s)U_{st}+(1-s\alpha_s)U_{s^{-1}t}\right]\geq (1-\alpha_s)g(t)\frac{\rho+\chi}{2}-C(t+1)(s-1)^2.
		\end{align}
		The left-hand side can be rewritten as
		\begin{align*}
		\frac{1}{2}\left[(1-s^{-1}(1-A(s-1)^2))U_{st}+(1-s(1-A(s-1)^2))U_{s^{-1}t} \right]\\
		=\frac{1}{2}\left[ (s-1)(U_{st}-U_{s^{-1}t})+(A-1)s^{-1}(s-1)^2U_{st}+As(s-1)^2U_{s^{-1}t}\right].
		\end{align*}
		By Theorem~\ref{thmlips}, we have
		\begin{align*}
		(s-1)(U_{st}-U_{s^{-1}t})\geq 2(s-1) ^2(L_U(\rho+\chi)-L_U).
		\end{align*}
		The same arguments as in the proof of Theorem~\ref{thmlips} give 
		\begin{align*}
		(A-1)s^{-1}U_{st}\geq AU_{st}\geq(A-L_U/g(0))g(t)\frac{\rho+\chi}{2}-C_1(T+1),\\
		AsU_{s^{-1}t}\geq (A+1)U_{s^{-1}t}\geq  (A-L_U/g(0))g(t)\frac{\rho+\chi}{2}-C_1(T+1).
		\end{align*}
		where $A$ is large enough so that $(A-L_U/g(0))/(A+1)\geq (1+\delta_0g(0))^{-1}\gamma$ ($\gamma=1\e_0\kappa_gTg(0)^{-1}$).
		Combining these estimates, it follows from the choice of $C$ that \eqref{est_cc1} holds.
		Since $F$ is non-decreasing in $r$ and uniformly Lipschitz in $t$,  
		it follows from~\eqref{est_Fcc} and \eqref{est_U} that
		\begin{align*}
		\dfrac{1}{2}F(st,\cdot,U_{st})+\dfrac{1}{2}F(s^{-1}t,\cdot,U_{s^{-1}t})& \geq F\left(t,\cdot,\dfrac{U({st},\cdot)+U({s^{-1}t},\cdot)}{2}\right)-\kappa_Ft\left(\frac{s+s^{-1}}{2}-1\right)\\
		&\geq F(t,\cdot,u^s(t,\cdot))-\kappa_FT(s-1)^2,
		\end{align*}
		hence
		\begin{align*}
		\dfrac{\alpha_s}{2}\left(F(st,\cdot,U_{st})+F(s^{-1}t,\cdot,U_{s^{-1}t})\right)& \geq   F(t,\cdot,u^s(t,\cdot))-\kappa_FT(s-1)^2- AM_F(s-1)^2.
		\end{align*}
		Therefore, we obtain
		\begin{align*}
		a(s)+a(s^{-1})+(1-\alpha_s)(n\log g(t)+c_1)\geq \partial_{\tau}u^s(t,\cdot)+F(t,\cdot,u^s(t,\cdot)).
		\end{align*}
		On the other hand, the choice of $C$ ensures, for any $(t,x)\in X_{T'}$, that
		\begin{align*}
		u^s(0,x)&\leq\f_0(x) -C(s-1)^2+(1-\alpha_s)g(0)\frac{\rho(x)+\chi(x)}{2}-\left(1-\dfrac{s+s^{-1}}{2}\alpha_s\right)\f_0(x)\\
		&\leq\f_0(x) -C(s-1)^2+A(s-1)^2g(0)\frac{\rho(x)+\chi(x)}{2}-(A+1)(s-1)^2\f_0(x)\\
		&\leq \f_0(x)-C(s-1)^2+(A+1)(s-1)^2\left(\frac{A}{A+1}g(0)\frac{\rho(x)+\chi(x)}{2}-\f_0(x) \right)\\
		&\leq \f_0(x)-C(s-1)^2+(A+1)C_0(s-1)^2\leq \f_0(x).
		\end{align*}
		Therefore, we conclude that $u^s\in\mathcal{S}_{\f_0,f,F}(X_{T'})$, so we obtain for any $(t,x)\in X_{T'}$ that
		\begin{align*}
		\alpha_s\dfrac{s^{-1}U(st,x)+sU(s^{-1}t,x)}{2}-U(t,x)+&A/2(s-1)^2(\rho(x)+\chi(x))\\
		&\leq C(T+1)(s-1)^2,
		\end{align*}
		hence
		\begin{align*} 
		\dfrac{s^{-1}U(st,x)+sU(s^{-1}t,x)}{2}-U(t,x)+&A(s-1)^2(\rho(x)+\chi(x))\\
		&\leq (C(T+1)+2AM_0)(s-1)^2.
		\end{align*}
		From this, we obtain for all $(t,x)\in X_{T'}$,
		{ \begin{align*}
			\dfrac{U(st,x)+U(s^{-1}t,x)}{2}&-U(t,x)+A(s-1)^2(\rho(x)+\chi(x))\\
			&\leq (C(T+1)+(2A+1)M_0+2L_U-L_U(\rho(x)+\chi(x)))(s-1)^2.
			\end{align*} }
		Letting $s\rightarrow 1$ yields for all $(t,x)\in X_{T'}$
		\begin{align*}
		t^2\partial^2_{t}U(t,x)\leq (C(T+1)+(2A+1)M_0+5L_U)-(A+L_U)(\rho(x)+\chi(x)).
		\end{align*}
		We finally let $T'\rightarrow T$ and apply Proposition~\ref{pro1} to complete the proof.
	\end{proof}

	\section{Existence and Uniqueness}\label{exist}
	
	\subsection{Existence of solutions}
	We shall prove in this section that $U_{\f_0,f,F,X_T}$ is the unique pluripotential solution to the Cauchy problem (see Definition~\ref{defcp}).
	\begin{theorem}\label{existence}
		The upper envelope $U:=U_{\f_0,f,F,X_T}$ is a pluripotential solution to the Cauchy problem for the parabolic complex Monge-Amp\`ere equation \eqref{cmaf} in $X_T$. Moreover, $U$ is locally uniformly semi-concave in $(0,T)\times \Omega$.
		
	\end{theorem}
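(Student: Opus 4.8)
The statement has two parts: that $U$ is a pluripotential \emph{solution} of the Cauchy problem, and that it is locally uniformly semi-concave in $(0,T)\times\Omega$. The second part is a direct consequence of Theorem~\ref{thmscc}: the bound $t^2\partial_t^2 U\le C_U-C_U(\rho+\chi)$ holds in the sense of distributions on $X_T$, and on any compact box $J\times K\Subset(0,T)\times\Omega$ we have $t\ge\min J>0$ and $\rho,\chi$ bounded below on $K$ (they are locally bounded on $\Omega$), so $\partial_t^2 U\le\kappa$ on $J\times K$ for a constant $\kappa=\kappa(J,K)$; since $U(\cdot,x)$ is continuous by Theorem~\ref{thmlips}, this means $t\mapsto U(t,x)-\tfrac{\kappa}{2}t^2$ is concave on $J$ for every $x\in K$, i.e.\ $U$ is locally uniformly semi-concave in time. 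Together with Theorems~\ref{thmlips}, \ref{thmsub} and \ref{thmlim} this already shows $U\in\mathcal S_{\f_0,f,F}(X_T)$; it remains to prove that $U$ also satisfies the supersolution inequality on $\Omega_T$.

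For this I would carry out a balayage (Perron lifting) argument exhausting $\Omega_T$ by small parabolic cylinders. Fix $Q=(a,b)\times B\Subset\Omega_T$ with $B$ a coordinate ball; on $\bar Q$ the function $U$ is bounded, since $\underline u\le U\le M_0$ and the explicit subsolution $\underline u$ of \eqref{eq: subsol underline u} is bounded on $\bar Q$ (again because $\rho,\chi$ are locally bounded on $\Omega$). Writing $\omega_t=\dc h_t$ for a smooth family of local potentials near $\bar B$, the restriction of \eqref{cmaf} to $Q$ becomes a local parabolic complex Monge-Amp\`ere flow whose data still satisfy the structural hypotheses of Section~\ref{subsol} after absorbing $\partial_t h_t$ into $F$ (the $L^p$-integrability of $f$ and the monotonicity, convexity and Lipschitz properties of $F$ are preserved). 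Invoking the local existence result of \cite{guedj2018pluripotential}, let $W$ be the upper envelope of all local pluripotential subsolutions on $Q$ with parabolic boundary values at most $U$; then $W$ is a pluripotential solution of \eqref{cmaf} on $Q$ with parabolic boundary values $U|_{\partial_{\mathrm{par}}Q}$, and since $U|_Q$ is itself one of the competing local subsolutions we get $W\ge U$ on $Q$.

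I would then glue: set $\tilde U:=W$ on $\bar Q$ and $\tilde U:=U$ on $X_T\setminus Q$. Because $W$ agrees with $U$ along the bottom $\{a\}\times B$ and the lateral face $(a,b)\times\partial B$ and $W\ge U$ on $Q$, the function $\tilde U$ is upper semi-continuous and belongs to $\mathcal P(X_T,\omega)$: each slice $\tilde U_t$ is $\omega_t$-psh by the standard gluing of psh functions with matching boundary values, and the distributional bound $\partial_t\tilde U\le\kappa-\kappa(\rho+\chi)$ persists since crossing the top face $\{b\}\times B$ can only create a downward jump in $t$. Moreover $\tilde U$ is a pluripotential subsolution to \eqref{cmaf} on $X_T$: on $Q$ it equals the solution $W$, outside $Q$ it equals the subsolution $U$, the gluing along $\partial_{\mathrm{par}}Q$ is handled exactly as for the elliptic maximum principle (cf.\ Lemma~\ref{maxi}), and the single slice $\{b\}\times B$ is negligible by Lemma~\ref{lem}. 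Since $a>0$ the initial values of $\tilde U$ coincide with those of $U$, and $\tilde U\ge U\ge\underline u$, so $\tilde U\in\mathcal S_{\f_0,f,F}(X_T)$ and hence $\tilde U\le U$ by maximality of the envelope. Combining with $W=\tilde U|_Q\ge U|_Q$ gives $U=W$ on $Q$, so $U$ solves \eqref{cmaf} on $Q$ and in particular is a supersolution there. Covering $\Omega_T$ by countably many such cylinders, $U$ is a supersolution on $(0,T)\times\Omega$; together with Theorem~\ref{thmsub} it is a pluripotential solution, and by Theorem~\ref{thmlim} it solves the Cauchy problem.

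The heart of the argument, and the main obstacle, is the balayage step itself: one must transfer the flow in the big class to a genuine local Euclidean parabolic Monge-Amp\`ere flow so as to apply the local theory of \cite{guedj2018pluripotential} --- in particular with parabolic boundary data $U|_{\partial_{\mathrm{par}}Q}$ that is a priori only bounded and $\omega_t$-psh in space, not yet known to be continuous --- and one must check carefully that the glued function $\tilde U$ is a bona fide pluripotential subsolution across the whole parabolic boundary of $Q$, the delicate point being the behaviour of $\partial_t\tilde U$ along the top face of the cylinder.
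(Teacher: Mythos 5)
Your overall strategy is the one the paper follows: the semi-concavity is read off from Theorem~\ref{thmscc} exactly as you say, the subsolution property and the initial condition come from Theorems~\ref{thmsub} and~\ref{thmlim}, and the supersolution inequality is obtained by a balayage on small cylinders using the local parabolic theory of \cite{guedj2018pluripotential}, followed by gluing and the maximality of the envelope. Your variant with space--time boxes $(a,b)\times B$, $a>0$, instead of the full-time cylinders $(0,T)\times B$ used in the paper is harmless (the paper's choice simply avoids the top-face discussion altogether, since its parabolic boundary is $([0,T)\times\partial\mathbb B)\cup(\{0\}\times\mathbb B)$), and your treatment of the gluing --- matching slices on $\partial B$, $W\geq U$ inside, a possible downward jump across the top face which is compatible with the one-sided bound $\partial_t\tilde U\leq \kappa-\kappa(\rho+\chi)$ and with Lemma~\ref{lem} since a single time slice is $dt$-negligible --- is correct in substance.

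The genuine gap is precisely the step you flag as ``the main obstacle'' and then do not carry out: you invoke ``the local existence result of \cite{guedj2018pluripotential}'' to produce a local solution $W$ attaining the parabolic boundary values $U|_{\partial_{\mathrm{par}}Q}$, but that theorem is not directly applicable, because the Cauchy--Dirichlet datum is only known to be bounded and upper semi-continuous (psh in the space variable), not continuous. Without solving the local problem for such data, one cannot assert that $W$ is a solution on $Q$ with the right boundary values, and the whole argument stalls. The paper closes this gap in three moves: (i) it records that the boundary datum $h=U|_{\partial_P\mathbb B_T}$ satisfies the quantitative bounds $t|\partial_t h|\leq L$ and $t^2|\partial_t^2h|\leq C$ coming from Theorems~\ref{thmlips} and~\ref{thmscc}, which are part of the hypotheses of the local existence theorem; (ii) it mollifies $h$ into a sequence of \emph{continuous} data $h^j$ decreasing to $h$ and satisfying the same bounds, solves the local flow for each $h^j$ by \cite[Theorem 6.4]{guedj2018pluripotential}, and passes to the decreasing limit $v=\lim_j u^j$, which is still a solution by the convergence result of Proposition~\ref{115}; (iii) it uses the local comparison principle \cite[Theorem 6.5]{guedj2018pluripotential} to get $U\leq u^j$ for every $j$, hence $U\leq v$, which is what replaces your unproved claim $W\geq U$ with attainment of the boundary values. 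Your proposal is therefore the right architecture, but it is incomplete exactly at its load-bearing point; incorporating the approximation-plus-comparison argument above is what turns it into a proof.
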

	
	\begin{proof}
		We have shown in Theorem~\ref{thmscc} that $U$ is locally uniformly semi-concave in $t\in (0,T)$, and $U\in \mathcal{S}_{\f_0,f,F}(X_T)$ and it satisfies the initial condition. It remains to show that $U$ solves the parabolic equation~\eqref{cmaf}. We apply a local balayage process to modify the function $U$ on a given "small ball" $B\Subset \Omega$ by constructing a new $\omega_{t}$-psh function $U_B$ so that it satisfies the local Monge-Amp\`ere flow $(\omega_t+\dc{U_B})^n=e^{\partial_{t}U_B(t,\cdot)+{F}(t,\cdot,U_B(t,\cdot))}fdV$ on $B_T=(0,T)\times B$, ${U_B}\geq U$ on $B_T$ and ${U_B}=U$ on $ X_T\setminus B_T$. 
		
		Indeed, we choose complex coordinates $z=(z_1,\cdots,z_n)$ identifying $B$ with the complex unit ball $\mathbb{B}\subset \C^n$. 
		We can write $\omega_t=\dc g_t$ in a local holomorphic coordinate chart $B\subset X$, for some smooth local potential $g_t$. Set $\tilde{f}=f\circ z^{-1}\in L^p(\mathbb{B})$ and $d\tilde{V}$ is the restriction of the volume $dV$ to $B$.
		We consider the following complex Monge-Amp\`ere flow
		\begin{align}\label{cmaf1}
		dt\wedge(\dc u_t)^n=e^{\partial_{t}u(t,\cdot)+\tilde{F}(t,\cdot,u(t,\cdot))}\tilde{f}d\tilde{V}\wedge dt
		\end{align}		
		in $\mathbb{B}_T:=(0,T)\times\mathbb{B}$ with the Cauchy-Dirichlet boundary data $h$ being the restriction of $U$ defined on the parabolic boundary of $\mathbb{B}_T$ denoted by $\partial_P\mathbb{B}_T:=([0,T)\times\partial\mathbb{B})\cup(\{0\}\times\mathbb{B})$. Here $\tilde{F}(t,x,r)=F(t,x,r-g_t(x))-\partial_tg_t$  satisfies the same assumptions as $F$. We have shown that $h$ is locally uniformly Lipschitz (see Theorem~\ref{thmlips}) and locally uniformly semi-concave (see Theorem~\ref{thmscc}) i.e. for all $0<T'<T$, and for all $(t,z)\in (0,T')\times\partial\mathbb{B}$, there exist constants $L=L(T')$, and $C=C(T')$ such that
		\begin{align}\label{assumh}
		t|\partial_th(t,z)|\leq L,\qquad t^2|\partial^2_th(t,z)|\leq C.
		\end{align}
		Using mollifiers we can find a sequence $h^j$ of continuous functions on $[0,T)\times\partial\mathbb{B}$ such that $h^j$ decreases pointwise to $h$. The function $h^j$ is thus the Cauchy-Dirichlet boundary data satisfying the same assumption~\eqref{assumh} as $h$. 

		Then it follows from \cite[Theorem 6.4]{guedj2018pluripotential} that there exists a sequence of functions $u^j$ solving~\eqref{cmaf1} with the boundary data $h^j$. Moreover, $u^j$ is locally uniformly semi-concave in $t\in [0,T)$. 
		Since $h^j$ decreases to $U\circ z^{-1}$ on $\partial_P\mathbb{B}_T$, so $U\circ z^{-1}\leq u^j$ and the sequence $u^j$ decreases to some function $v$. The function $v$ solves~\eqref{cmaf1} by using Proposition~\ref{115}, and $\limsup_{t\rightarrow 0}v(t,z)\leq U_0\circ z^{-1}$ in $\mathbb{B}$. But the comparison principle (see~\cite[Theorem 6.5]{guedj2018pluripotential}) ensures that $U\circ z^{-1}\leq v$ in $\mathbb{B}_T$. Hence $\lim_{t\rightarrow 0}v(t,z)=U_0\circ z^{-1}$.
		We then define 
		$$
		U_B(t,x)=\begin{cases} v(t,z(x))&\textrm{in}\, B_T\\
		U(t,x) &\textrm{in}\, X_T\setminus B_T.
		\end{cases}
		$$ as required. We infer that $U_B$ belongs to the set $\mathcal{S}_{\f_0,f,F}(X_T)$, the maximal property ensures that $U_B\leq U$, hence equality. Since $B$ is an arbitrary ball in $\Omega$, this shows that $U$ solves~\eqref{cmaf} on $\Omega_T$, hence on $X_T$.

		Moreover, by Theorem~\ref{thmlips} and Theorem~\ref{thmscc}, $U$ is locally uniformly uniformly Lipschitz and semi-concave in $t$.
	\end{proof}
	
	\subsection{The comparison principle}\label{sect: compa_princ} 
	We first establish a version of the comparison principle which requires relatively strong regularity assumptions:
	\begin{proposition}\label{comparison1}
		Let $\f$ (resp. $\psi$) be a subsolution (resp. supersolution) to \eqref{cmaf} with initial value $\f_0$ (resp. $\psi_0$). We assume that
		\begin{itemize}
			\item[a)] $\f$ is $\mathcal{C}^1$ in $t$ and continuous on $(0,T)\times\Omega$,
			\item[b)] $\psi$ is locally uniformly semi-concave in $t$	
			\item[c)]  $\f_t\rightarrow \f_0$ and $\psi_t\rightarrow\psi_0$ in $L^1(X)$, as $t\rightarrow 0$,
			\item[d)] for any $t\in[0,T)$, $\psi_t$ has minimal singularities,
			\item[e)] the function $(t,x)\mapsto \psi(t,x)$ is continuous on $[0,T)\times \Omega$.
		\end{itemize}
		Then 
		$$\f_0\leq \psi_0\Rightarrow\f\leq \psi \quad\text{in}\,\, X_T.$$
	\end{proposition}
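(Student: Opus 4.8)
The plan is to argue by contradiction, assuming $\sup_{[0,T)\times X}(\f-\psi)>0$. Since $\psi_t$ has minimal singularities and each $\f_t$ is locally bounded on $\Omega$ (with $\sup_X\f_t$ uniformly bounded, so $\f_t\le V_{\omega_t}+\sup_X\f_t$), the difference $\f-\psi$ is bounded above on $\Omega_T$; and since an $\omega$-psh function dominates another one on $X$ as soon as it does so on the dense open set $\Omega$ (by sub-mean-value plus upper semi-continuity), it suffices to prove $\f\le\psi$ on $\Omega_T$. I would fix $T'\in(0,T)$, prove $\f\le\psi$ on $(0,T')\times\Omega$, and then let $T'\to T$.

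The next, and most delicate, step is to replace $\f$ by a \emph{strict} subsolution which is strictly more singular than $\psi$ along $\partial\Omega$. Put $\f^\e:=(1-\e)\f+\e\,\underline w$ with $\underline w(t,x):=g(t)\bigl(a\rho(x)+b\chi(x)\bigr)-C(t+1)$, where $a>0$ is small, $b>1$ is close to $1$, and $C$ is large. Using $\theta+\dc\chi\ge2\delta_0\Theta$ and $g(t)\theta\le\omega_t$ one chooses $a,b$ so that $\underline w_t$ is $\omega_t$-psh, and — exactly as in Lemma~\ref{exist_sub}, now using $(\theta+\dc\rho)^n=2^ne^{c_1}fdV$ to absorb the $L^p$ density and taking $C$ large enough — so that $\underline w$ is a subsolution satisfying the reinforced inequality $(\omega_t+\dc\underline w_t)^n\ge e^{c_0}e^{\dot{\underline w}_t+F(t,\cdot,\underline w_t)}fdV$ for a fixed $c_0>0$. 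Then $\f^\e\in\mathcal P(X_T,\omega)$, and by Lemma~\ref{logconcave} together with the convexity of $r\mapsto F(t,\cdot,r)$ one gets $(\omega_t+\dc\f^\e_t)^n\ge e^{c_0\e}e^{\dot\f^\e_t+F(t,\cdot,\f^\e_t)}fdV$, i.e.\ $\f^\e$ is a strict subsolution. Moreover $\f^\e\to\f$ as $\e\to0$, $\limsup_{t\to0}\f^\e_t\le\f_0$ (Lemma~\ref{12}), and, since $b>1$, $g(t)\chi$ is $\omega_t$-psh and $\le0$ hence $V_{\omega_t}\ge g(t)\chi$, while $\psi_t-V_{\omega_t}$ is bounded and $\f^\e_t-V_{\omega_t}\le(1-\e)M_0+\e(b-1)g(t)\chi\to-\infty$ near $\partial\Omega$, the difference $\f^\e_t-\psi_t$ tends to $-\infty$ as $x\to\partial\Omega$. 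Hence, whenever $G_\e(t):=\sup_X(\f^\e_t-\psi_t)>0$, it is attained on a nonempty compact set $D_t\Subset\Omega$, and for $\e$ small $\sup_{\Omega_{T'}}(\f^\e-\psi)>0$.

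The core is to show $G_\e$ is non-increasing where positive. As $\f$ is $\mathcal C^1$ in $t$ (with $\dot\f$ locally bounded) and $\psi$ is locally uniformly semi-concave, $\f^\e-\psi$ is ``$\mathcal C^1$ + semi-convex'' in $t$, so $G_\e$ is locally Lipschitz on $(0,T')$, hence absolutely continuous, with $\limsup_{t\to0}G_\e\le0$ by the previous paragraph. Fix an a.e.\ $t_0$ which is a differentiability point of $G_\e$, at which~\eqref{lemsub} (for $\f^\e$) and~\eqref{lemsuper} (for $\psi$) hold and $\dot\psi(t_0,\cdot)$ exists a.e.; assume $G_\e(t_0)>0$. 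For $0<\delta<G_\e(t_0)$ the open set $A_\delta:=\{\f^\e_{t_0}>\psi_{t_0}+G_\e(t_0)-\delta\}$ satisfies $D_{t_0}\subset A_\delta\Subset\Omega$ and $A_\delta\downarrow D_{t_0}$. Applying the Bedford--Taylor comparison principle on $\Omega$ to $v:=\f^\e_{t_0}$ and $u:=\psi_{t_0}+G_\e(t_0)-\delta$ (so $\{u<v\}=A_\delta$ and $u\ge v$ near $\partial A_\delta$) yields $\int_{A_\delta}(\omega_{t_0}+\dc\f^\e_{t_0})^n\le\int_{A_\delta}(\omega_{t_0}+\dc\psi_{t_0})^n$; inserting the strict subsolution and the supersolution inequalities and using $F(t_0,\cdot,\f^\e_{t_0})\ge F(t_0,\cdot,\psi_{t_0})$ on $A_\delta$ gives
\[
\int_{A_\delta}e^{F(t_0,\cdot,\psi_{t_0})}\bigl(e^{c_0\e+\dot\f^\e_{t_0}}-e^{\dot\psi_{t_0}}\bigr)fdV\le0 .
\]
Since $f>0$ a.e., there is $x_\delta\in A_\delta$ with $\dot\psi(t_0,x_\delta)$ defined and $\dot\f^\e(t_0,x_\delta)+c_0\e\le\dot\psi(t_0,x_\delta)=\partial_t^-\psi(t_0,x_\delta)$. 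Letting $\delta\to0$ along a subsequence $x_\delta\to x_*\in D_{t_0}\Subset\Omega$, and using continuity of $\dot\f^\e$ together with the upper semi-continuity of $(t,x)\mapsto\partial_t^-\psi$ (Proposition~\ref{t_der}), we get $\dot\f^\e(t_0,x_*)+c_0\e\le\partial_t^-\psi(t_0,x_*)$. Finally $x_*\in D_{t_0}$ means $t\mapsto\f^\e(t,x_*)-\psi(t,x_*)$ touches $G_\e$ from below at $t_0$, whence $G_\e'(t_0)\le\dot\f^\e(t_0,x_*)-\partial_t^-\psi(t_0,x_*)\le-c_0\e<0$ (this forces the one-sided $t$-derivatives of $\psi$ at $(t_0,x_*)$ to agree). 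So $G_\e'<0$ a.e.\ on $\{G_\e>0\}$, which with $\limsup_{t\to0}G_\e\le0$ and absolute continuity gives $G_\e\le0$ on $(0,T')$; letting $\e\to0$ and $T'\to T$ yields $\f\le\psi$ on $[0,T)\times X$, a contradiction.

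I expect the main obstacle to be the construction of the barrier $\underline w$: it must simultaneously be $\omega_t$-psh, be a \emph{strict} pluripotential subsolution — which is what forces the $\rho$-term (to carry the $L^p$ density of the equation, via~\eqref{rho}) and the $\chi$-term (to supply strict positivity, via~\eqref{chi}) — and be strictly more singular than $V_{\omega_t}$ along $\partial\Omega$, which requires the $\chi$-weight to exceed $1$; checking that admissible weights $a,b$ coexist is exactly where the quantitative hypotheses $\theta+\dc\chi\ge2\delta_0\Theta$ and $g(t)\theta\le\omega_t\le\Theta$ enter. A secondary delicate point is making the fixed-time pluripotential comparison interact cleanly with the time-monotonicity of $G_\e$ under only $\mathcal C^1$/semi-concave time regularity, where one must use $\dot\f$ only at the $\mathcal C^1$ level, $\partial_t^\pm\psi$ only via semi-concavity, and the semicontinuity of Proposition~\ref{t_der}.
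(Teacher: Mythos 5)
Your strategy is genuinely different from the paper's: instead of taking a single space--time maximum of a perturbed difference and deriving a contradiction at that one point via the elliptic domination principle (Proposition~\ref{domination}), you differentiate the spatial supremum $G_\e(t)=\sup_X(\f^\e_t-\psi_t)$ in time and try to show it decreases. The fixed-time part of your argument is essentially sound: the barrier $\underline w$ can indeed be built from $\rho$ and $\chi$ with a $\chi$-weight $b>1$ (your bound $\f^\e_t-V_{\omega_t}\leq(1-\e)M_0+\e(b-1)g(t)\chi$ needs the intermediate inequality $\f_t\leq V_{\omega_t}+M_0$ rather than just $\f_t\leq M_0$, but that is available), the strict subsolution inequality with gain $e^{c_0\e}$ follows from Lemma~\ref{logconcave} and the convexity of $F$, and the extraction of a point $x_\delta\in A_\delta$ with $\dot\f^\e(t_0,x_\delta)+c_0\e\leq\partial_t^-\psi(t_0,x_\delta)$, followed by the passage to a contact point $x_*$ via the semicontinuity in Proposition~\ref{t_der}, is a correct way to exploit the Bedford--Taylor comparison on the superlevel sets.

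The genuine gap is the assertion that $G_\e$ is locally Lipschitz, hence absolutely continuous, on $(0,T')$. This is not cosmetic: without absolute continuity, ``$G_\e'(t_0)\leq-c_0\e$ for a.e.\ $t_0$ with $G_\e(t_0)>0$'' does not imply that $G_\e$ decreases (a Cantor-type function has a.e.\ derivative zero and still increases), so the whole conclusion hangs on it. To get Lipschitz regularity of the sup you need the family $t\mapsto\f^\e(t,x)-\psi(t,x)$ to be uniformly Lipschitz over the set of $x$ where the sup is nearly attained, which forces you to show that the near-contact sets $\{\f^\e_t-\psi_t>G_\e(t)-1\}$ stay in a \emph{fixed} compact subset of $\Omega$ as $t$ ranges over a compact interval. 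Your estimate $\f^\e_t-\psi_t\leq(1-\e)M_0+\e(b-1)g(t)\chi+C_t$ controls this only for each fixed $t$, because hypothesis d) provides a minimal-singularity constant $C_t=\sup_X|\psi_t-V_{\omega_t}|$ with no uniformity in $t$, and the structure of $\mathcal{P}(X_T,\omega)$ (the one-sided bound $\partial_t\psi\leq\kappa-\kappa(\rho+\chi)$) yields lower bounds on $\psi_t$ that degrade like $-\kappa(\rho+\chi)$, i.e.\ in the wrong direction near $\partial\Omega$. The same uniformity is silently needed for your claim $\limsup_{t\to0}G_\e(t)\leq0$, which does not follow from the pointwise inequality $\limsup_{t\to0}\f^\e(t,x)\leq\f_0(x)\leq\psi_0(x)$ alone. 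The paper's proof is designed precisely to avoid differentiating the sup: it only needs the maximum of the u.s.c.\ function $w$ to be attained at a single point $(t_0,x_0)$, applies the classical maximum principle in $t$ there, and then runs the elliptic comparison/domination argument on a neighborhood $D$ of the contact set, never requiring any quantitative time-regularity of $\sup_X(\f^\e_t-\psi_t)$. You would either need to supply the missing uniform compactness (which the stated hypotheses do not obviously provide) or switch to a single-maximum argument of that type.
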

	\begin{proof}
		Fix $0<T'<T$, in particular $T'<+\infty$. We shall prove that $\f\leq\psi$ on $[0,T']\times X$. The result thus follows by letting $T'\rightarrow T$. We fix $\lambda,\e>0$ sufficiently small. Set for $(t,x)\in [0,T']\times X$,
		\begin{align*}
		\f_{\lambda}(t,x):=(1-\lambda)\f(t,x)+\lambda g(t)\dfrac{\rho(x)+\chi(x)}{2},
		\end{align*}
		where $\rho,\chi$ are $\theta$-psh functions defined in~\eqref{rho}, \eqref{chi}. One can moreover impose $\chi <0$ to be smooth in the ample locus $\Omega=\text{Amp}\{\theta\}$, with analytic singularities, and such that $\chi(x)\rightarrow-\infty$ as $x\rightarrow\partial\Omega$. We will show that $\f_{\lambda}\leq \psi$ and we then let $\lambda\rightarrow 0$ to conclude the proof. Set
		$$w(t,x):=\f_{\lambda}(t,x)+\lambda g(0)\delta_0\chi(x)-\psi(t,x)-3\e t.$$ 
		Observe that by Lemma~\ref{12}, this function is upper semi-continuous on $[0,T']\times\Omega$. By the assumption d) we have $\f_\lambda(t,\cdot)\leq \psi(t,\cdot)+O(1)$ for each $t$.
		Since $\chi$ is continuous in $\Omega$ and tends to $-\infty$ on $\partial \Omega$, we have that $w$
		tends to $-\infty$ on $\partial\Omega$.  
		Hence $w$ attains its maximum at some point $(t_0,x_0)\in [0,T']\times \Omega$. 
		
		We want to show that $w(t_0,x_0)\leq 0$. Assume by contradiction that it is not the case i.e $w(t_0,x_0)>0$, with $t_0>0$. The set 
		$$K:=\{x\in \Omega: w(t_0,x)=w(t_0,x_0)\}$$
		is a compact subset of $\Omega$ since $w(t_0,x)$ tends to $-\infty$ as $x\rightarrow\partial\Omega$. The classical maximum principle ensures for all $x\in K$ that
		$$(1-\lambda)\partial_{t}\f(t_0,x)\geq \partial_{t}^{-}\psi(t_0,x)+3\e,$$
		since $g'(t)\geq 0$ for all $t$.
		The partial derivative $\partial_{t}\f(t,x)$ is continuous in $\Omega$ by assumption. 
		Since the function $t\mapsto \psi(t,x)$ is locally uniformly semi-concave, for any $t\in(0,T)$,
		the left derivative  $\partial_t^-\psi(t,\cdot)$ is upper semi-continuous in $\Omega$ (see Proposition \ref{t_der}). We can thus find  $\eta>0$ small enough  that, by introducing the open set containing $K$,    
		$$D:=\{x\in\Omega:w(t_0,x)>w(t_0,x_0)-\eta \}\Subset \Omega.$$
		We have for all $x\in D$,
		\begin{align}\label{41}
		(1-\lambda)\partial_{t}\f(t_0,x)>\partial_{t}^{-}\psi(t_0,x)+2\e.
		\end{align}  
		Set $u:=\f_\lambda(t_0,\cdot)+\lambda g(0)\delta_0\chi$ and $v=\psi(t_0,\cdot)$. We observe that $$\lambda\frac{\omega_t+\dc  g(t)\chi}{2}+\lambda g(0)\dc\chi\geq \lambda g(0)\delta_0\Theta+\lambda g(0)\delta_0\dc\chi\geq 0.$$
		Since $\f$ is a pluripotential subsolution to \eqref{cmaf}, we infer by using Lemma~\ref{logconcave} that
		\begin{align*}
		(\omega_{t_0}+\dc u)^n&\geq ((1-\lambda)(\omega_{t_0}+\dc\f_{t_0})+\lambda (g(t_0)\theta+g(t_0)\dc\rho)/2)^n\\
		&\geq e^{(1-\lambda)(\partial_t\f(t_0,\cdot)+F(t_0,\cdot,\f(t_0,\cdot)))+\lambda(n\log g(t) +c_1)}fdV\\
		&\geq e^{(1-\lambda)\partial_{t}\f(t_0,\cdot)+F(t_0,\cdot,\f(t_0,\cdot))-\lambda (M_F+|n\log g(t)+c_1|)}fdV	
		\end{align*}
		in the weak sense of measures in $D$. Choosing $\lambda$ so small that $$\lambda<\min_{[0,T]}\{(M_F+|n\log g(t)+c_1|)^{-1}\e\}.$$ It thus follows from~\eqref{41} and the increasing property of $F$ that
		\begin{align*}
		(\omega_{t_0}+\dc u)^n\geq e^{\partial_{t}^-\psi(t_0,\cdot)+F(t_0,\cdot,u)+\e}fdV
		\end{align*}
		in the weak sense of measures in $D$. On the other hand, $\psi$ is a pluripotential supersolution to~\eqref{cmaf}, thus
		\begin{align*}
		(\omega_{t_0}+\dc v)^n\leq e^{\partial_{t}^-\psi(t_0,\cdot)+F(t_0,\cdot,\psi(t_0,\cdot))}fdV
		\end{align*} in the weak sense of measures in $D$. The last two inequalities yield
		\begin{align*}
		(\omega_{t_0}+\dc u)^n\geq e^{F(t_0,\cdot,u)-F(t_0,\cdot,v)+\e}(\omega_{t_0}+\dc v)^n.
		\end{align*}
		Shrinking $D$ if necessary, we can assume that $u(x)>v(x)$ for any $x\in D$. We thus get
		\begin{align*}
		(\omega_{t_0}+\dc u)^n\geq e^{\e}(\omega_{t_0}+\dc v)^n
		\end{align*}
		in the sense of measures in $D$.
		
		Consider now $\tilde{u}:=u+\min_{\partial D}(v-u)$. We observe that $v\geq \tilde{u}$ on $\partial D$, hence the elliptic comparison principle (see Proposition~\ref{domination}) yields
		\begin{align*}
		\int_{\{v<\tilde{u}\}\cap D}e^{\e}(\omega_{t_0}+\dc v)^n&\leq \int_{\{v<\tilde{u}\}\cap D}(\omega_{t_0}+\dc u)^n
		\leq \int_{\{v<\tilde{u}\}\cap D}(\omega_{t_0}+\dc v^n).
		\end{align*}
		Thus $\tilde{u}\leq v$ almost everywhere in $D$ with respect to the measure $(\omega_{t_0}+\dc v)^n$. It thus follows from the domination principle (Proposition~\ref{domination}) that $\tilde{u}\leq v$ everywhere in $D$. In particular,
		\begin{align}\label{contradict}
		u(x_0)-v(x_0)+\min_{\partial D}(v-u)\leq \tilde{u}(x)-v(x)\leq 0.
		\end{align}
		On the other hand, since $K\cap\partial D=\varnothing$, we get $w(t_0,x)\leq w(t_0,x_0)$, for all $x\in\partial D$, hence
		\begin{align*}
		u(x)-v(x)< u(x_0)-v(x_0),
		\end{align*}
		contradicting~\eqref{contradict}. Therefore, we must have $t_0=0$, hence
		\begin{align*}
		(1-\lambda)\f+\lambda g(t)\frac{\rho+\chi}{2}+\lambda g(0)\chi-\psi-3\e t\leq \lambda\sup_X\left(g(0)\frac{\rho+\chi}{2}-\f_0\right),
		\end{align*} 
		in $[0,T]\times \Omega$. The right-hand side is finite thanks to Lemma~\ref{lem_initial}. Letting $\lambda\rightarrow 0$ we obtain $\f\leq \psi+3\e t$ in $[0,T']\times \Omega$, hence in $[0,T']\times X$. We thus conclude the proof by letting $\e\rightarrow 0$ and $T'\rightarrow T$.
	\end{proof}
	\begin{proposition}\label{domination} Fix a nonempty open subset of $D\Subset \Omega$.
		Let $u$, $v$ be $\theta$-psh functions, which are bounded in a neighborhood of $D$, such that
		$$\limsup_{D\ni x\rightarrow\partial D}(u-v)(x)\geq 0.$$
		Assume that $v$ has minimal singularities.	Then 
		\begin{align*}
		\int_{\{u<v\}\cap D}\textrm{MA}_\theta(v)\leq \int_{\{u<v\}\cap D}\textrm{MA}_\theta(u).
		\end{align*}
		Moreover, if $\text{MA}_{\theta}(u)(\{u<v\}\cap D)=0$ then $u\geq v$ in $D$.
	\end{proposition}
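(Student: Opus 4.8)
The plan is to prove first the comparison inequality and then to deduce the domination statement from it by perturbing $v$ with the function $\chi$ of \eqref{chi}. Throughout I would use that $v$ has minimal singularities and $\overline D\Subset\Omega$, so $v$ is bounded on a neighbourhood of $\overline D$; since $u$ is bounded there as well, on such a neighbourhood $\MA_\theta(u)$ and $\MA_\theta(v)$ are genuine Bedford--Taylor measures, which lets one invoke plurifine locality of the Monge--Amp\`ere operator, the maximum inequality $\MA_\theta(\max(a,b))\geq \mathbf{1}_{\{a\geq b\}}\MA_\theta(a)$, and integration by parts exactly as in the local theory. I would also record that the boundary hypothesis forces $\{u<v-\delta\}\cap D\Subset D$ for every $\delta>0$ (this is where one really needs the comparison of $u$ and $v$ near $\partial D$).

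For the comparison inequality, fix $\delta>0$ and set $A_\delta:=\{u<v-\delta\}\cap D$, a plurifine open set with $\overline{A_\delta}\Subset D$. Define $w_\delta:=\max(u,v-\delta)$ on $D$ and $w_\delta:=u$ on $X\setminus D$; since $A_\delta\Subset D$ the two formulas agree on a neighbourhood of $\partial D$, so $w_\delta$ is a well-defined $\theta$-psh function on $X$ which equals $u$ outside the compact set $\overline{A_\delta}\subset D$. As $u$, $v$, hence $w_\delta$ are bounded on a neighbourhood of $\overline{A_\delta}$, a standard integration by parts gives $\int_X\MA_\theta(w_\delta)=\int_X\MA_\theta(u)$. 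Now split $X=A_\delta\sqcup(\{u\geq v-\delta\}\cap D)\sqcup(X\setminus D)$: on $A_\delta$ one has $w_\delta=v-\delta$, so $\mathbf{1}_{A_\delta}\MA_\theta(w_\delta)=\mathbf{1}_{A_\delta}\MA_\theta(v)$ by plurifine locality; on $\{u\geq v-\delta\}\cap D$ one has $\MA_\theta(w_\delta)\geq\MA_\theta(u)$ by the maximum inequality; and on $X\setminus D$, which lies in the open set $\{w_\delta=u\}$, the two measures coincide. Adding these three contributions and using the total mass identity together with $\int_X\MA_\theta(u)\leq\Vol(\alpha)<\infty$, one obtains $\int_{A_\delta}\MA_\theta(v)\leq\int_{A_\delta}\MA_\theta(u)$. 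Letting $\delta\downarrow 0$, the sets $A_\delta$ increase to $\{u<v\}\cap D$, and monotone convergence yields the first inequality.

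For the domination statement, suppose $\MA_\theta(u)(\{u<v\}\cap D)=0$; by the inequality just proved, $\MA_\theta(v)(\{u<v\}\cap D)=0$ as well. I would set $\psi:=\chi+\bigl(\inf_{\overline D}v-\sup_{\overline D}\chi\bigr)$, a $\theta$-psh function with $\theta+\dc\psi\geq 2\delta_0\Theta$ and $\psi\leq v$ on $\overline D$, and for $\e\in(0,1)$ put $v_\e:=(1-\e)v+\e\psi$. Then $v_\e$ is $\theta$-psh, bounded near $\overline D$, satisfies $v_\e\leq v$ on $\overline D$, and $\MA_\theta(v_\e)\geq\e^n(2\delta_0)^n\Theta^n$ on $D$. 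Since $v_\e\leq v$ near $\overline D$, the pair $(u,v_\e)$ still satisfies the boundary hypothesis and $\{u<v_\e\}\cap D\subseteq\{u<v\}\cap D$, so the comparison inequality already established gives
\[ \e^n(2\delta_0)^n\int_{\{u<v_\e\}\cap D}\Theta^n \leq \int_{\{u<v_\e\}\cap D}\MA_\theta(v_\e) \leq \int_{\{u<v_\e\}\cap D}\MA_\theta(u) \leq \int_{\{u<v\}\cap D}\MA_\theta(u) = 0. \]
Hence $\{u<v_\e\}\cap D$ has Lebesgue measure zero, so $u\geq v_\e$ almost everywhere on $D$; since $u$ is upper semicontinuous and $v_\e$ is quasi-plurisubharmonic (hence recovered from its averages), this forces $u\geq v_\e$ everywhere on $D$. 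Finally $v_\e\to v$ uniformly on $\overline D$ as $\e\to 0$, so $u\geq v$ on $D$.

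The main obstacle, and the point needing the most care, is the total mass identity $\int_X\MA_\theta(w_\delta)=\int_X\MA_\theta(u)$: one must localise the integration by parts to the compact set $\overline{A_\delta}\Subset D$, on which all the potentials are bounded and the mixed Monge--Amp\`ere currents are genuine closed positive Bedford--Taylor currents, so that Stokes' theorem applies to the bounded, compactly supported function $w_\delta-u$ (conceptually, $w_\delta$ and $u$ have the same singularity type, which already gives equality of total masses). The remaining ingredients --- plurifine locality, the maximum inequality for mixed Monge--Amp\`ere measures, and the implication ``$\le$ almost everywhere $\Rightarrow\le$ everywhere'' for quasi-plurisubharmonic functions --- are routine facts of pluripotential theory.
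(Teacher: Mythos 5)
Your proof is correct and follows essentially the same route as the paper: the comparison inequality comes from the standard mass-balance/Stokes argument applied to the gluing $\max(u,v-\delta)$ (the paper localizes this via $u^C=\max(u,V_\theta-C)$ and an auxiliary domain $D'\Subset D$ rather than invoking a global total-mass identity on $X$, but the cancellation $\int \MA_\theta(\max(u,v-\delta))=\int\MA_\theta(u)$ is the same computation), and the domination statement is deduced by perturbing $v$ with the strictly positive potential $\chi$ exactly as in the paper. The only cosmetic difference is your normalization $\psi=\chi+\bigl(\inf_{\overline D}v-\sup_{\overline D}\chi\bigr)$, which secures $\psi\leq v$ on $\overline D$ using only local boundedness, whereas the paper uses the minimal-singularity hypothesis to arrange $\chi\leq v$ globally.
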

	\begin{proof} 
		Fix $\e>0$.	
		For each $C>0$ we set $u^C:=\max(u,V_\theta-C)$.
		Then the function $\max(u^C,v-\e)$ is $\theta$-psh with minimal singularities and coincides with $u^C$ in a neighborhood of $\partial D$. The boundary condition means that for any $\e>0$ the subset $\overline{\{u^C<v-\e\}}$ is compact in $D$.
		Let $\overline{\{u^C<v-\e\}}\Subset D'\Subset D$. 
		We claim that
		\begin{align}\label{43}
		\int_{D'}(\theta+\dc u^C)^n=\int_{D'}(\theta+\dc\max(u^C,v-\e))^n.
		\end{align}
		Indeed, set $w:=\max(u^C,v-\e)$, using local regularization of plurisubharmonic functions, we observe that $(\theta+\dc w)^n-(\theta+\dc u^C)^n=\dc S$ in the sense of currents on $D$, where $S:=(w-u^C)((\theta+\dc w)^{n-1}+\cdots+(\theta+\dc u^C)^{n-1})$ is a well-defined current with compact support in $D$. Pick any test function $\gamma$ which is identically $1$ in a neighborhood of the support of $S$. Then 
		\begin{align*}
		\int_{D'}\dc S=\int_{D'}\gamma \dc S=\int_{D'}S\wedge\dc\gamma=0,
		\end{align*}
		where we have known that $\dc\gamma=0$ on the support of $S$. This implies~\eqref{43}.
		
		On the other hand, we apply~\cite[Theorem 3.27]{guedj2017degenerate} to get
		\begin{align*}
		\int_{\{u^C<v-\e\}}(\theta+\dc\max(u^C,v-\e))^n=\int_{\{u^C<v-\e\}}(\theta+\dc v)^n.
		\end{align*}
		Combining this together with the equality~\eqref{43}, we obtain
		\begin{align*}
		\int_{\{u^C<v-\e\}\cap D'}\MA_{\theta}(v)&=\int_{D'}\MA_{\theta}(\max(u^C,v-\e))-\int_{\{u^C\geq v-\e\}}\MA_{\theta}(\max(u^C,v-\e))\\
		&\leq\int_{D'}\MA_{\theta}(u^C)-\int_{\{u^C> v-\e\}}\MA_{\theta}(\max(u^C,v-\e))\\
		&\leq\int_{D'}\MA_{\theta}(u^C)-\int_{\{u^C> v-\e\}}\MA_{\theta}(u^C),
		\end{align*} which implies
		\begin{align*}
		\int_{\{u^C<v-\e \}\cap D}\MA_{\theta}(v)\leq \int_{\{u^C\leq v-\e\}\cap D}\MA_{\theta}(u^C) 
		\end{align*} since $D'$ was taken arbitrarily.
		Letting $\e\rightarrow 0$ and then $C\to+\infty$ we obtain the required inequality.

		Arguing as in~\cite[Corollary 2.5]{boucksom2010monge}, we  complete the last statement. Let $\chi$ be a $\theta$-psh function defined in \eqref{chi}. Since $v$ has minimal singularities, we may also assume that $\chi\leq v$.
		For any $\e>0$ small enough, consider $v_{\e}:=(1-\e)v+\e\chi$, hence $\limsup_{D \ni x\rightarrow\partial D}(u(x)-v_{\e}(x))\geq 0$. Then
		\begin{align*}
		\e^n\int_{\{u<v_{\e}\}}\MA_{\theta}(\chi)\leq \int_{\{u<v_{\e}\}}\MA_{\theta}(v_{\e})\leq \int_{\{u<v_{\e}\}}\MA_{\theta}(u)=0,
		\end{align*}
		since $\chi\leq v$ implies that $\{u<v_{\e}\}\subset \{u<v\}$. On the other hand, $\MA_{\theta}(\chi)$ dominates Lebesgue measure. We deduce that $u\geq v_{\e}$ almost everywhere with respect to Lebesgue measure in the open set $D$, hence everywhere in $D$. The result follows by letting $\e\rightarrow 0$.
	\end{proof}
	We now slightly relax the hypothesis e) in Proposition~\ref{comparison1}.
	\begin{proposition} \label{comparision}
		Let $\f$ (resp. $\psi$) be a pluripotential subsolution (resp. supersolution) to~\eqref{cmaf} with initial value $\f_0$ (resp. $\psi_0$). We assume that
		\begin{itemize}
			\item[a)] $\f$ is $\mathcal{C}^1$ in $t$ and continuous on $(0,T)\times\Omega$,
			\item[b)] $\psi$ is locally uniformly semi-concave in $t$,	
			\item[c)]  $\f_t\rightarrow \f_0$ and $\psi_t\rightarrow\psi_0$, as $t\rightarrow 0$,
			\item[d)] for any $t\in(0,T)$, $\psi_t$ has minimal singularities,
			\item[e')] the function $(t,x)\mapsto \psi(t,x)$ is continuous on $(0,T)\times \Omega$.
		\end{itemize}
		Then 
		$$
		\f_0\leq \psi_0\Rightarrow\f\leq \psi \quad\text{in}\,\, X_T.
		$$
	\end{proposition}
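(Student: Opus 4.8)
The plan is to deduce Proposition~\ref{comparision} from Proposition~\ref{comparison1} by translating the whole problem in time: the point is that a time-translation turns the supersolution $\psi$, which is only assumed continuous on $(0,T)\times\Omega$, into one that is continuous up to its new initial time, so that the stronger hypothesis~e) of Proposition~\ref{comparison1} becomes available.

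Concretely, I would fix $0<\tau<T$ and set, for $(t,x)\in X_{T-\tau}=(0,T-\tau)\times X$,
$\varphi^{\tau}(t,x):=\varphi(t+\tau,x)$ and $\psi^{\tau}(t,x):=\psi(t+\tau,x)$. The translated data $\omega^{\tau}_{t}:=\omega_{t+\tau}$, $g^{\tau}:=g(\cdot+\tau)$ and $F^{\tau}:=F(\cdot+\tau,\cdot,\cdot)$ again satisfy all the standing assumptions of the Introduction, and $\varphi^{\tau}$ (resp.\ $\psi^{\tau}$) is a pluripotential subsolution (resp.\ supersolution) to the corresponding equation~\eqref{cmaf} on $X_{T-\tau}$ with initial data $\varphi_{\tau}$ (resp.\ $\psi_{\tau}$); the Cauchy condition $\limsup_{t\to0}\varphi^{\tau}(t,x)\le\varphi_{\tau}(x)$ holds since $\varphi$ is upper semi-continuous on $X_{T}$. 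The gain is that, as $\psi$ is continuous on $(0,T)\times\Omega$ and $\tau>0$, the function $\psi^{\tau}$ is continuous on all of $[0,T-\tau)\times\Omega$, hence fulfils hypothesis~e) of Proposition~\ref{comparison1}. Hypotheses a), b) and d) are inherited immediately, and the $L^{1}$-convergence required in c) follows from the continuity of $\varphi,\psi$ on $(0,T)\times\Omega$, the uniform bound $\varphi_{t}\le M_{0}$ valid along subsolutions together with the slicewise upper bounds along $\psi$, and the usual compactness of families of quasi-psh functions uniformly bounded above.

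The only remaining ingredient needed to invoke Proposition~\ref{comparison1} is a comparison of the initial data at time $\tau$, which one cannot assume a priori. To get around this I would set $c_{\tau}:=\sup_{X}(\varphi_{\tau}-\psi_{\tau})_{+}$; this is finite because every $\omega_{\tau}$-psh function $u$ satisfies $u\le V_{\omega_{\tau}}+\sup_{X}u$ while $\psi_{\tau}\ge V_{\omega_{\tau}}-C_{\tau}$ by minimal singularities, so $\varphi_{\tau}-\psi_{\tau}\le\sup_{X}\varphi_{\tau}+C_{\tau}<+\infty$. Since $F$ is non-decreasing in $r$, adding the non-negative constant $c_{\tau}$ to $\psi^{\tau}$ keeps it a supersolution of the translated equation (still continuous on $[0,T-\tau)\times\Omega$, locally uniformly semi-concave in $t$, with minimal singularities in each slice), and now $\psi_{\tau}+c_{\tau}\ge\varphi_{\tau}$ on $X$; Proposition~\ref{comparison1} then yields $\varphi\le\psi+c_{\tau}$ on $[\tau,T)\times X$. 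It thus remains to prove $\limsup_{\tau\to0}c_{\tau}\le0$, after which letting $\tau\to0$ along a suitable sequence gives $\varphi\le\psi$ on $X_{T}$.

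For the decay of $c_{\tau}$ I would combine the time-regularity of elements of $\mathcal{P}(X_{T},\omega)$ with the semi-concavity of $\psi$. On a compact $K\Subset\Omega$, the inequality \eqref{famlip} makes $t\mapsto\varphi(t,x)-C(K)t$ non-increasing near $0$ for every $x\in K$, whence $\varphi_{\tau}\le\varphi_{0}+C(K)\tau$ on $K$; on the other hand, the concavity of $t\mapsto\psi(t,x)-\kappa t^{2}$ together with $\psi_{t}\to\psi_{0}$ (so that $\lim_{t\to0}\psi(t,x)=\psi_{0}(x)$ for a.e.\ $x$) gives a chord estimate bounding $\psi_{0}-\psi_{\tau}$ from above; using $\varphi_{0}\le\psi_{0}$, the sub-mean value inequality for $\varphi_{\tau}$ and the continuity of $\psi_{\tau}$ on $\Omega$, these combine to give $\sup_{K}(\varphi_{\tau}-\psi_{\tau})\to0$. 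A separate argument near $\partial\Omega$, based on the two-sided comparison $V_{\omega_{\tau}}-C_{\tau}\le\psi_{\tau}$, $\varphi_{\tau}\le V_{\omega_{\tau}}+\sup_{X}\varphi_{\tau}$ and on exhausting $\Omega$ by compacts, is needed to control the remaining part of $X$; and once $\varphi\le\psi$ is known on $\Omega_{T}$ it extends to $X_{T}$ by the sub-mean value property of $\varphi(t,\cdot)$ and the fact that $\Omega$ has full Lebesgue measure. I expect the main obstacle to be precisely this last estimate — that the initial-data discrepancy $c_{\tau}$ vanishes as $\tau\to0$: this is where the local uniform semi-concavity of $\psi$ in $t$ is genuinely used (it prevents $\psi$ from falling significantly below $\psi_{0}$ as $t\to0$), and it is also where one must be careful about the behaviour of the potentials near the boundary of the ample locus $\Omega$.
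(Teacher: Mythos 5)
The overall shape of your argument is close to the paper's: both reduce to Proposition~\ref{comparison1} by translating $\psi$ in time so that the translated supersolution becomes continuous up to the new initial slice. But the step you yourself flag as the main obstacle --- showing that the initial-data discrepancy $c_{\tau}=\sup_{X}(\f_{\tau}-\psi_{\tau})_{+}$ tends to $0$ --- is a genuine gap, and the mechanism you propose for it does not work. Both the one-sided bound \eqref{famlip} and the local uniform semi-concavity are only asserted on compact subintervals $J\Subset(0,T)$, with constants $\kappa_{J}$ that may blow up as $J$ approaches $t=0$ (compare Theorem~\ref{thmlips}, where the natural bound on $\partial_{t}U$ carries a factor $1/t$). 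So neither ``$\f_{\tau}\leq\f_{0}+C(K)\tau$'' nor a chord estimate for $\psi$ down to $t=0$ follows from the hypotheses; the correct lower bound for $\psi_{\tau}$ near $\tau=0$ is the content of Lemma~\ref{l1}, which produces a $\tau\log\tau$ defect and an additional term $A g(0)\tau(\rho+\chi)/2$, and which is itself proved by another application of Proposition~\ref{comparison1} with explicit sub/supersolutions --- not by semi-concavity. Your ``separate argument near $\partial\Omega$'' is also missing in an essential way: there the only available bound is $\f_{\tau}-\psi_{\tau}\leq\sup_{X}\f_{\tau}+C_{\tau}$, where $C_{\tau}$ is the (non-uniform in $\tau$) minimal-singularity constant of $\psi_{\tau}$, and nothing forces this to be small. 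Since $c_{\tau}$ must be a supremum over all of $X$ for the initial comparison in Proposition~\ref{comparison1} to apply, the boundary region cannot be discarded.

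The paper's proof circumvents exactly these two difficulties by not adding a constant to $\psi$: it compares the time-translate $v^{s}(t,x)=\psi(t+s,x)+Cs(t+1)-Cs\log\delta_{0}^{-1}s$ with the perturbation $u^{s}(t,x)=\alpha_{s}\f(t,x)+(1-\alpha_{s})g(t)(\rho+\chi)/2-Cs(t+1)$ of $\f$, where $\alpha_{s}=1-As$. The convex combination with $(\rho+\chi)$ makes $u^{s}$ go to $-\infty$ near $\partial\Omega$ at a rate matching the term $Ag(0)s(\rho+\chi)/2$ appearing in the lower bound of Lemma~\ref{l1} for $\psi_{s}$, so the inequality $u^{s}(0,\cdot)\leq v^{s}(0,\cdot)$ holds on all of $X$ with only an $O(s\log(1/s))$ loss. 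If you want to salvage your version, you would have to prove an analogue of Lemma~\ref{l1} first and then replace the additive constant $c_{\tau}$ by a correction that degenerates like $\tau(\rho+\chi)$ near $\partial\Omega$ --- at which point you have essentially reconstructed the paper's argument.
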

	
	\begin{proof}
		We proceed as in the proof of Theorem \ref{thmlips}. We fix $s>0$ sufficiently small and consider
		$$
		v^s(t,x)=\psi(t+s,x)+Cs(t+1)-Cs\log\delta_0^{-1} s,
		$$
		and
		$$
		u^s(t,x):=\alpha_s\f(t,x)+(1-\alpha_s)g(t)\dfrac{\rho(x)+\chi(x)}{2}-Cs(t+1).
		$$
		Here $\alpha_s=1-As\in(0,1)$, $A>0$ is determined in Lemma \ref{l1}, the functions $\rho,\chi$ are defined in \eqref{rho}, \eqref{chi} and $C$ is a positive constant which will be chosen later. We want to show that for $C>0$ large enough, $u^s$ is a subsolution while $v^s$ is a supersolution to \eqref{cmaf} and $u^s(0,\cdot)\leq v^s(0,\cdot)$. We can then apply Proposition~\ref{comparison1} and let $s\rightarrow 0$ to complete the proof. We first observe that
		\begin{align*}
		\omega_{t+s}+\dc u^s&=\alpha_s(\omega_t+\dc\f_t)+\dfrac{1-\alpha_s}{2}(\omega_t+g(t)\dc\rho)\\
		&\quad +\dfrac{1-\alpha_s}{2}(\omega_t+g(t)\dc\chi)+\omega_{t+s}-\omega_t.
		\end{align*}
		By assumptions, we see that for $s>0$ 
		$\omega_{t+s}-\omega_t\geq -s\Theta.$
		Since $\theta+\dc\chi\geq 2\delta_0\Theta$ we  thus obtain 
		$$\dfrac{1-\alpha_s}{2}(\omega_t+g(t)\dc\chi)+\omega_{t+s}-\omega_t\geq Asg(t)\delta_0\Theta-s\Theta\geq  0.$$
		Hence,
		\begin{align*}
		(\omega_{t+s}+\dc u^s)^n&\geq(\alpha_s(\omega_t+\dc \f_t)+(1-\alpha_s)g(t)(\theta+\dc\rho)/2)^n\\
		&\geq e^{\alpha_s(\partial_{t}\f_t+F(t,\cdot,\f_t))+(1-\alpha_s)(n\log g(t)+c_1)}fdV
		\end{align*}
		applying Lemma~\ref{logconcave} in the last line. Since $\alpha_s=1+O(s)$ and $F$ is bounded from above and locally uniformly Lipschitz, by choosing $C>0$ large enough,  depending on $\delta_0,\kappa_F, M_F,c_1$ (see Theorem~\ref{thmlips}), we have
		\begin{align*}
		(\omega_{t+s}+\dc u^s)^n\geq e^{\partial_{t} u^s+F(t,\cdot,u^s)}fdV.
		\end{align*} 
		On the other hand, since $\psi$ is a supersolution, we have
		\begin{align*}
		(\omega_{t+s}+\dc v^s)^n&\leq e^{\partial_{t} v^s-Cs+F(t+s,\cdot,\psi(t+s,\cdot))}f dV\\
		&\leq e^{\partial_{t}v^s-Cs+F(t,\cdot,v^s(t,\cdot))+\kappa_Fs}fdV\\
		&\leq e^{\partial_t v^s+F(t,\cdot,v^s(t,\cdot))}fdV,
		\end{align*}
		where the second line follows from the Lipschitz condition and the increasing monotonicity of $F$,
		the last line follows from the choice of $C$. Up to increasing $C>0$ it follows from Lemma~\ref{l1} that for any $x\in X$, \begin{align*}
		u^s(0,x)&\leq(1-As)\f(0,x)+Ag(0) s(\rho+\chi)/2\\
		&\leq \psi_s(x)+Cs-Cs\log Ag(0) s=v^s(0,x).
		\end{align*} It then follows from Proposition~\ref{comparison1} that
		$u^s(t,x)\leq v^s(t,x)$ for all $(t,x)\in X_T$.
		Letting $s\rightarrow 0$, we finish the proof.
	\end{proof}
	
	\begin{lemma}\label{l1}
		With the same assumptions of $\psi$ as in Proposition~\ref{comparision}. Then there exist uniform constants $A>0$, $C>0$, and $t_0>0$ small enough such that for all $(t,x)\in (0,t_0)\times X$, 
		$$\psi(t,x)\geq (1-At)\psi_0(x)+C(t\log (Ag(0)t)-t)+Ag(0) t(\rho(x)+\chi(x))/2.$$
	\end{lemma}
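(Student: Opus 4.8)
The plan is to produce an explicit \emph{barrier} — a pluripotential subsolution $w$ defined on a short slab $[0,t_0)\times X$ which lies below $\psi_0$ at $t=0$ and, at each positive time, equals (a mild rearrangement of) the right-hand side of the asserted inequality — and then to slide it under $\psi$ using the comparison principle. This is exactly the mechanism used to build the initial barrier in Lemma~\ref{lemexist}, and I would simply recycle that construction with $\varphi_0$ replaced by $\psi_0$.

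Concretely, first I would set $A:=(\delta_0 g(0))^{-1}$ (note $Ag(0)=\delta_0^{-1}$), pick $t_0>0$ with $t_0\le\min(\delta_0,\delta_0 g(0))$, and define, for $(t,x)\in[0,t_0)\times X$,
\[
w(t,x):=(1-At)\psi_0(x)+Ag(0)\,t\,\frac{\rho(x)+\chi(x)}{2}+nt\bigl(\log(Ag(0)t)-1\bigr)-C_2\,t .
\]
Repeating the computation of Lemma~\ref{lemexist} verbatim — pairing $g(0)\dc\chi$ with $\omega_0$, using $\omega_t-\omega_0\ge -t\Theta$ from \eqref{estimate_ome} and $\omega_0+g(0)\dc\chi\ge g(0)(\theta+\dc\chi)\ge 2\delta_0 g(0)\Theta$ to absorb it, then invoking the mixed Monge--Amp\`ere inequality (Lemma~\ref{logconcave}) together with the normalisation $(\theta+\dc\rho)^n=2^ne^{c_1}fdV$ — shows that, for $C_2>0$ large enough (depending only on $M_F$, $|c_1|$, $\delta_0$ and $\sup_X\bigl(g(0)(\rho+\chi)/2-\psi_0\bigr)$), the function $w$ is a pluripotential subsolution to~\eqref{cmaf} on $(0,t_0)\times X$, and $w(0,\cdot)\le\psi_0$ since $\rho,\chi\le 0$ and $t\log t\to 0$. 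Because $\psi$ is a supersolution satisfying the regularity hypotheses of Proposition~\ref{comparison1} (semi-concavity in $t$, continuity on $(0,T)\times\Omega$, minimal singularities of the slices), the comparison principle then yields $w\le\psi$ on $(0,t_0)\times X$. It remains only to expand $nt(\log(Ag(0)t)-1)-C_2 t=nt\log(Ag(0)t)-(n+C_2)t$ and observe that $\log(Ag(0)t)\le 0$ for $t\le t_0$, so that $nt\log(Ag(0)t)-(n+C_2)t\ge (n+C_2)\bigl(t\log(Ag(0)t)-t\bigr)$; with $C:=n+C_2$ this gives $w(t,x)\ge(1-At)\psi_0(x)+C\bigl(t\log(Ag(0)t)-t\bigr)+Ag(0)t(\rho(x)+\chi(x))/2$, which is the claim.

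The hard part will be the finiteness of $C_2$, i.e. $\sup_X\bigl(g(0)(\rho+\chi)/2-\psi_0\bigr)<+\infty$: this forces $\psi_0$ to be sufficiently tame, and I would deduce it from $\psi_0$ having minimal singularities, since $g(0)(\rho+\chi)/2$ is $\omega_0$-psh and $\le 0$, hence $\le V_{\omega_0}\le\psi_0+O(1)$ by Lemma~\ref{lem_initial}. A second, more technical point is that invoking Proposition~\ref{comparison1} requires continuity of $\partial_t w$ in the space variable; since $\partial_t w(t,x)=(\delta_0 g(0))^{-1}\bigl(g(0)(\rho(x)+\chi(x))/2-\psi_0(x)\bigr)+n\log(Ag(0)t)-C_2$ only involves $\rho,\chi$ (continuous on $\Omega$) and $\psi_0$, one needs $\psi_0$ continuous on $\Omega$ as well, which I would arrange — if necessary — by first proving the estimate for $\psi_0=P_{\omega_0}h$ with $h$ continuous and then passing to a decreasing limit, exactly as in the proof of Theorem~\ref{thmsub}.
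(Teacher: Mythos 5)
Your barrier $w$ is, up to notation, exactly the function the paper uses, and the subsolution computation (pairing $g(0)\dc\chi$ with $\omega_t-\omega_0$, then invoking Lemma~\ref{logconcave} and the normalisation of $\rho$) is correct. The genuine gap is in the final step: you apply Proposition~\ref{comparison1} directly to the pair $(w,\psi)$ on $[0,t_0)\times X$. That proposition requires the \emph{supersolution} to be continuous on $[0,T)\times\Omega$ (hypothesis e), closed at $t=0$) and its slice at $t=0$ to have minimal singularities (hypothesis d) is stated for $t\in[0,T)$). But in the setting of this lemma $\psi$ only satisfies e') — continuity on the \emph{open} set $(0,T)\times\Omega$ — and d) only for $t\in(0,T)$; nothing is assumed about the continuity of $\psi$ up to $t=0$ or about $\psi_0$ on $\Omega$. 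Your proposed repair — proving the estimate first for $\psi_0=P_{\omega_0}h$ and passing to a decreasing limit — addresses the wrong object: the missing regularity is that of the \emph{given} supersolution $\psi$ and its initial slice, which are fixed data and cannot be replaced by envelopes of continuous functions. (The same issue infects your "hard part": you invoke minimal singularities of $\psi_0$, which hypothesis d) does not provide at $t=0$.)

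The paper's proof closes this gap with a time shift, which is the one idea your proposal is missing. It builds the barrier from the slice $\psi_s$ with $s>0$ (which \emph{is} continuous on $\Omega$ and has minimal singularities, by d) and e') applied at $t=s$), namely $u^s(t,\cdot)=(1-At)\psi_s+Atg(s)(\rho+\chi)/2+n(t\log(Ag(0)t)-t)-Ct$, and compares it not with $\psi$ but with $v^s(t,\cdot)=\psi(t+s,\cdot)+2\kappa_F ts$; the extra term $2\kappa_F ts$ is needed because $\psi(t+s,\cdot)$ satisfies the supersolution inequality with $F(t+s,\cdot,\cdot)$ rather than $F(t,\cdot,\cdot)$, and the Lipschitz constant $\kappa_F$ absorbs the discrepancy. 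The pair $(u^s,v^s)$ satisfies all hypotheses a)--e) of Proposition~\ref{comparison1} with common initial value $\psi_s$, so $u^s\le v^s$, and letting $s\to 0$ yields the stated inequality. You should restructure your argument along these lines; the rest of your computation then goes through unchanged.
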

	
	\begin{proof} 
		The proof is similar to that of~\cite[Lemma 3.14]{guedj2020pluripotential}. 
		We recall that the function $F$ satisfies the Lipschitz condition i.e., there exists a constant $\kappa_F>0$ such that, for all $t,t'\in [0,T/2]$, $x\in X$, $r\in \mathbb{R}$, 
		\begin{align*}
		|F(t,x,r)-F(t',x,r)|\leq\kappa_F|t-t'|.
		\end{align*}
		Set $t_0=\min(1,A,T/4)$ with $A>0$ under control.
		Fix $s>0$ sufficiently small and consider for $(t,x)\in (0,t_0)\times X$,
		\begin{align*}
		u^s(t,x)&:=(1-At)\psi_s(x)+At g(s)(\rho(x)+\chi(x))/2+n(t\log Ag(0)t-t)-Ct,\\
		v^s(t,x)&:=\psi(t+s,x)+2\kappa_F ts,
		\end{align*}
		where $\rho,\chi$ are  $\theta$-psh functions on $X$  defined in \eqref{rho},\eqref{chi}, and $C$ is a positive constant to be chosen later. We see that $u^s$ is of class $\mathcal{C}^1$ in $t$, and for any $t\in(0,t_0)$ fixed, $u^s(t,\cdot)$ is continuous in $\Omega$ since $\rho$ is continuous in $\Omega$ (see e.g.~\cite[Theorem 12.23]{guedj2017degenerate}). Now let $A\geq (\delta_0g(0))^{-1}$.
		One has, for any $t\in (0,t_0),$
		\begin{align*}
		\omega_{t+s}+\dc u^s(t,\cdot)&=(1-At)(\omega_{s}+\dc\psi_s)+At(\omega_{s}+g(s)\dc\rho)/2\\
		&+At(\omega_{s}+g(s)\dc\chi)/2+\omega_{t+s}-\omega_{s}.
		\end{align*}
		By hypothesis~\eqref{estimate_ome}, we have
		$\omega_{t+s}-\omega_{s}\geq -t\Theta$.
		Since $\theta+\dc\chi\geq 2\delta_0\Theta$, we thus obtain $$		At(\omega_{s}+g(s)\dc\chi)/2+\omega_{t+s}-\omega_{s}\geq Atg(s)\delta_0\Theta-t\Theta\geq 0,
		$$		 hence
		$$(\omega_{t+s}+\dc u^s)^n\geq (At(\omega_{s}+g(s)\dc\rho)/2)^n\geq  (Ag(0)t)^ne^{c_1}fdV,$$
		since $g(s)\geq g(0)$.
		We now choose $C=A\sup_X(\rho+\chi-2\f_0)/2+M_F-\min(c_1,0)$, hence 
		$$(\omega_{t+s}+\dc u^s)^n\geq e^{\partial_{t}u^s(t,\cdot)+F(t,\cdot,u^s(t,\cdot))}fdV.$$
		It follows from the definition that  $u^s(t,\cdot)$ converges in $L^1(X,dV)$ to $u^s(0,\cdot)=\psi_s$.
		On the other hand, since $\psi $ is a supersolution to \eqref{cmaf}, we have
		\begin{align*}
		(\omega_{t+s}+\dc v^s)^n&\leq e^{\partial_{t}\psi_{t+s}+F(t+s,\cdot,\psi(t+s,\cdot))}fdV\\
		&=e^{\partial_tv^s-2\kappa s+F(t+s,\cdot,\psi(t+s,\cdot))}fdV.
		\end{align*}
		Since the function $F$ is Lipschitz in $t$ and is increasing in $r$, for all $t,s\in(0,t_0), x\in X$,
		\begin{align*}
		F(t+s,x,\psi(t+s,x))&\leq F(t,x,\psi(t+s,x))+\kappa_Fs\\
		&\leq F(t,x,v^s(t,x))+\kappa_Fs,
		\end{align*}
		this yields
		\begin{equation*}
		(\omega_{t+s}+\dc v^s)^n\leq e^{\partial_{t}v^s(t,\cdot)+F(t,\cdot,v^s(t,\cdot))}fdV.\end{equation*}
		Since for each $s$ that $\psi_s$ is continuous on $\Omega$ hence $v^s$ is continuous on $[0,t_0)\times\Omega$ and it is clear that $v^s(t,\cdot)$ converges to $v^s(0,\cdot)=\psi_s$ in $L^1(X,dV)$ as $t\to 0$. We moreover see that for each $t$, $v^s(t,\cdot)$ has minimal singularities because $\psi_t$ has. 
		We can now apply Proposition~\ref{comparison1} 
		and get $u^s\leq v^s$ on $(0,t_0)\times X$. Letting $s\rightarrow  0$ we have for all $(t,x)\in(0,t_0)\times X$,
		$$(1-At)\psi_0(x)+Ag(0)t(\rho(x)+\chi(x))/2+C(t\log (Ag(0)t)-t)\leq \psi(t,x),$$
		as desired.
	\end{proof}

	\subsection{Space regularity}
	In this section, we shall use the extra assumption  
	\begin{align}\label{eq: derivative omega t}
	\dot{\omega}_t\leq A\omega_t, \quad\forall\, t\in[0,T),
	\end{align}
	for some constant $A>1$. 
	
	\begin{theorem}
		\label{thm: min sing}
		Under the extra assumption~\eqref{eq: derivative omega t}, the envelope $U$ has minimal singularities and $U_t$ is moreover continuous in $\Omega$, for each $t\in (0,T)$.
	\end{theorem}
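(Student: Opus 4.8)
The plan is the following. The upper bound $U_t\le V_{\omega_t}+C_t$ is automatic, since $U_t\in\PSH(X,\omega_t)$ with $\sup_X U_t\le M_0$ and $V_{\omega_t}$ is less singular than every $\omega_t$-psh function; so the point is the lower bound $U_t\ge V_{\omega_t}-C_t$. Because $U$ is the \emph{maximal} element of $\mathcal S_{\f_0,f,F}(X_T)$ lying between $\underline u$ and $M_0$, it is enough to exhibit \emph{one} subsolution in this class whose slices have minimal singularities.

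I would build such a subsolution from the per-slice elliptic solutions. For each $t$ let $\rho_t\in\PSH(X,\omega_t)$, $\sup_X\rho_t=0$, be the unique full-mass solution of $(\omega_t+\dc\rho_t)^n=c(t)fdV$ (the same theorem that produces $\rho$, applied to the big class $\{\omega_t\}\ge g(t)\{\theta\}$, with $c(t)$ the corresponding normalising constant); it has minimal singularities, $\rho_t\ge V_{\omega_t}-M(t)$ with $M(t)$ locally bounded in $t$. Set
\[
\underline\Phi(t,x):=\max\bigl(\underline u(t,x),\,\rho_t(x)-G(t)\bigr)
\]
for a suitable increasing smooth function $G$. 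Granting the one-sided time‑Lipschitz estimate $\partial_t\rho_t\le\kappa_J\,(1-\rho-\chi)$ on $J\times\Omega$ for every $J\Subset(0,T)$, one checks that $\rho_t-G\in\mathcal P(X_T,\omega)$, and that for $G'$ large (depending on $\kappa_J$, $M_F$, $\log c(t)$) the inequality $(\omega_t+\dc(\rho_t-G))^n\ge e^{\partial_t(\rho_t-G)+F(t,\cdot,\rho_t-G)}fdV$ holds on $\Omega_T$, so $\rho_t-G$ is a subsolution; then $\underline\Phi$ is a subsolution by Lemma~\ref{maxi} and Lemma~\ref{lem}. For the initial condition, $\rho_0\le V_{\omega_0}$ and the minimal singularities of $\f_0$ give $\rho_0-G(0)\le V_{\omega_0}-G(0)\le\f_0$ once $G(0)$ is large enough, while $\underline u(0,\cdot)\le\f_0$ by Lemma~\ref{exist_sub}; one also has $\underline u\le\underline\Phi\le M_0$. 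Hence $\underline\Phi\in\mathcal S_{\f_0,f,F}(X_T)$, so $U\ge\underline\Phi\ge\rho_t-G(t)\ge V_{\omega_t}-M(t)-G(t)$, i.e.\ $U$ has minimal singularities. The extra assumption~\eqref{eq: derivative omega t} enters precisely here: integrating $\dot\omega_t\le A\omega_t$ gives $\omega_t\le e^{A(t-s)}\omega_s$ for $s\le t$, which (together with $\dot\omega_t\ge-\Theta$ and $\theta+\dc\chi\ge2\delta_0\Theta$, via a scaling argument and the a priori $L^\infty$/stability estimates for Monge–Amp\`ere equations in big classes) is what yields the required control of $t\mapsto\rho_t$ near $\partial\Omega$, in particular the Lipschitz bound above.

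For the continuity of $U_t$ on $\Omega$, fix $K\Subset\Omega$ and $J\Subset(0,T)$. By Theorem~\ref{thmlips}, $t|\partial_tU_t|\le L_U(1-\rho-\chi)$, and $\rho,\chi$ are bounded on $K$; combined with the minimal singularities just established, $U_t$ is bounded on $K$, and for a.e.\ $t$ Lemma~\ref{lem} gives $(\omega_t+\dc U_t)^n=g_t\,fdV$ on $\Omega$ with $g_t:=e^{\partial_tU_t+F(t,\cdot,U_t)}$ bounded on $K$, so $g_tf\in L^p(K)$, $p>1$. Equivalently, by the local balayage already used in the proof of Theorem~\ref{existence}, $U_t$ agrees on any ball $B\Subset\Omega$ with the solution of a Dirichlet problem for $(\dc\cdot)^n=g_t\tilde f\,d\tilde V$ with bounded boundary data and $L^p(B)$ density, which is continuous on $B$ by Ko\l odziej's theorem (see~\cite{guedj2017degenerate}). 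Thus $U_t\in C^0(\Omega)$ for a.e.\ $t$. Finally, for every fixed $x\in\Omega$ the function $t\mapsto U(t,x)$ is continuous, being locally Lipschitz and semi-concave by Theorems~\ref{thmlips} and~\ref{thmscc}; together with the locally uniform Lipschitz bound in $t$, $U_s\to U_t$ locally uniformly on $\Omega$ as $s\to t$, so $U_t\in C^0(\Omega)$ for every $t\in(0,T)$ and $U$ is continuous on $\Omega_T$.

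The main obstacle is the one-sided time-Lipschitz estimate for $\rho_t$ — equivalently, verifying that $\underline\Phi\in\mathcal P(X_T,\omega)$ and that it is a subsolution — which is exactly why~\eqref{eq: derivative omega t} is indispensable: without a bound $\dot\omega_t\le A\omega_t$ the forms $\omega_t$, hence the potentials $\rho_t$, could degenerate arbitrarily fast in the ``big'' directions and no such estimate would survive near $\partial\Omega$. A secondary point is the passage from a.e.-$t$ continuity of the slices to continuity for all $t$, for which the semi-concavity in time from Theorem~\ref{thmscc} is essential.
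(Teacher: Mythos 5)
Your overall strategy for the minimal-singularities part is the right one (exhibit a single subsolution in $\mathcal{S}_{\f_0,f,F}(X_T)$ whose slices have minimal singularities and invoke maximality of $U$), but the central technical step is missing and, as sketched, does not close. You take the per-slice solutions $\rho_t$ of $(\omega_t+\dc\rho_t)^n=c(t)fdV$ normalized by $\sup_X\rho_t=0$ and then ``grant'' a one-sided estimate $\partial_t\rho_t\le\kappa_J(1-\rho-\chi)$. This estimate is not proved (stability results in big classes give capacity or $L^1$ control of $\rho_t-\rho_s$, not a pointwise bound of this shape near $\partial\Omega$), and even if granted it is insufficient: to get the subsolution inequality $(\omega_t+\dc(\rho_t-G))^n\ge e^{\partial_t(\rho_t-G)+F}fdV$ you need $\partial_t\rho_t\le G'(t)+\log c(t)-M_F$, and a correction $G$ depending on $t$ alone cannot absorb a right-hand side $\kappa_J(1-\rho-\chi)$ that blows up at $\partial\Omega$; absorbing it with a term proportional to $t(\rho+\chi)$ would reintroduce the singularities of $\chi$ and destroy the minimal singularities you are trying to produce. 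The paper sidesteps all of this with a monotonicity trick that is exactly where \eqref{eq: derivative omega t} enters: since $\dot\omega_t\le A\omega_t$, the forms $\eta_t:=e^{-At}\omega_t$ are \emph{decreasing} in $t$, and the solutions $\phi_t$ of $(\eta_t+\dc\phi_t)^n=e^{\phi_t+c_1}fdV$ (note the $e^{\phi_t}$ on the right, so the elliptic comparison principle applies) satisfy $\phi_t\le\phi_s$ for $s\le t$. Hence $\dot\phi_t\le0$ a.e.\ for free, and $u(t,x)=e^{At}\phi_t(x)-C_2(t+1)$ is checked to be a subsolution with $u_t\ge V_{\omega_t}-C'(t)$ by a one-line computation. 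Your appeal to ``a scaling argument and a priori $L^\infty$/stability estimates'' does not substitute for this.

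The continuity part also has a genuine gap. Your plan is to localize on a ball $B\Subset\Omega$ and invoke Ko\l{}odziej's theorem for the Dirichlet problem with density $g_tf\in L^p(B)$; but that theorem requires \emph{continuous boundary data}, which is precisely what you are trying to establish, so the argument is circular. Nor does an $L^p$ density alone force interior continuity of a locally bounded psh function (bounded maximal psh functions, with vanishing Monge--Amp\`ere measure, can be discontinuous). The paper instead observes that $\partial_tU+F(t,\cdot,U_t)\le\kappa-\kappa(\rho+\chi)+M_F$ and applies the global continuity theorem of \cite[Theorem 3.2]{dang2021continuity} for Monge--Amp\`ere potentials with minimal singularities in big classes; this is not a local balayage argument. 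Your final step (upgrading continuity from a.e.\ $t$ to all $t$ via the locally uniform Lipschitz bound in $t$) would be fine if the a.e.-$t$ continuity were in hand, but as it stands that input is not established.
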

	
	\begin{proof} We first show that for each $t$, $U_t$ has minimal singularities.
		Observe that 
		$$ \eta_t:=e^{-At}\omega_t $$
		is decreasing in $t$. By~\cite[Theorem 6.1]{boucksom2010monge}, for each $t\in[0,T)$ there exists a unique $\eta_t$-psh function $\phi_t$ with full Monge-Amp\`ere mass such that 
		\begin{align*}
		(\eta_t+\dc\phi_t)^n=e^{ \phi_t+c_1}fdV,
		\end{align*}
		for $c_1>0$ so that $\sup_X\phi_0=0$. For $0<s\leq t$ we have 
		\begin{align*}
		(\eta_s+\dc\phi_t)^n\geq (\eta_t+\dc\phi_t)=e^{\phi_t+c_1}fdV. 
		\end{align*}
		It follows that $\phi_t$ is a subsolution  to $(\eta_s+\dc\phi_s)^n=e^{\phi_s+c_1}fdV $, so a classical comparison principle (see e.g.~\cite[Proposition 6.3]{boucksom2010monge}) ensures that $\phi_t\leq\phi_s$ for $s\leq t$. Therefore the function $t\mapsto\phi_t(x)$ is decreasing for all $x\in X$. Since $t\mapsto\eta_t$ is decreasing in $t$, we may assume that $\eta_t\geq \theta'$ for some big $(1,1)$-form $\theta'$. 
		As explained in Section~\ref{subsol}, $\dot{\phi}_t=\partial_{t}\phi_t$ is well-defined almost everywhere and negative.		
		Set for any $(t,x)\in X_T$, $$u(t,x):=e^{At}\phi_t(x)-C_2(t+1)$$ where $C_2>0$ is a constant to be chosen later. Since $\sup_X\phi_0=0$ and $\phi_t$ is decreasing in $t$, we have $\phi_t\leq 0$ for all $t\in [0,T)$. 
		We infer for almost every $t\in[0,T)$, $$\dot{u}(t,\cdot)=e^{At}\dot{\phi}_t+Ae^{At}\phi_t-C_2\leq \phi_t-C_2,$$ hence
		\begin{align*}
		(\omega_t+\dc u_t)^n&=e^{nAt+\phi_t+c_1}fdV\geq e^{\dot{u}_t+F(t,\cdot,u(t,\cdot))}fdV,
		\end{align*} where for the last inequality we have chosen $C_2>0$ so big that $C_2>nAT+c_1+M_F$.
		Hence $u_t$ is a subsolution to \eqref{cmaf}. Moreover, we can choose $C_2>0$ so big that $u(0,\cdot)=\phi_0-C_2\leq\f_0 $ since $\f_0$ is a $\eta_0$-psh function with minimal singularities. Therefore, $u$ is a subsolution to the Cauchy problem, i.e. $u\in\mathcal{S}_{\f_0,f,F}(X_T)$. By \cite[Theorem 6.1]{boucksom2010monge} we have $\phi_t\geq V_{\eta_t}-C(t)$ for some time-dependent constant $C(t)$. Since $V_{\omega_t}=e^{At} V_{\eta_t}$, we thus infer $u_t\geq V_{\omega_t}-C'(t)$, hence $U_t\geq V_{\omega_t}-C'(t)$ for all $t\in[0,T)$. 
		
		It remains to show that for each $t\in (0,T)$, $U_t$ is continuous in $\Omega$. We have  that for each $t>0$, $\partial_t U+F(t,\cdot,U_t)\leq \kappa-\kappa(\rho+\chi)+M_F$ on $X$, for some constant $\kappa>0$. 
		The continuity of $U_t$ in $\Omega$ thus follows from~\cite[Theorem 3.2]{dang2021continuity}.
	\end{proof}		
	
	We now show that the solution constructed in Theorem~\ref{existence} is unique:
	
	\begin{theorem}\label{thm_unique} 
		Let $\Phi$ be a pluripotential solution to the Cauchy problem  for \eqref{cmaf} with initial data $\f_0$. 
		Assume that \eqref{eq: derivative omega t} holds and
		\begin{itemize}
			\item $\Phi$ is locally uniformly semi-concave in $(0,T)$;
			\item for each $t$, $\Phi_t$ has minimal singularities;
			\item $\Phi_t$ is continuous in $\Omega$. 
		\end{itemize}
		Then $\Phi=U$.
	\end{theorem}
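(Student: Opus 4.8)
The plan is to derive $\Phi=U$ from the comparison principle (Theorem~\ref{B}), applied twice with the roles of sub- and supersolution interchanged. Recall first the properties of the envelope $U:=U_{\f_0,f,F,X_T}$: by Theorem~\ref{existence} it is a pluripotential solution to~\eqref{cmaf} with initial data $\f_0$ which is locally uniformly semi-concave in $t\in(0,T)$; by Theorem~\ref{thmlips} it is locally uniformly Lipschitz in $t$; and, under the extra hypothesis~\eqref{eq: derivative omega t}, Theorem~\ref{thm: min sing} gives that each slice $U_t$ has minimal singularities and is continuous in $\Omega$. I would first record that $U$ is continuous on $(0,T)\times\Omega$: for $(s,y)\to(t,x)$ inside a compact box $J\times K\subset(0,T)\times\Omega$ one estimates $|U(s,y)-U(t,x)|\le|U(s,y)-U(t,y)|+|U(t,y)-U(t,x)|$, the first term being $\le C_{J,K}|s-t|$ by Theorem~\ref{thmlips} (the factor $\rho+\chi$ is bounded on $K$) and the second tending to $0$ by continuity of $U_t$.

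Then I would run the comparison twice. For $\Phi\le U$: the solution $\Phi$ is in particular a pluripotential subsolution to~\eqref{cmaf} with initial data $\f_0$, and $U$ is a supersolution with initial data $\f_0$ enjoying, by the previous paragraph, all the regularity asked of the supersolution in Theorem~\ref{B} (locally uniformly semi-concave in $t$, continuous on $(0,T)\times\Omega$, each slice of minimal singularities); since $\f_0\le\f_0$, Theorem~\ref{B} gives $\Phi\le U$ on $[0,T)\times X$. For $U\le\Phi$ I would swap the roles: $U$ is the subsolution and $\Phi$ the supersolution. By assumption $\Phi$ is locally uniformly semi-concave in $(0,T)$, each $\Phi_t$ has minimal singularities and is continuous in $\Omega$; the one extra point needed to invoke Theorem~\ref{B} is the \emph{joint} continuity of $\Phi$ on $(0,T)\times\Omega$. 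This I would obtain from the per-slice continuity and the local uniform semi-concavity: on a compact box $J\times K$ the functions $g_x\colon t\mapsto\Phi(t,x)-\kappa t^2$ are concave on $J$ with a modulus $\kappa=\kappa(\Phi,J,K)$ independent of $x\in K$; they are bounded above on $J\times K$ because $\Phi$ is upper semi-continuous, and bounded below by the chord inequality for concave functions together with the minimal-singularities bound $\Phi_{t}\ge V_{\omega_{t}}-C_{t}$ applied at the two endpoint times of $J$ (where $V_{\omega_t}$ is bounded below on $K\Subset\Omega$). A uniformly bounded family of concave functions that converges pointwise in $t$ (here as $x$ varies, by continuity of $\Phi_t$) converges locally uniformly, so $\Phi$ is jointly continuous on $(0,T)\times\Omega$. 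Then Theorem~\ref{B} with subsolution $U$ and supersolution $\Phi$ (and $\f_0\le\f_0$) yields $U\le\Phi$, and combining the two inequalities gives $\Phi=U$ on $[0,T)\times X$.

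The substance of the argument is thus Theorem~\ref{B}; the only genuinely non-routine verification is matching its hypotheses in the second application, i.e.\ upgrading the per-slice continuity of $\Phi$ to joint continuity on $(0,T)\times\Omega$, which in turn reduces — via the concavity/chord trick above — to the local boundedness of $\Phi$ there. Should one prefer to avoid citing Theorem~\ref{B}, an alternative is to note, exactly as in the balayage step of the proof of Theorem~\ref{existence}, that $U$ is a decreasing limit of solutions of local Cauchy--Dirichlet problems that are $\mathcal{C}^1$ in $t$, and to apply Proposition~\ref{comparision} against each of these (and, symmetrically, to first regularize $\Phi$); but the direct route via Theorem~\ref{B} is shorter and cleaner.
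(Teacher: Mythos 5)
Your two-sided comparison skeleton is right, and your verifications of the regularity hypotheses are sound: the joint continuity of $U$ on $(0,T)\times\Omega$ from Theorem~\ref{thm: min sing} plus the Lipschitz bound of Theorem~\ref{thmlips}, and the joint continuity of $\Phi$ from slice continuity together with local uniform semi-concavity and local boundedness, are both essentially what the paper asserts. Note, though, that the direction $\Phi\le U$ is free and needs no comparison principle: a solution is in particular a subsolution with the correct initial data, so after replacing $\Phi$ by $\max(\Phi,\underline{u})$ (still a subsolution by Lemma~\ref{maxi}, and $\le M_0$ by Lemma~\ref{exist_sub}) the maximality of the envelope gives $\Phi\le U$ directly.

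The gap is in the direction $U\le\Phi$, where you invoke Theorem~\ref{B} as a black box. Within the paper's logical architecture this is circular: the comparison principles actually established in Section~\ref{sect: compa_princ} (Propositions~\ref{comparison1} and~\ref{comparision}) both require the \emph{subsolution} to be $\mathcal{C}^1$ in $t$, a regularity that neither $U$ nor $\Phi$ is known to possess, and Theorem~\ref{B} in the generality you use (no regularity imposed on the subsolution) is not proved anywhere before this point — the proof of Theorem~\ref{thm_unique} is precisely where the missing step is supplied. Concretely, the paper regularizes $U$ in time: it forms the rescaled subsolutions $V^s(t,x)=\frac{\alpha_s}{s}U(st,x)+(1-\alpha_s)g(t)\frac{\rho(x)+\chi(x)}{2}-C|s-1|(t+1)$ as in Theorem~\ref{thmlips}, convolves in $s$ against a mollifier to obtain $u^{\e}$, checks via the concavity of $A\mapsto(\det A)^{1/n}$, Jensen's inequality, and the convexity of the exponential and of $F$ that $u^{\e}$ is still a subsolution which is now $\mathcal{C}^1$ in $t$, applies Proposition~\ref{comparision} to get $u^{\e}\le\Phi$, and lets $\e\to 0$. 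Your suggested fallback — approximating $U$ by the local Cauchy--Dirichlet solutions from the balayage step — does not substitute for this: those local solutions coincide with $U$ on the ball rather than being smoother approximants, and they are only defined locally, whereas Proposition~\ref{comparision} needs a global subsolution that is $\mathcal{C}^1$ in $t$. So your argument is complete only if one grants Theorem~\ref{B} as stated; to make it self-contained you must insert the time-convolution regularization of $U$.
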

	
	\begin{proof}
		Since $\Phi$ is locally uniformly Lipschitz in $t$ we infer that $\Phi$ is continuous on  $(0,T)\times \Omega$. We would like to apply Proposition~\ref{comparision}  but $U$ is not $\mathcal{C}^1$ in $t$. We are going to regularize it by taking convolution in $t$ as in \cite[Proposition 3.16]{guedj2020pluripotential}. Fix $0<T'<T$, $s>0$ near $1$. Set, for any $(t,x)\in X_{T'}$,
		\begin{align*}
		V^s(t,x):=\dfrac{\alpha_s}{s}U(st,x)+(1-\alpha_s)g(t)\dfrac{\rho(x)+\chi(x)}{2}-C|s-1|(t+1),
		\end{align*}
		where $\alpha_s$, $C$ are defined as in the proof of Theorem~\ref{thmlips} so that $V^s\in\mathcal{S}_{\f_0,f,F}(X_{T'})$. Let $\eta$ be a smooth function with compact support in $[-1,1]$ such that $\int\eta(t)dt=1$. Set, for $\e>0$ small, $\eta_\e(t)=\e^{-1}\eta(t/\e)$, and we define for any $(t,x)\in X_{T'}$
		\begin{align*}
		u^{\e}(t,x):=\int_{\R}V^s(t,x)\eta_{\e}(s-1)ds-B\e(t+1).
		\end{align*}
		Using the arguments as in~\cite[Proposition 3.16]{guedj2020pluripotential}, we will show that $u^\varepsilon$ is a pluripotential subsolution to~\eqref{cmaf} which is $\mathcal{C}^1$ in $t$. Indeed, 
		since $V^{s}$ is a pluripotential subsolution to~\eqref{cmaf}, Lemma~\ref{lem} yields, for any $t\in (0,T)$,
		\begin{align*}
		(\omega_{t}+\dc V^s(t,\cdot))^n\geq \exp(\partial_{t}V^s+F(t,\cdot,V^s(t,\cdot)))fdV.
		\end{align*}
		We know that the function $A\mapsto (\det A)^{1/n}$ is concave on the convex cone of non negative hermitian matrices. It follows from Jensen's inequality  that, for any $t\in (0,T)$,
		\begin{align*}
		(\det(\omega_{t}+&\dc u^{\e}))^{1/n} =\left(\det\left(\int_{\R}(\omega_{t}+\dc V^s)\eta_{\e}(s-1)ds\right)\right)^{1/n}\\
		&\geq \left(\int_{\R}\exp\left(\frac{1}{n}\left(\partial_{t}V^s(t,\cdot)+F(t,\cdot,V^s(t,\cdot))\right)\right)\eta_{\e}(s-1)ds\right) f^{1/n}\\
		&\geq\exp\left(\int_{\R}\frac{1}{n}\left(\partial_{t}V^s+F(t,\cdot,V^s(t,\cdot))\right)\eta_{\e}(s-1)ds\right)f^{1/n}\\
		&\geq\exp\left(\frac{1}{n}\left(\partial_{t}u^{\e}(t,\cdot)+B\e+F\left(t,\cdot,\int_{\R}V^s(t,\cdot)\eta_{\e}(s-1)ds\right)\right)\right)f^{1/n}\\
		&\geq\exp\left(\frac{1}{n}\left(\partial_tu^{\e}(t,\cdot)+F(t,\cdot,u^{\e})\right)\right) f^{1/n}.
		\end{align*}
		The second line follows from the Main Theorem in~\cite{guedj2019weak}, 
		the third and fourth ones follow from the convexity of the exponential and $F$, 
		and the last one follows from  the monotonicity of $F$. Using Lemma~\ref{lem} again, we infer that $u^{\e}$ is a subsolution to \eqref{cmaf}. 	
		
		On the other hand, it follows from the proof of Theorem~\ref{thmlips} that $V^s(0,x)\leq \f_0$ on $X$. Hence $u^{\e}(0,\cdot)\leq\f_0$ on $X$ by taking $B>0$ large enough.	
		We can thus apply Proposition~\ref{comparision} to obtain $u^{\e}\leq \Phi$ on $[0,T']\times \Omega$, hence on $[0,T']\times X$. Letting $\e\rightarrow 0$ and $T'\rightarrow T$ we get $U\leq \Phi$ on $[0,T)\times X$ . Hence the equality holds.
	\end{proof}

	\section{Applications}\label{applies}
	
	We apply the tools we have developed in related geometrical settings. We first define and study the longtime behavior of the normalized K\"ahler-Ricci flow  on  manifolds of general type. We next prove the existence of a longtime solution of the on manifolds with nonnegative Kodaira dimension. 
	We then analyze the normalized K\"ahler-Ricci flow on varieties with semi-log canonical singularities and ample canonical bundle (stable varieties).
	
	\subsection{Manifolds of general type}
	We study in this section the (normalized) K\"ahler-Ricci flow on a manifold of general type. We try to run such flow in a weak sense  beyond the maximal existence time by using the results obtained in the previous section. We then study the long-time behavior of this flow. 
	
	Let $(X,\omega_0)$ be a compact K\"ahler manifold of general type i.e. the canonical divisor $K_X$ is big, and $\omega_{0}$ is a K\"ahler form. The normalized K\"ahler-Ricci flow is the evolution equation:
	\begin{align}\label{nkrf}
	\dfrac{\partial\vartheta_{t}}{\partial t}=-{\rm Ric}(\vartheta_{t})-\vartheta_{t},\quad \vartheta|_{t=0}=\omega_{0}.
	\end{align}
	Let $T$ be the maximal existence time of smooth flows which is defined by
	\begin{align*}
	T:=\sup\{t>0: e^{-t}\{\omega_{0}\}+(1-e^{-t})c_1(K_X) \,\, \textrm{ is K\"ahler}\}.
	\end{align*}
	Note that $T=+\infty$ if and only if the canonical divisor $K_X$ is nef. In this case, the normalized K\"ahler-Ricci flow exists in the classical (smooth) sense and converges to a singular K\"ahler-Einstein metric (cf.~\cite{tsuji1988existence,tian2006kahler}). 
	
	When $K_X$ is not nef, the flow has a finite time singularity at $T$ ($<+\infty$). The limiting class at $T$ of the flow
	\begin{align*}
	\alpha_T:=\lim\limits_{t\rightarrow T}\{\vartheta_t\}=e^{-T}\{\omega_{0} \}+(1-e^{-T})c_1(K_X)
	\end{align*}   
	is big and nef. In~\cite[Theorem 1.5]{collins2015kahler}, Collins and Tosatti showed that the flow $\vartheta_t$ exists on the maximal time interval $[0,T)$ and develops singularities precisely on the Zariski closed set $X\backslash \text{Amp}(\alpha_T)$ as $t\rightarrow T^-$. For $t>T$, the cohomology class $\{\vartheta_t\}$ is still big, but no longer nef, we can not continue the flow in the classical sense. 
	
	In~\cite[Section 10]{feldman2003rotationally}, Feldman, Ilmanen and Knopf have asked the question: can one define and construct weak solutions of the K\"ahler-Ricci flow beyond the singular time?  
	In~\cite[Theorem 4]{boucksom2012semipositivity}, Boucksom and Tsuji have constructed the normalized K\"ahler-Ricci flow on smooth projective varieties with pseudoeffective canonical class for all times. They used the discretization of the K\"ahler-Ricci flow and some algebro-geometric tools. In the end, they have conjectured the same result for the case of  general K\"ahler manifolds (see~\cite[Conjecture 1]{boucksom2012semipositivity}). T\^o \cite{to2021convergence} used the viscosity theory to show that the weak K\"ahler-Ricci flow exists for all time in the viscosity sense and converges to the unique singular K\"ahler-Einstein metric in the class $c_1(K_X)$ constructed in~\cite{eyssidieux2009singular,boucksom2010monge}. 
	
	In this section we show that the normalized K\"ahler-Ricci flow can be extended through a finite time singularity and understood in the weak sense of  parabolic pluripotential theory developed in Section~\ref{exist}. A compact K\"ahler manifold of general type turns out to be projective by a classical 	result of Moishezon.  Our result thus gives  an alternative approach to the existence of weak K\"ahler-Ricci flows previously obtained by Boucksom-Tsuji and T\^o.   The main point in our proof is  that the flow survives through a finite time singularity provided that the limiting class is big.  The argument is thus working also on a general K\"ahler manifold with pseudoeffective canonical class as will be shown later in Section \ref{sect: singularity}.  
	
	\medskip
	
	Now let $\theta$ 
	be a smooth closed $(1,1)$-form representing $c_1(K_X)$. Set $\omega_{t}:=e^{-t}\omega_{0}+(1-e^{-t})\theta$. 	Since $dV_X$ is a smooth volume form on $X$, $-\mathrm{Ric}(dV)\in -c_1(K_X)$, and so there is a smooth function $f$ such that $\theta=-\mathrm{Ric}(dV_X)+\dc  f$. We then define $$\mu=e^fdV$$ which is a smooth positive volume with $\theta=\textrm{Ric}(\mu)$. Thus the normalized K\"ahler-Ricci flow~\eqref{nkrf} can be written as the complex Monge-Amp\`ere flow
	\begin{equation}\label{pcmae}
	(\omega_{t}+\dc\f_t)^n=e^{\dot{\f}_t+\f_t}d\mu.
	\end{equation}
	It has been shown in~\cite[Theorem 6.1]{boucksom2010monge} that there exists a unique $\theta$-psh function $\varphi_{KE}$ with minimal singularities such that
	\begin{align}
	(\theta+\dc\f_{KE})^n=e^{\f_{KE}}\mu.
	\end{align}
	The current $\omega_{KE}:=\theta+\dc\f_{KE}$ is called the {\em singular K\"ahler- Einstein metric}. It has bounded potentials and is smooth in $\Omega:={\rm Amp}(K_X)$, where it satisfies
	$${\rm Ric}(\omega_{KE})=-\omega_{KE}.$$
	We are going to show that the (normalized) pluripotential K\"ahler-Ricci flow~\eqref{pcmae} exists for all time and continuously deforms any initial K\"ahler form $\omega_{0}$ towards $\omega_{KE}$ on $\textrm{Amp}(K_X)$, as $t\rightarrow+\infty$. The result even holds for an initial datum $S_0$ which is a positive current with bounded potentials:

	\begin{theorem}\label{thm_conv}
		Let $\f_0$ be a bounded $\omega_0$-psh function. Then there exists a unique pluripotential solution $\f$ to~\eqref{pcmae} with initial data $\f_0$ for $t>0$. Furthermore, $\f_t$ converges exponentially fast towards $\f_{KE}$ on $\textrm{Amp}(K_X)$, as $t \rightarrow +\infty$.
	\end{theorem}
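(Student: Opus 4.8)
\emph{Existence.} I would first recast \eqref{pcmae} as an instance of \eqref{cmaf}: the density is $e^{f}$, smooth and strictly positive (hence in every $L^{p}$), and $F(t,x,r)=r$, which is increasing in $r$, $1$-Lipschitz, and affine in $(t,r)$. With $\omega_{t}=e^{-t}\omega_{0}+(1-e^{-t})\theta$ one has $\dot\omega_{t}=\theta-\omega_{t}=e^{-t}(\theta-\omega_{0})$, $\ddot\omega_{t}=-\dot\omega_{t}$, so $-\Theta\le\omega_{t},\dot\omega_{t},\ddot\omega_{t}\le\Theta$ for $\Theta$ a large Kähler form, while $\omega_{t}\ge g(t)\theta$ with $g(t):=1-(1-\varepsilon_{0})e^{-t}$, because $\omega_{t}-g(t)\theta=e^{-t}(\omega_{0}-\varepsilon_{0}\theta)\ge0$ once $\varepsilon_{0}\theta\le\omega_{0}$; this $g$ is smooth, increasing and positive on $[0,+\infty)$ with $g(0)=\varepsilon_{0}>0$. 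Since $\{\omega_{0}\}$ is Kähler, $V_{\omega_{0}}$ is bounded, so a bounded $\omega_{0}$-psh $\varphi_{0}$ has minimal singularities and is a Cauchy datum. Theorem~\ref{existence}, applied on each finite interval $[0,T]$ and patched by Proposition~\ref{pro1}, then produces the envelope $U$ on $(0,+\infty)\times X$: a pluripotential solution to \eqref{pcmae} with initial value $\varphi_{0}$, locally uniformly Lipschitz and semi-concave in $t$.

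\emph{The elliptic potentials and regularity of $\varphi$.} By \cite[Theorem~6.1]{boucksom2010monge}, for every $t$ there is a unique $\omega_{t}$-psh function $\phi_{t}$ with minimal singularities solving $(\omega_{t}+\dc\phi_{t})^{n}=e^{\phi_{t}}\mu$, and it is continuous on $\Omega:=\mathrm{Amp}(K_{X})$. The heart of the proof is two exponential stability estimates for this family: a \emph{dynamical} one, $|\dot\phi_{t}|+|\ddot\phi_{t}|\le Ce^{-t}$ on $(0,+\infty)$, and an \emph{endpoint} one, $|\phi_{t}-\varphi_{KE}|\le C_{K}e^{-t}$ on every compact $K\Subset\Omega$ — both reflecting that the class moves along the ray $\{\omega_{t}\}=c_{1}(K_{X})+e^{-t}(\{\omega_{0}\}-c_{1}(K_{X}))$ at exponential rate, so that the Monge–Ampère equations for $\phi_{t}$ and for $\varphi_{KE}$ differ by an $O(e^{-t})$ perturbation of the reference form inside the fixed big class $c_{1}(K_{X})$; one derives them by differentiating, resp. comparing, these equations together with the a priori estimates of \cite{boucksom2010monge}. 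Granting the dynamical estimate, $w_{t}:=\phi_{t}-C(t+1)$ is, for $C$ large, a pluripotential subsolution of \eqref{pcmae} with $w_{0}\le\varphi_{0}$, so by maximality $\varphi_{t}\ge w_{t}\ge V_{\omega_{t}}-C(t)$; hence $\varphi_{t}$ has minimal singularities, and it is continuous in $\Omega$ because $\dot\varphi_{t}+F(t,\cdot,\varphi_{t})\le\kappa-\kappa(\rho+\chi)+M_{F}$ (as in Theorem~\ref{thm: min sing}, via \cite{dang2021continuity}). Uniqueness then follows — any competing solution $\Phi$ with these properties satisfies $U\le\Phi$ by the comparison principle (Theorem~\ref{B}) and $\Phi\le U$ by maximality — identifying $U=:\varphi$.

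\emph{Convergence.} Fix $t_{0}$ large, $\mu\in(0,1)$ and $\varepsilon$ large. For the lower barrier set, for $t\ge t_{0}$,
\[
\varphi^{-}_{t}:=(1-\varepsilon e^{-t})\varphi_{KE}+\varepsilon e^{-t}\chi-C_{1}e^{-\mu t}-C_{0}e^{-(t-t_{0})}.
\]
Using $\dc\varphi_{KE}=\omega_{KE}-\theta$ and $\theta+\dc\chi\ge2\delta_{0}\Theta$ gives $\omega_{t}+\dc\varphi^{-}_{t}\ge(1-\varepsilon e^{-t})\omega_{KE}+e^{-t}(\omega_{0}-\theta+2\varepsilon\delta_{0}\Theta)\ge(1-\varepsilon e^{-t})\omega_{KE}\ge0$; the $\chi$-terms cancel in $\dot\varphi^{-}_{t}+\varphi^{-}_{t}=\varphi_{KE}-C_{1}(1-\mu)e^{-\mu t}$, and since $(\omega_{t}+\dc\varphi^{-}_{t})^{n}\ge(1-\varepsilon e^{-t})^{n}e^{\varphi_{KE}}\mu$ with $-n\log(1-\varepsilon e^{-t})=O(e^{-t})$, for $C_{1},C_{0}$ large $\varphi^{-}$ is a subsolution on $(t_{0},+\infty)\times X$ with $\varphi^{-}_{t_{0}}\le\varphi_{t_{0}}$. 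For the upper barrier set $\Phi^{+}_{t}:=\phi_{t}+C_{1}e^{-\mu t}+C_{0}e^{-(t-t_{0})}$; then $(\omega_{t}+\dc\Phi^{+}_{t})^{n}=e^{\phi_{t}}\mu$ and $\dot\Phi^{+}_{t}+\Phi^{+}_{t}=\dot\phi_{t}+\phi_{t}+C_{1}(1-\mu)e^{-\mu t}$, so the dynamical estimate makes $\Phi^{+}$ a supersolution with minimal singularities and $\Phi^{+}_{t_{0}}\ge\varphi_{t_{0}}$, both barriers being semi-concave in $t$ and continuous in $\Omega$. Two applications of the comparison principle (Theorem~\ref{B}, on $[t_{0},+\infty)$) give $\varphi^{-}_{t}\le\varphi_{t}\le\Phi^{+}_{t}$, and combining with the endpoint estimate yields $\sup_{K}|\varphi_{t}-\varphi_{KE}|\le C_{K}e^{-\mu t}$ for every $\mu<1$ and every compact $K\Subset\mathrm{Amp}(K_{X})$.

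\emph{Main obstacle.} I expect the crux to be the two stability estimates for $\phi_{t}$ — the quantitative, exponentially small control of the solution of the degenerate complex Monge–Ampère equation in the big class $c_{1}(K_{X})$ under the perturbation $\theta\rightsquigarrow\omega_{t}$, \emph{uniformly up to} $\partial\Omega$. As in Lemma~\ref{exist_sub} and the proof of Theorem~\ref{thm: min sing}, the only device available for absorbing the error $e^{-t}(\omega_{0}-\theta)$ into the positivity of $\omega_{t}+\dc(\cdot)$ near $\partial\Omega$ is the $\theta$-psh function $\chi$ with analytic singularities satisfying $\theta+\dc\chi\ge2\delta_{0}\Theta$ and $\chi\to-\infty$ on $\partial\Omega$; a careful bookkeeping of the resulting $\chi$-corrections, keeping every error term $O(e^{-\mu t})$, is what makes the barrier comparison and the singularity bounds survive the degeneration. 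Carrying this out, in the spirit of \cite{to2021convergence}, is the substance of the argument.
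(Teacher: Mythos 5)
Your existence step is essentially the paper's: \eqref{pcmae} is an instance of \eqref{cmaf} with $F(t,x,r)=r$ and smooth positive density $e^{f}$, and $\omega_t\geq g(t)\theta$ with $g(t)=ce^{-t}+1-e^{-t}$ increasing once $\omega_0\geq c\theta$ with $c<1$; Theorem~\ref{existence} then applies. Note also that $\dot{\omega}_t=e^{-t}(\theta-\omega_0)\leq e^{-t}\theta\leq e^{-t}g(t)^{-1}\omega_t\leq c^{-1}\omega_t$, so hypothesis \eqref{eq: derivative omega t} holds and Theorems~\ref{thm: min sing} and~\ref{thm_unique} already give minimal singularities, continuity on $\Omega$ and uniqueness directly; your detour through the elliptic family $\phi_t$ to get $\f_t\geq V_{\omega_t}-C(t)$ is both unnecessary and dependent on the unproven estimate discussed below. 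Your lower barrier is correct and close in spirit to the paper's, which uses $e^{-t}\f_0+(1-e^{-t})\f_{KE}+h(t)$ with $h$ solving the explicit ODE $h'+h=n\log(1-e^{-t})$.

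The genuine gap is the upper bound. Your supersolution $\Phi^+_t=\phi_t+C_1e^{-\mu t}+C_0e^{-(t-t_0)}$ is a supersolution \emph{only if} $\dot{\phi}_t\geq -C e^{-t}$ uniformly on $X$, and the membership of $(t,x)\mapsto\phi_t(x)$ in $\mathcal{P}(X_T,\omega)$ with the semi-concavity required by Theorem~\ref{B} likewise needs two-sided control of $\dot\phi_t$ and $\ddot\phi_t$. You assert this ``dynamical estimate'' and defer its proof, but in a big class it is precisely the problematic point: one cannot linearize the equation $(\omega_t+\dc\phi_t)^n=e^{\phi_t}\mu$ and run a maximum principle on a merely pluripotential family, and the only available device --- comparing $\phi_s$ with $\phi_t$ via the comparison principle after absorbing $(e^{-s}-e^{-t})(\omega_0-\theta)$ with the Kähler current $\theta+\dc\chi\geq 2\delta_0\Theta$ --- produces bounds of the form $|\phi_s-\phi_t|\leq C|e^{-s}-e^{-t}|(1-\chi)$ which blow up on $\partial\Omega$; with such a $\chi$-corrected estimate your $\Phi^+$ is no longer obviously $\omega_t$-psh nor a supersolution. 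So the hardest step of your argument is exactly the one left open. The paper avoids it altogether by a change of unknown: choosing $\chi_0$ with $\theta+\dc\chi_0\geq\e\omega_0$, it shows $\f_t+e^{-t}(\e^{-1}\chi_0-C)$ is a \emph{subsolution} of the auxiliary flow \eqref{flow}, whose reference forms are the multiples $g(t)\theta$ of the \emph{fixed} form $\theta$; after rescaling in time, the static function $\f_{KE}+Ce^{-t}$ is a supersolution of that flow (Lemma~\ref{lemme}) precisely because the class no longer moves, so no time-derivative estimates on a family of degenerate elliptic solutions are ever needed. To repair your proof you would either have to establish the dynamical estimate (with its $\chi$-corrections) in full, or adopt the paper's auxiliary-flow device.
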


	\begin{proof}
		Recall that $\theta$ is a smooth representative of $c_1(K_X)$. Since $\omega_{0}$ is a K\"ahler form, there exists a small constant $c>0$ such that $\omega_0\geq c\theta$. Hence $\omega_{t}=e^{-t}\omega_{0}+(1-e^{-t})\theta\geq g(t)\theta$, where $g(t)=ce^{-t}+1-e^{-t}$ is a smooth (strictly) positive function with $g'(t)=e^{-t}(1-c)>0$ for $c>0$ small enough. The existence of the unique pluripotential solution follows from the results in Section~\ref{exist}. 
		
		It thus remains to study its long-term behavior. We first establish a lower bound for the solution $\f$ by constructing a subsolution to the Cauchy Problem. Set, for any $(t,x)\in (0,T)\times X$,
		\begin{equation*}
		u(t,x):=e^{-t}\f_0(x)+(1-e^{-t})\f_{KE}(x)+h(t),
		\end{equation*}
		for a $\mathcal{C}^1$ function $h$  to be chosen later 
		so that $u$ is a subsolution to the Cauchy Problem. We observe that for all $t>0$,
		$$
		\omega_{t}+\dc u_t=e^{-t}(\omega_{0}+\dc\f_0)+(1-e^{-t})(\theta+\dc \f_{KE})\geq 0
		$$
		in the weak sense of currents, so $u_t$ is $\omega_t$-psh and
		\begin{align*}
		(\omega_{t}+\dc u_t)^n&\geq (1-e^{-t})^n(\theta+\dc\f_{KE})^n=e^{n\log(1-e^{-t})}e^{\f_{KE}}fdV.
		\end{align*}
		On the other hand $\partial_t{u}_t+u_t=\f_{KE}+h'(t)+h(t)$ hence  $u$ is a subsolution if $$n\log(1-e^{-t})\geq h(t)+h'(t).$$ We thus choose $h$ to be the unique solution of the ODE: $$h(t)+h'(t)=n\log(1-e^{-t}), \,h(0)=0.$$ We compute $(e^th(t))'=e^t(h(t)+h'(t))=ne^t\log(1-e^{-t})$, hence
		\begin{align*}
		h(t)=ne^{-t}\left[\int e^t\log(1-e^{-t})dt\right]=ne^{-t}\left[(e^t-1)\log(e^t-1)-te^t+C\right].
		\end{align*} for some constant $C$.
		Since $h(0)=0$ the constant $C$ must be zero, hence
		$$h(t)=ne^{-t}\left[(e^t-1)\log(e^t-1)-te^t\right]=O(te^{-t})\quad \text{as}\,\, t\rightarrow\infty.$$
		It follows from the comparison principle that
		$u\leq \f$ hence
		\begin{align}\label{tt}
		\f_{KE}(x)+e^{-t}(\f_0(x)-\f_{KE}(x))+h(t)\leq \f(t,x).
		\end{align}

		For the upper bound, we argue as in the proof of~\cite[Theorem 4.4]{to2021convergence}. Since the cohomology class of $\theta$ is big, we can find a $\theta$-psh function $\chi_0$ with analytic singularities such that
		$$
		\theta+\dc\chi_0\geq \e\omega_{0}
		$$
		for some small constant $\e>0$.
		We can assume that $\e\leq 1$. Replacing $\chi_0$ by $\chi_0-\sup_X\chi_0$, we can always assume that 
		$\chi_0\leq 0$
		hence $\chi_0\leq V_{\theta}$. We then have
		\begin{align}\nonumber
		\omega_{t}+\dc \f_t&=e^{-t}(\omega_0-\e^{-1}\dc\chi _0)+(1-e^{-t})\theta+\dc(\f_t+\e^{-1}e^{-t} \chi_0)\\
		&\label{49}\leq [e^{-t}\e^{-1}+(1-e^{-t})] \theta+\dc(\f_t+\e^{-1}e^{-t} \chi_0).
		\end{align}
		Set $g(t)=1+(\e^{-1}-1)e^{-t}$ and $u(t,x)=\f_t(x)+e^{-t}(\e^{-1}\chi_0(x)-C)$ for a constant $C>0$ to be chosen later. It follows from \eqref{49} that
		\begin{align*}
		(g(t)\theta+\dc u_t)^n&\geq (\omega_t+\dc\f_t)^n
		\geq e^{\dot{\f_t}+\f_t}\mu
		=e^{\dot{u_t}+u_t}\mu,
		\end{align*}
		Let $\phi_0$ be a $\e^{-1}\theta$-psh function with minimal singularities. 
		We can  find a constant $C>0$ such that $\phi_0-\e^{-1}\chi_0\geq \f_0-C$. 
		Therefore $u$ is a subsolution of the following Cauchy problem
		\begin{equation}
		\begin{cases}\label{flow}
		(g(t)\theta+\dc\phi_t)^n=e^{\partial_{t}\phi_t+\phi_t}\mu \\
		\phi(0,\cdot)=\phi_0,
		\end{cases}
		\end{equation}
		where $\phi$ denotes the pluripotential solution to~\eqref{flow}. Thus the comparison principle (Proposition~\ref{comparision}) yields $u\leq \phi$ on $[0,\infty)\times {\rm Amp}(K_X)$, i.e.
		\begin{align*}
		\f(t,x)+e^{-t}(\e^{-1}\chi_0(x)-C)\leq \phi(t,x).
		\end{align*}
		On the other hand, the function $\phi_t$ converges to $\f_{KE}$ on ${\rm Amp}(K_X)$ as $t\rightarrow+\infty$ by Lemma~\ref{lemme} below. Combining this with \eqref{tt}, we infer that $\f_t$ converges to $\f_{KE}$ on ${\rm Amp}(K_X)$ as $t\rightarrow +\infty$.
		
	\end{proof}
	
	\begin{lemma}\label{lemme}
		The solution $\phi_t$ of~\eqref{flow} converges locally exponentially fast towards $\f_{KE}$ on ${\rm Amp}(K_X)$ as $t\rightarrow +\infty$.
	\end{lemma}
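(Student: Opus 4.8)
The plan is to trap $\phi$ between two explicit barriers built from $\f_{KE}$, absorbing the time‑dependence of the reference class into a scalar ODE. The crucial point is that for each $t$ the function $g(t)\f_{KE}$ is $g(t)\theta$‑plurisubharmonic, because $g(t)\theta+\dc\big(g(t)\f_{KE}\big)=g(t)\big(\theta+\dc\f_{KE}\big)\geq 0$, and on $\Omega={\rm Amp}(K_X)$, where $\f_{KE}$ is bounded,
$$\big(g(t)\theta+\dc(g(t)\f_{KE})\big)^n=g(t)^n\big(\theta+\dc\f_{KE}\big)^n=g(t)^n e^{\f_{KE}}\mu .$$
For $c\in\R$ I would let $y_c$ solve the linear ODE $y'+y=n\log g(t)$ with $y_c(0)=c$, and put $w^c(t,x):=g(t)\f_{KE}(x)+y_c(t)$. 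Since $g'(t)=-(\e^{-1}-1)e^{-t}$, the term $g'(t)\f_{KE}$ cancels and $\partial_t w^c_t+w^c_t=\f_{KE}+y_c'(t)+y_c(t)=\f_{KE}+n\log g(t)$; hence each $w^c$ satisfies the Monge--Amp\`ere equation in~\eqref{flow} \emph{with equality}, and $\partial_t w^c_t=g'(t)\f_{KE}+y_c'(t)$ is bounded on $\Omega$, so $w^c$ is a parabolic potential for~\eqref{flow}.

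I would then insert these barriers into the existence/comparison machinery. As $\f_{KE}$ has minimal singularities for $\theta$ and $\phi_0$ has minimal singularities for $\e^{-1}\theta=g(0)\theta$, there is $C>0$ with $\e^{-1}\f_{KE}-C\leq\phi_0\leq\e^{-1}\f_{KE}+C$ on $X$, i.e. $w^{-C}(0,\cdot)\leq\phi_0\leq w^{C}(0,\cdot)$. Since $w^{-C}$ is continuous up to $t=0$ it is a subsolution to the Cauchy problem for~\eqref{flow} with datum $\phi_0$, so the maximality of the Perron solution $\phi=U_{\phi_0,f,F,X_\infty}$ (Theorems~\ref{existence} and~\ref{thmsub}) gives $w^{-C}\leq\phi$ on $X_\infty$. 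On the other hand $w^{C}$ is a supersolution which is smooth in $t$ (so locally uniformly semi‑concave), continuous on $(0,\infty)\times\Omega$ (as $\f_{KE}$ is smooth there) and whose slices have minimal singularities; applying the comparison principle (Theorem~\ref{B}) on $[0,T']\times X$ and letting $T'\to+\infty$ gives $\phi\leq w^{C}$ on $X_\infty$.

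Combining the two inequalities, $g(t)\f_{KE}(x)+y_{-C}(t)\leq\phi(t,x)\leq g(t)\f_{KE}(x)+y_{C}(t)$ on $\Omega_\infty$, hence for any compact $K\Subset\Omega$,
$$\sup_{x\in K}|\phi(t,x)-\f_{KE}(x)|\leq |g(t)-1|\,\sup_{K}|\f_{KE}|+\max\big(|y_{-C}(t)|,|y_{C}(t)|\big).$$
Here $g(t)-1=(\e^{-1}-1)e^{-t}$, and solving the ODE gives $e^t y_c(t)=c+n\int_0^t e^s\log g(s)\,ds$ with $e^s\log g(s)\to\e^{-1}-1$, so $y_c(t)=O\big((1+t)e^{-t}\big)$; the right‑hand side therefore tends to $0$ exponentially fast. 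This is exactly the asserted locally exponential convergence $\phi_t\to\f_{KE}$ on ${\rm Amp}(K_X)$. The step I expect to need the most care is verifying that the fixed barrier $w^{C}$ meets the regularity hypotheses of the comparison principle and that $\phi$ is indeed the maximal subsolution, so that the lower bound $w^{-C}\le\phi$ comes for free; by contrast the apparent obstacle — reconciling the shrinking reference class $g(t)\theta$ with the limit class $\theta$ — evaporates because of the scaling identity above, which turns the whole problem into the scalar ODE $y'+y=n\log g$.
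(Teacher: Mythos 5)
Your proof is correct, and it takes a genuinely different route from the paper's. The paper first gauges away the non-autonomy: it sets $\tilde{\phi}_t=g(t)^{-1}(\phi_t-a(t))$ with $a'+a=n\log g$, then reparametrizes time via $s'(t)=g(s(t))$ to reduce~\eqref{flow} to the autonomous normalized flow $(\theta+\dc\psi_t)^n=e^{\partial_t\psi_t+\psi_t}\mu$, and only then sandwiches $\psi$ between the standard barriers $e^{-t}\psi_0+(1-e^{-t})\f_{KE}+h(t)$ and $\f_{KE}+Ce^{-t}$. You instead exploit the two identities $\bigl(g(t)\theta+\dc(g(t)\f_{KE})\bigr)^n=g(t)^ne^{\f_{KE}}\mu$ and $g(t)+g'(t)=1$ (special to $g(t)=1+(\e^{-1}-1)e^{-t}$) to exhibit a one-parameter family $w^c=g(t)\f_{KE}+y_c(t)$ of \emph{exact} solutions of the PDE in~\eqref{flow}, and sandwich $\phi$ directly; this avoids the change of unknown and the time reparametrization entirely and makes the error term $O((1+t)e^{-t})$ completely explicit. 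The trade-offs are minor: your upper bound requires checking that $w^C$ meets the hypotheses of the comparison principle (semi-concavity of $t\mapsto g(t)\f_{KE}(x)+y_C(t)$ uses that $g''>0$ and $\f_{KE}$ is bounded above; continuity and minimal singularities of the slices come from $\f_{KE}$), and — since $\phi$ itself is not $\mathcal{C}^1$ in $t$ — the comparison $\phi\leq w^C$ should be run through the time-convolution regularization of Theorem~\ref{thm_unique} rather than Proposition~\ref{comparison1} directly, exactly as the paper does elsewhere; likewise one should note that $\partial_t w^{-C}=g'\f_{KE}+y'_{-C}\leq \kappa-\kappa(\rho+\chi)$ (because $\f_{KE}\geq \rho+\chi-C'$ by minimal singularities), so that $w^{\pm C}$ genuinely lie in $\mathcal{P}(X_T,\omega)$. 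With those routine verifications supplied, your argument is complete and arguably cleaner than the one in the paper.
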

	
	\begin{proof}		
		Set 
		$$\tilde{\phi_t}:=g(t)^{-1}(\phi_t-a(t)),$$
		where $g(t)=1+(\e^{-1}-1)e^{-t}$, and $a$ is the unique solution of the ODE: $a(t)+a'(t)=n\log g(t)$, with $a(0)=0$. An easy computation shows that $a(t)=O(te^{-t})$. 
		Now the flow~\eqref{flow} becomes 
		\begin{align}
		\begin{cases}
		(\theta+\dc\tilde{\phi}_t)^n=e^{g(t)\partial_{t}\tilde{\phi}_t+\tilde{\phi}_t}\mu \\
		\tilde{\phi}(0,\cdot)=\e\phi_0.
		\end{cases}
		\end{align}
		We now normalize in time $\psi(t,\cdot)=\tilde{\phi}(s(t),\cdot)$, where $s(t)$ is the unique solution of the ODE $s'(t)=g(s(t))$ with $s(0)=0$. 
		Then the flow \eqref{flow} can be written as 
		\begin{align}\label{48}
		\begin{cases}
		(\theta+\dc\psi_t)^n=e^{\partial_{t}\psi_t+\psi_t}\mu \\
		\psi(0,\cdot)=\e\phi_0.
		\end{cases}
		\end{align}	
		We set for any $(t,x)\in (0,+\infty)\times X$,
		$u(t,x):=e^{-t}\psi_0(x)+(1-e^{-t})\f_{KE}(x)+h(t),$
		where $h$ is the unique solution to the ODE
		$h'(t)+h(t)=n\log(1-e^{-t})$, with $ h(0)=0. $
		As in the proof of Theorem \ref{thm_conv} we can check that $u$ is a subsolution to the Cauchy problem for the flow~\eqref{48}. 
		
		On the other hand, since $\f_{KE}$ is a $\theta$-psh function with minimal singularities we can choose a constant $C>0$ such that $\f_{KE}+C\geq\psi_0 $. Set for $(t,x)\in (0,+\infty)\times X$,
		\[v(t,x):=\f_{KE}(x)+Ce^{-t}. \]
		One can check that $v$ is supersolution to~\eqref{48}. Therefore, the comparison principle yields 
		\[ e^{-t}\psi_0(x)+(1-e^{-t})\f_{KE}(x) +O(te^{-t})\leq \psi(t,x)\leq \f_{KE}(x)+Ce^{-t},  \]
		which implies $\psi_t\to \f_{KE}$ on ${\rm Amp}(K_X)$ as $t\to +\infty$.
		So does the flow $\phi_t$ since $s(t)\to +\infty$,  $g(t)\rightarrow 1$ and $a(t)\rightarrow 0$ as $t\rightarrow +\infty$.
	\end{proof}
	
	\begin{remark}
		The uniqueness of the flow~\eqref{pcmae} follows directly from Theorem~\ref{thm_unique}.
	\end{remark}
	
	
	\subsection{Extending the K\"ahler-Ricci flow through finite time singularities} \label{sect: singularity}
	In this subsection, we apply our results to prove the existence of the (pluripotential) K\"ahler-Ricci flow on manifolds through a finite time singularitie. In particular, the answer of Feldman-Ilmanen-Knopf's  question is affirmative also in this case.    
	
	Let $(X,\omega_0)$ be a compact K\"ahler manifold of dimension $n$ and consider the K\"ahler-Ricci flow with initial data $\omega_0$,
	\begin{equation}\label{eq: kr}
	\frac{\partial\theta}{\partial t}=-\textrm{Ric}(\theta),\quad \theta|_{t=0}=\omega_0.
	\end{equation}
	The maximal existence time $T$ of the flow is defined  by
	\begin{equation*}
	T:=\sup\{t>0: \{\omega_0\}+tc_1(K_X)\quad \text{is K\"ahler}\}. 
	\end{equation*} Suppose that $T<\infty$ ($K_X$ is not nef).  Then the limiting class $\{\theta_T\}:=\{\omega_0\}+Tc_1(K_X)$ is nef, but not K\"ahler. If we assume moreover that $\int_X\theta_T^n>0$, then the class $\{\theta_T\}$ is big by a fundamental theorem of Demailly and Paun~\cite[Theorem 2.12]{demailly2004numerical}. Since the set of big cohomology classes is open, there is a constant $\e>0$ so small that the class $\{\theta_{t}\}$ is big  for $t\in [0,T+\varepsilon)$. We can prove the existence of a pluripotential solution of the flow on $[0,T+\varepsilon)$. 
	\begin{theorem}
		Let $(X,\omega_0)$ be a compact K\"ahler manifold. Assume that the solution $\theta(t)$ of the K\"ahler-Ricci flow~\eqref{eq: kr} starting at $\omega_0$ exists  on the maximal time interval $[0,T)$ with $T<\infty$, and that the limiting class $\{\omega_0\}+Tc_1(K_X)$  is big. Then the pluripotential K\"ahler-Ricci flow starting with $\omega_0$ exists for $t\in [0,T+\e)$ for some small $\e>0$.
	\end{theorem}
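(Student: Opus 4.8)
The strategy is to reduce the (unnormalized) K\"ahler-Ricci flow \eqref{eq: kr} to a complex Monge-Amp\`ere flow of the form \eqref{cmaf} and then invoke the results of Section~\ref{exist}. Fix a smooth volume form $dV$ on $X$ and set $\theta_K:=-\textrm{Ric}(dV)$, a smooth representative of $c_1(K_X)$; put $\omega_t:=\omega_0+t\theta_K$, so that $\{\omega_t\}=\{\omega_0\}+tc_1(K_X)$. Differentiating $\theta(t)=\omega_t+\dc\f_t$ in $t$ and using $\textrm{Ric}(\theta(t))=\textrm{Ric}(dV)-\dc\log(\theta(t)^n/dV)=-\theta_K-\dc\log(\theta(t)^n/dV)$, one sees that, after normalizing by a function of $t$ alone, \eqref{eq: kr} is equivalent to
\begin{equation*}
(\omega_t+\dc\f_t)^n=e^{\dot{\f}_t}\,dV,\qquad \f_0=0,
\end{equation*}
which is \eqref{cmaf} with $f\equiv 1$ and $F\equiv 0$; both trivially satisfy the standing hypotheses on $f$ and $F$. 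The hypothesis of the theorem enters at exactly one place: big cohomology classes form an open subset of $H^{1,1}(X,\R)$ and, by assumption, $\{\theta_T\}=\{\omega_0\}+Tc_1(K_X)$ is big, so after shrinking $\e>0$ the class $\{\omega_0\}+tc_1(K_X)$ is big for every $t\in[0,\tau]$, where $\tau:=T+\e$. I then fix a smooth form $\theta:=\omega_0+\tau\theta_K$ representing this big class.

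Next I would check that the data $\big((\omega_t)_{t\in[0,\tau)},\theta,dV,f\equiv 1,F\equiv 0\big)$ satisfy all the standing assumptions of Sections~\ref{prel}--\ref{exist} on the interval $[0,\tau)$. Since $\omega_0$ is K\"ahler there is a K\"ahler form $\Theta_0$ and $c_0>0$ with $\omega_0\geq c_0\Theta_0$, while $\theta_K$ is smooth, hence $|\theta_K|\leq C_0\Theta_0$ for some $C_0$. For small $\d>0$ set $g(t):=\tau^{-1}\big(t+\d(\tau-t)\big)$, a smooth, strictly increasing, positive function on $[0,\tau]$; a direct computation gives, for $t\in[0,\tau]$,
\begin{equation*}
\omega_t-g(t)\theta=(\tau-t)\big(\tfrac{1-\d}{\tau}\,\omega_0-\d\,\theta_K\big)\geq(\tau-t)\big(\tfrac{(1-\d)c_0}{\tau}-\d C_0\big)\Theta_0\geq 0
\end{equation*}
once $\d$ is small enough, so $g(t)\theta\leq\omega_t$. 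Moreover $\dot{\omega}_t=\theta_K$, $\ddot{\omega}_t=0$, and the family $(\omega_t)_{t\in[0,\tau]}$ is uniformly bounded, so enlarging $\Theta_0$ to a K\"ahler form $\Theta$ we obtain $\omega_t\leq\Theta$ and $-\Theta\leq\omega_t,\dot{\omega}_t,\ddot{\omega}_t\leq\Theta$, i.e.\ \eqref{estimate_ome} holds. Thus all the standing assumptions are met on $[0,\tau)$.

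Finally, since $\omega_0$ is K\"ahler the constant $\f_0=0$ is a bounded $\omega_0$-psh function, hence an admissible Cauchy datum with minimal singularities. Applying the results of Section~\ref{exist} (in particular Theorem~\ref{existence}) on $X_\tau=(0,\tau)\times X$ with $F\equiv 0$, $f\equiv 1$, $\f_0=0$ produces the upper envelope $U=U_{0,1,0,X_\tau}\in\mathcal{S}_{0,1,0}(X_\tau)$, a pluripotential solution to \eqref{cmaf} with $U_t\to 0$ in $L^1$ as $t\to 0$, locally uniformly Lipschitz and locally uniformly semi-concave in $t\in(0,\tau)$. Undoing the reduction, $U$ is a pluripotential K\"ahler-Ricci flow emanating from $\omega_0$ defined for all $t\in[0,\tau)=[0,T+\e)$, which is the asserted existence. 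One may moreover note that on $[0,T)$ the classical K\"ahler-Ricci flow solves the same equation and hence lies in $\mathcal{S}_{0,1,0}(X_T)$, so it is dominated by $U$; the reverse inequality on each $[0,T']\times X$ with $T'<T$ follows from uniqueness for \eqref{cmaf} in the range where all reference classes are K\"ahler, so that the pluripotential flow genuinely extends the classical one past the singular time $T$. The only substantial point in the argument is the first one — using the openness of the big cone at $\{\theta_T\}$ to dominate the entire reference path $t\mapsto\{\omega_0\}+tc_1(K_X)$ from below by a single rescaled big class — after which everything is a routine verification of hypotheses and a direct appeal to Theorem~\ref{existence}.
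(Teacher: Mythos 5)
Your proposal is correct and follows essentially the same route as the paper: reduce \eqref{eq: kr} to the scalar flow $(\omega_t+\dc\f_t)^n=e^{\dot\f_t}dV$ with $f\equiv 1$, $F\equiv 0$, $\f_0=0$, use openness of the big cone to make $\{\omega_0\}+(T+\e)c_1(K_X)$ big, exhibit an increasing positive $g$ with $g(t)\theta\leq\omega_t$ (the paper writes $\omega_t=\tfrac{1}{T+\e}((T+\e-t)\omega_0+t\eta)\geq g(t)\eta$ with $g(t)=(T+\e)^{-1}(c(T+\e)+t(1-c))$, which is your computation with a cohomologous reference form), and invoke Theorem~\ref{existence}. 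The remaining differences are purely cosmetic choices of representative and of $g$.
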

	\begin{proof}
		Let $\eta$ be a smooth representative of the class $\{\theta_{T+\e}\}$, and set
		\begin{equation*}
		\chi=\frac{1}{T+\e}(\eta-\omega_0)\in c_1(K_X);\end{equation*}
		\begin{equation*}
		\omega_t=\omega_0+t\chi=\frac{1}{T+\e}((T+\e-t)\omega_0+t\eta)\in\{\omega_0\}+tc_1(K_X).   
		\end{equation*}
		Fix a volume form $dV$ on $X$ with $\dc\log V=\chi$. Then the K\"ahler-Ricci flow can be written as  the complex Monge-Amp\`ere flow 
		\begin{equation*}
		(\omega_t+\dc\f_t)^n=e^{\dot{\f}_t}dV,\quad \f(0)=0.
		\end{equation*}
		Since $\omega_0$ is a K\"ahler form, there exists a small constant $c\in(0,1)$  such that $\omega_0\geq c\eta$. Hence $\omega_t\geq g(t)\eta$ for $t\in[0,T']$, where $g(t)=(T+\e)^{-1}(c(T+\e)+t(1-c))$ is a positive increasing function. Theorem~\ref{existence} can be applied (with $F(t,x,r)\equiv 0$, $f=1$) and guarantees the existence of a pluripotential solution to the Monge-Amp\`ere flow 
		on $X_{T+\varepsilon}$. 
	\end{proof}
	Using the
	same argument as above the pluripotential K\"ahler-Ricci flow can be continued as long as the class $\{\omega_0\}+tc_1(K_X)$ is big. If $X$ has nonnegative Kodaira dimension, then $c_1(K_X)$ is pseudoeffective, and hence the class $\{\omega_0\}+tc_1(K_X)$ is big for any $t>0$. In particular, the flow is volume non-collapsing at a finite time singularity, as emphasized by Collins-Tosatti (see~\cite[Proposition 4.2]{collins2015kahler}).  We thus obtain a longtime pluripotential solution: 
	\begin{theorem}
		Let $(X,\omega_0)$ be a compact K\"ahler manifold with nonnegative Kodaira dimension. Then the pluripotential K\"ahler-Ricci flow starting with $\omega_0$ exists for  $t\in[0,\infty)$.
	\end{theorem}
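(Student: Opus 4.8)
The plan is to exploit that the cohomology classes encountered along the flow stay big for \emph{all} time, so that the existence result of the previous theorem applies on every finite time interval, and then to patch these finite-time solutions together. First, since $X$ has nonnegative Kodaira dimension, some power $K_X^{\otimes m}$, $m\geq 1$, carries a nonzero holomorphic section, so $K_X$ is effective and $c_1(K_X)$ is pseudoeffective; fixing a positive closed $(1,1)$-current $T_0\in c_1(K_X)$ and writing $\omega_0\geq\delta_0\Theta$ for a K\"ahler form $\Theta$, the K\"ahler current $\omega_0+tT_0\geq\delta_0\Theta$ lies in $\{\omega_0\}+tc_1(K_X)$, which is therefore big for every $t\geq 0$.

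For a fixed $S>0$ I would set things up as in the proof of the previous theorem: choose a smooth form $\eta_S$ in the big class $\{\omega_0\}+Sc_1(K_X)$, put $\chi_S:=S^{-1}(\eta_S-\omega_0)\in c_1(K_X)$ and $\omega^{(S)}_t:=\omega_0+t\chi_S=S^{-1}((S-t)\omega_0+t\eta_S)$, and pick a volume form $dV_S$ with $\textrm{Ric}(dV_S)=-\chi_S$, so that~\eqref{eq: kr} becomes $(\omega^{(S)}_t+\dc\varphi_t)^n=e^{\dot{\varphi}_t}dV_S$, $\varphi(0,\cdot)=0$, i.e.~\eqref{cmaf} with $F\equiv 0$ and $f\equiv 1$. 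Since $\omega_0$ is K\"ahler there is $c\in(0,1)$ with $\omega_0\geq c\,\eta_S$, whence $\omega^{(S)}_t-g_S(t)\eta_S=S^{-1}(S-t)(\omega_0-c\,\eta_S)\geq 0$ on $[0,S]$, with $g_S(t)=c+t(1-c)/S$ smooth, positive and increasing, while~\eqref{estimate_ome} holds on $[0,S]$ because $\dot{\omega}^{(S)}_t\equiv\chi_S$ and $\ddot{\omega}^{(S)}_t\equiv 0$. Theorem~\ref{existence} then yields a pluripotential solution on $X_S$, locally uniformly semi-concave in $t$; equivalently, a family of positive currents $\theta^{(S)}_t:=\omega^{(S)}_t+\dc\varphi^{(S)}_t$, $t\in(0,S)$, in the classes $\{\omega_0\}+tc_1(K_X)$, solving~\eqref{eq: kr} weakly on $(0,S)\times\textrm{Amp}(\{\eta_S\})$ and tending to $\omega_0$ as $t\to 0$.

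It then remains to patch: for $S<S'$ I would rewrite $\theta^{(S')}$ over $(0,S)$ in the reference data $(\omega^{(S)}_t,dV_S)$. Since $\omega^{(S')}_t-\omega^{(S)}_t=t\,\dc\psi$ for the fixed smooth function $\psi:=\log(dV_{S'}/dV_S)$ (indeed $\dc\psi=\textrm{Ric}(dV_S)-\textrm{Ric}(dV_{S'})=\chi_{S'}-\chi_S$), the function $\varphi^{(S')}_t+t\psi$ is a pluripotential solution on $X_S$ of the \emph{same} Cauchy problem, with the same initial datum $0$. Because $\theta^{(S)}$ is the Perron envelope, hence the maximal subsolution, it dominates $\theta^{(S')}$ on $(0,S)$, and the reverse inequality comes from the comparison principle; hence $\theta^{(S')}=\theta^{(S)}$ on $(0,S)$, and $\theta_t:=\theta^{(S)}_t$ (for any $S>t$) is a pluripotential K\"ahler-Ricci flow defined for all $t\in[0,\infty)$ starting at $\omega_0$.

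The main obstacle is precisely this patching: the reference forms $\omega^{(S)}_t$, the density $dV_S$ and the ample locus $\textrm{Amp}(\{\eta_S\})$ all depend on the truncation time $S$, so one must argue that the resulting geometric flow is intrinsic. When~\eqref{eq: derivative omega t} can be arranged on the finite interval, this follows at once from the uniqueness Theorem~\ref{thm_unique}; otherwise one applies the comparison principle (Proposition~\ref{comparision}) directly, as in the long-time analysis of Theorem~\ref{thm_conv}, comparing the envelope solution with sub- and supersolutions built out of $\theta^{(S')}$, or else one restarts the flow at a slice $\theta_{t_0}$ --- whose potential has minimal singularities, hence is an admissible Cauchy datum --- and extends by bounded time increments, which is always possible since $\{\omega_0\}+tc_1(K_X)$ stays big.
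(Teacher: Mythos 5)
Your overall strategy is the paper's: solve on each finite interval $X_S$ via Theorem~\ref{existence} (the class $\{\omega_0\}+tc_1(K_X)$ stays big since $c_1(K_X)$ is pseudoeffective), then identify the solutions for $S<S'$ and glue. Your gauge-change computation ($\omega^{(S')}_t-\omega^{(S)}_t=t\,\dc\psi$ with $\psi=\log(dV_{S'}/dV_S)$, so that $\varphi^{(S')}_t+t\psi$ solves the $S$-normalized Cauchy problem) is correct and makes explicit a point the paper leaves implicit.

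The genuine gap is in the patching step itself. Both routes you offer --- Theorem~\ref{thm_unique} and the comparison principle (Proposition~\ref{comparision} / Theorem~\ref{B}) --- require the competing solution, viewed as a supersolution, to have \emph{minimal singularities} for each $t$ and to be continuous on the ample locus; you never verify this, and it is not automatic. Worse, the hypothesis \eqref{eq: derivative omega t} that you hope to ``arrange on the finite interval'' cannot be arranged here: $\dot{\omega}^{(S)}_t\equiv\chi_S$ while $\omega^{(S)}_t$ is only bounded below by a multiple of $\eta_S$, a smooth representative of a big but non-K\"ahler class, so $\dot{\omega}_t\leq A\omega_t$ fails in general as $t$ approaches the degeneration. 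The paper's proof spends most of its length on exactly this point: it exploits the weaker structural fact $t\dot{\omega}_t\leq\omega_t$ (equivalently $\omega_t-t\dot{\omega}_t=\omega_0\geq 0$, so $t^{-1}\omega_t$ is decreasing), solves the static equations $(t^{-1}\omega_t+\dc\phi_t)^n=e^{\phi_t}dV$, shows $t\mapsto\phi_t$ is decreasing by the elliptic comparison principle, and feeds the explicit subsolution $u(t,x)=t\phi_t(x)+n(t\log t-t)$ into the envelope to conclude $U_t\geq V_{\omega_t}-C(t)$, i.e.\ minimal singularities. Without this (or an equivalent barrier), neither the uniqueness theorem nor the comparison principle applies, and your ``restart at a slice $\theta_{t_0}$'' fallback presupposes the very minimal-singularity property of $\theta_{t_0}$ that is at issue.
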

\begin{proof}
		Fix $T<+\infty$. Let $\eta$ be a smooth representative of the class $\{\theta_{T}\}$, and set
		\begin{equation*}
		\chi=\frac{1}{T}(\eta-\omega_0)\in c_1(K_X);\end{equation*}
		\begin{equation*}
		\omega_t=\omega_0+t\chi=\frac{1}{T}((T-t)\omega_0+t\eta)\in\{\omega_0\}+tc_1(K_X).   
		\end{equation*}
		Fix a volume form $dV$ on $X$ with $\dc\log V=\chi$. Then the K\"ahler-Ricci flow can be written as  the complex Monge-Amp\`ere flow 
		\begin{equation*}
		(\omega_t+\dc\f_t)^n=e^{\dot{\f}_t}dV,\quad \f(0)=0.
		\end{equation*} 	Since $\omega_0$ is a K\"ahler form, there exists a small constant $c\in(0,1)$  such that $\omega_0\geq c\eta$. Hence $\omega_t\geq g(t)\eta$ for $t\in[0,T]$, where $g(t)=T^{-1}(cT+t(1-c))$ is a positive increasing function. Again, by Theorem~\ref{existence} there exists  a pluripotential solution $U=U_{\f_0, f,F,X_T}$ with $\f_0=0,f=1,F=0$.   
		
		We next claim that $U_t$ has minimal singularities for each $t\in (0,T)$. The proof of the claim is very similar to that of Theorem~\ref{thm: min sing}, but for completeness we provide the details below.  We first observe that $t\dot{\omega}_t\leq \omega_t$ for all $t>0$, yielding that $$\eta_t:=t^{-1}\omega_t$$ is decreasing in $t$. 
		 By~\cite[Theorem 6.1]{boucksom2010monge}, for each $t\in(0,T)$ there exists a unique $\eta_t$-psh function $\phi_t$ with full Monge-Amp\`ere mass such that 	\begin{align*}(\eta_t+\dc\phi_t)^n=e^{ \phi_t}dV.	\end{align*} For $0<s\leq t$ we have 	\begin{align*}	(\eta_s+\dc\phi_t)^n\geq (\eta_t+\dc\phi_t)=e^{\phi_t} dV. 	\end{align*}
		It follows that $\phi_t$ is a subsolution  to $(\eta_s+\dc\phi_s)^n=e^{\phi_s}fdV $, so the comparison principle (see e.g.~\cite[Proposition 6.3]{boucksom2010monge}) ensures that $\phi_t\leq\phi_s$ for $s\leq t$. Therefore the function $t\mapsto\phi_t(x)$ is decreasing for all $x\in X$. 
	 As explained in Section~\ref{subsol}, $\dot{\phi}_t=\partial_{t}\phi_t$ is well-defined almost everywhere on $X_T$. Set $u(t,x):= t\phi_t(x)+n(t\log t-t)$. 
		We infer, for almost every $(t,x)\in(0,T)\times X$, $$\dot{u}(t,x)={t}\dot{\phi}_t(x)+\phi_t(x)+n\log t \leq \phi_t+n\log t,$$ hence
		\begin{align*}
		(\omega_t+\dc u_t)^n&=e^{n\log t+\phi_t}dV\geq e^{\dot{u}_t}dV.
		\end{align*} Moreover, since $u(0,\cdot)=0=\f_0$, we have that  $u$ is a subsolution to the Cauchy problem, i.e. $u\in\mathcal{S}_{\f_0,f,F}(X_T)$ with $\f_0=0$, $F\equiv 0$. 
		  By~\cite[Theorem 6.1]{boucksom2010monge} we have $\phi_t\geq V_{\eta_t}-C(t)$ for some time-dependent constant $C(t)$. Since $V_{\omega_t}=tV_{\eta_t}$, we thus infer $u_t\geq V_{\omega_t}-C'(t)$, hence $U_t\geq V_{\omega_t}-C'(t)$ for all $t\in(0,T)$.  This completes the proof of the claim. 
		  
		 If $T'>T$, then by the above arguments there exist a pluripotential solution $U'=U_{\f_0,f,F,X_T'}$ of the flow. Both $U$ and $U'$ satisfy the assumptions in Theorem \ref{thm_unique}, hence $U=U'$ on $X_T$. We can thus glue all these solutions to get a longtime solution of the flow, finishing the proof. 
		
\end{proof}
	
	\subsection{Stable varieties}\label{sect: semi-log canonical} 
	
	\subsubsection*{Log canonical pairs.}
	A pair $(X,D)$ is by definition a complex normal compact projective variety carrying a Weil $\mathbb{Q}$-Cartier $D$ (not necessary effective). 
	We will say that the pair $(X,D)$ is a {\em log canonical (lc)} pair if $K_X+D$ is $\mathbb{Q}$-Cartier, and if for some (or equivalently any) log resolution $\pi:X'\rightarrow X$, we have  
	\begin{align*}
	K_{X'}=\pi^*(K_X+D)+\sum a_i E_i
	\end{align*}
	where $E_i$ are either exceptional divisors or components of the strict transform of $D$, and the coefficients $a_i$ satisfy the inequality $a_i\geq -1$.
	
	When $D\equiv 0$, we say that $X$ has {\em log canonical singularities}.
	\subsubsection*{Semi-log canonical singularities.}
	We give here a short overview of the notion of semi-log canonical singularities and stable varieties. 
	We refer to the survey~\cite[\S 5, 6]{kovac2013singularities} and the references therein for more details.
	
	In the sequel, $X$ will be a reduced and equidimensional scheme of finite type over $\mathbb{C}$ unless stated otherwise, and
	we set $n:=\dim_\mathbb{C}X$.  
	In order to study the normalized K\"ahler-Ricci flow, one needs  a canonical sheaf (or a canonical divisor). 
	Let us stress that the dualizing sheaf, even if it exists, is not necessarily a line bundle (or a divisor). 
	
	We say that the scheme (variety) $X$ is \emph{Cohen-Macaulay} if for every $x\in X$ the depth of $\mathcal{O}_{X,x}$, denoted by $\textrm{depth}(\mathcal{O}_{X,x})$,  is equal to its Krull dimension. If $X$ is Cohen-Macaulay, then $X$ admits a dualizing sheaf $\omega_X$. 
	
	We say that $X$ is \emph{Gorenstein} if $X$ is Cohen-Macaulay ($X$ admits a dualizing sheaf $\omega_X$) and $\omega_X$ is a line bundle. A scheme (variety) $X$ is called $G_1$ if it is Gorenstein in codimension 1, which means that there exists an open subset $U\subset X$ such that $\text{codim}_X(X\backslash U)\geq 2$ and $U$ is Gorenstein.
	
	We say that $X$ satisfies the \emph{$S_2$ condition} of Serre if for all $x\in X$, we have $\text{depth}(\mathcal{O}_{X,x})\geq \min\{\dim \mathcal{O}_{X,x},2\}$. This condition is equivalent to saying that for each closed subset $\imath:Z\hookrightarrow X$ 
	of codimension at least two, the natural map $\mathcal{O}_X\rightarrow\imath_*\mathcal{O}_{X\backslash Z}$ is an isomorphism.
	
	We now want to have an interpretation of $\omega_X$ in terms of Weil divisor. If $X$ satisfies the conditions $G_1$  and $S_2$, and $U$ is a Gorenstein open subset whose complement has codimension at least $2$, we may define the "canonical=dualizing" sheaf $\omega_U$ as the determinant of the cotangent bundle, i.e., the sheaf of top differential forms, $\omega_U=\det\Omega_U$. One can then define the canonical sheaf $\omega_X$ by $\omega_X=\jmath_*\omega_U$ where $\jmath:U\hookrightarrow X$ is the open embedding.
	
	As $U$ is non-singular, $\omega_U$ is a line bundle, hence corresponds to  a Cartier divisor. 
	Let $K_U:=\sum a_iK_i$ be a Weil divisor associated
	to this Cartier divisor such that for all $i$, $K_i$ does not contain any component of $X_{\rm sing}$ of codimension $1$. 
	Let $\bar{K}_i$ denote the closure of $K_i$ and  
	$$K_X:=\sum a_i\bar{K}_i.$$ 
	Since ${\rm codim}_X(U)\geq 2$, this is  the unique Weil divisor  for which $K_X|_U=K_U$. We see that the divisorial sheaf
	$$
	\mathcal{O}_X(K_X):=\{f\in K(X): K_X+{\rm div}(f)\geq 0 \}
	$$
	is reflexive, and coincides with $\omega_U=\omega_X|_U$, hence the $S_2$ condition implies that
	$$\omega_X\simeq\mathcal{O}_X(K_X).$$
	
	\begin{remark}
		The condition $G_1$ guarantees the existence of the canonical sheaf $\omega_X$, and the condition $S_2$ ensures its uniqueness.
		When $X$ is projective, we know that it admits a dualizing sheaf, as it is reflexive, it coincides with $\omega_X$ by the $S_2$ condition.
	\end{remark}
	
	We let $\omega_X^{[m]}$ denote the $m$-th reflexive power of the canonical sheaf $\omega_X$ (defined by $\omega_X^{[m]}:=(\omega_X^{\otimes m})^{**}$). The same arguments above yield $\omega_X^{[m]}\simeq \mathcal{O}_X(mK_X)$. Thus the Weil divisor $K_X$ is $\mathbb{Q}$-Cartier if and only if $\omega_X$ is a $\mathbb{Q}$-line bundle, i.e.,  $\omega_X^{[m]}$ is a line bundle for some $m>0$.
	From now on we work with the canonical divisor $K_X$ instead of its associated canonical sheaf $\omega_X$.
	
	We say that a closed point $x\in X$ is \emph{double crossing} if it is locally  analytically isomorphic to the singularity
	$$\{0\in (z_0z_1=0)\subset \mathbb{C}^{n+1} \}.$$
	A scheme $X$ is called \emph{demi-normal} if it satisfies the $S_2$ condition and has only double crossing singularities in codimension 1. We now give the definition of semi-log canonical models:
	\begin{definition}\label{def_slc}
		We say that $X$ has \emph{semi-log canonical (slc)} singularities if $K_X$ is $\mathbb{Q}$-Cartier and there exist two Zariski open sets $U,V$ such that
		\begin{itemize}
			\item $X=U\cup V$,
			\item $U$ is a normal variety with log canonical singularities,
			\item $V$ has only double crossing points. 
		\end{itemize}
	\end{definition}
	
	We mention that semi-log canonical models may not be normal varieties. Let $\mu:X^n\rightarrow X$ be a normalization of $X$. 
	We emphasize again   that $X$ is not irreducible in general, so its normalization is defined to be the disjoint 
	union of the normalization of its irreducible components. The \emph{conductor ideal sheaf}
	$$\mathscr{I}_{C_X}:=Ann_{\mathcal{O}_X}(\mu_*\mathcal{O}_{X^n}/\mathcal{O}_X)$$ is defined to be the largest ideal sheaf on $X$ that is also an ideal sheaf on $X^n$. If we consider the affine case where $A^n$ is the integral closure of some integral ring $A$, then one can see that the annihilator $Ann_A(A^n/A):=\{a\in A: aA^n\subset A \}$ is the largest ideal in $A$ that is also an ideal  in $B$.
	
	For the case of schemes (varieties), we let $\mathcal{I}_{C_{X^n}}$ denote the corresponding conductor ideal sheaf on $X^n$,
	and we define the conductor subscheme as  $C_X:=Spec_X(\mathcal{O}_X/\mathscr{I}_{C_X})$ on $X$ and $C_{X^n}:=Spec_{X^n}(\mathcal{O}_{X^n}/\mathscr{I}_{C_{X^n}})$ on $X^n$. If $X$ is seminormal (i.e. every finite morphism $X'\rightarrow X$, with $X'$ is reduced, that is a bijection on points is an isomorphism) and $S_2$, then one can show that these subschemes have pure codimension 1 hence they define Weil divisors which are moreover  reduced (cf.~\cite[4.5]{kovac2010canonical}). 
	
	If $X$ is demi-normal and $K_X$ is $\mathbb{Q}$-Cartier, then we have the following relation
	\begin{align}\label{cond}
	\mu^*K_X=K_{X^n}+C_{C^n}.
	\end{align}
	Under the previous seminormality and $S_2$ assumptions, the $G_1$ condition is equivalent to the demi-normality. 
	In other words, we may alternatively define slc models as follows:
	\begin{defprop}
		A scheme $X$ has semi-log canonical singularities if and only if
		\begin{itemize}
			\item $X$ is $G_1$ and $S_2$,
			\item $K_X$ is $\mathbb{Q}$-Cartier (of index $m$),
			\item The pair $(X^n,C_{X^n})$ is log-canonical.
		\end{itemize}
	\end{defprop}  
	Note that there are many schemes satisfying the $S_2$ condition and the seminormality but not demi-normality. 
	For instance, a reduced scheme consisting of the three axes in $\mathbb{A}^3$ does not have double crossings in codimension 1, but is both $S_2$ and
	seminormal. 
	
	\smallskip
	
	We can finally give the definition of stable variety:
	
	\begin{definition}
		A projective variety $X$ is called \emph{stable} if 
		\begin{itemize}
			\item $X$ has semi-log canonical singularities, 
			\item  $K_X$ is an ample $\mathbb{Q}$-Cartier divisor. 
		\end{itemize}
	\end{definition}
	
	From Definition~\ref{def_slc}, we can see that $X$ is a stable variety if $K_X$ is ample and $X=U\cup V$, where $U,V$ are Zariski open sets, 
	$U$ is  a normal variety with log canonical singularities, and $V$ has only double crossing singularities.

	\subsection{Convergence of NKRF on stable varieties}
	
	Let $X$ be a complex projective variety with semi-log canonical singularities such that $K_X$ is ample (stable variety). 	 
	We now consider the normalized K\"ahler-Ricci flow  starting at any K\"ahler metric $\omega_0$ on $X$,
	this is the evolution following equation:
	\begin{align}\label{nkrf2}
	\dfrac{\partial\theta_{t}}{\partial t}=-{\rm Ric}(\theta_{t})-\theta_{t},\quad \theta|_{t=0}=\omega_{0}.
	\end{align}
	
	After passing to a suitable resolution of singularities, we may as well assume that $X$ is smooth if we 
	study  the setting of {\em log pairs} $(X,D)$, where $D=\sum_{i=1}^N a_iD_i$ is the $\mathbb{Q}$-divisor on $X$ with simple normal crossing (snc), where the role of the canonical line bundle is played by the {\em log canonical line bundle} $K_X+D$ (which occurs as the pull-pack to the resolution of the original canonical line bundle). In this setting the original variety has semi-log canonical singularities precisely when the log pair $(X,D)$ is {\em log canonical (lc)} in the usual sense of Minimal Model Program (MMP), i.e. the coefficients of $D$ are at most equal to one (but negative coefficients allowed). Let us mention that even if the original canonical line bundle is ample, the corresponding log canonical line bundle is merely {\em semi-ample} and big on the resolution, since it is trivial along the exceptional divisors of the corresponding resolution. The initial data $\omega_0$ may now assume to be a smooth semi-positive (1,1) form with big cohomology class.
	
	Let $X$ be a compact K\"ahler manifold and $(X,D)$ be a log canonical pair such that  $K_X+D$ is semi-ample and big (i.e., $(K_X+D)^n>0$). We fix $\theta$ a smooth representative of the class $c_1(K_X+D)$. 
	It has been shown in~\cite[Theorem C]{berman2014kahler} that there exists a unique closed positive current
	$\omega_{KE}=\theta+\dc\psi_{KE}$ in $c_1(K_X+D)$ which is smooth on a Zariski open set $U$ of $X$ and satisfies
	\begin{align*}
	{\rm Ric}(\omega_{KE})=-\omega_{KE}+[D]
	\end{align*} in the  sense of currents on $X$. The current $\omega_{KE}$ is called the \textit{singular K\"ahler-Einstein metric}.
	\smallskip
	
	Our aim is to prove the existence and the convergence of the pluripotential solutions of the normalized K\"ahler-Ricci flow like the previous one, this is the content of the following theorem:
	\begin{theorem}\label{thm_conv2} Let $S_0$ be a positive closed current with bounded potentials.
		Then the normalized K\"ahler-Ricci flow starting with $S_0$ admits a unique pluripotential solution defined on $[0,+\infty)\times X$. 
		Furthermore, the pluripotential normalized K\"ahler-Ricci flow converges towards $\omega_{KE}$ on $\textrm{Amp}(K_X+D)$, as $t\rightarrow+\infty$.
	\end{theorem}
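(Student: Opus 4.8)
The plan is to mirror the proof of Theorem~\ref{thm_conv}, after reducing the normalized K\"ahler-Ricci flow on the stable variety to a complex Monge-Amp\`ere flow~\eqref{cmaf} on a log resolution with $F(t,x,r)=r$. Writing $S_0=\omega_0+\dc\f_0$ with $\f_0$ a bounded $\omega_0$-psh function, one notes that being bounded $\f_0$ is less singular than any other $\omega_0$-psh function, so it has minimal singularities and is an admissible Cauchy datum. I would fix a smooth semi-positive representative $\theta$ of $c_1(K_X+D)$ (available since $K_X+D$ is semi-ample) together with an adapted smooth volume form, so that the flow reads
\begin{equation*}
(\omega_t+\dc\f_t)^n=e^{\dot\f_t+\f_t}f\,dV,\qquad \f(0,\cdot)=\f_0,
\end{equation*}
where $\omega_t:=e^{-t}\omega_0+(1-e^{-t})\theta$ and $f$ is the density produced by adjunction for the pair $(X,D)$; since $(X,D)$ is log canonical, $f\in L^p(X,dV)$ for some $p>1$ and is strictly positive a.e. As $\omega_0$ is the pull-back of a K\"ahler metric, and $\theta$ may be chosen compatibly, one gets $\omega_0\ge c\theta$ for some $c\in(0,1)$, hence $g(t)\theta\le\omega_t$ with $g(t)=ce^{-t}+1-e^{-t}$ increasing and positive on $[0,T]$; the bounds~\eqref{estimate_ome} and the estimate $\dot\omega_t=e^{-t}(\theta-\omega_0)\le e^{-t}\theta\le c^{-1}\omega_t$, i.e.\ the extra hypothesis~\eqref{eq: derivative omega t}, are then immediate.

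For the existence and uniqueness part, I would invoke Theorem~\ref{existence} on each finite interval $[0,T]$ to obtain the pluripotential solution $U=U_{\f_0,f,F,X_T}$, which is locally uniformly Lipschitz and locally uniformly semi-concave in $t$. Under~\eqref{eq: derivative omega t}, Theorem~\ref{thm: min sing} shows that each $U_t$ has minimal singularities and is continuous on $\mathrm{Amp}(K_X+D)$, and Theorem~\ref{thm_unique} identifies $U$ as the unique such solution. Letting $T\to+\infty$ and gluing the solutions (their restrictions agree on overlaps by uniqueness) yields the unique pluripotential solution $\f$ of the normalized K\"ahler-Ricci flow on $[0,+\infty)\times X$.

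For the long-time behavior, let $\omega_{KE}=\theta+\dc\psi_{KE}$ be the singular K\"ahler-Einstein current of Berman--Guenancia~\cite{berman2014kahler}, solving $(\theta+\dc\psi_{KE})^n=e^{\psi_{KE}}f\,dV$ and smooth on a Zariski open set containing $\mathrm{Amp}(K_X+D)$. The \emph{lower bound} comes for free from maximality: I would check directly that $u(t,x)=e^{-t}\f_0(x)+(1-e^{-t})\psi_{KE}(x)+h(t)$, where $h$ solves $h'+h=n\log(1-e^{-t})$, $h(0)=0$ (so $h(t)=O(te^{-t})$), is a pluripotential subsolution to the Cauchy problem, and since $U$ is the maximal such subsolution (Theorem~\ref{thmsub}) this forces $\f\ge u$. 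For the \emph{upper bound} I would follow Theorem~\ref{thm_conv}: choose a $\theta$-psh function $\chi_0\le 0$ with analytic singularities and $\theta+\dc\chi_0\ge\e\omega_0$, rewrite the flow so that $\f_t+\e^{-1}e^{-t}\chi_0$ becomes a subsolution of an auxiliary Cauchy problem with reference forms $g_2(t)\theta$, $g_2(t)=1+(\e^{-1}-1)e^{-t}$, and compare with the solution $\phi_t$ of that auxiliary flow via Proposition~\ref{comparision} — after regularizing $\f=U$ in the time variable by convolution exactly as in the proof of Theorem~\ref{thm_unique}, since $U$ is only semi-concave, not $\mathcal C^1$, in $t$. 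This gives $\f_t+e^{-t}(\e^{-1}\chi_0-C)\le\phi_t$. Finally, the analogue of Lemma~\ref{lemme} — normalize $\tilde\phi_t=g_2(t)^{-1}(\phi_t-a(t))$ with $a'+a=n\log g_2$, reparametrize time via $s'=g_2(s)$, and sandwich the normalized flow between the barriers $e^{-t}\psi_0+(1-e^{-t})\psi_{KE}+h(t)$ and $\psi_{KE}+Ce^{-t}$ — shows $\phi_t\to\psi_{KE}$ on $\mathrm{Amp}(K_X+D)$; combined with the lower bound this yields $\f_t\to\psi_{KE}$ on $\mathrm{Amp}(K_X+D)$ as $t\to+\infty$, proving both assertions.

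I expect the main obstacle to be the convergence step rather than existence. In the log canonical (as opposed to log terminal) case $\psi_{KE}$ need not be globally bounded, so one must track the singularity type carefully: one needs $\psi_{KE}$ locally bounded on $\mathrm{Amp}(K_X+D)$ so that the barrier $u$ above genuinely defines a subsolution, the auxiliary initial datum $\psi_0$ must be chosen with the correct minimal singularities, and one must verify that the hypotheses of Proposition~\ref{comparision} — in particular that the subsolution be $\mathcal C^1$ in $t$ — are met, which is exactly why $U$ must first be regularized in time as in the proof of Theorem~\ref{thm_unique}. By contrast, the reduction step (checking the structural assumptions of Section~\ref{subsol} when $\omega_0$ is merely semi-positive and $\theta$ merely semi-ample) is routine once one records the inequality $\omega_0\ge c\theta$.
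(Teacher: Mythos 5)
Your reduction and your convergence scheme follow the paper's broad outline, but there is a genuine gap at the very first step: you assert that ``since $(X,D)$ is log canonical, $f\in L^p(X,dV)$ for some $p>1$''. This is false precisely in the situation the theorem is about. For a semi-log canonical (as opposed to klt) pair, some coefficients $a_k$ of $D$ equal $1$ (e.g.\ along the conductor divisor), so the adjoint density $f=\prod_i|s_i|^{-2a_i}$ contains factors $|s_k|^{-2}$ and is not in $L^p$ for any $p>1$ --- indeed not even in $L^1$. Consequently the standing hypotheses of Sections~\ref{sec:envelope}--\ref{exist} are violated, and you cannot invoke Theorem~\ref{existence}, Theorem~\ref{thm: min sing} or Theorem~\ref{thm_unique} directly to get existence, minimal singularities, continuity, or uniqueness. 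Everything downstream of that invocation (including the use of Proposition~\ref{comparision}, which is also proved under the $L^p$ assumption) is therefore unjustified as written.

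The paper's proof is organized around exactly this obstruction. It solves the approximating flows $(\omega_t+\dc\f_{t,j})^n=e^{\dot\f_{t,j}+\f_{t,j}}\min(f,j)\,dV$ (to which the general theory does apply), proves uniform a priori bounds in $j$ --- a lower bound by the barrier $\psi_{KE}-C_0e^{-t}$ and Lipschitz/semi-concavity estimates in $t$ with explicit blow-up $\sum_{a_k=1}\log(-\log|s_k|^2)-\chi_0$ near the non-klt locus (Proposition~\ref{thm_lip2}) --- and then passes to the decreasing limit, using the relative capacity estimates of \cite{di2015generalized} to show no Monge--Amp\`ere mass escapes, so the limit solves the flow with the true density $f$. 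Convergence at $t=0$ and uniqueness then require separate arguments (Steps~3 and~4) adapted to the unbounded density, the uniqueness step being a maximum-principle argument on $\Omega\setminus D_{\rm lc}$ with auxiliary potentials $\rho,\chi$ rather than a citation of Theorem~\ref{B}. Your long-time analysis (the subsolution $e^{-t}\f_0+(1-e^{-t})\psi_{KE}+h(t)$ and the upper barrier via a K\"ahler current $\theta+\dc\chi_0\ge\e\omega_0$) is in the right spirit and close to the paper's Step~5, but it can only be run once existence and a working comparison principle have been established by the approximation scheme you omitted.
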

	
	Observe that Theorem~\ref{thm_conv2} implies Theorem~\ref{D} in the introduction:
	indeed, if $Y$ is a projective variety with semi-log canonical singularities such that $K_Y$ is ample (stable variety) and $\pi:(X,D)\rightarrow Y$ is a log resolution of the normalization (endowed with its conductor), then the exceptional locus of $\pi$ is contained in the complement of the ample locus of $K_X+D$.

	\begin{remark}
		A similar result has been obtained in \cite[Theorem 1.3]{chau2019kahler} with a very different approach.
		These authors generalize the a priori estimates of Song-Tian \cite{song2017kahler}
		to the case of	$\mathbb{Q}$-factorial projectives varieties with log canonical singularities. 
		They also show that if $X$  is stable, then the normalized K\"ahler-Ricci flow~\eqref{nkrf2} has a unique maximal weak solution on $[0,+\infty)$
		which is smooth in $(0,+\infty) \times X_{\rm reg}$ and  converges to the singular K\"ahler-Einstein metric $\omega_{KE}$ both 
		in the sense of currents and in the $\mathcal{C}^\infty_{\rm loc}(X_{\rm reg})$-topology as $t$ tends to infinity.
		
		Our approach  allows one to treat more general equations, avoiding any projectivity assumption on the variety
		nor any integrality on the initial cohomology class, 
		and applies to  big classes for which no smooth deformation is available.
	\end{remark}
	
	\begin{proof}[Proof of Theorem~\ref{thm_conv2}]
		By definition, $D=\sum_{i=1}^Na_iD_i$ is a simple normal crossings $\mathbb{R}$-divisor with $a_i\in(-\infty,1]$ and defining section $s_i$.
		The normalized K\"ahler-Ricci flow~\eqref{nkrf2} can be written as the following complex Monge-Amp\`ere flow
		\begin{align}\label{fbg}
		(\omega_{t}+\dc\f_t)^n=e^{\dot{\f}_t+\f_t}d\mu, 
		\end{align}
		where $\omega_t:=e^{-t}\omega_0+(1-e^{-t})\theta$, 
		and $d\mu$ is a measure on $X$ which is of the form 
		\begin{align*}
		d\mu= \dfrac{dV_X}{\prod_{i=1}^N|s_i|^{2a_i}}=fdV_X
		\end{align*}
		where  $s_i$  are non-zero sections of $\mathcal{O}_X(D_i)$, $|\cdot|_i$ are smooth hermitian metrics on $\mathcal{O}_X(D_i)$, and $dV_X$ is a smooth volume form on $X$. We let $D_{\rm lc}:=\cup_{a_k=1}D_k$ denote the "non-klt" locus.
		
		\medskip 
		
		\label{step1}\noindent{\bf Step 1: constructing  a subsolution.} 
		We let  $\Omega$ denote the ample locus of the class $\{\theta\}$. Since the latter is big, there exists a $\theta$-psh function $\chi_0$ such that 
		\[\tilde{\theta}:=\theta+\dc\chi_0\geq \delta\omega_X \;\text{on}\; \Omega\; \text{for some}\, \delta>0\; \text{and}\; \chi_0\to -\infty \;\text{near}\; \partial\Omega. \]
		
		Up to multiplying by a positive constant
		we can assume that $|s_i|^2\leq 1/e$ so that $-\log(|s_i|^2)\geq 1$ out of $D_i$. Note also that $dd^c(-\log(|s_i|^2))$ extends as a smooth real $(1,1)$-form on $X$ whose cohomology class is $2\pi c_1(D)$. We compute
		\begin{align*}
		-dd^c(\log(\lambda-\log(|s_i|^2)))=-\dfrac{dd^c(\lambda-\log(|s_i|^2))}{\lambda-\log(|s_i|^2)}+\dfrac{ds_i\wedge d^c s_i}{|s_i|^2(\lambda-\log(|s_i|^2))^2}.
		\end{align*}
		The second term is a semipositive $(1,1)$-form. Since $-\log(|s_i|^2)$ goes to $\infty$ near $D_j$, we infer that $\tilde{\theta}-\dc \log(\lambda-\log|s_i|^2)$ is positive on $\Omega\backslash D_j$ when $\lambda$ is big enough. Replacing $\tilde{\theta}$ by $\frac{1}{N}\tilde{\theta}$ and increasing $\lambda$ if necessary one has $\frac{1}{N} \tilde{\theta}-\dc \log(\lambda-\log|s_i|^2)>0$, hence $\sum(\frac{1}{N}\tilde{\theta}-\dc\log(\lambda-|s_i|^2))$ defines a K\"ahler form on $\Omega\backslash D$. Then for suitable positive constants $\lambda,A$ the function 
		\begin{align}\label{func_v}
		v:=-2\sum_{i=1}^N\log(\lambda-\log|s_i|^2)+\chi_0-A
		\end{align}
		is a subsolution of the complex Monge-Amp\`ere equation:
		\begin{align}\label{sol_bg}
		(\theta+\dc\psi_{KE})^n=e^{\psi_{KE}}\frac{dV_X}{\prod_i|s_i|^{2a_i}}.
		\end{align}
		By the arguments above, we  get the lower bound (see also~\cite[5.5.2]{berman2014kahler}): 
		\begin{align*}
		\psi_{KE}\geq\chi_0 -\sum_{a_k=1}\log(-\log|s_k|^2)-A
		\end{align*}
		for some uniform constant $A>0$. Here the hermitian metrics $|\cdot|_k$ are chosen conveniently.	 
		
		Since $\theta$ is semi-positive we see that for all $t$, $\omega_t\geq c\theta$ for some $c>0$ small enough. 
		For simplicity, we may assume that $c=1$. We can check that 
		\begin{equation}\label{eq: subsol}
		u(t,x):=\psi_{KE}(x)-C_0e^{-t}
		\end{equation}
		is a subsolution to~\eqref{fbg}, for a constant $C_0>0$ so large that $u(0,\cdot)\leq \f_0$.
		
		\vskip 0.3cm
		\label{step2}\noindent{\bf Step 2: the approximating flows.}
		We now establish the existence of the flow~\eqref{fbg} by an approximation argument using ideas from~\cite[Theorem 4.5]{di2015generalized}.
		Fix $T<+\infty$.  The difficulty is that the density $f=\Pi_i |s_i|^{-2a_i}$ is not in $L^p$, $p>1$,
		(not even in $L^1$) since some of the coefficients $a_j$ might be equal to $1$.
		For each $j\in\mathbb{N}$, Theorems~\ref{A} and \ref{B} provide a unique $\f_{t,j}\in\mathcal{P}(X_T,\omega)$ such that
		\begin{equation}\label{eq: approx. NKRF}
		(\omega_t +dd^c \varphi_{t,j})^n = e^{\dot{\varphi}_{t,j} + \varphi_{t,j}} \min(f,j) dV_X, \; \varphi_{0,j} = \varphi_0. 
		\end{equation} 
		Since $\omega_t$ is the pull-back of a smooth family of K\"ahler forms, we have 
		\[
		-A\omega_t \leq \dot{\omega}_t \leq A\omega_t,
		\]
		for a uniform constant $A>0$.  We can proceed as in the proof of Theorem~\ref{thmlips} and Theorem~\ref{thmscc} to establish the following uniform bounds: for each $T\in (0,+\infty)$ and any compact $K\subset \Omega\backslash D_{\rm lc}$, there is a constant $C(T,K)$ such that 
		
		\[
		t |\partial_t \varphi_{t,j}(x)| \leq C(T,K), \; \text{and} \;  t^2 \partial^2_t \varphi_{t,j}(x) \leq C(T,K),\quad \forall\;(t,x)\in (0,T)\times K. 
		\]
		Indeed, on $(0,T)$ the forms $\omega_t$ satisfy $\omega_t \geq g(t) \theta$, where $g(t) =c_0>0$ is a constant.  The function $F$ in our case is defined by  $r\mapsto F(t,x,r)\equiv r$ which satisfies the assumptions in the introduction. More precisely, we have the following:

		\smallskip
		
		\begin{proposition}\label{thm_lip2}
			Let $J=[a,b]$ be a compact interval of $(0,T)$. There exist uniform constants $C_0,C_1,C_2>0$ such that  for all $j\in\mathbb{N}$, $t\in J$,
			\begin{itemize}
				\item[(1)] $C_0\geq \f_{t,j}(x)\geq \psi_{KE}(x)-C_0e^{-t}$,
				\item[(2)] $|\partial_t\f_{t,j}|\leq C_1 +\sum_{a_k=1}\log(-\log|s_k|^2) -\chi_0$,
				\item[(3)] $\partial_t^2\f_{t,j}\leq C_2 +\sum_{a_k=1}\log(-\log|s_k|^2) -\chi_0.$ 
			\end{itemize}
		\end{proposition}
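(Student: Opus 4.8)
The plan is to deduce (1) from the comparison principle together with monotonicity in $j$, and to obtain (2)--(3) by running the time--dilation arguments of Theorem~\ref{thmlips} and Theorem~\ref{thmscc}, the only genuinely new point being that the constants must be made independent of $j$.

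For \textbf{(1)}, the lower bound follows from the subsolution $u(t,x):=\psi_{KE}(x)-C_0e^{-t}$ of \eqref{eq: subsol}: one has $\dot u_t+u_t=\psi_{KE}$ and, since $\omega_t\ge\theta$, $(\omega_t+\dc u_t)^n\ge(\theta+\dc\psi_{KE})^n=e^{\psi_{KE}}fdV_X\ge e^{\psi_{KE}}\min(f,j)dV_X$, while $u(0,\cdot)\le\varphi_0$ for $C_0$ large (because $\psi_{KE}$ is bounded above and $\varphi_0$ is bounded). As $\omega_t$ is pulled back from a smooth family of K\"ahler forms, $-A\omega_t\le\dot\omega_t\le A\omega_t$, so Theorem~\ref{thm: min sing} applies: $\varphi_{t,j}$ has minimal singularities and is continuous on $\Omega$, and it is locally uniformly semi--concave in $t$ by Theorem~\ref{existence}; the comparison principle (Theorem~\ref{B}) then gives $u\le\varphi_{\cdot,j}$. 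For the upper bound, $j\mapsto\min(f,j)$ is non--decreasing, so for $j'>j$ the potential $\varphi_{\cdot,j'}$ is a pluripotential subsolution of \eqref{eq: approx. NKRF} with parameter $j$ and the same initial datum; Theorem~\ref{B} yields $\varphi_{\cdot,j'}\le\varphi_{\cdot,j}$, hence $\varphi_{t,j}\le\varphi_{t,1}$, and $\varphi_{\cdot,1}$ solves \eqref{eq: approx. NKRF} with the fixed density $\min(f,1)\in L^\infty(X)$, so it is bounded above on $[0,T]$ by Lemma~\ref{exist_sub}; enlarging $C_0$ gives $\varphi_{t,j}\le C_0$.

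For \textbf{(2)--(3)}, here $F(t,x,r)\equiv r$ satisfies the standing assumptions and $\omega_t\ge g(t)\theta$ with $g\equiv c_0>0$, so Theorems~\ref{thmlips} and \ref{thmscc} apply to each $\varphi_{\cdot,j}$; the difficulty is that the Monge--Amp\`ere barrier $\rho=\rho_j$ used there (the solution of $(\theta+\dc\rho_j)^n=2^ne^{c_{1,j}}\min(f,j)dV_X$) and the constant $c_{1,j}$ degenerate as $j\to\infty$ once $D_{\rm lc}\ne\varnothing$, since $\int_X\min(f,j)dV_X\to\int_Xf\,dV_X=+\infty$. I would fix a compact $K\Subset\Omega\setminus D_{\rm lc}$ inside a slightly larger compact $K'\Subset\Omega\setminus D_{\rm lc}$: on $K'$ the density $f$ is bounded, and by (1) together with the boundedness of $\psi_{KE},\chi_0,f$ there, the family $(\varphi_{t,j})_{j,\,t\in J}$ is uniformly bounded on $K'$. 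For $j\ge j_0(K'):=\lceil\sup_{K'}f\rceil$ one has $\min(f,j)\equiv f$ on $K'$, so $\varphi_{\cdot,j}$ solves the \emph{unaltered} equation $(\omega_t+\dc\varphi_{\cdot,j})^n=e^{\dot\varphi_{\cdot,j}+\varphi_{\cdot,j}}f\,dV_X$ on $(0,T)\times K'$ with a fixed bounded positive density; the local time--dilation estimates of Theorem~\ref{thmlips} and Theorem~\ref{thmscc}, applied on balls exhausting $K$ (using $-A\omega_t\le\dot\omega_t\le A\omega_t$), then give $t|\partial_t\varphi_{t,j}|\le C(K,K')$ and $t^2\partial_t^2\varphi_{t,j}\le C(K,K')$ on $(0,T)\times K$, with $C(K,K')$ independent of $j\ge j_0$; the finitely many $j<j_0$ contribute individual bounds from Theorems~\ref{thmlips},~\ref{thmscc}, and one takes the maximum. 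To reach the explicit form in (2)--(3) one tracks how $\sup_{K'}f$ and $\inf_{K'}\psi_{KE}$ grow as $K$ exhausts $\Omega\setminus D_{\rm lc}$ --- equivalently, one repeats the dilation argument globally with the \emph{fixed} $\theta$--psh function $v$ of \eqref{func_v}, which satisfies $(\theta+\dc v)^n\ge e^vf\,dV_X\ge e^v\min(f,j)dV_X$ independently of $j$, playing the role of $\rho_j$, with (1) controlling the terms $\tfrac{\alpha_s}{s}\varphi_{t,j}(st,\cdot)$; since by construction $-v(x)\le C+\sum_{a_k=1}\log(-\log|s_k(x)|^2)-\chi_0(x)$ and $t\in J=[a,b]$ absorbs the factors $1/t$ and $1/t^2$, this yields (2) and (3).

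The \textbf{main obstacle} is precisely this $j$--uniformity: because $\min(f,j)\to f\notin L^1(X)$, the global barriers of Theorems~\ref{thmlips} and \ref{thmscc} degenerate over the non--klt locus $D_{\rm lc}$, which is exactly where (2)--(3) are forced to blow up, at the rate $\sum_{a_k=1}\log(-\log|s_k|^2)-\chi_0$ of the singularity of $\psi_{KE}$. What makes the argument run is the $j$--independent two--sided bound of (1) combined with the stabilization $\min(f,j)\equiv f$ on compacts avoiding $D_{\rm lc}$ (turning the estimate into a family of interior estimates with $j$--independent data); the remaining delicate step is to extract the sharp blow--up rate above rather than a crude $K$--dependent constant, which requires carrying the singular barrier $v$ of \eqref{func_v} through the mixed Monge--Amp\`ere inequalities and checking the sign of its extra contributions.
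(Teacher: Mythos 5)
Your proposal is essentially correct and, in its final form, lands on the same mechanism as the paper for the substantive estimates (2)--(3): a global time--dilation argument (as in Theorems~\ref{thmlips} and~\ref{thmscc}) in which the $j$--dependent barrier $\rho_j$ is replaced by a \emph{fixed} singular quasi-psh function whose Monge--Amp\`ere measure dominates $e^{v}\min(f,j)dV_X$ for every $j$ because it already dominates $e^{v}f\,dV_X$. The paper implements this with the exact solution $v_1$ of $(\e_1\theta+\dc v_1)^n=e^{v_1}f\,dV_X$ (whose lower bound $v_1\geq \e_1\chi_0-\e_1\sum_{a_k=1}\log(-\log|s_k|^2)-A$ is obtained precisely from the explicit subsolution $v$ of \eqref{func_v}), whereas you propose feeding $v$ itself into Lemma~\ref{logconcave}; since only a lower Monge--Amp\`ere bound enters the mixed inequality, both choices work and yield the same blow-up rate. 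Two remarks on the remaining differences. First, for the upper bound in (1) you use monotonicity $\f_{t,j'}\leq\f_{t,j}$ for $j'\geq j$ plus boundedness of $\f_{t,1}$, while the paper bounds $\f_{t,j}$ directly by a constant via the domination principle; your route is a legitimate (and arguably more robust) alternative, and the monotonicity you invoke is exactly what the paper uses later to pass to the limit. Second, your preliminary localization step (stabilizing $\min(f,j)\equiv f$ on compacts $K'\Subset\Omega\setminus D_{\rm lc}$ and ``applying Theorems~\ref{thmlips} and~\ref{thmscc} on balls'') does not stand on its own: those theorems are statements about the global envelope obtained via the comparison principle on all of $X_T$, not interior estimates, so they cannot simply be run on a ball with its own data. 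This detour is harmless only because you then discard it in favour of the global barrier argument, which is the one that actually proves (2)--(3).
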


		\begin{proof} We first prove (1). For the lower bound, we can check that the function $u$ in~\eqref{eq: subsol} is also a subsolution to~\eqref{eq: approx. NKRF}. For the upper bound, we pick $C>0$ so big that $\omega_t^n\leq e^C fdV$ for all $t\in [0,T]$. The domination principle (see e.g.~\cite[Corollary 2.5]{boucksom2010monge}) yields $\f_{t,j}\leq C $ holds everywhere for all $t$, $j$.
			
			We next prove $(2)$. Fix $\e_0>0$ such that $(1+\varepsilon_0)b<T$. For all $t\in J$ and $s\in(1-\e_0,1+\e_0)$ there exists a constant $A_1>0$ such that
			\begin{align}\label{ineq}
			\omega_t\geq (1-A_1|s-1|)\omega_{ts}.
			\end{align}
			For $s$ small enough we set \begin{align}\label{alpha}
			\lambda_s:=\frac{|1-s|}{s},\quad\alpha_s:=s(1-\lambda_s)(1-A_1|s-1|)\in (0,1),
			\end{align} hence $\gamma_s:=\lambda_s/(1-\alpha_s)\geq \varepsilon_1>0$.  
			Shrinking $\e_1$ we may assume that $\gamma_s\omega_t\geq \e_1\theta$. 
			Let $v_1$ be a solution to the following equation
			\begin{align}\label{eq1_bh}
			(\e_1\theta+\dc v_1)^n=e^{v_1}f dV_X.
			\end{align}
			The same argument in the Step \hyperref[step1]{1} yields
			\begin{align*}
			v_1\geq \e_1\chi_0-\varepsilon_1\sum_{a_k=1}\log(-\log|s_k|^2)-A,
			\end{align*} for some uniform constant $A>0$.
			For any $(t,x)\in J\times X$ we set
			\begin{align*}
			u^s(t,x):=\frac{\alpha_s}{s}\f_{j}(ts,x)+(1-\alpha_s)v_1(x)-C|s-1|e^{-t},
			\end{align*}
			for $C>0$ to be chosen later. We have
			\begin{align*}
			(\omega_t+\dc u^s(t,\cdot))^n&=\left[(1-\lambda_s)\omega_t+\frac{\alpha_s}{s}\dc \f_{ts}+(1-\lambda_s)\omega_t+(1-\alpha_s)\dc v_1\right]^n\\
			&\geq [\alpha_s(\omega_{ts}+\dc \f_{ts,j})+(1-\alpha_s)(\gamma_s\omega_t+\dc v_1)]^n\\
			&\geq e^{\alpha_s(\partial_t\f_j(ts,\cdot)+\f_j(ts,\cdot))+(1-\alpha_s)v_1}\min (f,j)dV\\
			&=e^{\partial_tu^s(t,\cdot)+u^s(t,\cdot)}\min (f,j)dV
			\end{align*}
			where we use \eqref{ineq} in the second line and 
			Lemma~\ref{logconcave} in the third one. 
			Therefore $u^s$ is a subsolution to \eqref{eq: approx. NKRF}.  Since $\f_0$ is bounded we can choose $C>$ so large that $u^s(0,\cdot)\leq \f_0$ on $X$. Hence  the comparison principle (Proposition~\ref{comparision}) ensures that for any $j$, $u^s\leq \f_j$ in $J\times X$, i.e.,
			\begin{align*}
			\frac{\alpha_s}{s}\f_j(ts,x)+(1-\alpha_s)v_1-C|s-1|e^{-t}\leq \f_j(t,x),\quad\forall\, (t,x)\in J\times X.
			\end{align*}
			Letting $s\rightarrow 1$ we infer for all $(t,x)\in J\times X$,
			\begin{align*}
			|\partial_t\f_j(t,x)|\leq C_1-C_1v_1(x),
			\end{align*}
			for a uniform constant $C_1>0$.	
			
			To prove $(3)$ we argue as above. Set for any $(t,x)\in J\times X$, 
			\begin{align*}
			v^s(t,x):={\alpha_s}\frac{s^{-1}\f_{j}(ts,x)+s\f_j(ts^{-1},x)}{2}+(1-\alpha_s)v_1(x)-C|s-1|e^{-t},
			\end{align*}
			for a constant $C>0$ so large that $v^s(0,\cdot)\leq \f_0$.
			We can check as above that $v^s$ is a subsolution to~\eqref{eq: approx. NKRF}. By the same arguments we can obtain the estimate $(3)$.
		\end{proof}

		We now finish the proof of Step 2.
		For $t\in (0,T)$ fixed,  $\f_{t,j}$ is decreasing as $j\to\infty$ by the comparison principle (Proposition~\ref{comparision}). 
		It follows from Proposition~\ref{thm_lip2} that
		$$\f_{t,j}(x)\geq \psi_{KE}(x)-C_0,\, \forall\; (t,x)\in [0,T)\times X,$$ 
		for a large constant $C_0>0$. It has been shown in~\cite[Theorem 4.2]{boucksom2010monge} that 
		$\psi_{KE}\in\mathcal{E}(X,\omega_t)$ for each $t$ since $0\leq \theta\leq \omega_t$, 
		hence $\f_{t,j}\in\mathcal{E}(X,\omega_t)$.
		We want to prove that $\lim_j\f_{t,j}=\f_t$ is a solution to the flow~\eqref{fbg}.


		Fix a compact sub-interval  $J\Subset(0,T)$, a compact subset $K\subset(\Omega\setminus D_{\rm lc})$. Proposition~\ref{thm_lip2} implies that there exists a constant $C=C_J>0$ such that the function $t\rightarrow \f_j(t,x)-Ct^2$ is concave in $J$, for all $x\in K$. 
		Moreover, the function $x\mapsto\f_j(t,x)$ is $\omega_t$-psh and uniformly bounded on $K$ for all $j$. We obtain the same properties for the limiting function $\f(t,x)$ by letting $j\rightarrow+\infty$. It follows from Proposition~\ref{t_der} that $\dot{\f}_j$, $\dot{\f}$ are well-defined and $\lim_j\dot{\f}_j(t,\cdot)=\dot{\f}(t,\cdot)$.
		Consider
		\[G:=\{x\in X: f(x)>M \}\cup \{x\in X: -v_1(x)>M \}, \] where $v_1$ is a solution to~\eqref{eq1_bh}. Since $-v_1$ is locally bounded outside a divisor, we can choose $M>0$ so large that  
		$G$ has small Monge-Amp\`ere capacity
		${\rm Cap}_\Theta(G)<\varepsilon$ for some K\"ahler form $\Theta$ and for any $\varepsilon>0$. Hence for all $t\in J$, $j\in\mathbb{N}$, we have that $\dot{\f}_{t,j}$ is uniformly bounded from above on $X\backslash G$. 
		Therefore Lebesgue dominated convergence theorem ensures that 
		\begin{align*}
		\lim_{j\rightarrow+\infty}\int_J\int_{X\backslash G}e^{\dot{\f}_{t,j}+\f_{t,j}}\min(f,j)dVdt=\int_J\int_{X\backslash G} e^{\dot{\f}_t+\f_{t}}fdVdt.
		\end{align*}
		Using the notations from~\cite[Section 2]{di2015generalized}, 
		it follows from \cite[Theorem 2.9]{di2015generalized} that, for all $t\in (0,T)$, $j\in\mathbb{N}$,
		\begin{align*}
		\int_{G}(\omega_t+\dc\f_{t,j})^n\leq  {\rm Cap}_{\psi_{KE}-C_0,0}(G)\leq h(\varepsilon),
		\end{align*}
		for some continuous function $h: [0,+\infty) \rightarrow [0,+\infty)$ with $h(0)=0$.
		Hence
		\begin{align*}
		\int_J\int_{ X}e^{\dot{\f}_t+\f_t}fdVdt &\geq \int_J\int_{X\backslash G}e^{\dot{\f}_t+\f_t}fdVdt=\lim_{j\rightarrow +\infty}\int_J\int_{X\backslash G}(\omega_t+\dc\f_{t,j})^n\\
		&=\lim_{j\to\infty}\int_0^T\int_{X}(\omega_t+\dc\f_{t,j})^n-\lim_{j\rightarrow +\infty}\int_J\int_{G}(\omega_t+\dc\f_{t,j})^n\\
		&\geq \int_J\int_{X}\omega_t^n-Th(\varepsilon).
		\end{align*}
		Letting $\e\rightarrow 0$ we obtain 
		$\int_J\int_{ X}e^{\dot{\f}_t+\f_t}fdVdt\geq \int_J\int_{X}\omega_t^n.$
		
		On the other hand, since $(\omega_t+\dc\f_{t,j})^n$ converges  to  $(\omega_t+\dc\f_{t})^n$, 
		Fatou's lemma yields
		\begin{align*}
		dt\wedge (\omega_t+\dc\f_t)^n\geq e^{\dot{\f}_t+\f_t}fdV dt
		\end{align*}
		in the sense of measures in $(0,T)\times X$, 
		whence equality. This implies that $\f$ is a solution to~\eqref{fbg}.
		\begin{proposition}
			For each $t$, the solution $\f_t$ of \eqref{fbg} is continuous on $\Omega\setminus D_{\rm lc}$.
		\end{proposition}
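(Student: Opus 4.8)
The plan is to deduce the continuity of $\f_t$ from the regularity of the approximants $\f_{t,j}$, the local a priori bounds of Proposition~\ref{thm_lip2}, and the local continuity theory for complex Monge--Amp\`ere equations with $L^p$ density. First note that $\min(f,j)$ is bounded, hence lies in $L^p(X,dV_X)$ for every $p>1$ and is strictly positive a.e.; since moreover $\om_t\geq c_0\theta$ for a constant $c_0>0$ and $-A\om_t\leq\dot\om_t\leq A\om_t$, Theorem~\ref{thm: min sing} applies to the Cauchy problem~\eqref{eq: approx. NKRF} (with $F(t,x,r)\equiv r$) and shows that each slice $\f_{t,j}$ is continuous on $\Omega$. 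As $\f_{t,j}$ decreases to $\f_t$, the function $\f_t$ is already upper semi-continuous on $\Omega$, so it remains only to prove lower semi-continuity on $\Omega\setminus D_{\rm lc}$.

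Next I would record the local estimates. Fix a compact $K\Subset\Omega\setminus D_{\rm lc}$. On $K$ the function $\sum_{a_k=1}\log(-\log|s_k|^2)$ is bounded (since $K\cap D_{\rm lc}=\varnothing$) and $-\chi_0$ is bounded (since $\chi_0$ is smooth on $\Omega\supset K$), so Proposition~\ref{thm_lip2}(1)--(2) produces a constant $C(K)$ with $\psi_{KE}-C_0\leq\f_{t,j}\leq C_0$ and $|\partial_t\f_{t,j}|\leq C(K)$ on $K$, uniformly in $j$. Letting $j\to\infty$ as in Step~2 (via Propositions~\ref{t_der} and~\ref{115}), $\f_t$ is bounded on $K$ (because $\psi_{KE}$ is bounded on $K$) and $\dot\f_t$ exists a.e. on $K$ with $|\dot\f_t|\leq C(K)$. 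Since $\f$ solves~\eqref{fbg},
\[
(\om_t+\dc\f_t)^n=e^{\dot\f_t+\f_t}f\,dV_X\leq C(K)\,f\,dV_X\qquad\text{on }K,
\]
and, writing $f=\prod_i|s_i|^{-2a_i}$, only the factors with $a_i<1$ are relevant on $K$, so $f\in L^p(K)$ for some $p=p(K)>1$.

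The final step is the continuity itself, which is local. Fix $x_0\in\Omega\setminus D_{\rm lc}$, a coordinate ball $B\Subset\Omega\setminus D_{\rm lc}$ about $x_0$, and write $\om_t=\dc h_t$ with $h_t$ smooth on $B$. Then $u:=\f_t+h_t$ is a bounded psh function on $B$, the decreasing limit of the continuous psh functions $u_j:=\f_{t,j}+h_t$, with $(\dc u_j)^n=g_{t,j}\,dV_X$ where $g_{t,j}:=e^{\dot\f_{t,j}+\f_{t,j}}\min(f,j)$; from the bounds above $0\leq g_{t,j}\leq C(K)f$ and $g_{t,j}\to g_t:=e^{\dot\f_t+\f_t}f$ a.e., hence $g_{t,j}\to g_t$ in $L^q(B')$ for each $B'\Subset B$ and $1\leq q<p$. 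I would then invoke Kolodziej's stability theorem for the complex Monge--Amp\`ere equation (see e.g.~\cite{guedj2017degenerate}): on a nested exhaustion $B''\Subset B'\Subset B$ it controls $\sup_{B''}|u_j-u_k|$ by $\sup_{\partial B'}|u_j-u_k|$ plus a term governed by $\|g_{t,j}-g_{t,k}\|_{L^q(B')}$, thereby upgrading the $L^q$ convergence of the densities to local uniform convergence $u_j\to u$; hence $u$, and so $\f_t$, is continuous on $\Omega\setminus D_{\rm lc}$. Equivalently — and more economically — the argument of \cite[Theorem~3.2]{dang2021continuity} is purely local and applies to the $\om_t$-psh function $\f_t$, which is locally bounded on $\Omega\setminus D_{\rm lc}$ and satisfies $(\om_t+\dc\f_t)^n\leq e^{\psi_t}f\,dV_X$ there with $\psi_t:=\dot\f_t+\f_t$ locally bounded above and $f\in L^p_{\loc}(\Omega\setminus D_{\rm lc})$.

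The main obstacle is precisely this localization: because $\f_t$ inherits the mild unbounded singularities of $\psi_{KE}$ along $D_{\rm lc}$ (being squeezed between $\psi_{KE}-C_0$ and $C_0$), the global continuity statement of \cite{dang2021continuity} used in Theorem~\ref{thm: min sing} is not directly available, and one must check that on $\Omega\setminus D_{\rm lc}$ the density stays in $L^p_{\loc}$ and the time derivative $\dot\f_t$ stays locally bounded — which is exactly the content of Proposition~\ref{thm_lip2}. The secondary technical point, the boundary bookkeeping in Kolodziej's stability estimate, is routine once the uniform $L^p$ bound on the densities is in hand.
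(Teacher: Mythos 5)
Your closing ``more economical'' remark is precisely the paper's proof: Proposition~\ref{thm_lip2} gives the pointwise bound $\dot\f_t+\f_t\leq C+\sum_{a_k=1}\log(-\log|s_k|^2)-\chi_0$, hence $(\om_t+\dc\f_t)^n\leq \exp\bigl(C+\sum_{a_k=1}\log(-\log|s_k|^2)-\chi_0\bigr)f\,dV_X$, and continuity on $\Omega\setminus D_{\rm lc}$ is then read off from \cite[Theorem~3.2]{dang2021continuity}. So you have correctly identified the two essential inputs (the time-derivative bound, which only degenerates along $D_{\rm lc}$ and $\partial\Omega$, and the $L^p$ character of $f$ away from $D_{\rm lc}$), and in that form your argument coincides with the paper's.

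The route you actually develop in detail, however, has a genuine gap. Kolodziej's stability estimate on nested balls controls $\sup_{B''}|u_j-u_k|$ by $\sup_{\partial B'}|u_j-u_k|$ plus an $L^q$-norm of the difference of densities; for a decreasing sequence of continuous functions the boundary term tends to zero uniformly only if the limit is already continuous on $\partial B'$ (Dini's theorem), which is exactly what you are trying to prove --- the argument is circular as written. Relatedly, the assertion that interior continuity can be extracted from purely local data should be treated with caution: a locally bounded psh function whose Monge--Amp\`ere measure has $L^p$ density need not be continuous (take $u(z_1,z_2)=v(z_1)$ with $v$ bounded, subharmonic and discontinuous, so that $(\dc u)^2=0$). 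The theorem invoked in the paper is a global statement on $X$, applied to the $\om_t$-psh function $\f_t$ with the global density bound above; locality enters only through the set where that bounding density degenerates, which is why the conclusion is continuity on $\Omega\setminus D_{\rm lc}$ rather than on all of $\Omega$. Replacing your stability argument by this direct global application removes the difficulty.
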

		
		\begin{proof}
			It follows from Proposition~\ref{thm_lip2} that $$e^{\dot{\f_t}+\f_t}f\leq \exp\left(C +\sum_{a_k=1}\log(-\log|s_k|^2) -\chi_0-\sum_{i}\log(|s_i|^2)\right).$$ The proof thus follows from~\cite[Theorem 3.2]{dang2021continuity}.
		\end{proof}
		
		\medskip
		
		\noindent{\bf Step 3: convergence at time zero.}
		Using similar arguments as in the proof of Theorem~\ref{thmlim}, we are going to check that
		the solution $\f_t$ of the equation~\eqref{fbg} converges pointwise towards $\f_0$ as $t\to 0^+$.
		
		\smallskip

		Arguing as at the beginning of the proof of Theorem~\ref{thm_conv}, we can check that \[\f(t,x)\geq u(t,x):= e^{-t}\f_0(x)+(1-e^{-t})\psi_{KE}(x)+h(t), \quad\forall\, (t,x)\in(0+\infty)\times X, \] where $\psi_{KE}$ is the solution of~\eqref{sol_bg} and $h(t)=ne^{-t}\left[(e^t-1)\log(e^t-1)-te^t\right].$
		It thus remains to show that for all $x \in X$,
		$\lim_{t\to 0}\f_t(x)\leq \f_0(x)$.
		
		Fix $T<+\infty$ and consider
		$$
		G:=\{x\in X: u(T,x)>-M \},
		$$ where $M>0$ is a constant such that $\mu(G)>\mu(X)/2$
		(recall that $\psi_{K_E}$ is smooth outside a divisor). 
		Observe that $\f(t,x)\geq u(t,x)\geq u(T,x)>-M$ for all $x\in G$, $t\in (0,T)$. 
		Following the proof of Theorem~\ref{thmlim}, we obtain as in~\eqref{claim28} that
		\begin{align}
		\int_G\f_t d\mu\leq \int_G \f_0 d\mu+Ct,
		\end{align}
		for a constant $C>0$ depending on $G$. 
		
		Let now $u_0\in \PSH(X,\omega_{0})$ be any cluster point of $\f_t$ as $t\rightarrow 0$. We can assume that $\f_t$ converges to $u_0$ in $L^q(X,dV)$ for any $q>1$. On the other hand, $d\mu=\prod_{i}|s_i|^{-2a_i}dV_X$ has density $f=\prod_{i}|s_i|^{-2a_i}\in L^{p}_{loc}(X\backslash D)$ for any $p>1$. Hence, $\f_t f$ converges to  $u_0f$ in $L^1(K)$ for any compact subset $K$ of $X\backslash D$. Thus, the claim above ensures that
		\begin{align*}
		\int_G u_0fdV\leq \int_G \f_0fdV.
		\end{align*} 
		We infer that $u_0\leq \f_0$ almost everywhere on $G$ with respect to $fdV$, hence everywhere on $G$.
		Letting $M\to +\infty$, we conclude that $\limsup_{t\rightarrow 0}\f_t= \f_0$ on $X\backslash D$, hence on the whole $X$.
		
		\medskip
		
		\label{step4}\noindent{\bf Step 4: uniqueness of the flow.} By the previous steps, we have shown that there exists a solution $\f$ to~\eqref{fbg} with initial data $\f_0$. This function satisfies the following properties:
		\begin{itemize}
			\item $\f$ is locally uniformly semi-concave in $t$,
			\item  $(t,x)\to \f(t,x)$ is continuous on $(0,+\infty)\times U$, where $U:=\Omega\setminus D_{\rm lc}$,
			\item $\f_t\to \f_0$ pointwise as $t\to 0^+$.  
		\end{itemize}
		We are going to show that such a solution is unique. 
		Let $\Phi$ be a solution to~\eqref{fbg} with the same properties as above. 
		We shall prove that $\f\leq \Phi$ on $[0,+\infty)\times X$, whence equality. 
		The proof follows step by step from the uniqueness result obtained  in Section~\ref{sect: compa_princ}.
		
		\smallskip
		
		\noindent{\it Step 4.1.}\label{step41} Assume moreover that:
		\begin{enumerate}
			\item $\f$ is $\mathcal{C}^1$ in $t$,
			\item $\Phi$ is continuous on $[0,+\infty)\times U$.		
		\end{enumerate}
		Since $\theta$ is semi-positive we fix $c>0$ such that $\omega_t\geq c\theta$  for all $t$. For simplicity we again assume that $c=1$.  
		Let $\chi$ be a $\theta/2$-psh function with analytic singularities such that $\chi$ is smooth in $U$, $\chi=-\infty$ on $\partial U$, and $\sup_X\chi=0$. We will use this function in order to apply the classical maximum principle in $U$. The standard strategy is to replace $\f$ by $(1-\lambda)\f+\lambda\chi$. Nevertheless, the time derivative $\dot{\f_t}$ may blow up as $t\to 0$ so we need another auxiliary function. Let $\rho\in \PSH(X,\theta/2)$ be the unique solution to    
		\begin{align}
		(\theta/2+\dc\rho)^n=e^{{\rho}}d\mu,
		\end{align}
		normalized by $\sup_X\rho=0$, where $d\mu=\prod_{i}|f_i|^{-2a_i}dV$. It follows from~\cite[Corollary 3.5]{dang2021continuity} that $\rho$ is continuous in $U$.
		
		Fix $0<T<+\infty$. For $\e,\lambda>0$ small enough we set
		$$
		w(t,x):=(1-\lambda)\f(t,x)+\lambda(\rho(x)+\chi(x))-\Phi(t,x)-3\e t, \quad \forall\;(t,x)\in (0,T)\times X.
		$$ 
		By Lemma~\ref{12}, this function is upper semi-continuous on $[0,T]\times U$. 
		Since ${\rho+\chi}$ is a $\theta$-psh function which is continuous in $U$ and tends to $-\infty$ on $\partial U$, 
		the function $w$ attains its maximum at some point $(t_0,x_0)\in [0,T]\times U$. 
		
		We want to show that $w(t_0,x_0)\leq 0$. Assume by contradiction that it is not the case i.e., $w(t_0,x_0)>0$ with $t_0>0$. The set 
		$$
		K:=\{x\in U: w(t_0,x)=w(t_0,x_0)\}
		$$
		is a compact subset of $U$ since $w(t_0,x)$ tends to $-\infty$ as $x\rightarrow\partial U$. 
		The classical maximum principle ensures that for all $x\in K$,
		$$
		(1-\lambda)\partial_{t}\f(t_0,x)\geq \partial_{t}^{-}\Phi(t_0,x)+3\e.   
		$$
		The partial derivative $\partial_{t}\f(t,x)$ is continuous on $U$ by assumption. 
		Since the function $t\mapsto \Phi(t,x)$ is locally uniformly semi-concave, for any $t\in(0,T)$,
		the left derivative  $\partial_t^-\Phi(t,\cdot)$ is upper semi-continuous in $\Omega$ (see Proposition~\ref{t_der}). We can thus find $\eta>0$ small enough that, by introducing the open set containing $K$,
		$$D:=\{x\in U:w(t_0,x)>w(t_0,x_0)-\eta \}\Subset U.$$
		We have for all $x\in D$
		\begin{align}
		(1-\lambda)\partial_{t}\f(t_0,x)>\partial_{t}^{-}\Phi(t_0,x)+2\e.
		\end{align}  
		Set $u:=(1-\lambda)\f(t_0,\cdot)+\lambda(\rho+\chi)$ and $v=\Phi(t_0,\cdot)$. Since $\f$ is a pluripotential solution to~\eqref{fbg}, using Lemma~\ref{logconcave} we infer that
		\begin{align*}
		(\omega_{t_0}+\dc u)^n&\geq [(1-\lambda)(\omega_{t_0}+\dc\f_{t_0})+\lambda (\theta/2+\dc\rho)]^n\\
		&\geq e^{(1-\lambda)(\partial_t\f(t_0,\cdot)+\f(t_0,\cdot))+\lambda\rho}d\mu
		\\&\geq e^{\partial_t^-\Phi(t_0,\cdot)+\varepsilon+(1-\lambda)\f(t_0,\cdot)+\lambda(\rho+\chi)}d\mu
		.
		\end{align*}
		On the other hand, $\Phi$ is a pluripotential solution to \eqref{fbg}, hence
		$$
		(\omega_{t_0}+\dc v)^n\leq e^{\partial_{t}^-\Phi(t_0,\cdot)+\Phi(t_0,\cdot)}d\mu
		$$ in the weak sense of measures in $D$. The last two inequalities yield
		\begin{align*}
		(\omega_{t_0}+\dc u)^n\geq e^{u-v+\e}(\omega_{t_0}+\dc v)^n.
		\end{align*}
		We then repeat the arguments as in the proof of Proposition~\ref{comparison1} to obtain a contradiction. Therefore, we must have $t_0=0$, hence
		\begin{align*}
		(1-\lambda)\f+\lambda (\rho+\chi)-\Phi-3\e t\leq \lambda\sup_X\left((\rho+\chi)-\f_0\right),
		\end{align*} 
		in $[0,T]\times U$. Letting $\lambda\rightarrow 0$ we obtain $\f\leq \Phi+3\e t$ in $[0,T]\times U$, hence in $[0,T]\times X$. We thus finish the proof by letting $\e\rightarrow 0$ and $T\rightarrow +\infty$.
		
		\medskip
		\noindent{\it Step 4.2.} \label{step42} We next remove the continuity assumption on $\Phi$ in Step~\hyperref[step41]{4.1}.
		
		Fixing $s>0$ small enough, we set
		\begin{align*}
		u^s(t,x):= e^{-s}\f_t(x)+(1-e^{-s})\psi_{KE}(x)+h(s)
		\end{align*}
		where $h$ is defined as in the proof of Theorem~\ref{thm_conv}. We observe that
		\[\omega_{t+s}=e^{-s}\omega_t+ (1-e^{-s})\theta, \quad\forall\; t\in[0,+\infty), \]
		hence
		\begin{align*}
		(\omega_{t+s}+\dc u^s)^n&=\left[e^{-s}(\omega_t+\dc\f_t)+(1-e^{-s})(\theta+\dc\psi_{KE}) \right]^n\\
		&\geq e^{e^{-s}(\partial_\tau \f_t+\f_t)+(1-e^{-s})\psi_{KE}}d\mu
		\end{align*}
		where the last inequality follows from Lemma~\ref{logconcave}. Since $h(s)\leq 0$ for $s>0$ we have 
		\[(\omega_{t+s}+\dc u^s)^n\geq e^{\partial_\tau{u^s}+u^s}d\mu. \] 
		On the other hand, 
		\[(\omega_{t+s}+\dc v^s)^n= e^{\partial_\tau{v^s}+v^s}d\mu, \] where $v^s(t,x):=\Phi(t+s,x)$ for $(t,x)\in (0,+\infty)\times X$. 
		By Lemma~\ref{lem413} we have $u^s(0,x)\leq v^s(0,x)$ for all $x\in X$. Since 
		$v^s$ is continuous on $[0,+\infty)\times U$, it follows from Step \hyperref[step41]{4.1} that $$u^s(t,x)\leq v^s(t,x),\quad (t,x)\in [0,+\infty)\times X.$$
		Letting $s\to 0$ we thus obtain $\f\leq \Phi$ on $[0,+\infty)\times X$. 
		
		\begin{lemma}\label{lem413}
			For all $(t,x)\in (0,T)\times X$,
			\begin{equation}\label{424}
			\Phi_t(x)\geq e^{-t}\f_0(x) +(1-e^{-t})\psi_{KE}(x)+h(t),
			\end{equation}
			where $h$ is the unique solution to the ODE: $h'(t)+h(t)=\log(1-e^{-t})$, $h(0)=0$.
		\end{lemma}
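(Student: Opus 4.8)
The plan is to exhibit the right-hand side of~\eqref{424} as a pluripotential subsolution of~\eqref{fbg} with initial datum $\varphi_0$ and then to invoke the comparison principle against the supersolution $\Phi$. Concretely, I would set, for $(t,x)\in(0,+\infty)\times X$,
\[
u(t,x):=e^{-t}\varphi_0(x)+(1-e^{-t})\psi_{KE}(x)+h(t),
\]
with $\psi_{KE}$ the solution of~\eqref{sol_bg} and $h$ the $\mathcal C^1$ function introduced in the proof of Theorem~\ref{thm_conv} (so that $h(0)=0$). Since $u(0,\cdot)=\varphi_0$, the desired bound $\Phi\ge u$ is exactly~\eqref{424}, so the task reduces to comparison.

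First I would check that $u$ is a subsolution. The identity
\[
\omega_t+\dc u_t=e^{-t}(\omega_0+\dc\varphi_0)+(1-e^{-t})(\theta+\dc\psi_{KE})\ge 0
\]
shows that $u_t$ is $\omega_t$-psh, and on $U:=\Omega\setminus D_{\rm lc}$, where all these potentials are locally bounded, one expands the $n$-th power and discards the cross terms to get
\[
(\omega_t+\dc u_t)^n\ge(1-e^{-t})^n(\theta+\dc\psi_{KE})^n=e^{n\log(1-e^{-t})+\psi_{KE}}\,d\mu
\]
(Lemma~\ref{logconcave} may also be used here). A direct computation gives $\dot u_t+u_t=\psi_{KE}+h(t)+h'(t)$, so by the choice of $h$ the subsolution inequality $(\omega_t+\dc u_t)^n\ge e^{\dot u_t+u_t}\,d\mu$ holds on $\Omega_T$, hence on $X_T$ since $\Omega$ is Zariski open; moreover $\dot u_t$ is locally bounded above because $\psi_{KE}$ is bounded above, so $u\in\mathcal P(X_T,\omega)$, and $u(t,\cdot)\to\varphi_0$ as $t\to 0^+$.

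It then remains to compare $u$ with $\Phi$. The function $u$ is $\mathcal C^1$ in $t$ and, after replacing $\varphi_0$ by a decreasing sequence of continuous $\omega_0$-psh functions if necessary (which only decreases $u$), continuous on $(0,+\infty)\times U$; the supersolution $\Phi$ is locally uniformly semi-concave in $t$, continuous on $(0,+\infty)\times U$, and $\Phi_t\to\varphi_0$ as $t\to0^+$. Running the maximum-principle argument of Step~4.1 above (equivalently Proposition~\ref{comparision}) on $U$, with the $\theta$-psh barrier $\rho+\chi$ forcing the maximum of the relevant auxiliary function into a compact subset of $U$, yields $u\le\Phi$ on $[0,T]\times U$; this extends to $[0,T]\times X$ because $U$ is Zariski dense and both sides are $\omega_t$-psh in $x$, and letting $T\to+\infty$ gives~\eqref{424}.

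The hard part is precisely this last comparison. Since $\Phi$ is a priori known to be finite and continuous only on $U$, and the time derivative of a general supersolution may degenerate as $t\to0$, the inequality cannot be read off from a global argument but only from the localized maximum principle on $U$, with $\rho+\chi$ playing the role of a boundary barrier exactly as in Step~4.1; the subsolution $u$, being explicit and smooth in $t$, presents no difficulty on its side.
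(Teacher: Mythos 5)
Your construction of the candidate subsolution is the right one (it is exactly the computation from the proof of Theorem~\ref{thm_conv}: the convex combination $e^{-t}(\omega_0+\dc\f_0)+(1-e^{-t})(\theta+\dc\psi_{KE})$ together with the ODE for $h$), and you correctly identify that the comparison with $\Phi$ is the delicate point. But your resolution of that point has a genuine gap. Step~4.1 requires the supersolution to be continuous on $[0,+\infty)\times U$, i.e.\ \emph{up to and including} $t=0$; in the setting of this lemma $\Phi$ is only assumed continuous on $(0,+\infty)\times U$, with merely pointwise convergence $\Phi_t\to\f_0$. Pointwise convergence of the slices does not give joint lower semi-continuity of $\Phi$ at $\{0\}\times U$, so the auxiliary function $(1-\lambda)u+\lambda(\rho+\chi)-\Phi-3\e t$ need not be u.s.c.\ on $[0,T]\times U$ and the maximum-principle argument cannot be closed at $t_0=0$. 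Invoking Proposition~\ref{comparision} instead is not available either: the density $f=\prod_i|s_i|^{-2a_i}$ is not in $L^p$ for any $p>1$ (this is the reason the whole comparison principle is being re-proved in Steps 4.1--4.3), and in any case the mechanism that lets Proposition~\ref{comparision} relax the continuity-at-$t=0$ hypothesis is precisely the device you are missing. Note also that Lemma~\ref{lem413} is the tool used \emph{inside} Step~4.2 to remove the continuity assumption at $t=0$, so appealing to that step would be circular.

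The paper's proof avoids all of this by a time shift: it compares
\[
w^\e(t,x)=e^{-t}\Phi_\e(x)+(1-e^{-t})\psi_{KE}(x)+h(t)
\]
with the shifted supersolution $\Phi(t+\e,\cdot)$. The shifted flow is continuous on $[0,+\infty)\times U$ (since $\Phi$ is continuous on $(0,+\infty)\times U$), its initial value is exactly $w^\e(0,\cdot)=\Phi_\e$, and $w^\e$ is $\mathcal C^1$ in $t$ and continuous in $x$ on $U$ because $\Phi_\e$ is (whereas your $u$ involves the merely bounded $\f_0$, and your proposed remedy of approximating $\f_0$ from above by continuous functions increases $u$ and destroys the initial inequality, so it does not help). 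Step~4.1 then applies verbatim to give $w^\e\leq\Phi(\cdot+\e,\cdot)$, and letting $\e\to0$, using $\Phi_\e\to\f_0$ pointwise and the continuity of $t\mapsto\Phi_t$ on $(0,+\infty)$, yields \eqref{424}. You should incorporate this time-shift; without it the comparison step does not go through.
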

		\begin{proof}
			Fix $\e>0$, and consider
			\[w^\e(t,x)= e^{-t}\Phi_\e+(1-e^{-t})\psi_{K_E}+h(t). \]
			A direct computation shows that
			\begin{align*}
			(\omega_{t+\e}+\dc w^{\e})^n&=\left(e^{-t}(\omega_\e+\dc\Phi_\e)+(1-e^{-t})(\theta+\dc\psi_{KE}) \right)^n\\
			&\geq e^{\log(1-e^{-t})+\psi_{KE}}\mu.
			\end{align*}
			where we have used $\omega_\e+\dc\Phi_\e\geq 0$. Since $h'(t)+h(t)=n\log(1-e^{-t})$ we have
			\[(\omega_{t+\e}+\dc w^\e)^n\geq e^{\partial_{t}w^\e+w^\e}\mu. \]
			It is also clear from the definition that $w^\e(t,\cdot)$ converges in $L^1(X)$ to $w^\e(0,\cdot)=\Phi_\e$ as $t\to 0^+$.
			On the other hand, $w^\e$ is $\mathcal{C}^1$ in $t$ and $\Phi_{t+\e} $ is continuous on $[0,+\infty)\times U$. We can thus apply Step~\hyperref[step41]{4.1} to obtain $w^\e(t,x)\leq \Phi(t+\e,x)$. 
			The proof follows by letting $\e\to 0$.	 
		\end{proof}
		\noindent{\it Step 4.3.} We are now ready to treat the general case by removing the extra assumption on $\f$. 
		
		For $s>0$ near 1 we set, for any $(t,x)\in (0,T)\times X$
		\begin{align*}
		V^s(t,x):=\frac{\alpha_s}{s}\f(ts,x)+(1-\alpha_s)v_1(x)-C|s-1|e^{-t},
		\end{align*}
		where $\alpha_s$ is defined as in~\eqref{alpha}, and $v_1$ is a solution to~\eqref{eq1_bh}. For $C>0 $ large enough, the proof of Proposition~\ref{thm_lip2} ensures that $V^s$ is a subsolution to~\eqref{fbg} that satisfies $V^s(0,\cdot)\leq \f_0$ on $X$.
		Let $\{\eta_{\e}\}_{\e>0}$ be a family of smoothing kernels in $\mathbb{R}$ approximating the Dirac mass $\delta_0$. For $\e>0$ small enough we define
		\begin{align*}
		\f^{\e}(t,x):=\int_{\mathbb{R}}V^s(t,x)\eta_{\e}(s-1)ds
		\end{align*}
		We proceed as in the proof of Theorem~\ref{thm_unique} to show that $\f^\e-O(\e)$ is  again a subsolution and apply the previous step to conclude. 
		\medskip
		
		\noindent{\bf Step 5: the long-term behavior of the flow.}
		It remains to establish the convergence at $t=+\infty$. We have seen that $$u(t,x):=e^{-t}\f_0+(1-e^{-t})\psi_{KE}(x)+h(t)$$ is a subsolution to~\eqref{fbg}. The comparison principle (see Step~\hyperref[step4]{4}) yields for any $t>0$, $x\in X$,
		\begin{align*}
		\psi_{KE}(x)-C(t+1)e^{-t}\leq u(t,x)\leq \f(t,x)
		\end{align*}
		for some uniform constant $C>0$.
		
		For the upper bound, since $\tilde{\theta}=\theta+\dc\chi_0$ is a K\"ahler current we can fix a constant $A>0$ such that $\omega_0\leq (1+A)\tilde{\theta}$ on $\Omega$, thus $\omega_t\leq (1+Ae^{-t})\tilde{\theta}$ for all $t$. Set 
		\begin{align*}
		v(t,x):=(1+Ae^{-t})\psi_{KE}(x)+Be^{-t}
		\end{align*}
		where $B$ is chosen so that $v_0\geq \f_0$. 
		Thus the function $v$ is a supersolution to the Cauchy problem for the parabolic equation
		\begin{align*}
		((1+Ae^{-t})\tilde{\theta}+dd^cv_t)^n\leq e^{\dot{v}_t+v_t+nAe^{-t}}
		\end{align*}
		with initial data $\f_0$, while $w(t,x):=\f(t,x)-nAe^{-t}$ is a subsolution to this equation since 
		\begin{align*}
		((1+Ae^{-t}) \tilde{\theta}+\dc w_t)^n\geq (\omega_t+\dc\f_t)^n=e^{\dot{\f}_t+\f_t}fdV=e^{\dot{w}_t+w_t+nAe^{-t}}fdV.
		\end{align*}
		The comparison principle  thus yields 
		\begin{align*}
		\f(t,x)\leq (1+Ae^{-t}) \psi_{KE}(x)+C'e^{-t},
		\end{align*}
		as desired.
	\end{proof}

	\bibliographystyle{plain}
	\bibliography{bibfile}

\end{document}